\newcolumntype{Y}{>{\centering\arraybackslash}X}
\providecommand{\U}[1]{\protect\rule{.1in}{.1in}}
\providecommand{\U}[1]{\protect\rule{.1in}{.1in}}
\newtheorem{thm}{Theorem}
\newtheorem{proposition}{Proposition}
\newtheorem{lemma}{Lemma}
\newtheorem{corollary}{Corollary}
\newtheorem{assumption}{Assumption}
\newtheorem{remark}{Remark}
\newtheorem{claim}{Claim}
\newcommand{\EE}{\mathbb{E}}
\newtheorem{proof}{Proof}
\def\rd{{\textnormal{d}}}
\def\vtheta{{\bm{\theta}}}
\DeclareMathAlphabet{\mathsfit}{\encodingdefault}{\sfdefault}{m}{sl}
\SetMathAlphabet{\mathsfit}{bold}{\encodingdefault}{\sfdefault}{bx}{n}
\def\gA{{\mathcal{A}}}
\def\gB{{\mathcal{B}}}
\def\gD{{\mathcal{D}}}
\def\gF{{\mathcal{F}}}
\def\gL{{\mathcal{L}}}
\def\gO{{\mathcal{O}}}
\def\gP{{P}}  
\def\gT{{\mathcal{T}}}
\def\gU{{\mathcal{U}}}
\def\gV{{\mathcal{V}}}
\def\gW{{\mathcal{W}}}
\def\gX{{\mathcal{X}}}
\def\0{{\bf 0}}
\def\1{{\bf 1}}
\def\CM{{\mathcal C}}
\def\FM{{\mathcal F}}
\def\NM{{\mathcal N}}
\def\XM{{\mathcal X}}
\def\EB{{\mathbb E}}
\def\NB{{\mathbb N}}
\def\PB{{\mathbb P}}
\def\RB{{\mathbb R}}
\def\ph{\mbox{\boldmath$\phi$\unboldmath}}
\def\De{\mbox{\boldmath$\Delta$\unboldmath}}
\def\eps{{\varepsilon}}
\def\floor#1{\lfloor #1 \rfloor}
\def\argmin{\mathop{\rm argmin}}
\newcommand{\Var}{\mathrm{Var}}
\newcommand{\ssum}[3]{\sum\limits_{{#1}={#2}}^{#3}}
\def\inner#1#2{\left\langle #1, #2 \right\rangle}
\newcommand{\norm}[1]{\left\|{#1}\right\|}
\newcommand{\abs}[1]{\left|{#1}\right|}
\def\EB{{\mathbb E}}
\def\RB{{\mathbb R}}
\def\ttheta{{\widetilde{\vartheta}}}
\def\De{{\widetilde{\Delta}}}
\def\sr{{r}}
\def\su{{u}}
\def\sw{{\omega}}
\def\snu{{\nu}}
\def\ph{{\phi}}
\def\bG{{ \overline{G} }}
\def\vtheta{{\vartheta^{\star}}}
\def\FM{{\mathcal F}}
\def\tconst{{\widetilde{C}}}
\def\Bpsi{{\widetilde{\psi}}}
\newcommand{\vertiii}[1]{{\left\vert\kern-0.25ex\left\vert\kern-0.25ex\left\vert #1 \right\vert\kern-0.25ex\right\vert\kern-0.25ex\right\vert}}
\newcommand{\myinner}[2]{\langle #1, #2 \rangle}
\begin{document}

\title{Convergence and Inference of Stream SGD, with
Applications to Queueing Systems and Inventory
Control}
\author{
	Xiang Li\thanks{Equal contribution, School of Mathematical Sciences, Peking University; email: \texttt{lx10077@pku.edu.cn}. } \\
	\and
	Jiadong Liang\thanks{Equal contribution, School of Mathematical Sciences, Peking University; email: \texttt{jdliang@pku.edu.cn}. } \\
	\and
 	Xinyun Chen\thanks{School of Data Science, The Chinese University of Hong Kong, Shenzhen; email: \texttt{chenxinyun@cuhk.edu.cn}. }\\
	\and
	Zhihua Zhang\thanks{School of Mathematical Sciences, Peking University; email: \texttt{zhzhang@math.pku.edu.cn}. } \\
}

\maketitle

\abstract{%
Stream stochastic gradient descent (SGD) is a simple and efficient method for solving online optimization problems in operations research (OR), where data is generated by parameter-dependent Markov chains. Unlike traditional approaches which require increasing batch sizes during iterations, stream SGD uses a single sample per iteration, significantly improving sample efficiency. 
This paper establishes a systematic framework for analyzing stream SGD, leveraging the Poisson equation solution to address gradient bias and statistical dependence. We prove optimal \( \gO(T^{-1}) \) convergence rates and the state-of-the-art \( \gO(\log T) \) regret, while also introducing an online inference method for uncertainty quantification and supporting it by a novel functional central limit theorem.  We propose a novel Wasserstein-type divergence to describe the framework's conditions, which makes the assumptions in question directly verified via coupling techniques tailored to underlying OR models. We consider applications in queueing systems and inventory management, demonstrating the practicality and broad relevance, as well as providing new insights into the effectiveness of stream SGD in OR fields. 
}%







\section{Introduction}

Many operations research (OR) problems aim to minimize long-term costs by optimizing system parameters, where the data is evaluated under the stationary distribution of a parameter-dependent Markov chain. This can be formulated as the following minimization problem:
\begin{equation}\label{eq: objective_previous}
\min_{\vartheta\in\Theta}~ f(\vartheta) :=
\int_{\mathcal{X}} c(\vartheta,x)\pi_\vartheta(\rd x),
\end{equation}
where \( \vartheta \in \Theta \) is the control parameter, \( \Theta \) is the bounded feasible domain, and \( \pi_\vartheta \) is the steady-state distribution of the Markov chain \( P_\vartheta \). The function \( c(\vartheta, x) \) represents the transient cost incurred when the system state is \( x \) and the control parameter is \( \vartheta \). Examples include pricing and capacity sizing in queueing systems \citep{chen2023online} and base-stock replenishment in inventory systems \citep{Huh2009}, which are the focus of this paper.
In these OR problems, direct access to the distribution \( \pi_\vartheta \) is not available. Instead, at each step \( t \), the current control parameter \( \vartheta_t \) is applied to the system, transitioning the state \( x_{t-1} \) to \( x_t \), where:
\[
x_{t}\sim P_{\vartheta_t}(x_{t-1},\cdot).
\]
With the observed \( x_t \), the next control parameter \( \vartheta_{t+1} \) is obtained. This process alternates until the optimal parameter \( \vartheta^\star = \arg\min_{\vartheta \in \Theta} f(\vartheta) \) is found. 

Stochastic gradient descent (SGD), introduced by \citet{robbins1951stochastic}, is a powerful stochastic approximation method for this iterative optimization. It alternates between observing random samples and updating parameters. 
Due to its simplicity and computational efficiency, SGD has been widely used in online optimization \citep{shalev2012online, bubeck2015convex} 
and in OR applications \citep{heyman2004stochastic, chen2022elements}.
For the problem \eqref{eq: objective_previous}, we assume that the gradient takes the following form
\begin{equation*}
	\nabla f (\vartheta) :=	\int_\gX H(\vartheta, x) \pi_{\vartheta}(\rd x),
\end{equation*}
where \( H(\vartheta, x) \) presents a (known) gradient estimator for \( \vartheta \).
As an immediate   application of stochastic approximation, SGD updates the parameter by:
\begin{equation*}
\vartheta_{t+1} \leftarrow \Pi_\Theta(\vartheta_t - \eta_t H(\vartheta_t, x_t)),
\end{equation*}
where \( \Pi_\Theta \) projects \( \vartheta_t \) onto the feasible domain \( \Theta \). 

However, this approach 
presents two challenges.
First, the gradient estimator  is significantly biased. Since \( x_t \) is drawn from the conditional distribution \( P_{\vartheta_t}(x_{t-1}, \cdot) \), which is typically different from the steady-state distribution \( \pi_\vartheta \), the gradient estimator (denoted \( H(\vartheta_t, x_t) \)) satisfies \( \EB_{x_t \sim P_{\vartheta_t}(x_{t-1},\cdot)} [H(\vartheta_t, x_t)] \neq \nabla f(\vartheta_t) \). Such a biased estimate could impede the convergence to the true optimum \citep{ajalloeian2020convergence}. 
Second, parameter-dependent Markov chains pose challenges for theoretical analysis. The dependence of \( P_\vartheta \) on \( \vartheta \) causes the sample distribution to evolve with the parameter, making the update potentially unstable and complicating rigorous analysis.

To address these challenges, previous work has considered periodic updates that approximate the stationary distribution by collecting multiple samples from the same Markov chain at each iteration. This approach uses an increasing batch size \( B_t \), and at each iteration \( t \), \( B_t \) sequential samples are drawn from the same Markov chain \( P_{\vartheta_t} \) and used to compute the gradient. By making \( B_t \)  sufficiently large, the sequential samples closely approximate the steady-state distribution \( \pi_{\vartheta_t} \). For instance, \citet{chen2023online} suggested \( B_t \sim \log t \), while \citet{Huh2009} proposed \( B_t \sim t^\beta \) for \( \beta > 0 \). Although this approach facilitates theoretical analysis, it is highly sample inefficient and potentially slows down the convergence of $\vartheta_t$. This inefficiency is particularly problematic in online scenarios, where data samples are streamed via real-time interaction with real systems and result in immediate costs.

\begin{figure}[t!]
    \centering
    \includegraphics[width=0.8\textwidth]{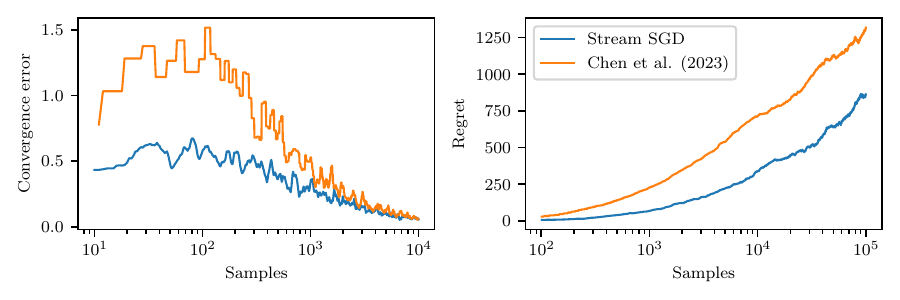} \\
    \includegraphics[width=0.8\textwidth]{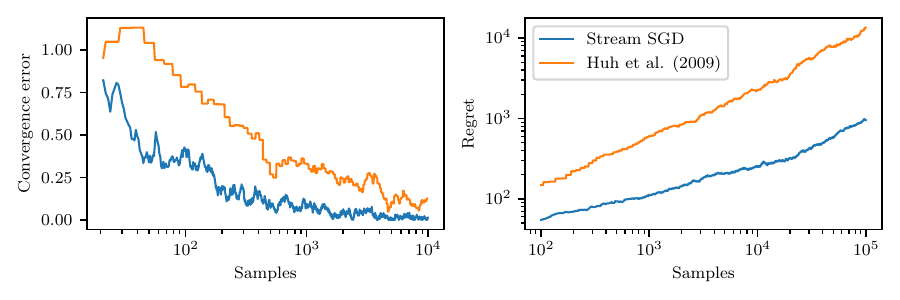}
    \caption{Stream SGD exhibits greater sample efficiency compared to increasing-batch variants, with better convergence rates and cumulative regret. Top: queueing systems \citep{chen2023online}. Bottom: inventory systems \citep{Huh2009}. The simulation setup follows the experimental section.}
   \label{fig:intro}
\end{figure}

Interestingly, simulation results show that such large batch sizes are unnecessary. 
A single sample (\( B_k = 1 \)) from \( P_{\vartheta_t}(x_{t-1}, \cdot) \) often enjoys fast and stable convergence.
The numerical simulations in Figure \ref{fig:intro} demonstrate that single-sample SGD achieves faster convergence rates and smaller cumulative regret compared to its batch counterparts.
We refer to this single-sample SGD method as \textit{stream SGD} to distinguish it from other variants.

A natural question arises: \textit{why does stream SGD work so effectively in practice?} While this question has been explored in the optimization or statistics literature, it remains insufficiently addressed in the context of OR. For instance, \citet{li2022state, karimi2019non, khodadadian2022finite} established convergence results based on regularity assumptions that are too restrictive for the OR applications considered here. Classical studies such as \citet{liang2010trajectory} examine the asymptotic properties of stream SGD under broad conditions, but these conditions are often impractical to verify. 
In short, existing studies are either too general or too narrowly scoped to fully explain the practical success of stream SGD in OR applications, leaving a notable gap in theoretical understanding. 
In this paper, we develop an analytic framework, suitable for a variety of OR problems of interest, to establish convergence rate and regret bound for stream SGD, which provides a theoretical assurance of the benefits stream SGD has compared to previous methods that rely on batch data. 
More recently, \citet{ethan2024stochastic}\footnote{We noted this work during the preparation of our current revision.} argued that stream SGD achieves convergence rates comparable to increasing batch methods (up to logarithmic factors) in common OR settings. In contrast, our analysis employs different methods and yields distinct (in fact stronger) results.

Another critical yet understudied aspect of online optimization in OR problems is uncertainty quantification. Although the previous works \citep{chen2023online,Huh2009} have established convergence rate results on $\theta_t$, those results cannot provide quantitative measures on the uncertainty of SGD outputs, as an estimate of the true $\theta^*$, in finite running time. A common approach to uncertainty quantification is to construct consistent confidence intervals, which has been studied in a variety of SGD settings.  
However, most existing methods rely on assumptions of i.i.d. data \citep{chen2020statistical, zhu2021online, su2023higrad, lee2021fast} or parameter-independent Markov chains \citep{ramprasad2021online, li2023online}, thus are unsuitable for the OR problems considered here. In this paper, based on our theoretic framework, we are able to develop an  online inference algorithm to construct asymptotically valid confidence intervals for stream SGD in a computationally efficient way. 

\subsection{Our Contributions}
We summarize our contributions as follows:

\paragraph{A general framework with easy-to-verify conditions}
To address a broad range of OR applications, we propose a general framework to evaluate the efficiency of stream SGD. At its essence, the framework leverages the \textit{Poisson equation solution} to effectively capture the statistical dependence between sequential samples in parameter-dependent Markov chains. 
Different from the prior studies \citep{karimi2019non, li2022state}, we provide a systematic approach from first principles: identifying conditions under which the Poisson equation solution exists and is sufficiently smooth. These conditions fall into two categories.
The first is the regularity conditions on the stationary loss \( f(\vartheta) \), such as strong convexity and smoothness, which are commonly used in the SGD literature.
The other is the continuity conditions on the transition kernel \( P_{\vartheta} \). We formulate those conditions using Wasserstein-type divergences so that they can be directly verified by applying coupling techniques to the underlying OR models.
As illustrative examples, we apply  our framework to two OR problems of different natures: (i) pricing and capacity sizing in queueing systems, and (ii) base-stock replenishment in inventory systems.

\paragraph{Convergence and regret analysis}
Based on our framework, we establish a finite-sample convergence rate and an upper bound for cumulative regret of the stream SGD algorithm. Specifically, we demonstrate that stream SGD achieves a convergence rate of \( \EB \| \vartheta_T - \vtheta \|^2 = \gO(T^{-1}) \) when the step size is \( \eta_t \sim t^{-1} \). 
This result improves upon the convergence rate of $\gO((\log T)^2/T)$ in the recent work \citep{ethan2024stochastic}.
Furthermore, we show that the cumulative regret over \( T \) observations is \( \gO(\log T) \), representing the state-of-the-art regret bound to the best of our knowledge. Compared to the regret bounds of $\gO((\log T)^2) $ and $\gO(T^{2/3})$ obtained in \cite{chen2023online} and \cite{Huh2009}, respectively, for batch data methods,  our regret bound for stream SGD possesses a smaller growing rate in $T$. These theoretic results indicate the effectiveness and efficiency of stream SGD without increasing batch size when applied to a variety of stochastic systems.

\paragraph{Uncertainty quantification}
To measure the uncertainty in the output of the stream SDG algorithm, we develop an online inference method for uncertainty quantification, supported by a novel functional central limit theorem (FCLT). Different from the classic CLT \citep{liang2010trajectory, lei2024variance} focusing solely on the sample average, our FCLT captures the asymptotic behavior of the entire trajectory of stream SGD iterates. This more comprehensive characterization enables the construction of confidence regions via the cancellation method \citep{glynn1990simulation}, eliminating the need for explicit estimation of the asymptotic variance matrix and resulting in a computationally efficient online inference algorithm. The derivation of FCLT is also tied to our theoretic framework and relies heavily on the Poisson equation solution.
In addition to validating our online inference method, we would believe the FCLT result itself is also of independent theoretic interest in general SGD scenarios.

\subsection{Organization of the Paper}
The paper is organized as follows.
Section \ref{sec: literture review} reviews related work, emphasizing the distinctions between our work and prior studies.  
Section \ref{sec: main results} introduces our framework, detailing the assumptions, presenting the convergence and regret analysis, establishing the FCLT, and describing the proposed online uncertainty quantification method.  
Section \ref{sec:examples} demonstrates the applicability of our framework through two OR applications, providing detailed theoretical results for each.  
Finally, in Section \ref{sec: numermical}, we present a comprehensive comparison of the convergence and regret performance of stream SGD with prior methods and showcase the effectiveness of the proposed online inference methods.  
All proofs are provided in the appendix.

\section{Literature Review}
\label{sec: literture review}

\subsection{Stream SGD in OR Studies}

Stream SGD algorithm and its variants have been extensively applied to optimizing control policies of stochastic systems in OR literature. In early works,  it was applied to M/M/1 queues in \cite{suri1988perturbation} and was later extended to GI/GI/1 queues by \cite{Fu1990} using their regenerative structure. \cite{LEcuyer1994} analyzed convergence properties with various gradient estimators, including IPA, finite-difference, and likelihood-ratio methods, while \cite{LEcuyer94b} provided detailed numerical studies. Recently, \cite{chen2023online} established finite-step convergence for online learning in GI/GI/1 queues, and \cite{ravner2023Nash} developed an SGD-based algorithm for Nash equilibrium in queueing games. 
In these works, SGD algorithms relied on batch data to manage gradient bias, using fixed or regenerative cycle-based batch sizes \citep{LEcuyer1994, chen2023online, Fu1990, ravner2023Nash}.

Beyond queueing systems, SGD methods have also been applied to steady-state optimization in inventory systems. In early work, \citet{huh2009nonparametric} employed stream SGD for lost-sales inventory systems with censored demand.
They identified an unbiased gradient estimator given the assumption of zero replenishment lead time. Similarly, \citet{yuan2021marrying} combined SGD with cycle-based batch sizes and bandit controls to address stochastic inventory systems with fixed costs, also assuming zero lead time.
However, for inventory systems with general lead times, unbiased gradient estimators are typically unavailable. To address this, researchers have developed batch-based SGD variants to manage gradient bias. \citet{Huh2009} proposed an increasing batch approach, while \citet{zhang2020closing} introduced a cycle-based batch size method.

\subsection{Theoretic Study of Stream SGD}

There is a vast body of literature on the theoretical analysis of stream SGD and general SGD-type algorithms. 
\citet{liang2010trajectory} focused on the asymptotic behavior of stream SGD rather instead of finite-time convergence.
Other studies, such as \citet{duchi2012ergodic}, \citet{doan2022finite}, and \citet{even2023stochastic}, established finite-time convergence rates under the assumption that data originate from a fixed, suitably ergodic Markov chain. Similarly, \citet{mendler2020stochastic} and \citet{drusvyatskiy2023stochastic} assumed that data are sampled directly from the steady-state distribution \( \pi_\vartheta \). These assumptions, however, are incompatible with the parameter-dependent Markov chains commonly encountered in OR problems. Furthermore, some works impose stronger conditions, such as uniformly bounded gradient estimators \citep{sun2018markov} or almost surely (strong) convexity and smoothness \citep{li2022state}, which differ fundamentally from the settings of interest in our study.

Our analysis employs the Poisson equation framework (e.g., \citet{benveniste2012adaptive}) to address the non-stationarity inherent in Markov data. While the Poisson equation has been used in prior analyses of SGD-type algorithms \citep{karimi2019non, tadic2017asymptotic, li2022state}, these works typically assume the existence and smoothness of the Poisson equation solution without verifying. In contrast, we establish a set of sufficient conditions to guarantee these properties, making our approach more practical for applying stream SGD to specific stochastic systems. 
Under these tailored assumptions, we prove finite-time convergence rates of \( \gO(T^{-1}) \), matching the minimax lower bound due to gradient noise \citep{lan2020first, even2023stochastic, beznosikov2024first} and outperforming the results of the concurrent work \citet{ethan2024stochastic} by logarithmic factors. 

\subsection{Statistical Inference for Stream SGD} 
Statistical inference quantifies the uncertainty of estimated parameters, helping us understand inherent randomness and make better decisions under uncertainty.
Statistical inference for stream SGD often begins with establishing a central limit theorem (CLT) for the averaged iterates \citep{polyak1992acceleration, ruppert1988efficient, lei2024variance}, followed by estimating the asymptotic variance matrix \citep{chen2020statistical, zhu2021online}. The combination of asymptotic normality from the CLT and variance estimates enables the construction of confidence intervals, which has been widely applied to SGD and its variants under i.i.d. data \citep{chen2020statistical, toulis2017asymptotic, liang2019statistical}. For Markovian data, however, methods like online bootstrapping \citep{ramprasad2021online} often rely on multiple simulation oracles, while other approaches assume data generated from parameter-dependent Markov chains \citep{li2023online}, limiting their practicality.

Our approach diverges from existing methods by analyzing data from parameter-dependent Markov chains and leveraging the entire trajectory of SGD iterates to quantify uncertainty. Specifically, we establish a functional central limit theorem (FCLT) in which the partial-sum process converges weakly to a Brownian motion. This enables the construction of confidence intervals that incorporate information from the full trajectory rather than relying solely on simple averages. While our approach draws inspiration from \citet{lee2021fast,lee2022fast}, it addresses challenges posed by parameter-dependent Markov data and connects to broader applications in various fields~\citep{li2021statistical, li2021polyak, li2023online, chen2021online}.
A major challenge lies in establishing the FCLT for parameter-dependent Markov data, as it requires addressing the temporal dependencies among SGD iterates. To tackle this, we utilize the Poisson solution equation to decompose the partial-sum process into several simpler components, enabling us to analyze their asymptotic behaviors individually.

\section{Main Results}
\label{sec: main results}
In Section \ref{subsec:algorithm and assumptions}, we provide a formal introduction to the stream SGD algorithm along with technical assumptions.  Section \ref{subsec: convergence and regret} presents the main theoretic results on the finite-step convergence rate and the regret upper bound, while Section \ref{subsec: inference} covers the CLT analysis and resulted online inference method. The complete proofs are deferred to~\ref{sec:prf_of_conv&regret} and~\ref{sec:prf_of_inference}.

\subsection{The Algorithm and Assumptions}
\label{subsec:algorithm and assumptions}
The objective is to minimize the long-term cost of a discrete-time Markov system \( x_t \) with state space \( \mathcal{X} \subseteq \mathbb{R}^n \). The system's transition kernel \( P_\vartheta \), controlled by \( \vartheta \in \Theta \subseteq \mathbb{R}^m \), has the stationary distribution \( \pi_\vartheta \). Each step incurs a transient cost \( c(\vartheta, x) \), and the objective is to find \( \vartheta^\star \) that minimizes 
\begin{equation}
\label{eq: objective}
\min_{\vartheta \in \Theta}~ f(\vartheta) := \int_{\mathcal{X}} c(\vartheta, x) \pi_\vartheta(\rd x) .
\end{equation}
In practical applications, the function \( f(\vartheta) \) is typically assumed to be convex and differentiable, with gradient \( \nabla f(\vartheta) \). To facilitate analysis, we impose the following convexity assumption, as commonly used in previous studies \citep{chen2023online,ethan2024stochastic}.
\begin{assumption}[Convexity and smoothness]\label{assmpt: convexity}
There exist constants \( C_\Theta \), \( K_1 > K_0 > 0 \) such that:
\begin{enumerate}
\item[$(a)$] \( \vartheta^\star \) is an interior point of the bounded domain \( \Theta \), i.e., \( \sup_{\vartheta \in \Theta} \|\vartheta\| \le C_\Theta \);
\item[$(b)$] \( \langle \vartheta - \vartheta^\star, \nabla f(\vartheta) \rangle \geq K_0 \|\vartheta - \vartheta^\star\|^2 \); and (c) \( \|\nabla f(\vartheta)\| \leq K_1 \|\vartheta - \vartheta^\star\| \).
\end{enumerate}
\end{assumption}

\begin{remark}[Discussion on Strong Convexity Condition]
    {By Condition~(b) of Assumption~\ref{assmpt: convexity}, the objective function $f(\vartheta)$ is required to be strongly convex over the feasible set $\Theta$. This condition is essential for developing our inference method for stream SGD, as it guarantees the optimal $\mathcal{O}(1/T)$ convergence rate and allows us to control the remainder terms beyond the fluctuation term in the FCLT proof (see Lemma~\ref{lem:error-analysis} for details).}
    \end{remark}

There exists a bivariate function $H$ so that for any \( \vartheta \in \Theta \), the gradient \( \nabla f(\vartheta) \) can be expressed as
\begin{equation}\label{eq: H gradient esimator}
  \nabla f(\vartheta) :=  \int_{\mathcal{X}} H(\vartheta, x) \pi_\vartheta(\rd x) . 
\end{equation}
We apply the stream SGD algorithm (see Algorithm~\ref{alg:online_sgd}) to solve the above problem.

\begin{algorithm}[t!]
\small
\caption{\textsc{Stream SGD}}
\label{alg:online_sgd}
\begin{algorithmic}
\STATE \textbf{Input:} Step sizes $\{\eta_t\}$, initial state $x_0$, initial control parameter $\vartheta_1$, and total step $T$.
\FOR {Iteration $t = 1, \ldots, T$}
    \STATE Observe $x_t$ from the transition  $P_{\vartheta_t}(x_{t-1}, \cdot)$ 
    \STATE Compute the gradient estimator by $H(\vartheta_t, x_t)$.
    \STATE Update $\vartheta_{t+1} \leftarrow \Pi_\Theta(\vartheta_t - \eta_t H(\vartheta_t, x_t))$, where $\Pi_\Theta$ is the projection operator.
\ENDFOR
\STATE Output the final parameter $\vartheta_{T+1}$.
\end{algorithmic}
\end{algorithm}

\begin{assumption}[Conditions on step sizes]
\label{assump:step-size} Step size $\{\eta_t\}$ decreases in $t$. There exist constants $C_U >0$ and $0 <\varkappa \le \frac{K_0}{2}$ so that \textnormal{(}a\textnormal{)}  $0<\eta_t<\gamma_0 := \min\left\{\frac{2K_0}{C_U}, \frac{2}{K_0}, \frac{K_0}{\varkappa + 3K_0}\right\}$ and \textnormal{(}b\textnormal{)} $\left\vert\frac{1}{\eta_{t}}-\frac{1}{\eta_{t-1}}\right\vert \le \varkappa$.
\end{assumption}

We assume that the step size \( \eta_t \) satisfies the conditions in Assumption \ref{assump:step-size}, common in the SGD literature \citep[e.g.,][]{polyak1992acceleration, moulines2011non}. 
In practice, common choices for the step size include \( \eta_t \propto t^{-\alpha} \) with \( \alpha \in (0,1) \), or \( \eta_t = \gamma t^{-1} \) where \( \gamma \leq \gamma_0 \) for some constant $\gamma_0$.


The conventional SGD usually assumes i.i.d. data samples \citep{shalev2012online}, whereas our work considers data \( x_t \) generated by a parameter-dependent Markov chain \( P_{\vartheta} \). Consequently, \( H(\vartheta_t, x_t) \) is typically a \textit{biased} estimator of \( \nabla f(\vartheta_t) \), because the distribution \( P_{\vartheta_t}(x_{t-1}, \cdot) \) generally differs from the steady-state distribution \( \pi_{\vartheta_t} \). 
To analyze this gradient bias and its evolution through state transitions and parameter updates, we introduce a set of continuity conditions for the transition probabilities. These conditions, which are formulated by using potential functions and Wasserstein-type divergences, are detailed below.




The Wasserstein distance quantifies the discrepancy between two probability distributions by considering the ``cost'' of transporting one distribution to the other \citep{villani2009optimal}. By relaxing some of its metric properties, it can be extended to a Wasserstein-type divergence, which provides a more flexible measure of transport-based differences. 
In our work, this divergence facilitates the use of coupling techniques to characterize the continuity of transient distributions with respect to the parameter \( \vartheta \) or the state \( x \). In detail, for any given cost function \( d(\cdot, \cdot) \) on the state space \( \mathcal{X} \),\footnote{The cost function does not need to be a strict distance metric; for specific requirements, see Theorem 5.10 in \citep{villani2009optimal}.} we define the Wasserstein-type divergence between two distributions \( \mu \) and \( \nu \) on \( \mathcal{X} \) as
\begin{equation}\label{eq:def_w_dist}
\mathcal{W}_d(\mu, \nu) := \inf_{\pi \in \Gamma(\mu, \nu)} \mathbb{E}_{(X, Y) \sim \pi} \left[d(X, Y)\right],
\end{equation}
where \( \Gamma(\mu, \nu) \) represents the set of all couplings of \( \mu \) and \( \nu \):
\[
\Gamma(\mu, \nu) := \left\{\pi :~ \mathrm{Supp}(\pi) \subseteq \mathcal{X} \times \mathcal{X},~ \int \pi(\cdot, \mathrm{d} y) = \mu(\cdot),~ \int \pi(\mathrm{d} x, \cdot) = \nu(\cdot)\right\}.
\]

Selecting different cost functions allows us to define specific Wasserstein-type divergences for particular application settings. In this paper, we construct the cost function \( d(x, y) \) as $$ d(x, y) := \rho(x, y)(1 + V(x) + V(y)) ,$$ where \( \rho \) is typically either the Euclidean distance \( \norm{x - y} \) or an indicator function \( \mathbbm{1}\{x \neq y\} \) and  $V (\cdot)$ is a positive and measurable potential function  satisfying 
$$\sup_{\vartheta\in \Theta}\int_{\gX} V(y)^4 P_{\vartheta}(\mathrm{d}y|x) < V(x)^4 + b,~ \forall x\in\gX$$ with a given constant $b$. By appropriately selecting \( V \) and \( \rho \), our analytical framework adapts seamlessly to a wide range of OR problems.
Here and later, we denote by \( \gW_{\norm{\cdot}, V} \) and \( \gW_{\mathbbm{1}, V} \)  the  Wasserstein-type divergences corresponding to  cost functions  \( d(x, y) = \norm{x - y} \big(1 + V(x) + V(y)\big) \) and \( d(x, y) = \mathbbm{1}\{x \neq y\} \big(1 + V(x) + V(y)\big) \), respectively.



For any given control parameter $\vartheta$ and state point $x \in \gX$, we define the function $P_{\vartheta}f(x) \coloneqq \int f(y)P_{\vartheta}(\rd y\mid x)$ and the measure $\delta_{x}P_{\vartheta}(\cdot)\coloneqq P_{\vartheta}(\cdot, x)$.
Then, we are ready to state the continuity conditions for the transition probabilities described by the Wasserstein-type divergence. 
Specifically, Assumption \ref{ass:continue_H} ensures the joint $(\vartheta, x)$-continuity of the one-step transition \( \gP_{\vartheta} H(\vartheta, x) \) on the gradient estimator $H(\vartheta, x)$, while Assumption \ref{ass:continue_P} focuses on the $\vartheta$-continuity of the transition dynamics from a fixed initial state \( x \). Beyond these continuity requirements, Assumption \ref{ass:W-contraction} further demands that the influence of the initial state on the $n$-step transition dynamics diminishes over time, approximately at an exponential rate.


\begin{assumption}[Joint continuity of $P_{\vartheta}H$]\label{ass:continue_H}
There exist a universal constant \( L > 0 \) and a potential function \( V(x) > 0 \) such that for any \( \vartheta, \vartheta' \in \Theta \) and \( x, x' \in \mathcal{X} \), 
\[
\norm{\gP_{\vartheta} H(\vartheta, x) - \gP_{\vartheta'} H(\vartheta', x')} \leq L \big(V(x) + 1\big) \big(\norm{x - x'} + \norm{\vartheta - \vartheta'}\big).
\]
\end{assumption}

\begin{assumption}[Continuity of transition kernel with respect to \( \vartheta \)]\label{ass:continue_P}
Given the potential function \( V(x) \) from Assumption~\ref{ass:continue_H}, the transition kernel \( \gP_\vartheta \) satisfies a nearly \( L \)-Lipschitz continuity property. Specifically, there exists a universal constant \( L > 0 \) such that
\[
\gW_{\norm{\cdot}, V^2}(\delta_x \gP_{\vartheta}, \delta_x \gP_{\vartheta'}) \leq L \norm{\vartheta - \vartheta'} \big(1 + V(x)^2\big),
\]
where \( \gW_{\norm{\cdot}, V^2} \) is the Wasserstein-type divergence with the cost function \( \norm{x - y}(1 + V(x)^2 + V(y)^2) \).
\end{assumption}

\begin{assumption}[Wasserstein contraction]\label{ass:W-contraction}
The transition kernel \( \gP_\vartheta \) exhibits a Wasserstein contraction property for both the \( \gW_{\norm{\cdot}, \tilde{V}} \) and \( \gW_{\mathbbm{1}, \tilde{V}} \) distances, where \( \tilde{V} \in \{V, V^2\} \). Specifically, there exists a constant \( c > 0 \) such that for any \( x, y \in \mathcal{X} \) and \( \vartheta \in \Theta \), we have:
\[
\gW_{\rho, \tilde{V}}(\delta_x \gP_{\vartheta}^n, \delta_y \gP_{\vartheta}^n) \leq C \rho(x, y) e^{-cn} \big(2 + \tilde{V}(x)^2 + \tilde{V}(y)^2\big),
\]
where \( \rho \in \{\norm{\cdot}, \mathbbm{1}\} \) is the cost function, and \( C > 0 \) is a universal constant.
\end{assumption}

\begin{remark}[Discussion on Assumption \ref{ass:W-contraction}]
Classic works such as \citep{liang2010trajectory,care2019poisson} {replace Assumption~\ref{ass:W-contraction} with a combination of a drift condition and a Lipschitz continuity assumption of the transition kernel with respect to $\vartheta$ in total variation distance}. However, this approach has two main drawbacks: (1) the drift condition is often difficult to verify in complex OR applications, as it requires characterizing the density of $x_t$ for all time steps $t$ and parameters $\vartheta$; {(2) it imposes a Lipschitz requirement more stringent than Assumption~\ref{ass:continue_P}, further complicating verification. This highlights that the drift condition cannot fully replace Assumption~\ref{ass:W-contraction}}.
In contrast, the Wasserstein contraction assumption can be directly verified using coupling techniques, which are inherent in the definition of the Wasserstein-type divergence in \eqref{eq:def_w_dist}. Since coupling methods are widely used in OR model analysis \citep{lindvall2002lectures,kalashnikov2013mathematical}, our framework can be extended to general OR settings by leveraging existing results in the literature.
\end{remark}

\begin{remark}[Comparison with \citep{benveniste2012adaptive}]
{The assumptions we adopt are akin to those suggested in Chapter 2 of \citet{benveniste2012adaptive}, and are similarly used to establish certain smoothness conditions of the solution to the Poisson equation (see Lemma \ref{lem:reg_poisson_eq}). Nonetheless, due to some key differences in analytical approaches, they require extra conditions including contraction properties between two Markov systems with different model parameters, which are more technical and less straightforward to verify. In contrast, our approach achieves the same result under simpler and more practical assumptions. Employing a telescoping technique, we show that our assumptions are sufficient and extra assumptions in \cite{benveniste2012adaptive} can be derived from our assumptions.}
\end{remark}

\begin{remark}[The role of potential functions]
The potential function \( V(x) \), defined as a function of the state \( x \), measures the system's local stability and smoothness. It acts as a ``local Lipschitz constant,'' reflecting how sensitive the system is around different states. 
If \( V(x) \) is a trivial constant function, the above assumptions reduce to the stronger uniform ergodicity condition as  used in \citep{chandak2022concentration,kaledin2020finite}. 
For many OR models, however, the uniform ergodicity condition does not hold because the Wasserstein-type divergence is dependent on the states $x$ and $y$. As illustrated by concrete examples in Section~\ref{sec:examples}, the potential function can be chosen properly in accordance with the system's dynamics and the coupling rule.
\end{remark}

\subsection{Finite-time Convergence and Regret Bound}
\label{subsec: convergence and regret}

In the heart of our theoretic analysis is the solution  \( U_h(\vartheta, x) \) to the following Poisson equation responding to the Markov system:
\begin{equation}
\label{eq:poisson}
U_h(\vartheta,x) - \gP_\vartheta U_h(\vartheta, x) = h(x) - \EB_{\pi_{\vartheta}}[h],
\end{equation}
where \( \EB_{\pi_\vartheta}[h] \equiv \int_{\XM} h(x) \pi_{\vartheta}(x) \rd x \). The following Lemma \ref{lem:reg_poisson_eq} shows that, under the continuity assumptions introduced in the previous section, the Poisson equation \eqref{eq:poisson} does have a solution, and in addition, its solution $U_h(\vartheta,x)$ is sufficiently smooth with respect to the parameter \( \vartheta \). We refer the readers to \ref{sec:prf_sketch} for the proof of Lemma \ref{lem:reg_poisson_eq}.

\begin{lemma}[Existence and smoothness of the Poisson equation solution]
\label{lem:reg_poisson_eq}
Suppose Assumptions \ref{ass:continue_H}--\ref{ass:W-contraction} hold for a given potential function \( V \in \gV \). Then, for any control parameter \( \vartheta \in \Theta \), there exists a function \( U_{P_{\vartheta}H}(\vartheta, \cdot) \) that solves Eqn.~\eqref{eq:poisson}, where \( h(\cdot) \) is chosen as \( P_\vartheta H(\vartheta, \cdot) \). 
Moreover, there exists a universal constant \( C > 0 \), independent of \( \vartheta \) and \( x \), such that for any \( \vartheta_1, \vartheta_2 \in \Theta \) and \( x \in \mathcal{X} \),
\[
\abs{U_{P_{\vartheta_1} H}(\vartheta_1, x) - U_{P_{\vartheta_2} H}(\vartheta_2, x)} \le
C\norm{\vartheta_1 - \vartheta_2}\left(1 + {V}(x)^4 +  \EB_{\pi_{\vartheta_1} + \pi_{\vartheta_2}}[V^4]\right),
\]
where \( \EB_{\pi_{\vartheta_1} + \pi_{\vartheta_2}}[V^4] = \int_{\XM} V(x)^4 \big(\pi_{\vartheta_1}(x) + \pi_{\vartheta_2}(x)\big) \rd x \) . 
\end{lemma}

\begin{corollary}
\label{cor:U_H-exist}
From Lemma \ref{lem:reg_poisson_eq}, there exists a function \( U_{H}(\vartheta, \cdot) \) that solves Eqn.~\eqref{eq:poisson} when  \( h(\cdot) = H(\vartheta, \cdot) \). It is given by $U_{H}(\vartheta, \cdot) = H(\vartheta, \cdot) - \nabla f(\vartheta) +  U_{P_{\vartheta}H}(\vartheta, \cdot)$ and satisfies $ U_{P_{\vartheta}H} = P_{\vartheta} U_{H}$.
\end{corollary}



Lemma \ref{lem:reg_poisson_eq} shows $U_{P_{\vartheta}H}$ exists and is smooth, as a result of which, $U_{H}$ also exists, as shown in Corollary \ref{cor:U_H-exist}.
In our analysis, the solution  $U_{P_{\vartheta}H}$ not only captures the deviation of the expected gradient estimator under the transient distribution from that under stationary distribution, i.e. the true gradient, but also quantifies the memory effect of the Markov chain and its impact on the gradient estimator variance.\footnote{The memory effect in the context of parameter-dependent Markov chains refers to the influence of past states of the Markov chain on its current state and behavior.}
Based on this foundation, we then are able to separate the transient effects of stream SGD from its long-term behavior and conduct finite-time analyses of convergence rates (using $U_{P_{\vartheta}H}$) and uncertainty quantification (using $U_{H}$). Before presenting our main theoretic results, we would like to point out that, for implementing stream SGD in practice, one does not need to compute the solution of the Poisson equation; it is used purely as a theoretical tool for analysis. 

\subsubsection{Finite-step Convergence Rate}\label{subsec: convergence result}

Our first main result provides a finite-time analysis of the mean square error $\EB \| \vartheta_{t} - \vtheta \|^2$, which we denote by \( \Delta_t  \) to simplify the notation.

\begin{thm}
\label{thm:L2-convergence}  
Consider applying the stream SGD algorithm to an online learning problem with the objective of the form~\eqref{eq: objective}, and let the observed state sequence during the algorithm execution be \(\{x_t\}_{t=1}^\infty\). Assume that \(\sup\limits_{t > 0} \EB V(x_t)^4 < \infty\).
Suppose Assumptions~\ref{assmpt: convexity}--\ref{ass:W-contraction} hold. There exist two problem-dependent constants $\overline{\Delta}$ and $C_{\star} >0$, such that for all $t\ge 1$,
\[
\Delta_{t+1} \le
\prod_{s=2}^t \left( 1 - K_0 \eta_s \right) \Delta_1 
+ \eta_t \left[ \left(1 + 8 \kappa\right) \overline{\Delta} + 18 \kappa C_{\star} \right].
\]
\end{thm}
\begin{corollary}
\label{cor:convergence}
Under the same conditions of Theorem \ref{thm:L2-convergence}, if \( \eta_t = \gamma / t \) with  \( K_0 \gamma \ge 1 \). Then, for all \( t \ge 1 \), $\Delta_{t+1} \le  \frac{1}{t} \cdot \left[ \Delta_1 + \gamma \left[(1 + 8 \kappa) \overline{\Delta} + 18 \kappa C_\star\right]\right].$
In other words, the mean square error \( \EB\|\vartheta_t - \vtheta\|^2 \) achieves a convergence rate of \( \gO(t^{-1}) \). 
\end{corollary}

\begin{remark}
    Prior works, such as \citep{li2022state,ethan2024stochastic}, have studied state-dependent stochastic algorithms similar to Theorem~\ref{thm:L2-convergence}. In the strongly convex case, \citet{ethan2024stochastic} achieved a convergence rate of $\mathcal{O}((\log t)^2 / t)$, while \citet{li2022state} obtained a rate of $\mathcal{O}(1/t)$ under the assumption that $H(\theta, x)$ is strongly monotone for all $x \in \mathcal{X}$---a condition significantly more stringent than Assumption~\ref{assmpt: convexity}. In the absence of strong convexity, \citet{ethan2024stochastic} shows that the rate further degrades to $\mathcal{O}((\log t)^2 / \sqrt{t})$.
\end{remark}

The \( \eta_t = \gO(t^{-1}) \) convergence rate in Corollary~\ref{cor:convergence} matches the rate of standard SGD with unbiased sampling data, see for example, Theorem 2.1 of Chapter 10 in \citet{kushner2003stochastic}. 

\subsubsection{Regret Analysis for Stream SGD}\label{sec:regret_analysis}
To evaluate the performance of stream SGD when applied online, we also establish an upper bound for the cumulative regret.
In general, cumulative regret quantifies the difference between the total cost incurred by an online algorithm and the best possible cost achievable in hindsight. In our setting, the cumulative regret is specified as
\begin{equation}
\label{eq:regret}
R(T) := \ssum{t}{1}{T} \big(\EB [c(\vartheta_t, X_t)] - \EB [c(\vartheta^*, X_\infty(\vartheta^*))] \big),
\end{equation}
where \( c(\cdot, \cdot) \) is the cost function defined in \eqref{eq: objective}, \( \vartheta_t \) is the parameter obtained by stream SGD at iteration \( t \), \( X_t \) represents the state in the Markov system at iteration \( t \), and \( X_\infty(\vartheta^*) \) is the (auxiliary) state from the stationary distribution of the dynamics under the optimal parameter \( \vartheta^* \).

To bound the regret, we impose the following Lipschitz assumption on the cost function \( c(\vartheta, x) \). 

\begin{assumption}[Continuity of cost function]\label{ass:cont_cost}
    There is a universal constant $L_{\text{Lip}}$ such that $c(\vartheta, x) - c(\vartheta, y) \le L_{\text{Lip}}\norm{x - y},~\forall \vartheta \in \Theta,~ \forall x,~y \in \gX$.
\end{assumption}

Following \cite{chen2021online},  we first decompose the cumulative regret into two terms as
\begin{equation*}
\begin{aligned}
    R(T)= \underbrace{\ssum{t}{1}{T}\EB[c(\vartheta_t, X_t) - c(\vartheta_t, X_\infty(\vartheta_t))]}_{R_1(T)} + 
    \underbrace{\ssum{t}{1}{T}\EB[c(\vartheta_t, X_\infty(\vartheta_t)) - c(\vartheta^*, X_\infty(\vartheta^*) )]}_{R_2(T)}.
\end{aligned}
\end{equation*}
The first term $R_1(T)$ captures the discrepancy caused by the distributional difference between the current state \( X_t \) and the steady state \( X_\infty(\vartheta_t) \) (regret of nonstationarity), while the second term $R_2(T)$ reflects the optimization gap due to incomplete convergence (regret of suboptimality).

Given the finite-time convergence rate established in Corollary \ref{cor:convergence}, the regret of suboptimality
\begin{equation}\label{eq:bd_estimation_err}
\begin{aligned}
   &R_2(T)= \ssum{t}{1}{T}\EB\big\{
    c(\vartheta_t, X_\infty(\vartheta_t)) - c(\vartheta^*, X_\infty(\vartheta^*) )
    \big\}
    = \ssum{t}{1}{T} [f(\vartheta_t) - f(\vartheta^*)]  \\
    \overset{(a)}{\le}& \ssum{t}{1}{T}\EB\left\{
    \inner{\nabla f(\vartheta^*) }{\vartheta_t - \vartheta^*} + \frac{L_{s}}{2}\norm{\vartheta_t - \vartheta^*}^2
    \right\} \overset{(b)}{=} 
     \frac{L_{s}}{2} \ssum{t}{1}{T}\EB
    \norm{\vartheta_t - \vartheta^*}^2 
    \le C\ssum{t}{1}{T}\frac{1}{t} = \gO(\log T),
\end{aligned}
\end{equation}
where inequality $(a)$ follows from Assumption~\ref{assmpt: convexity} and equality $(b)$ holds as  $\nabla f(\vartheta^*)=0$.

To bound $R_1(T)$, we establish the following Lemma \ref{lem:bd_trans_err_of_regret} on the transient error of expected cost incurred by stream SGD. 

\begin{lemma}[Bounding the transient error]\label{lem:bd_trans_err_of_regret}
    Under the assumptions of Theorem~\ref{thm:L2-convergence} and Assumption~\ref{ass:cont_cost}, if the step size $\eta_t \sim t^{-1}$, the transient error satisfies $ \EB [c(\vartheta_t, X_t) - c(\vartheta_t, X_\infty(\vartheta_t))] = \gO(t^{-1}).$
\end{lemma}

Combining the results from Eq.~\eqref{eq:bd_estimation_err} and Lemma~\ref{lem:bd_trans_err_of_regret}, we obtain an upper bound of order $\gO(\log T)$ for the cumulative regret of stream SGD.
\begin{thm}[Regret analysis]\label{thm:regret}
Consider applying the stream SGD algorithm to an online learning problem with the objective of the form~\eqref{eq: objective}. 
Suppose that the assumptions in Theorem~\ref{thm:L2-convergence} and Assumption~\ref{ass:cont_cost} are satisfied.
    If the step size is set as \( \eta_t \sim t^{-1} \), the cumulative regret of stream SGD (defined in \eqref{eq:regret}) is bounded by
    \[
    R(T) = \gO(\log T).
    \]
\end{thm}
Our \( \gO(\log T) \) regret also matches that for SGD  with i.i.d. data \citep{shalev2012online} and parameter-independent Markov chains \citep{agarwal2012generalization}.  
To the best of our knowledge, Theorem \ref{thm:regret} provides the tightest regret bound for parameter-dependent Markov chains in the literature. Furthermore, relative to the regret bounds established for batch-data techniques \citep{chen2023online, Huh2009}, Theorem \ref{thm:regret} shows that stream SGD not only speeds up convergence but also effectively reduces cumulated regrets, which also explains our finding in the simulation experiments as reported in Figure \ref{fig:intro} in Introduction.

\subsection{Functional CLT and Online Inference}
\label{subsec: inference}

This subsection presents how to perform uncertainty quantification using stream SGD outputs. We establish an FCLT for the averaging trajectory in Section \ref{subsec: FCLT}, outline confidence interval construction in Section \ref{subsec: CI}, and introduce an efficient online algorithm in  \ref{subsec: online computation}.

\subsubsection{Functional Central Limit Theorem}\label{subsec: FCLT}

\begin{assumption}[Regularity Conditions for FCLT]
\label{assmpt: more condtion FCLT}  
Assume that
\begin{enumerate}
\item[(a)] There exist a positive definite matrix $G$ and constants $L_G, \delta_G$ such that for any $\|\vartheta - \vtheta\| \leq \delta_G$, $\| \nabla f (\vartheta) - G(\vartheta - \vtheta) \| \leq L_G \cdot \|\vartheta - \vtheta\|^2.$
\item[(b)] The step size $\eta_t$ satisfies Assumption~\ref{assump:step-size} and the following conditions:
\begin{equation}
\label{eq:new-step-size}
\eta_t \downarrow 0, \quad t \eta_t \uparrow \infty, \quad \frac{\eta_{t-1}-\eta_t}{\eta_{t-1}} = o(\eta_{t-1}), \quad \sum_{t=1}^\infty \eta_t = \infty, \quad \sum_{t=1}^\infty \eta_t^{1.5} < \infty.
\end{equation}
\end{enumerate}
\end{assumption}

To establish the FCLT, Assumption~\ref{assmpt: more condtion FCLT} introduces two key prerequisites. Condition (a) ensures that $\nabla f (\vartheta)$ exhibits linear behavior near $\vtheta$, with $G$ representing the curvature information of $f$ at $\vtheta$. A sufficient condition for this is the continuity of $\nabla^2 f(\vartheta)$ in a neighborhood of the optimum $\vartheta^\star$.
Condition (b) places restrictions on the decay rate of the step size $\eta_t$. A practical alternative is $\eta_t = \gamma \cdot t^{-\alpha}$, where $\alpha \in (2/3, 1)$ and $\gamma > 0$, which satisfies both Assumption~\ref{assump:step-size} and Eqn.~\eqref{eq:new-step-size}.
These conditions are quite common in the literature \citep{polyak1992acceleration,li2023online}.

Before presenting the FCLT, we define the limiting covariance matrix $S$, which appears in the theorem. Recall the definition of $U_H(\vartheta, x)$ in Corollary \ref{cor:U_H-exist} with the gradient estimator $H(\vartheta,x)$. Based on this function, the covariance matrix is defined as:  
\[
\Var_{\vartheta}\big(U_H(\vtheta, x)\big) \equiv P_{\vartheta}[U_H(\vtheta,x)U_H(\vtheta,x)^\top] - \left(P_{\vartheta}U_H(\vtheta,x)\right)\left(P_{\vartheta}U_H(\vtheta,x)\right)^{\top},
\]
which represents the conditional covariance of $U(\vartheta, x)$ starting from $x$, with transitions governed by $P_{\vartheta}$. Consequently, $\Var_{\vartheta_t}(U_H(\vtheta, x_{t-1}))$ depends on both $x_{t-1}$ and $\vartheta_t$, capturing discrepancies from the stream SGD method in gradient estimation $H(x_t, \vartheta_t)$.  

\begin{proposition}[Limiting Covariance Matrix]  
\label{prop: covariance convergence}  
Under Assumptions \ref{assmpt: convexity} -- \ref{assmpt: more condtion FCLT}, we have 
\[
\frac{1}{T} \sum_{t=1}^T \Var_{\vartheta_t}(U_H(\vtheta, x_{t-1})) \overset{p.}{\to} S := \EB_{x \sim \pi_{\vartheta^*}} \Var_{\vartheta^*}(U_H(\vtheta, x)).
\]
\end{proposition}

\begin{thm}[Functional Central Limit Theorem (FCLT)]  
\label{thm:fclt}  
Under Assumptions \ref{assmpt: convexity} -- \ref{assmpt: more condtion FCLT}, we have 
\[
{\ph}_T(r) := \frac{1}{\sqrt{T}} \sum_{t=1}^{\floor{Tr}} (\vartheta_t - \vartheta^\star) \overset{d.}{\to} \psi(r) := G^{-1} S^{1/2} W(r)
\]
in the topology $\gD([0,1]; \vertiii{\cdot})$, representing the space of c\`adl\`ag functions on $[0,1]$ with the maximum norm $\vertiii{\phi} := \max_{r \in [0, 1]} \|\phi(r)\|_\infty$. Here, $G$ is defined in Assumption~\ref{assmpt: more condtion FCLT}, $S$ is the limiting covariance matrix from Proposition~\ref{prop: covariance convergence}, and $W(r)$ is a standard $d_\vartheta$-dimensional Brownian motion.
\end{thm}

Theorem~\ref{thm:fclt} establishes the FCLT for the partial-sum process ${\ph}_T(r)$, which represents a scaled average of the error vector $\vartheta_t - \vartheta^\star$ over the first $r$ fraction of iterations. This result demonstrates that the process weakly converges to a scaled Brownian motion. As Corollary~\ref{cor:functional} illustrates, this FCLT provides a stronger result than the conventional CLT \citep{liang2010trajectory}. The proof leverages the continuous mapping theorem\footnote{It states that any continuous functional of the process will weakly converge to the corresponding functional on the scled Brownian motion.} \citep{van2000asymptotic} by selecting an the functional $h(\phi) = \phi(1)$.

\begin{corollary}  
\label{cor:functional}  
Under Theorem~\ref{thm:fclt}, then for any continuous functional $h\colon \gD([0,1]; \vertiii{\cdot}) \to \RB^d$, 
\[
h({\ph}_T) \overset{d.}{\to} h(\psi) \quad \text{as } T \to \infty.
\]
In particular, if we use the functional $h(\phi) = \phi(1)$,  then
\[
{\ph}_T(1) \overset{d.}{\to} \psi(1), \quad \text{i.e.,} \quad \frac{1}{\sqrt{T}} \sum_{t=1}^T (\vartheta_t - \vtheta) \overset{d.}{\to} \NM(0, G^{-1} S G^{-\top}).
\]
\end{corollary}


\subsubsection{Construction of Confidence Intervals}
\label{subsec: CI}
{With Corollary~\ref{cor:functional}, we develop our inference procedure. 
Direct use of its asymptotic normality requires knowing $G^{-1} S^{1/2}$, which is difficult to estimate in Markovian systems.
We avoid this by employing a scale-invariant functional $h$, satisfying $h(A\ph) = h(\ph)$ for any non-singular $A \in \mathbb{R}^{d \times d}$ and c\`adl\`ag process $\ph$. Corollary~\ref{cor:functional} then yields
\[
h(\ph_T) \overset{d}{\to} h(W),
\]
where the law of $h(W)$ is independent of $G^{-1} S^{1/2}$ and can be simulated since $W$ and $h$ are known. This enables valid confidence intervals for any scale-invariant $h(\cdot)$, as in Proposition~\ref{prop:CI}.
}
\begin{remark}
This idea of leveraging scale-invariant functional to eliminate dependence on nuisance parameters aligns with the cancellation methods described in \citep{glynn1990simulation}. However, while in \citep{glynn1990simulation}  the FCLT is assumed to hold directly, our work establishes the FCLT from scratch and applies it to two concrete OR applications. Furthermore, we give a concrete functional in Eqn.~\eqref{eq:h-pivotal} that facilitates online computation.
\end{remark}

\begin{proposition}
\label{prop:CI}
Under the same conditions in Theorem~\ref{thm:fclt}, given any unit vector $v \in \RB^d$ and a continuous scale-invariant functional $h$ on $\gD([0,1]; \vertiii{\cdot})$, it follows that $\lim_{T \to \infty}\PB \left(v^\top {\vtheta} \in \CM(\alpha, h,  \phi_T) \right) = 1 - \alpha,$
where $\CM(\alpha, h,  \phi_T) := \left\{  v^\top \vtheta \in \RB  :  |h(v^\top \ph_T)| \le q_{\alpha, h}\right\}$ is the $\alpha$-level confidence set
and $ q_{\alpha, h} := \sup\{q:\PB( |h(W)| \ge q ) \le \alpha\}$ is the critical value.
\end{proposition}

According to Proposition~\ref{prop:CI}, the remaining task is to select an appropriate scale-invariant functional $h(\cdot)$. The widely used \textit{batch-mean method} for inference can be incorporated into this framework by choosing a specific $h(\cdot)$ as:  
\[
h(\phi) = \frac{\phi(1)}{\sqrt{\frac{1}{m(m-1)}\sum_{i=1}^m \left(\frac{\phi\left(\tau_{i+1}\right)-\phi\left(\tau_{i}\right)}{\tau_{i+1}-\tau_{i}} - \phi(1) \right)^2 }},
\]  
where the integer $m$ and endpoints $\{\tau_i : 1 \leq i \leq m\}$ are algorithm hyper-parameters corresponding to different variants of the batch-mean method. For instance, the classic batch-mean method \citep{glynn1990simulation} uses evenly split batch sizes, i.e., $\tau_{i+1} - \tau_i = 1/m$. In contrast, for SGD inference, \cite{chen2020statistical} and \cite{zhu2021constructing} proposed a scheme of increasing batch sizes.

We propose an alternative functional $h(\cdot)$ to avoid the complexity of analyzing algorithm hyper-parameters and to facilitate efficient online computation. Our choice is inspired by the \textit{random scaling} idea \citep{lee2022fast} and takes the form:  
\begin{equation}
\label{eq:h-pivotal}
h(\phi) = \frac{\phi(1)}{\sqrt{\int_0^1 (\phi(r) - r \phi(1))^2 \, \mathrm{d}r}}.
\end{equation}  
This functional is not only scale-invariant (i.e., $h(a \phi) = h(\phi)$ for any non-negative $a \geq 0$) but also symmetric (i.e., $h(-\phi) = -h(\phi)$) for any c\`adl\`ag process $\phi$.  
Substituting this $h(\cdot)$ into Proposition \ref{prop:CI}, we derive explicit confidence intervals in the following corollary.

\begin{corollary}  
\label{cor:CI}  
Under the conditions of Theorem~\ref{thm:fclt}, we have that
\begin{equation*}
\lim_{T \to \infty}
\PB\left( v^\top\vtheta \in \left[
v^\top \bar{\vartheta}_T - q_{\alpha, h} \frac{\bar{\sigma}_T}{\sqrt{T}}, \, v^\top \bar{\vartheta}_T + q_{\alpha, h} \frac{\bar{\sigma}_T}{\sqrt{T}}
\right] \right) \to 1 - \alpha,
\end{equation*}  
where $\bar{\vartheta}_T = \frac{1}{T} \sum_{i=1}^T \vartheta_i$ is the average of the first $T$ samples and $\bar{\sigma}_T = \frac{1}{T} \sqrt{\sum_{i=1}^T i^2 \left|v^\top (\bar{\vartheta}_i - \bar{\vartheta}_T) \right|^2}$ approximates the denominator $\sqrt{\int_0^1 (v^\top \ph_T(r) - r v^\top \ph_T(1))^2 \, \mathrm{d}r}$.  
\end{corollary}  

 The critical values $q_{\alpha, h}$ can be obtained via numerical simulation and listed in Table~\ref{table:critical}. In \ref{subsec: online computation}, we present an online method to compute $\bar{\sigma}_T$ efficiently in both computation and memory, which perfectly fits online learning settings where decisions may be required at every iteration. Furthermore, this online inference algorithm offers decision-makers greater flexibility. For instance, they can adapt the stream SGD procedure in real time based on decision quality, such as the half-width of the confidence interval.


\section{Online Optimization Examples}\label{sec:examples}
We now apply the framework developed in Section \ref{sec: main results} to two application examples: (1) revenue management  for queueing systems in Section \ref{subsec:queue}; (2) replenishment for inventory systems in Section \ref{subsec:inventory}. For both settings, we develop efficient stream SGD algorithms and corresponding online inference methods, leveraging our framework, with theoretical performance guarantees. We also test their numerical performance and the results will be reported in Section \ref{sec: numermical}.

\subsection{Pricing and Capacity sizing in Queueing Systems}
\label{subsec:queue}

\subsubsection{Problem Formulation}

In this section, we apply the previously established framework to solve the pricing and capacity sizing problem in a GI/GI/1 queue. 
This problem involves determining the optimal service fee $p$ and capacity $\mu$ to minimize costs by balancing customer demand, congestion, and staffing expenses.
The decisions $(\mu, p)$ directly impact both the dynamics and overall cost of the GI/GI/1 queue system. With $\lambda(p)$ denoting the demand arrival rate, the interarrival and service times are modeled using two scaled random sequences, $\{U_n/\lambda(p)\}$ and $\{V_n/\mu\}$, where $U_1, U_2, \ldots$ and $V_1, V_2, \ldots$ are independent i.i.d.\ sequences with unit means, i.e., $\EE[U_n] = \EE[V_n] = 1$. The service times $V_i$ are assumed to have a continuous density function with a maximum value of $p_V$, while no regularity condition is imposed on the distribution of $U_i$, allowing it to be singular.
The service fee $p$ influences the demand arrival rate $\lambda(p)$, determining the interarrival times as $U_i/\lambda(p)$. Similarly, the service times depend on capacity $\mu$ and are given by $V_i/\mu$. Customers are charged a fee $p > 0$ upon joining the queue, while maintaining a service rate $\mu$ incurs a continuous staffing cost of $c(\mu)$ per time unit. Congestion, measured by the steady-state mean queue length $\EE[Q_\infty(\mu, p)]$, contributes to the cost at a rate proportional to $h_0$. Here, $h_0$ represents the cost per unit of congestion, encompassing penalties such as delays or customer dissatisfaction.

Putting these components together, we attempt to find the optimal service fee $p^\star$ and service capacity $\mu^\star$ within the feasible set $[\underline{\mu}, \bar{\mu}] \times [\underline{p}, \bar{p}]$ by minimizing the steady-state expected cost rate:
\begin{equation}
\label{eqObjmin}
\min_{(\mu, p) \in \mathcal{B}} f(\mu, p) \equiv h_0 \EE[Q_{\infty}(\mu, p)] + \zeta(\mu) - p\lambda(p), \qquad \mathcal{B} \equiv [\underline{\mu}, \bar{\mu}] \times [\underline{p}, \bar{p}].
\end{equation} 
This formulation provides a comprehensive framework to evaluate and optimize the trade-offs inherent in the pricing and capacity sizing problem.

To apply the framework, we impose assumptions on the demand and cost functions, as well as on the interarrival and service time distributions, which are used previously \citep{chen2023online}.

\begin{assumption}[Demand rate, staffing cost, and uniform stability]
\label{assmpt: uniform}
We assume that
\begin{enumerate}
\item[$(a)$] The arrival rate $\lambda(p)$ is continuously differentiable and non-increasing in $p$.
\item[$(b)$] The staffing cost $\zeta(\mu)$ is continuously differentiable and non-decreasing in $\mu$.
\item[$(c)$] The lower bounds $\underline{p}$ and $\underline{\mu}$ satisfy that  $\lambda(\underline{p})<\underline{\mu}$ so that the system is uniformly stable for all feasible choices of the pair $(\mu,p)$.
\end{enumerate}
\end{assumption}

\begin{assumption}[Light-tailed service and inter-arrival times]
\label{assmpt: light tail}
There exists a sufficiently small positive constant $\eta>0$ such that the moment-generating functions
$$\EE[\exp(\eta V_n)]<\infty\quad \text{and}\quad \EE[\exp(\eta U_n)]<\infty. $$ 
In addition, there exist constants ${0<\theta <\eta/2\bar{\mu}}$, $0<a<(\underline{\mu}-\lambda(\underline{p}))/(\underline{\mu}+\lambda(\underline{p}))$ and $\gamma >0$ such that
\begin{equation*}
\phi_U(-\theta/(1-a)) < -\theta -\gamma/2\quad \text{and}\quad \phi_V(\theta/(1+a)) < \theta -\gamma/2,
\end{equation*}
where $\phi_V(\cdot)$ and $\phi_U(\cdot)$ are the cumulant generating functions of $V$ and $U$, respectively.
\end{assumption}

To obtain an appropriate gradient estimator, \citet{chen2023online} considered the joint dynamics of the pair $(W_t, Y_t)$ which consist of the waiting time $W_t$ and the busy period $Y_t$.
In detail, for each $t \geq 1$, the observed busy period $Y_t$ is the age of the server’s busy time observed by customer $t$ upon arrival. If an arriving customer finds the server idle, both the waiting time and observed busy time corresponding to this customer are equal to 0. Under a fixed pair of control parameters $\vartheta\equiv (\mu, p)^\top \in\mathcal{B}$, $Y_t$ evolves jointly with the waiting time $W_t$ according to the following recursion:
\begin{equation}\label{eq:gnr rule of w&y}
W_{t+1}=\left(W_t - \frac{U_t}{\lambda(p)}+\frac{V_t}{\mu}\right)_+, \quad Y_{t+1}=\left(Y_t + \frac{U_t}{\lambda(p)}\right)\cdot \mathbbm{1}(W_{t+1}>0),
\end{equation}
where $(z)_{+} =\max(z, 0)$ and $\mathbbm{1}(\cdot)$ is the indicator function.  

It is known that under the stability condition $\mu>\lambda(p)$, the process $(W_t, Y_t)$ has a unique stationary distribution. We denote by $W_\infty(\mu,p)$ and $Y_\infty(\mu,p)$ the steady-state waiting time and observed busy period corresponding to control parameters $(\mu,p)$, respectively. Then, the derivative of $\EE[W_\infty(\mu,p)]$ is connected to $Y_\infty(\mu,p)$ as follows.

\begin{lemma}[Gradient Estimator of Queueing System]\label{lmm: gradient estimator} Under Assumptions \ref{assmpt: uniform} and \ref{assmpt: light tail}, then
\begin{equation*}
\begin{aligned}
    &\frac{\partial}{\partial p}\EE[W_\infty(\mu,p)]=\frac{\lambda'(p)}{\lambda(p)}\EE[Y_\infty(\mu,p)]~\text{and}~ \frac{\partial}{\partial \mu}\EE[W_\infty(\mu,p)]=-\left(\EE[Y_\infty(\mu,p)]+\frac{\EE[W_\infty(\mu,p)]}{\mu}\right).
\end{aligned}
\end{equation*}
\end{lemma}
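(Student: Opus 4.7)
The plan is to use infinitesimal perturbation analysis (IPA): differentiate the pathwise Lindley recursion~\eqref{eq:gnr rule of w&y} to obtain closed-form expressions for $\partial_p W_t$ and $\partial_\mu W_t$ in terms of $(W_t,Y_t)$, pass to the stationary limit using the geometric ergodicity that follows from Assumptions~\ref{assmpt: uniform}--\ref{assmpt: light tail}, and finally interchange expectation and differentiation via dominated convergence underwritten by the light-tail assumption.

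\emph{Step 1 (pathwise IPA).} Initialize $W_0=Y_0=0$ and differentiate~\eqref{eq:gnr rule of w&y} componentwise. The map $(\cdot)_+$ is differentiable off the measure-zero event $\{W_{t+1}=0\}$, so
\begin{align*}
\partial_p W_{t+1} &= \left(\partial_p W_t + \tfrac{\lambda'(p)}{\lambda(p)^2}U_t\right)\mathbbm{1}(W_{t+1}>0),\\
\partial_\mu W_{t+1} &= \left(\partial_\mu W_t - \tfrac{V_t}{\mu^2}\right)\mathbbm{1}(W_{t+1}>0).
\end{align*}
Matching these recursions against that of $Y_t$ in~\eqref{eq:gnr rule of w&y}, a direct induction on $t$ produces the closed-form identities
$$\partial_p W_t \;=\; \frac{\lambda'(p)}{\lambda(p)}\, Y_t,\qquad \partial_\mu W_t \;=\; -\,\frac{W_t+Y_t}{\mu}.$$

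\emph{Step 2 (stationarity and interchange).} Under Assumptions~\ref{assmpt: uniform}--\ref{assmpt: light tail}, the joint chain $(W_t,Y_t)$ is geometrically ergodic (this is the content of the auxiliary results stated in Section~\ref{sec: convergence}), so the pathwise identities above pass to the stationary limit, giving
$$\partial_p W_\infty(\mu,p)=\tfrac{\lambda'(p)}{\lambda(p)} Y_\infty(\mu,p),\qquad \partial_\mu W_\infty(\mu,p)=-\tfrac{W_\infty(\mu,p)+Y_\infty(\mu,p)}{\mu}.$$
It remains to justify the interchange $\partial_\vartheta\EE[W_\infty(\vartheta)]=\EE[\partial_\vartheta W_\infty(\vartheta)]$. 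Coupling the perturbed and unperturbed systems through a common realization of $\{U_n\},\{V_n\}$, the finite-horizon difference quotient $(W_t(\vartheta+he)-W_t(\vartheta))/h$ can be controlled by a geometric sum in $|U_n|,|V_n|$ on the driving horizon. Assumption~\ref{assmpt: light tail} (uniform exponential moments) then supplies an integrable dominator valid uniformly in both $h$ and $t$, and dominated convergence in $h\to 0$ followed by $t\to\infty$ yields the required interchange. Taking expectations in Step~1 and combining with Little's law identity for $\EE[Q_\infty]$ delivers both claimed formulas.

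\emph{Main obstacle.} The delicate step is the interchange of expectation and derivative in Step~2: the pathwise derivative formula is essentially algebra, but one must rule out pathological contributions near the kink of $(\cdot)_+$ at zero and verify uniform integrability of the coupled difference quotients across all iterations $t$ and perturbation sizes $h$. The light-tail hypothesis is the essential technical input, as it provides the exponential moment bounds needed to dominate the trajectorywise perturbations; without it, one would have to appeal to ad-hoc regularity of the inter-arrival and service distributions to rule out accumulation of the kink contributions.
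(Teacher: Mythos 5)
Your proposal is correct in substance, and it takes a genuinely different route from the paper for the simple reason that the paper offers no proof at all: Lemma~\ref{lmm: gradient estimator} is justified only by a citation to Lemma~5 of \cite{chen2023online}, whereas you reconstruct the underlying IPA argument. Your Step~1 is right and can be checked by induction: on $\{W_{t+1}>0\}$ one has $W_{t+1}+Y_{t+1}=W_t+Y_t+V_t/\mu$, which is exactly why $\partial_\mu W_t=-(W_t+Y_t)/\mu$ is preserved by the recursion, and $Y_{t+1}=(Y_t+U_t/\lambda(p))\mathbbm{1}(W_{t+1}>0)$ is why the $p$-identity closes. One remark: your derivation yields $\partial_\mu\EE[W_\infty]=-\tfrac{1}{\mu}\left(\EE[W_\infty]+\EE[Y_\infty]\right)$, while the lemma as printed (with its garbled bracket) reads $-\left(\EE[Y_\infty]+\EE[W_\infty]/\mu\right)$; your version is the correct one --- it is the only one consistent with the estimator $H$ in \eqref{eq: Q gradient} and with an explicit M/M/1 check --- so the discrepancy is a typo in the statement, not an error in your algebra. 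What your Step~2 leaves as a sketch is precisely the content that the paper outsources to \cite{chen2023online}: (i) almost-sure differentiability requires that the kink event $\{W_t-U_t/\lambda(p)+V_t/\mu=0\}$ be null, which needs some continuity of the interarrival/service distributions (the appendix implicitly uses a bounded density $p_V$, but Assumptions~\ref{assmpt: uniform}--\ref{assmpt: light tail} alone do not state it); and (ii) the two interchanges --- $\partial_\vartheta\EE[W_t]=\EE[\partial_\vartheta W_t]$ at finite horizon, and then $t\to\infty$ versus differentiation --- must be justified by a uniform-in-$t$ integrable Lipschitz bound on the coupled difference quotients; your domination plan via busy-period sums and the light-tail/sub-exponential bounds (cf.\ Lemma~\ref{lem:subexp_of_w&y}) is the right ingredient, but the phrase ``the pathwise identities pass to the stationary limit'' is not literally meaningful without that argument. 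So: correct approach, fully correct pathwise computation, and a standard but still-to-be-fleshed-out validation step standing in for the cited lemma.
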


Back to the form of the objective function~\eqref{eqObjmin}, by Little's law, the steady-state expectation is $\EE[Q_\infty(\mu,p)]=\lambda(p)(\EE[W_\infty(\mu,p)]+1/\mu)$.
Hence, denoting $\vartheta = (\mu,p)$ and $x = (w,y)$, we can rewrite the gradient estimator $H(\vartheta, x)$ as 
\begin{equation}\label{eq: Q gradient}
  H(\vartheta, x) = \left({-}\lambda(p) {-} p\lambda'(p)+h_0\lambda'(p)\left(w {+} y {+} \frac{1}{\mu}\right),  \zeta'(\mu) {-}h_0\frac{\lambda(p)}{\mu}\left(w {+}y {+} \frac{1}{\mu}\right)\right). 
\end{equation}

\subsubsection{Application of the Framework}

We now show that the GI/GI/1 queue example can be framed within the theoretical framework developed in Section \ref{sec: main results} and present the resulting implications. To do so, we verify the general conditions outlined in Section \ref{subsec:algorithm and assumptions}, specifically Assumptions~\ref{ass:continue_H}---\ref{ass:W-contraction}. The following proposition identifies the potential function used in this context and confirms the validity of these assumptions.

\begin{proposition}\label{prop:verify_queue}
Let Assumptions \ref{assmpt: uniform} and \ref{assmpt: light tail} hold for the above pricing and capacity sizing problem. With the gradient estimator $H(\vartheta, x)$ as defined in \eqref{eq: Q gradient}, Assumptions~\ref{ass:continue_H}---\ref{ass:W-contraction} are satisfied, where the potential function $V$ is given by $V(w, y) := y + e^{\kappa w}$, with $\kappa > 0$ as a problem-dependent constant.
\end{proposition}

We briefly explain why these assumptions hold and defer the detailed technical verification to the appendix, offering insights for potential future applications.
As a reminder, Assumption \ref{ass:continue_H} concerns the joint $(\vartheta, x)$-continuity of the one-step transition on the gradient estimator $\gP_{\vartheta} H(\vartheta, x)$. Assumption \ref{ass:continue_P} focuses on the $\vartheta$-continuity of the one-step transition dynamic from the same state $x$, while Assumption \ref{ass:W-contraction} addresses the initial state dependence in the $n$-step transition kernels.  
Each assumption fixes one argument---either the parameter $\vartheta$ or the state $x$---and varies the other to varies the other to examine its effect on the system dynamics. This modular structure motivates verification through synchronous coupling, which is precisely the approach we employed.

{
Take Assumption \ref{ass:W-contraction} as an example. We verify it by coupling two processes with the same transition kernel \eqref{eq:gnr rule of w&y} but different initial states. Defining a stopping time, we show that before it, their distance is bounded by the initial gap, and afterward, they coincide. The problem thus reduces to analyzing the pre-stopping-time behavior.
}
This approach reduces the problem to analyzing the behavior of the processes before the stopping time. To achieve this, we establish an intermediate result: for a fixed parameter $\vartheta = (\mu, p)$ and initial states $x=(w, y)$, 
there exist universal constants $\iota_w, \iota_y, \iota_y^\prime > 0$ such that for any $a >0$ and $n \ge 1$, $\PB(W_n \ge a) \lesssim e^{-\iota_w(a - w)_+}$ and $\PB(Y_n \ge a) \lesssim e^{-\iota_y(a - y - \iota_y^\prime w)_+}$.
These tail bounds control any integrable function of the joint process and thus the process difference before the stopping time. They also show that the system’s scale grows exponentially with the initial states, motivating the exponential potential function $V$, which captures this growth pattern.
\begin{corollary}
Assuming Assumptions~\ref{assmpt: convexity}, \ref{assmpt: uniform}, and \ref{assmpt: light tail} hold for the pricing and capacity sizing problem.
The cost function here is \( c\big((p,\mu), (w,y)\big) := h_0\lambda(p)(w + 1/\mu) + \zeta(\mu) - p\lambda(p) \) which satisfies  Assumption~\ref{ass:cont_cost}.
When setting the step size \( \eta_t \sim t^{-1} \), the stream SGD iterates achieve a linear convergence rate \( \EB \big[\norm{\vartheta_T - \vartheta^*}^2\big] = \gO(1/T) \) and a logarithmic regret \( R(T) = \gO(\log T) \).\footnote{The regret is defined in continuous time rather than discrete time. However, the results remain analogous. For a more detailed discussion on this part, one can refer to \ref{sec:queue_regret}. 
} Furthermore, if $\nabla^2 f(\vartheta)$ is continuous around the optima $\vartheta^\star$ and the step size is chosen as $\eta_t \sim t^{-\alpha}$ with $\alpha \in (\frac{2}{3}, 1)$, the FCLT in Theorem~\ref{thm:fclt} holds.
\end{corollary}
\begin{remark}\label{rmk: queue non-convexity}
{The objective function~\eqref{eqObjmin} can be non-convex, particularly when $p \to \infty$ and $\lambda(p) \to 0$; see Section~6.2 of \cite{chen2023online}. Sufficient conditions ensuring strong convexity are given in \cite{chen2023online}, which require the initial point to lie within the strongly convex region and not too far from the optimum.  
Although this region is generally unknown before implementation, managers often possess market knowledge or face regulatory price caps, making their initial guess unlikely to be far from optimal.~\ref{appx: additional results} presents numerical results indicating that stream SGD is relatively robust to the choice of initial points across various model settings.}
\end{remark}

\subsection{Base-stock Replenishment in Inventory Systems}
\label{subsec:inventory}
This subsection examines a periodic review inventory system with lost sales, positive replenishment lead times, and stochastic demand, aiming to minimize expected holding and lost sales costs under a base-stock policy. We begin by introducing the basic settings, demonstrate how the problem can be integrated into our general framework, and conclude with the corresponding theoretical results.

\subsubsection{Problem Formulation}

We consider a periodic review inventory system with a single product, where \( t \in \{1, 2, \ldots\} \) denotes the time period, and \( D_t \) represents the demand during period \( t \). The demands \( D_t \) are assumed to be i.i.d.\ random variables, each following the same distribution as a generic demand \( D \). The demand \( D \) is nonnegative, continuous, and satisfies \( \mu := \mathbb{E}[D] > 0 \), with cumulative distribution function \( F_D \).
The system operates with a replenishment lead time \( \tau \geq 1 \), and we adopt the classic base-stock policy, a widely used approach in inventory control~\citep{zipkin2000foundations}. Under this policy, a base-stock level \( S \geq 0 \) is set, and the quantity ordered in period \( t \), denoted as \( Q_t \), arrives at the beginning of period \( t + \tau \). The ordering rule is given by:
\begin{equation}\label{eq:base_stock_update}
Q_t = \left(S - X_t \cdot \mathbf{1}^\tau\right)_{+},
\end{equation}
where \( X_t \) is the inventory vector summarizing the system's state at period \( t \), defined as $X_t := \left(Q_{t-1}, Q_{t-2}, \ldots, Q_{t-\tau+1}, I_t\right).$
Here, \( I_t \) denotes the on-hand inventory after delivery in period \( t \), which evolves according to $I_{t+1} = (I_t - D_t)_+ + Q_{t-\tau + 1}.$
The inventory cost during period \( t \), given the on-hand inventory \( I_t \) and current demand $D_t$, is
\begin{equation}\label{eq:inventory_rndm_cost}
C(I_t, D_t) = h \cdot [(I_t - D_t)_{+}] + b \cdot [(D_t - I_t)_{+}],
\end{equation}
where \( h > 0 \) is the per-unit holding cost, covering storage and related expenses, and \( b > 0 \) is the per-unit penalty for lost sales, reflecting the cost of unmet demand.

The objective is to determine the base-stock level \( S \) that minimizes the long-run average inventory cost under the steady state \( X_\infty(S) \). This can be formulated as:
\begin{equation}
\label{eq:pr_obj_inv}
\inf_{S}~\EB [C(I_{\infty}(S), D)] = \inf_{S}~\EB \left[h\cdot (I_{\infty}(S) - D)_{+} + b\cdot(D - I_{\infty}(S))_{+}\right].
\end{equation}

We now introduce the gradient estimator for the base-stock policy. Let \( Q_t^\prime:= \frac{d}{dS} Q_t \) and \( I_t^\prime := \frac{d}{dS} I_t \) denote the sample derivatives of the order quantities and on-hand inventory levels, respectively. By directly computing the derivative, we obtain the gradient estimator of the cost function with respect to \( S \). Specifically, let \( Y_t := (Q_{t-1}^\prime, Q_{t-2}^\prime, \ldots, Q_{t-\tau+1}^\prime, I_t^\prime) \) represent the derivative vector of \( X_t \). Then, the gradient estimator is given by
\begin{equation}\label{eq:Inventory_grad_inf}
\begin{aligned}
H(S, (X_t, Y_t, D_t)):=\frac{d}{dS} C\left(I_t, D_t\right)= h \cdot \left[\mathbbm{1}\left\{D_t < I_t\right\} \cdot I_t^\prime\right] 
- b \cdot \left[\mathbbm{1}\left\{D_t \ge I_t\right\} \cdot I_t^\prime\right],
\end{aligned}
\end{equation}
where $S$ is the base-stock level and $(X_t, Y_t)$ is the state of the Markov chain.\footnote{
To fit within our framework, the gradient estimator used for assumption verification is \( \EB_{D_t}[H] \), which introduces a level of smoothness without violating the posed assumptions. This formulation implies that the Markov chain involves only \( (X_t, Y_t) \). For practical purposes, however, we explicitly include the dependence on \( D_t \) as shown in \eqref{eq:Inventory_grad_inf}.
}

To analyze the gradient estimator, \citet{Huh2009} introduced some technical assumptions.
They showed that the sequence of inventory vectors \( \{X_t\}_{t=1}^\infty \) is ergodic under the following assumption.
\begin{assumption}\label{assmpt:demmand}
There exists a known lower bound \( \underline{M} \) for the optimal base-stock level \( S^\star \) and an known upper bound \( \overline{M} \) for inventory capacity. Additionally, The demand \( D \) is non-negative and has a continuous and bounded density. The probability \( \gamma(\underline{M}) := \PB(D_t \leq \frac{\underline{M}}{3(\tau+1)}) > 0\). 
\end{assumption}
Moreover, they established a recursive relationship between \( \{Y_t\} \) and \( \{X_t\} \) under the $S$-level base-stock policy (see Lemma \ref{lem:drvt_update}). This relationship implies that \( Q_t^\prime \) and \( I_t^\prime \) can be computed iteratively, and thus the gradient estimator can also be calculated recursively.
\begin{lemma}[Theorem 1 of \citep{Huh2009}]\label{lem:drvt_update}
Under the $S$-level base-stock policy, the derivatives with respect to $S$ of the order quantity $Q_t$ and on-hand inventory level $I_t$ satisfy the following recursive relationships.
\begin{equation}\label{eq:inv_drvt_update}
\begin{aligned}
Q_t^\prime = (1 - Y_t \cdot \mathbf{1}^\tau) \cdot \mathbbm{1}\left\{S \geq X_t \cdot \mathbf{1}^\tau\right\},\quad
I_t^\prime = I_{t-1}^\prime \cdot \mathbbm{1}\left\{D_{t-1} < I_{t-1}\right\} + Q_{t-\tau}^\prime,
\end{aligned}
\end{equation}
where the initial conditions are given by \( I_0^\prime = 0 \) and \( Q_t^\prime = 0 \) for all \( t \leq 0 \).
\end{lemma}

\subsubsection{Application of the Framework}
Similar to the approach for the GI/GI/1 queue example, we demonstrate how this inventory control problem can be incorporated into our framework. As before, it suffices to verify that the problem satisfies Assumptions~\ref{ass:continue_H} through~\ref{ass:W-contraction}.

\begin{proposition}\label{prop:verify_inventory}
Let Assumption~\ref{assmpt:demmand} hold for the inventory control problem under the base-stock policy. With the gradient estimator defined in \eqref{eq:Inventory_grad_inf}, Assumptions~\ref{ass:continue_H}--\ref{ass:W-contraction} are satisfied, where the potential function \( V \equiv 1 \) is a constant.
\end{proposition}
We outline the approach for verifying the assumptions using synchronous coupling, focusing on the most challenging case, Assumption~\ref{ass:W-contraction}. We couple two inventory dynamics $(X_t, Y_t)$ that follow the same update rules and demand sequence $\{D_t\}_{t=1}^\infty$ but start from different initial states $z_i = (x_i, y_i)$, $i=1,2$. Let $(X_t(z_i), Y_t(z_i))$ denote the coupled trajectories. A stopping time is defined after which the trajectories coincide, and we analyze their pre-stopping-time behavior to assess the effect of initial states on the dynamics.
To quantify this effect, we apply the mean value theorem and analyze the derivative with respect to the initial states. The mean value theorem implies:
\begin{equation*}
\begin{aligned}
    \EB \left[\norm{X_t(z_1) - X_t(z_2)}\right] &= \EB\left[\norm{\int_0^1(z_2-z_1)  dX_t\big(\lambda z_2 + (1 - \lambda) z_1\big) \rd \lambda}\right] \\
    &\le \norm{z_1 - z_2} \int_0^1 \EB\left[\norm{dX_t\big(\lambda z_2 + (1 - \lambda) z_1\big)}\right] \rd \lambda.
\end{aligned}
\end{equation*}
Here $dX_t$ represents the derivative of $X_t(z)$ with respect to the initial state $z$.
Thus, continuity with respect to initial states is guaranteed if we can uniformly bound \( \EB[\norm{d X_t(z)}] \) for any \( z \).
To establish this bound, we note that it suffices to focus on \( dI_t(z) \) and \( dQ_t(z) \), given the definition that
$dX_t(z) = \big(dQ_{t-1}(z), \ldots, dQ_{t-\tau+1}(z), dI_t(z)\big)$,
where \( dI_t(z) \) and \( dQ_t(z) \) are defined analogously to \( dX_t(z) \). 
We prove that these sequences decay exponentially in expectation over time. Specifically, there exist universal constants \( C, \iota > 0 \), independent of \( t, z, \) and \( S \), such that:
\begin{equation}
\label{eq:I-Q-exp-decay}
\EB \left[\norm{dI_t(z)}\right] \le C e^{-\iota t}, \quad
\EB \left[\norm{dQ_t(z)}\right] \le C e^{-\iota t}.
\end{equation}
This exponential decay establishes the uniform boundedness of \( \EB[\norm{d X_t(z)}] \), thereby verifying the continuity condition with respect to initial states. 

As a byproduct, the uniform boundedness of the inventory dynamics is also ensured, given the capacity limit \( \overline{M} \) of the inventory system and the uniform bounds implied by \eqref{eq:I-Q-exp-decay}. Consequently, our potential function \( V \) can be chosen as a constant.

Given Proposition~\ref{prop:verify_inventory}, we translate the general theoretical results into the following corollary.

\begin{corollary}
Assuming Assumptions~\ref{assmpt: convexity} and \ref{assmpt:demmand} hold for the inventory control problem.
The cost function here is $c(S, I) = \EB_D[C(I,D)]$ which satisfies Assumption~\ref{ass:cont_cost}.
When setting \( \eta_t \sim t^{-1} \), the stream SGD iterates achieve a linear convergence rate \( \EB \big[\abs{S_T - S^*}^2\big] = \gO(1/T) \) and logarithmic regret \( R(T) = \gO(\log T) \).
Furthermore, if $\nabla^2 f(\vartheta)$ is continuous around the optima $\vartheta^\star$ and the step size is chosen as $\eta_t \sim t^{-\alpha}$ with $\alpha \in (\frac{2}{3}, 1)$, the FCLT in Theorem~\ref{thm:fclt} holds.
\end{corollary}

\section{Numerical Experiments}
\label{sec: numermical}
In this section, we present numerical experiments to evaluate the performance of the stream SGD algorithm and our online inference method in two OR applications. 

\subsection{Pricing and Capacity Sizing in Single-Server Queue}
\label{sec:MM1}

We implement stream SGD on three GI/GI/1 queueing models with different inter-arrival and service time distributions, as in \cite{chen2023online}. For all settings, we use a logistic demand function $\lambda(p) = M \cdot \frac{\exp(a-p)}{1 + \exp(a-p)}$, and set $M=10$, $a=4.1$, and the holding cost coefficient $h_0=1$. For brevity, we present the setup and results for the M/M/1 model, which is the most classic example with Poisson arrivals and exponential service times.
Results on other models are provided in the appendix. Following \cite{chen2023online}, we use a quadratic cost function $\zeta(\mu) = \frac{1}{10} \cdot \mu^2$. In this case, the objective function \eqref{eqObjmin} has a closed-form expression from which we can numerically obtain the optimal solution $(\mu^\star, p^\star) = (7.103, 4.0234)$ with $\mathcal{B} = [6.56, 15] \times [3.5, 10]$.
As a benchmark, we use a simplified version of GOLiQ from \cite{chen2023online},\footnote{For a fair comparison, we ignore the leftovers from previous cycles in our implementation of GOLiQ.} which estimates the gradient $\nabla f(\vartheta_t)$ with batch sizes of $1 + b_c \log t$, where we choose $b_c \in \{1, 5, 10, 20\}$. Stream SGD corresponds to $b_c=0$. All experiments start with $(\mu_0, p_0) = (8, 3.5)$, using step sizes $\eta_t^\mu = \frac{12.5}{t}$ and $\eta_t^p = \frac{1.25}{t}$.
{To address the potential violation of strong convexity discussed in Remark~\ref{rmk: queue non-convexity}, we provide additional numerical results on the convergence rate of stream SGD across various model settings---particularly with different initial points---in~\ref{appx: additional results}.}

\subsubsection{Convergence and Regret Results}
\label{sec:convergence-MM1}

\begin{figure}[t!]
    \centering
    \includegraphics[width=\textwidth]{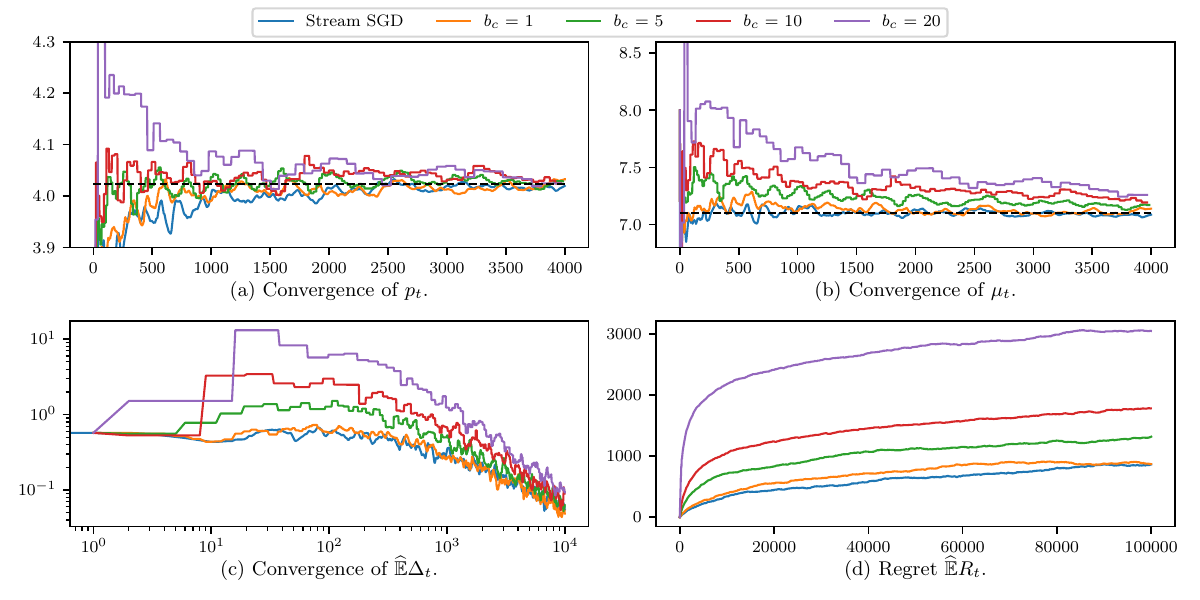}
    \caption{Convergence and regret in the M/M/1 setting, averaged over 200 repetitions.}  
    \label{fig:conv_MM1}
\end{figure}

The numerical results of stream SGD and GOLiQ with varying $b_c$ are shown in Figures \ref{fig:conv_MM1}. The top panels display the averaged trajectories of $p_t$ and $\mu_t$, while the bottom panels report the average $L_2$ difference between $\vartheta_t$ and $\vartheta^*$ (left) and the average cumulated regret (right), with $x$-axes representing the number of data samples (customers served).
The results show that increasing batch size does not improve convergence rate or reduce regret, which is consistent with our theoretical result that single-point gradient estimation suffices for convergence. The performance gap between stream SGD and GOLiQ becomes significant when $b_c \geq 5$, likely due to excessive emphasis on gradient refinement over optimization progress. When $b_c=1$, the batch size $1+b_c\log(t)$ is close to $1$ for small $t$. As a consequence, we see that the two curves corresponding to stream SGD and GOLiQ with $b_c=1$ are similar in the early iterations but gradually deviate from each other in the later iterations.

\subsubsection{Numerical Performance of Online Inference Method} 
\label{subsec: inference performance}

\begin{figure}[t!]
    \centering
    \includegraphics[width=\textwidth]{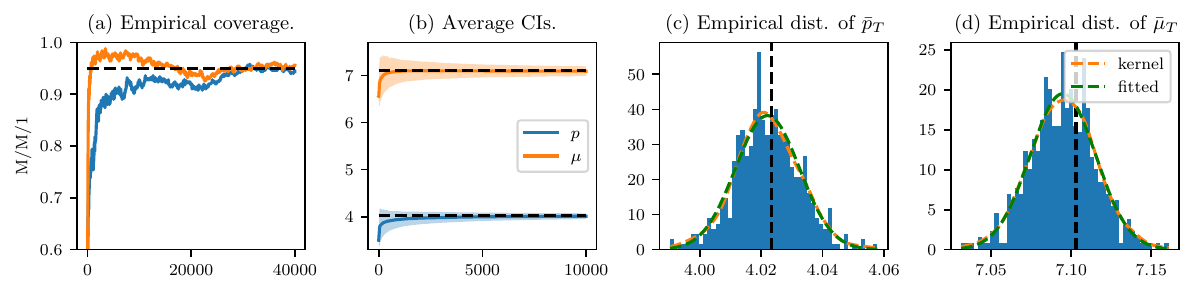}
    \caption{Online inference results in the M/M/1 setting, averaged over 500 repetitions.}  
   \label{fig:inf}
\end{figure}

To align with our theoretical framework, we use polynomial step sizes for all experiments: $\eta_{t,p} = \frac{c_p}{(1+t)^\alpha}$ and $\eta_{t,\mu} = \frac{c_\mu}{(1+t)^\alpha}$, with $c_p = 1$, $c_\mu = 10$, and $\alpha = 0.99$ for the M/M/1 model. Figure \ref{fig:inf} presents the inference results: (a) empirical coverage rates across 500 repetitions, (b) averaged confidence intervals, and (c), (d) histograms of averaged estimators $\bar{p}_T$ and $\bar{\mu}_T$.
As predicted by Corollary \ref{cor:CI}, Panel (a) shows empirical coverage rates approaching the target $95\%$ with increasing iterations and sample sizes. Panel (b) illustrates consistent coverage of true parameters by confidence intervals, with lengths decreasing as more data are processed. The histograms in Panels (c) and (d) reveal that $\bar{p}_T$ and $\bar{\mu}_T$ closely follow fitted Gaussian distributions, consistent with Corollary~\ref{cor:functional}.

\subsubsection{Comparison with Different Inference Methods}
\label{sec:other_reference_methods}

Finally, we compare our inference method with two benchmark batch-mean methods: (1) even-splitting (ES) with $\tau_i = \frac{i}{m}$~\citep{glynn1990simulation}, and (2) increasing batch size (IBS) with $\tau_i = (\frac{i}{m})^2$~\citep{chen2020statistical,zhu2021constructing}.\footnote{The original IBS proposed in \citep{chen2020statistical} uses $\tau_i = (\frac{i}{m})^{1/(1-\alpha)}$ for $\eta_t \propto t^{-\alpha}$ with $\alpha \in (0.5, 1)$. However, in our experiments, fixing $\alpha = 0.5$ performs better even when $\eta_t \propto t^{-0.99}$. So we present the results of $\alpha=0.5$ for IBS.}
Batch-mean methods are not inherently suited for online inference, so we perform the comparisons offline by storing all historical iterates in memory.
For a fair comparison, we apply all three methods to the same trajectories $\{\vartheta_t\}_{t \in [T]}$ generated by the SGD stream over 500 simulations with $T = 5 \times 10^4$ iterations for the M/M/1 case. Batch-mean methods are evaluated with batch numbers $m \in \{10, 20, 30\}$, and the best-performing results are reported.


Table~\ref{table:empirical_coverage} presents the empirical coverage rates (in percentages) estimated from 500 simulations, along with their standard errors, for parameters $p$ and $\mu$ across the four queueing settings. Results are provided for subsets of trajectory data with $n \in \{T/2^4, T/2^3, T/2^2, T/2, T\}$. 
The results show that our method consistently achieves higher empirical coverage rates when the number of iterations $n$ is small, while delivering performance comparable to IBS when $n$ is large. Specifically, when $n$ is small, the ranking is typically: Ours $>$ IBS $>$ ES. As $n$ increases, the empirical coverage rates of all methods converge to the nominal $95\%$ coverage, confirming their validity in large-sample settings.

\begin{table}[t!]
\caption{Comparison of empirical coverage rates between our method and the batch mean method in the M/M/1 setting. 
The standard errors of coverage rates $\widehat{p}$ are reported in the bracket (computed via $\sqrt{\widehat{p}(1-\widehat{p})/500} \times 100 \%$). 
} 
\centering 
\small
\begin{tabular}{c|c|ccccc} 
\toprule
Parameters &\multicolumn{1}{c|}{Methods} & $n=T/2^4$ & $n=T/2^3$ & $n=T/2^2$ & $n=T/2$ & $n=T$\\
\midrule \multirow{3}{*}{$p$}&
Ours & 89.2(1.39)&90.4(1.32)&93.2(1.13)&92.4(1.19)&94.8(0.99)\\ 
& ES& 80.6(1.77)&86.2(1.54)&90.2(1.33)&91.6(1.24)&92.4(1.19)\\ 
& IBS& 84.8(1.61)&92.4(1.19)&94.0(1.06)&95.6(0.92)&96.4(0.83)\\ 
\midrule
\multirow{3}{*}{$\mu$}&
 Ours & 97.0(0.76)&97.0(0.76)&95.8(0.9)&94.0(1.06)&94.4(1.03)\\ 
&ES& 97.8(0.66)&97.4(0.71)&96.6(0.81)&93.8(1.08)&93.6(1.09)\\ 
&IBS& 95.4(0.94)&94.6(1.01)&90.8(1.29)&89.2(1.39)&90.4(1.32)\\ 
\bottomrule
\end{tabular}
\label{table:empirical_coverage} 
\end{table}

\subsection{Base-stock Replenishment in Inventory Systems}

We apply stream SGD to solve the inventory control problem by finding the optimal base-stock level $S^\star$. The loss objective function is defined as $\min_{S} \EB[C(I_{\infty}(S))]$, as given in \eqref{eq:pr_obj_inv}. For all the experiments, we set $h=1$ and $b=10$. The demand $D$ follows an exponential distribution with an expectation of $\EB[D]=1$, and the feasible domain is $[1, 10]$. Two replenishment lead times are considered: $\tau \in \{1, 2\}$.
Since the optimal base-stock level $S^\star$ does not have a closed-form solution, we first use stream SGD to solve the problem with $10^5$ samples and compute the averaged iterates over 500 repetitions as the true parameter. For $\tau=1$, the computed optimal base-stock level is $S^\star = 4.4054$, and for $\tau=2$, it is $S^\star = 5.1406$.
We use the adaptive algorithm proposed in \cite{Huh2009} as a benchmark, as it also explores the application of batched stream SGD to inventory control. This algorithm, denoted by Adaptive$(\alpha, \beta)$, requires a step size $\eta_t \sim t^{-\alpha}$ and a batch size of the form $t^\beta$. 
To evaluate the effect of batch sizes, we use different values of $\beta \in \{0.5, 1, 1.5, 2\}$ for each gradient estimate. The stream SGD algorithm corresponds to the special case where $\beta=0$. For a fair comparison, we set the step size for all the algorithms to $\eta_t = \frac{2}{t^{0.67}}$, corresponding to $\alpha=0.67$, and initialize the base-stock level at $S_1=2$. 

\subsubsection{Convergence and Regret Results}

\begin{figure}[t!]
    \centering
    \includegraphics[width=\textwidth]{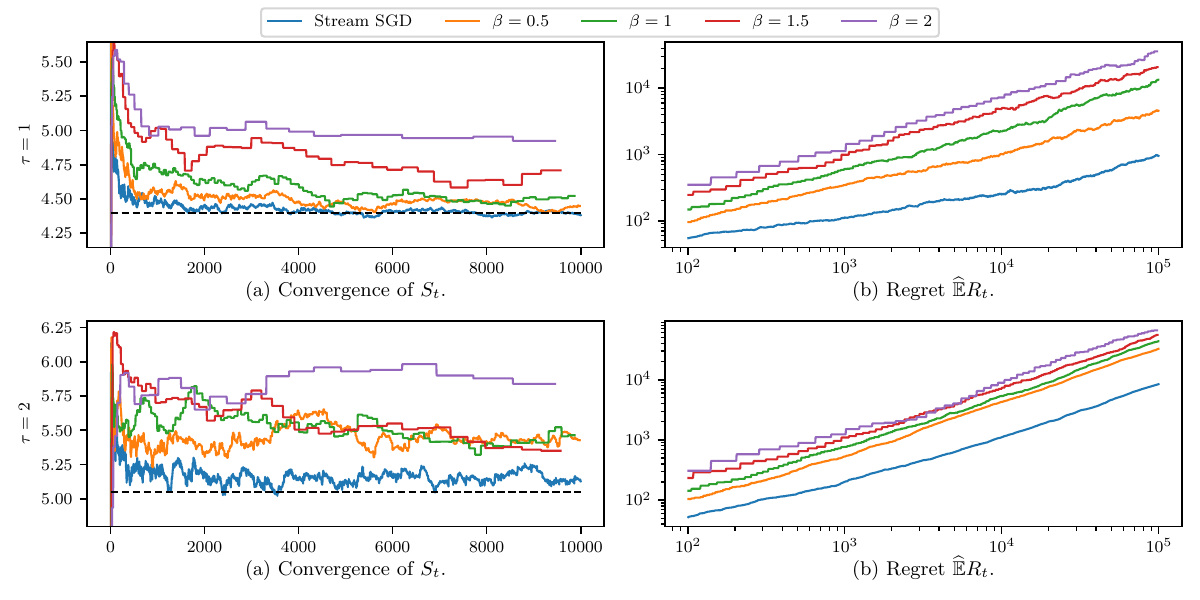}
    \caption{Convergence and regret in the inventory control problem, averaged over 200 repetitions. }  
    \label{fig:conv_inv}
\end{figure}

The numerical results of stream SGD and Adaptive$(\alpha, \beta)$ are presented in Figure \ref{fig:conv_inv}, with the top row corresponding to $\tau=1$ and the bottom row to $\tau=2$. As in Figure \ref{fig:conv_MM1}, the $x$-axes represent the number of samples. 
Adaptive$(\alpha, \beta)$ uses batched samples to compute gradients, leading to piecewise constant convergence and regret during the sample collection phases. From the left column, we observe that stream SGD remains consistently closer to the true parameter (denoted by the black dotted line), and from the right column, it demonstrates a smaller cumulative regret compared to Adaptive$(\alpha, \beta)$. 
As $\beta$ increases, the number of samples used for gradient computation grows, but Adaptive$(\alpha, \beta)$ performs worse than stream SGD in terms of both convergence and regret. Similar to Figure \ref{fig:conv_MM1}, increasing batch sizes does not improve sample efficiency. These results further support the conclusion that using a single sample for gradient estimation is sufficient to solve the inventory control problem effectively.

\subsubsection{Numerical Performance of Online Inference Method} 
\label{sec:exp-online-inference}

Under the same setup and parameter choices, we evaluate the performance of our online inference method for inventory control problems. Figure \ref{fig:inf_inv} summarizes the results: Panel (a) shows the empirical coverage rate over 500 repetitions, Panel (b) illustrates the averaged confidence intervals, and Panel (c) presents the histogram of the last iterate $\bar{S}_T$ for $T=10^5$ in the 500 experiments.
The observations are consistent with those in the M/M/1 setting. Panel (a) demonstrates that the empirical coverage rate quickly stabilizes around the target 95\% as iterations progress. Panel (b) reveals that the length of the averaged confidence intervals decreases with more iterations, reflecting improved precision and confidence. Notably, the confidence intervals for $\tau=2$ are wider than those for $\tau=1$ due to the greater stochasticity in the transition system for $\tau=2$. 
Finally, Panel (c) shows that the distribution of $\bar{S}_T$ approximates a Gaussian distribution, in agreement with Corollary~\ref{cor:functional}.

\begin{figure}[t!]
    \centering
    \includegraphics[width=0.7\textwidth]{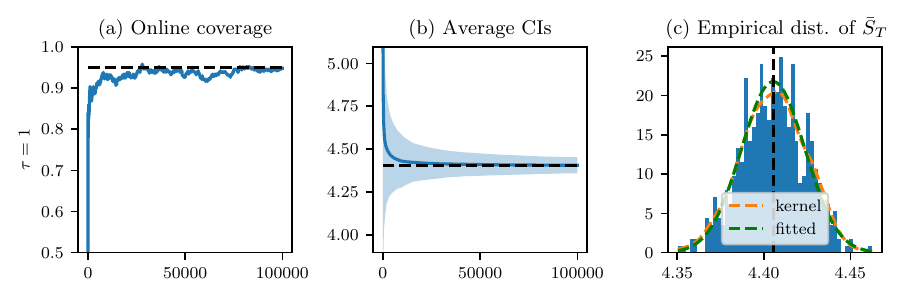} \\
    \includegraphics[width=0.7\textwidth]{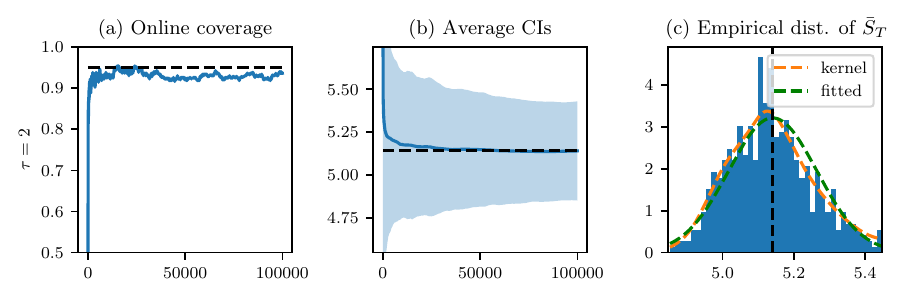}
    \caption{Online inference results in the inventory control problem, averaged over 500 repetitions.}  
    \label{fig:inf_inv}
    \vspace{-0.1in}
\end{figure}

\subsubsection{Comparison with Other Inference Methods} 
\label{sec:exp-inference-comparison}

Finally, we compare our method with the two baselines in Section~\ref{sec:other_reference_methods}. Table~\ref{table:empirical_coverage_inv} shows empirical coverage rates (in $\%$) and standard errors from 500 simulations for parameter \$S\$ in two inventory control settings, using trajectory subsets with $n \in {T/2^4, T/2^3, T/2^2, T/2, T}$ and $T=8000$. The coverage rate generally improves with sample size for both our method and ES. When the sample size is limited (e.g., $n \approx T/2^4$), IBS attains higher coverage but declines as $n$ grows, eventually becoming the lowest—indicating overestimation of randomness with limited samples. For instance, at $n = T/2^4$, the average interval length is $1.2169$ for IBS, $1.1601$ for ours, and $0.7843$ for the ES method. For $n > T/2^2$, our method consistently yields the highest coverage, showing robustness and accuracy with larger samples.

\begin{table}[t!]
\caption{Comparison of empirical coverage rates between our method and the batch mean method in the inventory control problem. 
The standard errors of coverage rates $\widehat{p}$ are reported in the bracket (computed via $\sqrt{\widehat{p}(1-\widehat{p})/500} \times 100 \%$). 
} 
\centering 
\small
\begin{tabular}{c|c|ccccc} 
\toprule
Cases &\multicolumn{1}{c|}{Methods} & $n=T/2^4$ & $n=T/2^3$ & $n=T/2^2$ & $n=T/2$ & $n=T$\\
\midrule \multirow{3}{*}{$\tau=1$}&
Ours & 84.4(1.62)&89.2(1.39)&86.6(1.52)&89.2(1.39)&93.6(1.09)\\ 
& ES& 74.4(1.95)&81.6(1.73)&83.2(1.67)&83.8(1.65)&88.0(1.45)\\ 
& IBS& 96.0(0.88)&92.6(1.17)&86.4(1.53)&90.0(1.34)&91.6(1.24)\\ 
\midrule
\multirow{3}{*}{$\tau=2$}&
 Ours & 87.2(1.49)&90.2(1.33)&91.8(1.23)&91.8(1.23)&92.6(1.17)\\ 
&ES& 78.2(1.85)&83.6(1.66)&88.0(1.45)&88.8(1.41)&88.0(1.45)\\ 
&IBS& 93.0(1.14)&89.0(1.4)&86.2(1.54)&81.8(1.73)&84.4(1.62)\\ 
\bottomrule
\end{tabular}
\label{table:empirical_coverage_inv} 
\end{table}

\section{Discussion}

In this paper, we study stream SGD, a method that uses only one sample from the current parameter-dependent Markov chain to compute the stochastic gradient for online OR problems. We develop a systematic analytical framework to support this simple yet effective approach.
From an optimization perspective, we establish finite-time convergence rates and perform regret analysis. From a statistical perspective, we derive a functional central limit theorem and propose an online inference method. We illustrate the framework in two OR applications: (i) pricing and capacity sizing in queueing systems and (ii) base-stock replenishment in inventory systems, showing how it advances online OR problem-solving.


Our theory and experiments consistently show that batching samples for gradient computation is unnecessary. When the Poisson equation solution is sufficiently smooth, a single sample suffices. This smoothness captures the statistical dependence between consecutive Markov chain samples, enabling convergence rate analysis and FCLTs. Importantly, practitioners need not compute the Poisson equation solution---it is purely a theoretical tool. In our examples, standard assumptions ensure the required smoothness.


Several open questions remain. First, our work focuses on box constraints, but practical problems often involve data-driven constraints \citep{boob2023stochastic, yang2024data}, whose integration into our framework is nontrivial.
Second, stream SGD is a single-step-size scale stochastic approximation method, while many RL problems involve multi-scale settings \citep{hong2023two}. Recent work on convergence \citep{han2024finite} and uncertainty quantification \citep{han2024decoupled} in such settings often assumes i.i.d. data, which is unrealistic for many OR problems. Extending our framework to multi-scale stochastic approximation with Markovian data is a promising direction.
Finally, our framework targets stable and relatively simple dynamic systems, but real-world systems—such as multi-server queues—can be unstable or highly complex. Coupling techniques \citep{blanchet2016rates,blanchet2018perfect} may allow our approach to extend to these systems if suitable Wasserstein-type divergence conditions can be verified. However, such extensions require refined analysis beyond the scope of this work.
{In concrete OR applications such as queueing systems, interesting directions include relaxing strong convexity and handling instability from suboptimal decision parameters, which may require advanced algorithmic designs (e.g., regularization) and transient analysis of unstable dynamics.} Addressing these challenges could substantially broaden the applicability of stream SGD, and we leave them for future work.

\normalem
\bibliographystyle{plainnat}
\bibliography{reference}

\clearpage
\appendix
\section{{Online Computation of Confidence Intervals}}\label{subsec: online computation}
{
In this section, we discuss how to compute the confidence intervals of $\vtheta$ efficiently in an online manner. This requires incremental updates of both $\bar{\vartheta}_t$ and $\bar{\sigma}_t$. The moving average $\bar{\vartheta}_t$ can be computed directly using
\[
\bar{\vartheta}_{t+1} = \frac{t}{t+1}\bar{\vartheta}_t + \frac{1}{t+1}\vartheta_{t+1}.
\] 
For $\bar{\sigma}_t$, the term $\sum_{i=1}^t i^2 |v^\top(\bar{\vartheta}_i-\bar{\vartheta}_t)|^2$ is decomposed into individual components that allow incremental updates:  
\begin{equation*}
\sum_{i=1}^t i^2 |v^\top(\bar{\vartheta}_i-\bar{\vartheta}_t)|^2 
= \underbrace{\sum_{i=1}^t i^2 (v^\top\bar{\vartheta}_i)^2}_{O_{t,1}}
- 2 \underbrace{\sum_{i=1}^t i^2 v^\top\bar{\vartheta}_i}_{O_{t,2}}
\cdot  (v^\top\bar{\vartheta}_t) 
+ \underbrace{\sum_{i=1}^t i^2}_{O_{t, 3}} \cdot (v^\top\bar{\vartheta}_t)^2.
\end{equation*}
Each $O_{t,j}$ can be updated to $O_{t+1,j}$ in $O(1)$ time using:  
\begin{equation}
\label{eq:O-update}
    O_{t+1,1} = O_{t,1} + (t+1)^2(v^\top\bar{\vartheta}_{t+1})^2,~~
    O_{t+1,2} = O_{t,2} + (t+1)^2(v^\top\bar{\vartheta}_{t+1}),~~
    O_{t+1,3} = O_{t,3} + (t+1)^2.
\end{equation}
With these updates, transitioning from $\bar{\sigma}_t$ to $\bar{\sigma}_{t+1}$ requires minimal computation and memory, relying on three one-dimensional sequences $\{O_{t,j}\}_{j \in [3]}$ and the average $\bar{\sigma}_t$. This approach is summarized in Algorithm~\ref{alg:inf}. }
\begin{algorithm}[ht]
\small
\caption{\textsc{Stream SGD with Online Confidence Intervals}}
\label{alg:inf}
\begin{algorithmic}
\STATE \textbf{Input:} Step sizes $\{\eta_t\}$, linear coefficient $v$, initial state $x_0$, and parameter $\vartheta_0$.
\FOR {Iteration $t \geq 1$}
    \STATE Sample $x_t$ from the transition kernel $P_{\vartheta_t}(x_{t-1}, \cdot)$.
    \STATE Update $\vartheta_{t+1} \leftarrow \Pi_\Theta(\vartheta_t - \eta_t H(\vartheta_t, x_t))$, where $\Pi_\Theta$ is the projection operator.
    \STATE Compute $\bar{\vartheta}_{t+1} = \frac{t}{t+1}\bar{\vartheta}_t + \frac{1}{t+1}\vartheta_{t+1}$.
    \STATE Compute the value of the inner product $v^\top \bar{\vartheta}_{t+1}$.
    \STATE Perform the updates in~\eqref{eq:O-update} to obtain $\{O_{t+1, j}\}_{j \in [3]}$.
    \STATE Compute $\bar{\sigma}_{t+1} = \frac{1}{t+1} \sqrt{O_{t+1,1} - 2 O_{t+1,2} \cdot (v^\top \bar{\vartheta}_{t+1}) + O_{t+1,3} \cdot (v^\top \bar{\vartheta}_{t+1})^2}$.
    \STATE Output the current confidence interval: $\left[
    v^\top \bar{\vartheta}_{t+1} - q_{\alpha, h} \bar{\sigma}_{t+1}, 
    v^\top \bar{\vartheta}_{t+1} + q_{\alpha, h} \bar{\sigma}_{t+1}
    \right].$
\ENDFOR
\end{algorithmic}
\end{algorithm}

\begin{table}[t]
\centering 
\caption{Asymptotic critical values $q_{\alpha, h}$ with $h$ set to~\eqref{eq:h-pivotal}, are computed via simulations. The Brownian motion $W$ is approximated by normalized sums of i.i.d.\ $\NM(0, 1)$ random variables using 1,000 steps and 50,000 replications.
} 
\label{table:critical} 
\begin{tabular}{c|ccccccccc} 
\toprule
$1-\alpha$& $1\%$ & $2.5\%$ & $5\%$ & $10\%$ & $50\%$ & $90\%$ & $95\%$ & $97.5\%$ &$99\%$ \\
\midrule		
$q_{\alpha, h}$&  -8.628&-6.758&-5.316&-3.873& 0.000 & 3.873& 5.316& 6.758 & 8.628\\
\bottomrule
\end{tabular}
\end{table}

\section{Proofs for Section~\ref{subsec: convergence and regret}}\label{sec:prf_of_conv&regret}
\subsection{Proofs for Continuity of the Solution to Poisson's Equation}\label{sec:prf_sketch}
In this section, our main goal is to prove the Lipschitz continuity of Poisson's equation solution $U_{P_{\vartheta}H}$ for the equation \eqref{eq:poisson} with $h(\cdot) = P_{\vartheta}H(\vartheta, \cdot)$. 
We use a slightly difference notation to denote the expectation for simplicity:
\[
\inner{\pi_\vartheta}{f} = \EB_{x \sim \pi_{\vartheta} }f(x) = \int_{\mathcal{X}} f(x) \rd\pi_{\vartheta}(x).
\]
The proof of its existence highly depends on the Neumann expansion~\citep{werner2011funktionalanalysis}, i.e.,
\begin{equation}
\label{eq:equi-def-U}
        U_f(\vartheta, x)
        = (I - \gP_{\vartheta})^{-1}(f(x) - \inner{\pi_\vartheta}{f})
        = \ssum{n}{0}{\infty} \{\gP^n_{\vartheta}f(x) - \inner{\pi_\vartheta}{f}\}
\end{equation}
is a solution to the Poisson's equation in \eqref{eq:poisson}.
From the last equation, one can also find that
\[
U_{P_{\vartheta} f} = P_{\vartheta}U_{ f}.
\]
For any $V\in\gV$ and metric $\rho(x,y) \in \big\{\norm{x-y}, \mathbbm{1}\{x\neq y\}\big\}$ in the space $\gX$, we define a function class $\gF_{\rho, V}$  as
    \[
    \gF_{\rho, V} := \left\{f: \abs{f(x) - f(y)} \leq \rho(x,y)(2+V(x)+V(y)),~\forall x,y \in \gX\right\}.
    \]
    We will frequently use the function class $\gF_{\rho, V}$ in the following proof.
By the weak duality of Wasserstein distance, the Wasserstein-style divergence $W_{\rho, V}$ (defined in \eqref{eq:def_w_dist}) can be lower bounded by 
\begin{equation*}
\gW_{\rho,V}(\mu,\nu) \ge \sup_{f \in \gF_{\rho, V}} \left\{ \EB_{X\sim \mu} [f(X)] - \EB_{Y\sim \nu} [f(Y)]\right\},~~\text{for}~~\rho \in \{\norm{\cdot},\mathbbm{1}\}.
\end{equation*}
When verifying the conditions for two OR applications, we specify the function $\rho(x, y)$, which is chosen as either the Euclidean norm $\norm{x-y}$ or the indicator function $\1\{x \neq y\}$. For simplicity, we denote the function class $\gF_{\rho, V}$ with $\rho(x, y) = \norm{x-y}$ as $\gF_{\norm{\cdot}, V}$ and the function class with $\rho(x, y) = \mathbbm{1}\{x \neq y\}$ as $\gF_{\mathbbm{1}, V}$.

In the rest of this section, we will prove Lemma~\ref{lem:reg_poisson_eq}. The proof structure involves using Lemmas~\ref{lem:prop_p^mf} -- \ref{lem:cont_Pnf-pi_f} to establish bounds for the key term:  
\[
\abs{
    \{\gP_{\vartheta_1}^n f(x) - \inner{\pi_{\vartheta_1}}{f}\} - \{\gP_{\vartheta_2}^n f(x) - \inner{\pi_{\vartheta_2}}{f}\}
}
\]
for any test function \( f \in \gF_{\norm{\cdot},V} \). Finally, by leveraging the Neumann expansion, we prove the continuity of the Poisson equation solution in Lemma~\ref{lem:cont_sol_poisson_eq}.

\begin{lemma}\label{lem:prop_p^mf}
For any choice of $\rho$ (either $\rho(x, y) = \norm{x - y}$ or $\rho(x, y) = \mathbbm{1}{x \neq y}$), the following result holds.
Given any function $f \in \gF_{\rho, V}$, there exist positive constants $c$ and $C$, depending on $\rho$ and $V$, such that for all $m \in \NB$ and $\vartheta \in \Theta$, we have:
  \[
  \frac{e^{cm}}{C}\gP_{\vartheta}^m f(\cdot) \in \gF_{\rho, {V^2}}.
  \]
\end{lemma}
\begin{proof}{Proof of Lemma~\ref{lem:prop_p^mf}}
    The proof of Lemma~\ref{lem:prop_p^mf} can be found in~\ref{proof:lem:prop_p^mf}.
\end{proof}

\begin{lemma}\label{lem:prop_L_mf}
    For any $m \in \NB$, any $f \in \gF_{\norm{\cdot}, V}$ and any $\vartheta_1, \vartheta_2 \in \Theta$ with $\vartheta_1 \neq \vartheta_2$, there exist positive constants $c$ and $C$ such that:
    \[
    \abs{(\gP_{\vartheta_1} - \gP_{\vartheta_2})\gP_{\vartheta_2}^m f(x)} \leq C\norm{\vartheta_1 - \vartheta_2}e^{-cm}\big(1 + {V}(x)^2\big).
    \]
    As a direct corollary, we also have:
    \[
    \frac{e^{cm}}{C\norm{\vartheta_1 - \vartheta_2}} (\gP_{\vartheta_1} - \gP_{\vartheta_2})\gP_{\vartheta_2}^m f(\cdot) \in \gF_{\mathbbm{1}, {V^2}}.
    \]
\end{lemma}
\begin{proof}{Proof of Lemma~\ref{lem:prop_L_mf}}
    The proof of Lemma~\ref{lem:prop_L_mf} can be found in~\ref{proof:lem:prop_L_mf}.
\end{proof}

\begin{lemma}\label{lem:prop_pl_Lm}
    For any $l,m \in \NB$, any $f \in \gF_{\norm{\cdot},V}$, and any $\vartheta_1, \vartheta_2 \in \Theta$ with $\vartheta_1 \neq \vartheta_2$, there exist positive constants $c$ and $C$ such that:
    \begin{equation*}
    \begin{aligned}
    &\abs{\gP_{\vartheta_1}^l(\gP_{\vartheta_1} - \gP_{\vartheta_2})\gP_{\vartheta_2}^m f(x) - \inner{\pi_{\vartheta_1}}{\gP_{\vartheta_1}^l(\gP_{\vartheta_1} - \gP_{\vartheta_2})\gP_{\vartheta_2}^m f(\cdot)}}\\
    &\qquad \leq CL\norm{\vartheta_1 - \vartheta_2}e^{-c(l + m)}\big(2 + {V}(x)^4 + \EB_{X\sim\pi_{\vartheta_1}}{V}(X)^4\big).
    \end{aligned}
    \end{equation*}
\end{lemma}
\begin{proof}{Proof of Lemma~\ref{lem:prop_pl_Lm}}
    The proof of Lemma~\ref{lem:prop_pl_Lm} can be found in~\ref{proof:lem:prop_pl_Lm}.
\end{proof}

\begin{lemma}\label{lem:lip_pi}
Recall that $\pi_{\vartheta}$ is the invariant (or stationary) distribution of the transition kernel $\gP_{\vartheta}$. 
There exits a positive number $C_1$ such that
    \begin{equation*}
    \gW_{\norm{\cdot},V^2}(\pi_{\vartheta_1}, \pi_{\vartheta_2}) \leq C_1 L \norm{\vartheta_1 - \vartheta_2}\left(2 + \inner{\pi_{\vartheta_1}}{V^4} + \inner{\pi_{\vartheta_2}}{V^4}\right).
    \end{equation*}
\end{lemma}

\begin{proof}{Proof of Lemma~\ref{lem:lip_pi}}
    The proof of Lemma~\ref{lem:lip_pi} can be found in~\ref{proof:lem:lip_pi}.
\end{proof}

Recall the expansion of the solution to Poisson's equation~\eqref{eq:equi-def-U}. The following Lemma characterizes the continuity of every difference term with respect to $\vartheta$.

\begin{lemma}\label{lem:cont_Pnf-pi_f}
    For any function $f\in\gF_{\norm{\cdot},V}$, we can find a positive number $C_2$ satisfying
    \begin{equation*}
    \begin{aligned}
    &\abs{
    \{\gP_{\vartheta_1}^n f(x) {-} \inner{\pi_{\vartheta_1}}{f}\} {-} \{\gP_{\vartheta_2}^n f(x) {-} \inner{\pi_{\vartheta_2}}{f}\}
    }\\ 
    &\quad\quad\quad\quad\quad{\leq}  C_2L \norm{\vartheta_1 - \vartheta_2}ne^{-cn}\left(1 {+} {V}(x)^4 {+} \inner{\pi_{\vartheta_1}{+} \pi_{\vartheta_2}}{V^4}\right).
    \end{aligned}
    \end{equation*}
\end{lemma}
\begin{proof}{Proof of Lemma~\ref{lem:cont_Pnf-pi_f}}
    The proof of Lemma~\ref{lem:cont_Pnf-pi_f} can be found in~\ref{proof:lem:cont_Pnf-pi_f}.
\end{proof}

\begin{lemma}[Continuity of the solution to Poisson's equation]\label{lem:cont_sol_poisson_eq}
    For any function $f\in \gF_{\norm{\cdot},V}$, 
    \[
    \abs{U_f(\vartheta_1, x) - U_f(\vartheta_2,x)}
    \leq C_3 L \norm{\vartheta_1 - \vartheta_2}\left(1+ {V}(x)^4 + \inner{\pi_{\vartheta_1} + \pi_{\vartheta_2}}{V^4}\right)
    \]
\end{lemma}
\begin{proof}{Proof of Lemma~\ref{lem:cont_sol_poisson_eq}}
    The proof of Lemma~\ref{lem:cont_sol_poisson_eq} can be found in~\ref{proof:lem:cont_sol_poisson_eq}.
\end{proof}

\subsubsection{Proof of Lemma~\ref{lem:prop_p^mf}}
\label{proof:lem:prop_p^mf}
\begin{proof}{Proof of Lemma~\ref{lem:prop_p^mf}}
    For any $f\in \gF_{\rho, V}$ and any $x,y \in \gX$, by Assumption~\ref{ass:W-contraction}, we have
    \begin{equation*}
    \begin{aligned}
    \gP_{\vartheta}^mf(x) - \gP_{\vartheta}^mf(y)
    &= \EB_{X \sim \delta_x \gP_{\vartheta}^m}f(X) - \EB_{X \sim \delta_y \gP_{\vartheta}^m}f(X)\\
    &\leq \gW_{\rho, V}(\delta_x \gP_{\vartheta}^m, \delta_y \gP_{\vartheta}^m)\\
    &\leq Ce^{-cm}\cdot \rho(x,y)\big(2+{V}(x)^2+ {V}(y)^2\big),
    \end{aligned}
    \end{equation*}
    which yields
    \[
    \frac{e^{cm}}{C}\gP_{\vartheta}^m f(x) - 
    \frac{e^{cm}}{C}\gP_{\vartheta}^m f(y) \leq \rho(x,y)\big(2+{V}(x)^2 + {V}(y)^2\big).
    \]
    Then the lemma holds because $\frac{e^{cm}}{C}\gP_{\vartheta}^m f \in \gF_{\rho, {V^2}}$.
\end{proof}
\subsubsection{Proof of Lemma~\ref{lem:prop_L_mf}}
\label{proof:lem:prop_L_mf}
\begin{proof}{Proof of Lemma~\ref{lem:prop_L_mf}}
    By Lemma~\ref{lem:prop_p^mf}, we know that $\frac{e^{cm}}{C}\gP_{\vartheta_2}^m f \in \gF_{\norm{\cdot}, V^2}$. 
    Furthermore, by making use of Assumption~\ref{ass:continue_P}, we obtain that
    \begin{equation*}
    \begin{aligned}
    \abs{\frac{e^{cm}}{C}(\gP_{\vartheta_1} - \gP_{\vartheta_2})\gP_{\vartheta_2}^m f(x)}
    &= \abs{\EB_{X\sim \delta_x \gP_{\vartheta_1}}\frac{e^{cm}}{C}\gP_{\vartheta_2}^m f(X) - \EB_{X\sim \delta_x \gP_{\vartheta_1}}\frac{e^{cm}}{C}\gP_{\vartheta_2}^m f(X)}\\
    &\leq \gW_{\norm{\cdot}, V^2}(\delta_x \gP_{\vartheta_1}, \delta_x \gP_{\vartheta_2}) \\
    &\leq L\norm{\vartheta_1 - \vartheta_2}\big(1+{V^2}(x)\big).
    \end{aligned}
    \end{equation*}
We complete the proof by multiplying both sides of the inequality above by ${Ce^{-cm}}$ and redefining $C$ as $CL$.
\end{proof}
\subsubsection{Proof of Lemma~\ref{lem:prop_pl_Lm}}
\label{proof:lem:prop_pl_Lm}
\begin{proof}{Proof of Lemma~\ref{lem:prop_pl_Lm}}
    For simplicity, we denote the function $(\gP_{\vartheta_1} - \gP_{\vartheta_2})\gP_{\vartheta_2}^m f(x)$ as $g(x)$. According to Lemma~\ref{lem:prop_L_mf}, it can be obtained that $\frac{e^{cm}}{CL\norm{\vartheta_1 - \vartheta_2}}g(\cdot) \in \gF_{\mathbbm{1},V^2}$. 
    Consequently, the following inequality is obtained by combining the properties of $g$ with Assumption~\ref{ass:W-contraction}:
    \begin{equation}\label{eq:prop_pl_Lm_eq1}
    \begin{aligned}
        \abs{\gP_{\vartheta_1}^l g(x) - \gP_{\vartheta_1}^l g(y)} &\leq CL\norm{\vartheta_1 - \vartheta_2}e^{-cm}\gW_{2,V^2}(\delta_x \gP_{\vartheta_1}^l, \delta_y \gP_{\vartheta_1}^l)\\
        &\leq CL\norm{\vartheta_1 - \vartheta_2}e^{-c(l+m)}\left(2+{V}(x)^4 + {V}(y)^4\right).
    \end{aligned}
    \end{equation}
Note that the above inequality holds for any choice of $y$. Now, let $y$ be sampled from the distribution $\pi_{\vartheta_1}$; then:
    \begin{equation*}
    \begin{aligned}
        \abs{\gP_{\vartheta_1}^l g(x) - \inner{\pi_{\vartheta_1}}{\gP_{\vartheta_1}^l g(\cdot)}}
        &=
        \abs{\EB_{y\sim \pi_{\vartheta_1}}[\gP_{\vartheta_1}^l g(x) - \gP_{\vartheta_1}^l g(y)]
        }
        \leq \EB_{y\sim \pi_{\vartheta_1}}\left[\abs{
        \gP_{\vartheta_1}^l g(x) - \gP_{\vartheta_1}^l g(y)
        }\right]\\
        &\overset{\eqref{eq:prop_pl_Lm_eq1}}{\leq}
        \EB_{y\sim \pi_{\vartheta_1}}\left[CL\norm{\vartheta_1 - \vartheta_2}e^{-c(l+m)}\big(2+ {V}(x)^4 + {V}(y)^4\big)\right]\\
        &\leq
        CL\norm{\vartheta_1 - \vartheta_2}e^{-c(l+m)}\big(2+{V}(x)^4 +\EB_{X\sim \pi_{\vartheta_1}}\big[{V}(X)^4\big]\big).
    \end{aligned}
    \end{equation*}
    We then complete the proof by plugging the definition of $g(x)$ into the above inequality.
\end{proof}
\subsubsection{Proof of Lemma~\ref{lem:lip_pi}}
\label{proof:lem:lip_pi}
\begin{proof}{Proof of Lemma~\ref{lem:lip_pi}}
It suffices to bound the value of $\abs{\inner{\pi_{\vartheta_1}}{f} - \inner{\pi_{\vartheta_2}}{f}}$ for any $f \in \gF_{\norm{\cdot},V^2}$. In fact, for any positive number $K$, it follows that
    \begin{equation*}
    \begin{aligned}
        \abs{\inner{\pi_{\vartheta_1}}{f} - \inner{\pi_{\vartheta_2}}{f}} &= 
        \abs{\inner{\pi_{\vartheta_1}}{\gP_{\vartheta_1}^Kf} - \inner{\pi_{\vartheta_1}}{\gP_{\vartheta_1}^Kf}}\\
        &\leq \abs{\inner{\pi_{\vartheta_1}}{(\gP_{\vartheta-1}^K - \gP_{\vartheta_2}^K) f}} + \abs{\inner{\pi_{\vartheta_1} - \pi_{\vartheta_2}}{\gP_{\vartheta_2}^K f}}\\
        &\overset{(a)}{\leq}
        \abs{\inner{\pi_{\vartheta_1}}{(\gP_{\vartheta-1}^K - \gP_{\vartheta_2}^K) f}}
        +
        Ce^{-cK}\gW_{\mathbbm{1},{V}^4}(\pi_{\vartheta_1}, \pi_{\vartheta_2})\\
        &\overset{(b)}{\leq} \ssum{m}{1}{K-1}\abs{
        \inner{\pi_{\vartheta_1}}{\gP_{\vartheta_1}^{K-m}(\gP_{\vartheta_1} - \gP_{\vartheta_2})\gP_{\vartheta_2}^{m-1} f}}
        +
        Ce^{-cK}\gW_{\mathbbm{1},{V^4}}(\pi_{\vartheta_1}, \pi_{\vartheta_2})\\
        &\overset{(c)}{=}\ssum{m}{1}{K-1}\abs{
        \inner{\pi_{\vartheta_1}}{(\gP_{\vartheta_1} - \gP_{\vartheta_2})\gP_{\vartheta_2}^{m-1} f}
        }+ Ce^{-cK}\gW_{\mathbbm{1},{V^4}}(\pi_{\vartheta_1}, \pi_{\vartheta_2}).
    \end{aligned}
    \end{equation*}
    Here $(a)$ holds by applying Lemma~\ref{lem:prop_p^mf} and the definition of $\gW_{\mathbbm{1},V^4}(\pi_{\vartheta_1}, \pi_{\vartheta_2})$, $(b)$ holds due to the telescoping equivalence
    \begin{equation*}
    \begin{aligned}
        \gP_{\vartheta_1}^K - \gP_{\vartheta_2}^K = \ssum{m}{1}{K-1}\left(\gP_{\vartheta_1}^{K-m+1}\gP_{\vartheta_2}^{m-1} - \gP_{\vartheta_1}^{K-m}\gP_{\vartheta_2}^{m}\right)
        = \ssum{m}{1}{K-1}\gP_{\vartheta_1}^{K-m}(\gP_{\vartheta_1} - \gP_{\vartheta_2})\gP_{\vartheta_2}^{m-1},
    \end{aligned}
    \end{equation*}
    and $(c)$ holds because $\pi_{\vartheta_1}$ is the invariant distribution of $\gP_{\vartheta_1}$.

    As for $\abs{\inner{\pi_{\vartheta_1}}{(\gP_{\vartheta_1} - \gP_{\vartheta_2})\gP_{\vartheta_2}^{m-1}f}}$, by Lemma~\ref{lem:prop_L_mf}, we have
    \[
    \abs{(\gP_{\vartheta_1} - \gP_{\vartheta_2})\gP_{\vartheta_2}^m f(x)} \leq CL\norm{\vartheta_1 - \vartheta_2}e^{-cm}\big(1 + V^{4}(x)\big).
    \]
    As a result,
    \begin{equation*}
    \begin{aligned}
    \abs{\inner{\pi_{\vartheta_1}}{(\gP_{\vartheta_1} - \gP_{\vartheta_2})\gP_{\vartheta_2}^{m-1}f}} 
    &\le \inner{\pi_{\vartheta_1}}{\abs{(\gP_{\vartheta_1} - \gP_{\vartheta_2})\gP_{\vartheta_2}^{m-1}f}}\\
    &\leq
    CL\norm{\vartheta_1 - \vartheta_2}\inner{\pi_{\vartheta_1}}{e^{-c(m-1)}\big(1+V^{4}(\cdot)\big)}\\
    &= CL\norm{\vartheta_1 - \vartheta_2}e^{-c(m-1)}\big(1 + \EB_{\pi_{\vartheta_1}}\big[V^{4}\big]\big).
    \end{aligned}
    \end{equation*}
Integrating all the above inequalities, we conclude that
\begin{equation*}
    \begin{aligned}
        \abs{\inner{\pi_{\vartheta_1}}{f} - \inner{\pi_{\vartheta_2}}{f}} 
        &\leq CL\norm{\vartheta_1 - \vartheta_2}\big(1+\EB_{\pi_{\vartheta_1}}\big[V^4\big]\big)\ssum{m}{1}{K-1}e^{-c(m-1)} + Ce^{-cK}\gW_{\mathbbm{1},V^4}(\pi_{\vartheta_1}, \pi_{\vartheta_2})\\
        &\leq
        \frac{C}{c}L\norm{\vartheta_1 - \vartheta_2}(2+\EB_{\vartheta_1}[V^{4}] + \EB_{\vartheta_2}[V^{4}]) + Ce^{-cK}\gW_{\mathbbm{1},V^4}(\pi_{\vartheta_1}, \pi_{\vartheta_2}).
    \end{aligned}
    \end{equation*}
    Note that $\gW_{\mathbbm{1},V^4}(\pi_{\vartheta_1},\pi_{\vartheta_2})$ is always upper bounded by $\inner{\pi_{\vartheta_1} + \pi_{\vartheta_2}}{1 + V^4(\cdot)}$. 
    We then complete the proof by setting the new constant $C_1 := \frac{C}{c}$, and $K \to \infty$.
\end{proof}
\subsubsection{Proof of Lemma~\ref{lem:cont_Pnf-pi_f}}\label{proof:lem:cont_Pnf-pi_f}
\begin{proof}{Proof of Lemma~\ref{lem:cont_Pnf-pi_f}}
    By using the telescoping, we have
\begin{equation*}
\begin{aligned}
        \gP_{\vartheta_1}^n f - \gP_{\vartheta_2}^n f = \ssum{m}{1}{n-1}\left(\gP_{\vartheta_1}^{n-m+1}\gP_{\vartheta_2}^{m-1} - \gP_{\vartheta_1}^{n-m}\gP_{\vartheta_2}^{m}\right)f
        = \ssum{m}{1}{n-1}\gP_{\vartheta_1}^{n-m}(\gP_{\vartheta_1} - \gP_{\vartheta_2})\gP_{\vartheta_2}^{m-1}f.
    \end{aligned}
    \end{equation*}
Taking the expectation with respect to the distribution $\pi_{\vartheta_1}$ on both sides of the above equation yields
\begin{align*}
    \inner{\pi_{\vartheta_1}}{f} - \inner{\pi_{\vartheta_1}}{\gP_{\vartheta_2}^nf} &= \inner{\pi_{\vartheta_1}}{\gP_{\vartheta_1}^n f} - \inner{\pi_{\vartheta_1}}{\gP_{\vartheta_2}^n f}\\
    &=\ssum{m}{1}{n-1}\inner{\pi_{\vartheta_1}}{\gP_{\vartheta_1}^{n-m}(\gP_{\vartheta_1} - \gP_{\vartheta_2})\gP_{\vartheta_2}^{m-1}f}.
\end{align*}
Combining these equations, we have that
\begin{equation}\label{eq:cont_Pnf-pi_f_eq1}
\begin{aligned}
    &\abs{
    \{\gP_{\vartheta_1}^n f(x) - \inner{\pi_{\vartheta_1}}{f}\} - \{\gP_{\vartheta_2}^n f(x) - \inner{\pi_{\vartheta_2}}{f}\}
    }\\
    \leq&
    \abs{
    \big(\gP_{\vartheta_1}^n f(x) - \gP_{\vartheta_2}^n f(x)\big) - \left(\inner{\pi_{\vartheta_1}}{f} - \inner{\pi_{\vartheta_1}}{\gP_{\vartheta_2}^n f(x)}\right) + \left(\inner{\pi_{\vartheta_1}}{\gP_{\vartheta_2}^n f} - \inner{\pi_{\vartheta_2}}{f}\right)
    }\\
    \leq&
    \abs{
    \big(\gP_{\vartheta_1}^n f(x) - \gP_{\vartheta_2}^n f(x)\big) - \left(\inner{\pi_{\vartheta_1}}{f} - \inner{\pi_{\vartheta_1}}{\gP_{\vartheta_2}^n f(x)}\right)
    } + \abs{\inner{\pi_{\vartheta_1} - \pi_{\vartheta_2}}{\gP_{\vartheta_2}^n f}}\\
    =& 
    \abs{
    \ssum{m}{1}{n-1} \left(
    \gP_{\vartheta_1}^l(\gP_{\vartheta_1} - \gP_{\vartheta_2})\gP_{\vartheta_2}^m f(x) - \inner{\pi_{\vartheta_1}}{\gP_{\vartheta_1}^l(\gP_{\vartheta_1} - \gP_{\vartheta_2})\gP_{\vartheta_2}^m f}
    \right)
    } + \abs{\inner{\pi_{\vartheta_1} - \pi_{\vartheta_2}}{\gP_{\vartheta_2}^n f}}\\
    \leq&
    \ssum{m}{1}{n-1}\abs{
    \gP_{\vartheta_1}^{n-m}(\gP_{\vartheta_1} - \gP_{\vartheta_2})\gP_{\vartheta_2}^m f(x) - \inner{\pi_{\vartheta_1}}{\gP_{\vartheta_1}^{n-m}(\gP_{\vartheta_1} - \gP_{\vartheta_2})\gP_{\vartheta_2}^m f}
    } + \abs{\inner{\pi_{\vartheta_1} - \pi_{\vartheta_2}}{\gP_{\vartheta_2}^n f}}.
\end{aligned}
\end{equation}
By Lemma~\ref{lem:prop_pl_Lm}, it follows that
\begin{equation*}
\begin{aligned}
\abs{
    \gP_{\vartheta_1}^{n-m}(\gP_{\vartheta_1} - \gP_{\vartheta_2})\gP_{\vartheta_2}^m f(x) - \inner{\pi_{\vartheta_1}}{\gP_{\vartheta_1}^{n-m}(\gP_{\vartheta_1} - \gP_{\vartheta_2})\gP_{\vartheta_2}^m f}
    }\\ \le CL\norm{\vartheta_1 - \vartheta_2}e^{-cn}\big(2+V^4(x)+\EB_{\pi_{\vartheta_1}}[V^4]\big).
\end{aligned}
\end{equation*}
By Lemma~\ref{lem:prop_p^mf}, we have that $\frac{e^{cn}}{C}\gP_{\vartheta_2}^n f \in \gF_{\norm{\cdot},V^{2}}$, which allows us to apply  Lemma~\ref{lem:lip_pi}. 
By Lemma~\ref{lem:lip_pi}, the following holds
\begin{equation*}
\begin{aligned}
\abs{
\inner{\pi_{\vartheta_1} - \pi_{\vartheta_2}}{\gP_{\vartheta_2}^n f} 
}
&\leq Ce^{-cn}\gW_{\norm{\cdot},V^{2}}(\pi_{\vartheta_1}, \pi_{\vartheta_2})\\
&\leq
CC_1L\norm{\vartheta_1 - \vartheta_2}e^{-cn}\left(2+\inner{\pi_{\vartheta_1}}{V^{4}} + \inner{\pi_{\vartheta_2}}{V^{4}}\right).
\end{aligned}
\end{equation*}
Plugging these into equation~\eqref{eq:cont_Pnf-pi_f_eq1} yields
\begin{equation*}
\begin{aligned}
    &\abs{
    \{\gP_{\vartheta_1}^n f(x) - \inner{\pi_{\vartheta_1}}{f}\} - \{\gP_{\vartheta_2}^n f(x) - \inner{\pi_{\vartheta_2}}{f}\}
    }\\
    \leq&
    \ssum{m}{1}{n-1}CL\norm{\vartheta_1 - \vartheta_2}e^{-cn}\big(2+V^4(x) + \inner{\pi_{\vartheta_1}}{V^4}\big) + CC_1L\norm{\vartheta_1 - \vartheta_2}e^{-cn}\big(2+\inner{\pi_{\vartheta_1} + \pi_{\vartheta_2}}{V^{4}}\big)\\
    \leq&
    C_2L \norm{\vartheta_1 - \vartheta_2}ne^{-cn}\big(1 + {V}(x)^4 + \inner{\pi_{\vartheta_1}+ \pi_{\vartheta_2}}{V^4}\big),
\end{aligned}
\end{equation*}
where the last inequality holds by setting $C_2 := 2(C + CC_1)$.
\end{proof}
\subsubsection{Proof of Lemma~\ref{lem:cont_sol_poisson_eq}}
\label{proof:lem:cont_sol_poisson_eq}
\begin{proof}{Proof of Lemma~\ref{lem:cont_sol_poisson_eq}}
    Combining Eqn.~\eqref{eq:equi-def-U} with Lemma~\ref{lem:cont_Pnf-pi_f} implies
    \begin{equation*}
    \begin{aligned}
        \abs{U_f(\vartheta_1 ,x) - U_f(\vartheta_2, x)} &=
        \abs{\ssum{n}{0}{\infty}\left\{\gP_{\vartheta_1}^n f(x) - \inner{\pi_{\vartheta_1}}{f} - \gP_{\vartheta_2}^n f(x) + \inner{\pi_{\vartheta_2}}{f}\right\}
        }\\
        &\leq \ssum{n}{0}{\infty}\abs{
        \gP_{\vartheta_1}^n f(x) - \inner{\pi_{\vartheta_1}}{f} - \gP_{\vartheta_2}^n f(x) + \inner{\pi_{\vartheta_2}}{f}
        }\\
        &\leq
        C_2L\norm{\vartheta_1 - \vartheta_2}\left(1 + {V}(x)^4 + \inner{\pi_{\vartheta_1} + \pi_{\vartheta_2}}{V^4}\right)\ssum{n}{0}{\infty}n e^{-cn}\\
        &\leq
        \frac{C_2}{c}L\norm{\vartheta_1 - \vartheta_2}\big(1 + {V}(x)^4 +\inner{\pi_{\vartheta_1} + \pi_{\vartheta_2}}{V^4}\big).
    \end{aligned}
    \end{equation*}
    Here at the last inequality, we use the following fact,
    \[
    \ssum{n}{0}{\infty}ne^{-cn} = \frac{1}{c}\ssum{n}{0}{\infty}cn\cdot e^{-cn} \leq \frac{1}{c}\int_0^\infty xe^{-x} \mathrm{d}x = \frac{1}{c}.
    \]
    We then complete the proof by setting $C_3:= \frac{C_2}{c}$. 
\end{proof}
\subsubsection{Proof of Lemma~\ref{lem:reg_poisson_eq}}
By Assumption~\ref{ass:continue_H}, we have that $\frac{1}{L}H(\vartheta, x) \in \gF_{\norm{\cdot},V}$ for any $\vartheta$. Hence, by Lemma~\ref{lem:cont_sol_poisson_eq} we can obtain
\[
\abs{U_{\frac{1}{L}P_{\vartheta_1}H}(\vartheta_1, x) - U_{\frac{1}{L}P_{\vartheta_2}H}(\vartheta_2,x)}
    \leq C_3 L \norm{\vartheta_1 - \vartheta_2}\left(1+ {V}(x)^4 + \inner{\pi_{\vartheta_1} + \pi_{\vartheta_2}}{V^4}\right).
\]
On the other hand, it is straightforward to verify the homogeneity of the solution to Poisson's equation with respect to the test function \( f \). Specifically, for any \( f \in \gF_{\norm{\cdot},V} \) and any constant \( c \), we have:  
\[
U_{cf}(\vartheta, x) = c U_f(\vartheta, x).
\]  
Therefore, Lemma~\ref{lem:reg_poisson_eq} follows by substituting \( f \) and \( c \) in the above equation with \( P_{\vartheta_1}H \) and \( \frac{1}{L} \), respectively.

\subsection{Proofs for Convergence Results}\label{sec:prf_for_conv_results}
\def\bG{{ \overline{G} }}
\def\tconst{{\widetilde{C}}}



Lemma~\ref{lem:reg_poisson_eq} provides the foundation for the following lemma, which incorporates the most critical technical conditions, particularly those concerning $U_H(\vartheta, x)$, in the convergence rate proof and the FCLT proof.

\begin{lemma}[Continuity and bounded moments]
\label{assump:U}
Suppose we utilize the stream SGD method to determine the optimal parameter choice $\vartheta^*$ for the online learning problem, and $\{x_t\}_{t=0}^\infty$ represents the data we gather during this process. Then we can establish that:
\begin{enumerate}
\item[$(a)$] For any $x \in \mathcal{X}$ and $\vartheta \in \Theta$, $\|P_{\vartheta^*}H(\vartheta^*, x) - P_{\vartheta}H(\vartheta, x)\| \le L_H(x) \cdot \|\vartheta-\vartheta^*\|$.
\item[$(b)$] For any $x \in \mathcal{X}$ and $\vartheta, \vartheta' \in \Theta$, $\|\gP_{\vartheta}U_H(\vartheta, x) - \gP_{\vartheta'}U_H(\vartheta', x)\| \le L_U(x) \cdot \|\vartheta-\vartheta'\|$.
\item[$(c)$] There exist $C_U, C_H>0$ so that for any $\alpha = 1,2$,
\[
\sup_{t \ge 1} {\EB L_H^\alpha(x_t)} \le C_H^\alpha \;
~\text{and}~ \;
\sup_{t \ge 1} {\EB L_U^\alpha(x_t)} \le C_U^\alpha.
\]
\item[$(d)$] There exist $\sigma, \sigma_U >0$ such that for any $\alpha = 1, 2$,
\[
\sup_{t \ge 0} \mathbb{E}\|H(\vartheta^*, x_t)\|^\alpha \le \sigma_H^\alpha\;
~\text{and}~ \;
\sup_{t \ge 0} \mathbb{E}\|\gP_{\vartheta^*}U_H(\vartheta^*, x_t)\|^\alpha \le \sigma_U^\alpha.
\]
\end{enumerate}
\end{lemma}
The proof of Lemma~\ref{assump:U} is deferred to Appendix~\ref{proof:assump:U}.
Lemma~\ref{assump:U} shows that $P_{\vartheta}U_H(\vartheta, x)$ is $L_U(x)$-Lipschitz w.r.t.\ $\vartheta$ for any given data point $x$.
Similarly, the $H(\vartheta, x)$ is average-$L_H(x)$-Lipschitz at the parameter $\vartheta^*$ for any given data point $x$.
Note that the Lipschitz modules $L_{H}(\cdot)$ and $L_U(\cdot)$ are allowed to depend on the data $x$ rather than a universal constant in previous works~\citep{li2022state,li2023online}.
In order to ensure stability, we require both $\EB L_H^2(x_t)$ and $\EB L_U^2(x_t)$ are uniformly bounded.
As a result, we have the following result 
\begin{lemma}
\label{lem:bounded-growth}
It follows from Assumption~\ref{assmpt: convexity} and Lemma~\ref{assump:U}  that for any $t \ge 0$,
\begin{equation*}
\begin{aligned}
\EB \| H(\vartheta_t, x_{t})\|^2 &\le 2(\sigma_H^2 + 4C_{\Theta}^2 C_H^2), \\
\EB \norm{H(\vartheta_t, x_t)}^4 &\le {8(\sigma_H^4 + 16L^4C_\Theta^4C_H^4)},\\
\EB \left\|P_{\vartheta_{t}}U_H(\vartheta_{t}, x_{t})
\right\|^2 &\le 2(\sigma_U^2 +4C_{\Theta}^2 C_U^2).
\end{aligned}
\end{equation*}
\end{lemma}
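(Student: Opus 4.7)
The plan is to control each moment by splitting the quantity around the reference point $\vtheta$ and combining the triangle inequality with Young-type inequalities such as $(a+b)^2 \le 2a^2 + 2b^2$. Each bound will then split into a ``reference'' piece (evaluated at the root $\vtheta$) handled by Proposition~\ref{assump:U}(d), plus a ``deviation'' piece controlled by a Lipschitz modulus times $\|\vartheta_t - \vtheta\|$, with the latter bounded uniformly by $2C_\Theta$ thanks to Assumption~\ref{assmpt: convexity}(a). The moment control of the Lipschitz modulus supplied by Proposition~\ref{assump:U}(c) then closes the loop.

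For the second moment of $H(\vartheta_t, x_t)$, I would write
\[
\|H(\vartheta_t, x_t)\|^2 \le 2\|H(\vtheta, x_t)\|^2 + 2\|H(\vartheta_t, x_t) - H(\vtheta, x_t)\|^2,
\]
so that the first expectation is immediately at most $2\sigma_H^2$ by Proposition~\ref{assump:U}(d). For the second expectation, I would condition on the $\sigma$-algebra $\gF_{t-1}$ generated by $\{x_s, \vartheta_{s+1}\}_{s \le t-1}$ (so that $\vartheta_t$ and $x_{t-1}$ are measurable while $x_t \sim P_{\vartheta_t}(x_{t-1},\cdot)$), and apply Proposition~\ref{assump:U}(a):
\[
\EB\bigl[\|H(\vartheta_t, x_t) - H(\vtheta, x_t)\|^2 \,\big|\, \gF_{t-1}\bigr] = P_{\vartheta_t}\|H(\vartheta_t, \cdot) - H(\vtheta, \cdot)\|^2(x_{t-1}) \le L_H^2(x_{t-1})\|\vartheta_t - \vtheta\|^2.
\]
Taking outer expectation, using $\|\vartheta_t - \vtheta\| \le 2C_\Theta$, and invoking Proposition~\ref{assump:U}(c) bounds this by $4C_\Theta^2 C_H^2$, which combined with the reference term yields the announced $2(\sigma_H^2 + 4C_\Theta^2 C_H^2)$. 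The bound on $\EB\|P_{\vartheta_t}U(\vartheta_t, x_t)\|^2$ is completely parallel: replace (a) by (b), $L_H$ by $L_U$, and $\sigma_H, C_H$ by $\sigma_U, C_U$.

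For the fourth moment, I would apply the same $(a+b)^2 \le 2a^2 + 2b^2$ inequality a second time to get
\[
\|H(\vartheta_t, x_t)\|^4 \le 8\|H(\vtheta, x_t)\|^4 + 8\|H(\vartheta_t, x_t) - H(\vtheta, x_t)\|^4.
\]
Proposition~\ref{assump:U}(d) again gives $\EB\|H(\vtheta, x_t)\|^4 \le \sigma_H^4$. For the deviation term, the pointwise Lipschitz estimate from Proposition~\ref{assumpt: H} yields
\[
\|H(\vartheta_t, x_t) - H(\vtheta, x_t)\| \le L(\|x_t\|+1)\|\vartheta_t - \vtheta\| \le 2LC_\Theta(\|x_t\|+1),
\]
so that the fourth moment is at most $16L^4 C_\Theta^4\,\EB(\|x_t\|+1)^4$, and the uniform fourth-moment control on $x_t$ established inside the proof of Proposition~\ref{assump:U}(c) (where $L_H(x)$ is taken proportional to $\|x\|+1$) delivers the factor $C_H^4$. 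Assembling the two pieces gives the stated $8(\sigma_H^4 + 16L^4 C_\Theta^4 C_H^4)$.

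The only delicate point is the measurability bookkeeping: the deviation bound in Proposition~\ref{assump:U}(a) involves $P_{\vartheta_t}$ acting on the integrand through the random parameter $\vartheta_t$, so the Lipschitz modulus evaluated at $x_{t-1}$ comes out of the conditional expectation only after conditioning on the correct filtration. Once this tower-property step is handled cleanly, the rest reduces to plugging in the uniform constants from Propositions~\ref{assumpt: H} and \ref{assump:U} together with Assumption~\ref{assmpt: convexity}(a).
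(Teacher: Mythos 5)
Your proposal is correct and follows essentially the same route as the paper's proof: decompose around the reference point $\vtheta$, bound the deviation via the tower property with $P_{\vartheta_t}$ and the modulus $L_H(x_{t-1})$ from Proposition~\ref{assump:U}(a) (respectively the pointwise bounds from Proposition~\ref{assump:U}(b) and Proposition~\ref{assumpt: H} for the other two moments), and then invoke $\|\vartheta_t-\vtheta\|\le 2C_\Theta$ together with the moment bounds in Proposition~\ref{assump:U}(c)--(d). The constants you obtain match the lemma exactly, so no changes are needed.
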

\begin{proof}{Proof of Lemma~\ref{lem:bounded-growth}}
The proof can be found in~\ref{proof:lem:bounded-growth}.
\end{proof}
Lemma~\ref{lem:bounded-growth} implies that both the incremental update $H(\vartheta_t, x_{t})$ and the conditional functions $P_{\vartheta_{t}}U_H(\vartheta_{t}, x_{t})$ have uniformly bounded (at most forth-order) moments.
The analysis of Lemma~\ref{lem:bounded-growth} heavily depends on the fact that all iterates $\{\vartheta_t\}_{t \ge 0}$ locate in $\Theta$ that is contained in a ball with radius $C_{\Theta}$.
If we allow $C_{\Theta}$ to be infinity, then we might force $L_H(x)$ and $L_H(x)$ to be constant again so as to ensure stability and convergence.




\subsubsection{Proof of Theorem~\ref{thm:L2-convergence}} \label{sec:prf_of_thm_convergence}
In the following, we will establish Theorem~\ref{thm:L2-convergence} step by step.
We first capture the one-step progress of our Markov SGD in Lemma~\ref{lem:itera}.

\begin{lemma} 
	\label{lem:itera}
	Define $\Delta_t = \EB \| \vartheta_{t} - \vartheta^* \|^2 $ for simplicity.
	Under Assumptions~\ref{assmpt: convexity}--\ref{ass:W-contraction}, it holds
	 \begin{equation}
	 		 \label{eq:square1}
	 		\Delta_{t+1} \le \left( 1-2 \eta_t K_0 \right) \Delta_t + 2(\sigma_H^2 + 4C_{\Theta}^2 C_H^2)\eta_t^2 - 2\eta_t\EB\myinner { \vartheta_t -\vartheta^* }{ H(\vartheta_t, x_t) - \nabla f (\vartheta_t) }.
	 \end{equation}
\end{lemma}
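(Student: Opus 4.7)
The plan is to track the squared distance $\|\vartheta_{t+1}-\vtheta\|^2$ through one iteration of the projected recursion, using the standard SGD decomposition but keeping the Markovian noise term explicit (since it is not a martingale increment and will need to be handled separately via the Poisson-equation machinery later in the proof of Theorem~\ref{thm:L2-convergence}).

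First I would invoke non-expansiveness of the Euclidean projection $\Pi_{\gB}$ together with the fact that $\vtheta \in \gB$ (Assumption~\ref{assmpt: convexity}(a)) to obtain
\[
\|\vartheta_{t+1}-\vtheta\|^2 = \|\Pi_{\gB}(\vartheta_t - \eta_t H(\vartheta_t,x_t)) - \Pi_{\gB}(\vtheta)\|^2 \le \|\vartheta_t - \eta_t H(\vartheta_t,x_t) - \vtheta\|^2.
\]
Expanding the right-hand side gives the familiar three-term decomposition
\[
\|\vartheta_t-\vtheta\|^2 - 2\eta_t \myinner{\vartheta_t-\vtheta}{H(\vartheta_t,x_t)} + \eta_t^2 \|H(\vartheta_t,x_t)\|^2.
\]

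Next I would split the cross term by adding and subtracting $g(\vartheta_t)$:
\[
-2\eta_t \myinner{\vartheta_t-\vtheta}{H(\vartheta_t,x_t)} = -2\eta_t \myinner{\vartheta_t-\vtheta}{g(\vartheta_t)} - 2\eta_t \myinner{\vartheta_t-\vtheta}{H(\vartheta_t,x_t)-g(\vartheta_t)}.
\]
Since $g = \nabla f$, Assumption~\ref{assmpt: convexity}(b) yields the strong-monotonicity lower bound $\myinner{\vartheta_t-\vtheta}{g(\vartheta_t)} \ge K_0 \|\vartheta_t-\vtheta\|^2$, which produces the contraction factor $(1-2\eta_t K_0)$ on $\Delta_t$ after taking expectations.

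Finally, for the quadratic noise term I would apply Lemma~\ref{lem:bounded-growth}, which gives $\EB \|H(\vartheta_t,x_t)\|^2 \le 2(\sigma_H^2+4C_{\Theta}^2 C_H^2)$ uniformly in $t$; this bound is the consequence of Proposition~\ref{assump:U}(d), Proposition~\ref{assumpt: H}, and the boundedness of $\Theta$. Assembling the three pieces and taking total expectation yields exactly~\eqref{eq:square1}. The only subtle point, and the reason the bias term is not dropped, is that under SAMCMC $x_t$ is drawn from $P_{\vartheta_t}(x_{t-1},\cdot)$ rather than from $\pi_{\vartheta_t}$, so $\EB[H(\vartheta_t,x_t)\mid\gF_{t-1}]\neq g(\vartheta_t)$ in general; controlling $\EB\myinner{\vartheta_t-\vtheta}{H(\vartheta_t,x_t)-g(\vartheta_t)}$ is therefore the genuine difficulty of the overall convergence analysis, but at this stage the term is simply retained verbatim for the downstream Poisson-equation argument.
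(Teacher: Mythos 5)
Your proposal is correct and follows essentially the same route as the paper's own proof: non-expansiveness of the projection, the standard three-term expansion, strong monotonicity from Assumption~\ref{assmpt: convexity}(b) for the contraction factor, and Lemma~\ref{lem:bounded-growth} for the uniform second-moment bound on $H(\vartheta_t,x_t)$, with the Markovian bias term retained for the later Poisson-equation analysis. No gaps.
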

\begin{proof}{Proof of Lemma~\ref{lem:itera}}
	The proof can be found in~\ref{proof:lem:itera}.
\end{proof}

Once $\eta_t$ is sufficiently small (e.g., $\eta_t \le \frac{1}{K_0}$), we have $1-2 \eta_t K_0 \le 1- \eta_t K_0 < 1$.
It implies that the first term in the r.h.s.~of \eqref{eq:square1} is a contraction. 
To simplify notations, we introduce the scalar product $G_{m:n} = \prod_{t=m}^n (1 - 2\eta_t K_0)$, for $n > m \geq 1$, and $G_{m:n} = 1$ if $n \leq m$. 
Solving the recursion in \eqref{eq:square1} yields
\begin{equation}
\begin{aligned}
    \label{eq:iterate}
    \Delta_{t+1} & \textstyle \leq G_{1:t} \Delta_1 + 2 (\sigma_H^2 + 4C_{\Theta}^2 C_H^2) \sum_{s=1}^{t} G_{s+1:t} \eta_{s}^2 \\
    &\qquad + 2 \sum_{s=1}^{t} G_{s+1:t} \eta_{s} \EB \myinner{ \vartheta^* - \vartheta_s }{ H(\vartheta_t, x_t) - \nabla f (\vartheta_t)   } .
\end{aligned}
\end{equation}

It can be shown that the first two terms in~\eqref{eq:iterate} are bounded by ${\cal O}(\eta_k)$.
Indeed, the first term would decay exponentially fast by using the numerical inequality $1-x \le e^{-x}$.
For the second term, by the following Lemma~\ref{lem:o_gamma_k}, we immediately know that $\ \sum_{s=1}^{t} G_{s+1:t} \eta_{s}^2\le \frac{\kappa}{K_0} \eta_t$, given the step size condition in Assumption~\ref{assump:step-size}.

\begin{lemma}\label{lem:o_gamma_k}
	Let $a>0$ and $\{\eta_{t}\}_{t \geq 1}$ be a non-increasing sequence such that $\eta_{1}< 2 / a$. If $\eta_{t-1} / \eta_t \leq 1 + (a/2)\eta_t$ for any $k \geq 1$, then for any $t \geq 2$, 
	\[
	\sum_{s=1}^{t} \eta_{s}^{2} \prod_{\tau=s+1}^{t}\left(1-\eta_{\tau} a\right) \leq  \frac{2}{a} \eta_{t}.
	\]
\end{lemma}
\begin{proof}{Proof of Lemma~\ref{lem:o_gamma_k}}
		The proof can be found in~\ref{proof:lem:o_gamma_k}.
\end{proof}

In the following, we focus on analyzing the last term in~\eqref{eq:iterate} which we refer to as the cross term, and present the result in Lemma~\ref{lem:cross}.
When the samples $\{ x_{t} \}_{t \geq 1}$ are drawn according to $x_{t} \sim P_{\vartheta_{t}}(x_{t-1}, \cdot)$, to bound the expectation of this cross term in Lemma~\ref{lem:cross}, we make use of the Poison equation and decompose the gradient error $H(\vartheta_s, x_s) - g(\vartheta_s)$ into martingale and finite difference terms.
This analysis approach is inspired by~\citet{benveniste2012adaptive} and has recently been used by~\citet{atchade2017perturbed,karimi2019non,li2022state} for the finite-time convergence analysis of their interested algorithms.

\begin{lemma} \label{lem:cross}
	Under Assumptions~\ref{assump:step-size}--\ref{ass:W-contraction}, it holds
    \begin{equation*}
	\begin{aligned}
		 \sum_{s=1}^{t} &\eta_{s} G_{s+1:t} \EB \myinner{ \vartheta^* - \vartheta_s }{ H(\vartheta_s, x_s) - g(\vartheta_s)   } \\
   &\le G_{2:t} C_0 + \sum_{s=2}^{t}  \eta_{s-1}\eta_sG_{s+1:t}  \left(
	C_1 \Delta_s + C_2 \Delta_{s-1}
	\right)  + \eta_t \left(\frac{1}{2}\Delta_t + C_3\right)
	\end{aligned}
    \end{equation*}
	where we have defined the following constants for simplicity
    \begin{equation}
    \begin{aligned}
    \label{eq:cross-term-constant}
    C_0 &= \eta_1\left(
    \frac{\Delta_{1}}{2}+ \sigma_U^2 +4C_{\Theta}^2 C_U^2
    \right), \quad
    C_1 = \frac{L_U}{2}, \quad
    C_2 = \frac{\varkappa + K_0}{2}, \quad
    C_3 = C_{\star},
    \end{aligned}
    \end{equation}
    where 
    \begin{equation}\label{eq:C_star}
    C_{\star} := {\frac{\kappa}{K_0}\left(4\sigma_U^4 + 32\sigma_H^4 + 512L^4C_{\Theta}^4 C_H^4\right)} + \left(1+ \frac{\kappa(1+\varkappa + K_0)}{K_0} \right)
    \left(\sigma_U^2 +4C_{\Theta}^2 C_U^2\right).
    \end{equation}
    \end{lemma}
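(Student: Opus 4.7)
The plan is to exploit the Poisson equation to rewrite the stochastic error $H(\vartheta_s,x_s)-g(\vartheta_s)$ as a telescoping combination that isolates a zero-mean martingale increment from two deterministic finite-difference terms; this is the classical Benveniste-style ``martingale–residual–coboundary'' decomposition. Writing $V(\vartheta,x):=P_\vartheta U(\vartheta,x)$ and using the Poisson identity $H(\vartheta_s,x_s)-g(\vartheta_s)=U(\vartheta_s,x_s)-V(\vartheta_s,x_s)$, I split
\[
H(\vartheta_s,x_s)-g(\vartheta_s)=\underbrace{U(\vartheta_s,x_s)-V(\vartheta_s,x_{s-1})}_{M_s}+\underbrace{V(\vartheta_s,x_{s-1})-V(\vartheta_{s-1},x_{s-1})}_{R_s}+\underbrace{V(\vartheta_{s-1},x_{s-1})-V(\vartheta_s,x_s)}_{T_s}.
\]
Inserted into the cross term, this gives three pieces to bound; the target bound in Lemma~\ref{lem:cross} will arise from each with the claimed constants.

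\paragraph{Martingale and residual pieces.} Let $\mathcal F_{s-1}=\sigma(x_0,\ldots,x_{s-1},\vartheta_1,\ldots,\vartheta_s)$. Since $\vartheta_s\in\mathcal F_{s-1}$ and $x_s\sim P_{\vartheta_s}(x_{s-1},\cdot)$, one has $\mathbb E[M_s\mid\mathcal F_{s-1}]=0$; taking the outer expectation kills every $\langle\vtheta-\vartheta_s,M_s\rangle$ term. For $R_s$, Proposition~\ref{assump:U}(b) and the non-expansiveness of $\Pi_{\mathcal B}$ together with $\vartheta_s-\vartheta_{s-1}=\Pi_{\mathcal B}(\vartheta_{s-1}-\eta_{s-1}H(\vartheta_{s-1},x_{s-1}))-\vartheta_{s-1}$ give
\[
\|R_s\|\le L_U(x_{s-1})\,\eta_{s-1}\,\|H(\vartheta_{s-1},x_{s-1})\|.
\]
Pairing this with $\vtheta-\vartheta_s$ via Cauchy--Schwarz and Young's inequality $\langle a,b\rangle\le \tfrac12\|a\|^2+\tfrac12\|b\|^2$, then using the moment bounds in Proposition~\ref{assump:U}(c) and Lemma~\ref{lem:bounded-growth} on $L_U(x_{s-1})\|H(\vartheta_{s-1},x_{s-1})\|$, yields a contribution of the form $\eta_{s-1}\eta_s G_{s+1:t}(\tfrac{L_U}{2}\Delta_s+\text{const})$, producing the $C_1$-term in the statement.

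\paragraph{Coboundary via Abel summation and constant bookkeeping.} Setting $a_s:=V(\vartheta_s,x_s)$ the coboundary piece is $T_s=a_{s-1}-a_s$, so summation by parts yields
\[
\sum_{s=1}^t \eta_s G_{s+1:t}\langle\vtheta-\vartheta_s,a_{s-1}-a_s\rangle=\eta_1 G_{2:t}\langle\vtheta-\vartheta_1,a_0\rangle-\eta_t\langle\vtheta-\vartheta_t,a_t\rangle+\sum_{s=1}^{t-1}\bigl(b_{s+1}\langle\vtheta-\vartheta_{s+1},a_s\rangle-b_s\langle\vtheta-\vartheta_s,a_s\rangle\bigr),
\]
with $b_s:=\eta_s G_{s+1:t}$. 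The $s=1$ boundary term, bounded by Young's inequality and the $L^2$ estimate on $\|a_0\|$ from Lemma~\ref{lem:bounded-growth}, supplies exactly $G_{2:t}C_0$. The $s=t$ boundary term is handled by Young with splitting $\tfrac12$/$\tfrac12$, yielding $\eta_t(\tfrac12\Delta_t+\text{const})$. Each interior difference $b_{s+1}c_{s+1}-b_sc_s$ splits additively into a weight-increment $(b_{s+1}-b_s)c_{s+1}$ and a parameter-increment $b_s(c_{s+1}-c_s)$; Assumption~\ref{assump:step-size}(b) controls $|b_{s+1}-b_s|$ by $\eta_s\eta_{s+1}(\varkappa+2K_0)G_{s+2:t}$, while $c_{s+1}-c_s=-\langle\vartheta_{s+1}-\vartheta_s,\cdot\rangle$ is $O(\eta_s\|H(\vartheta_s,x_s)\|)$. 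Applying Cauchy--Schwarz, Young, and the uniform $2$nd- and $4$th-moment bounds from Proposition~\ref{assump:U}(c)--(d) and Lemma~\ref{lem:bounded-growth} produces contributions of the stated $\eta_{s-1}\eta_s G_{s+1:t}(C_1\Delta_s+C_2\Delta_{s-1})$ form, and the ``leftover'' constants aggregate into $C_\star$ as in~\eqref{eq:C_star}.

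\paragraph{Main obstacle.} The trickiest part is the interior differences in the Abel sum: one must apply Young's inequality with weights tuned so that the aggregated coefficient of $\Delta_s$ in the final estimate is small enough (relative to $K_0$) to be absorbed by the contraction $-2K_0\eta_s\Delta_s$ in Lemma~\ref{lem:itera}. In parallel, the $4$th-moment estimates are needed—rather than just $2$nd moments—because cross terms of the form $L_U(x_{s-1})\|H(\vartheta_{s-1},x_{s-1})\|\cdot\|a_s\|$ arise after Young splitting, and Hölder with three factors forces the higher moments. Matching all the accumulated constants to the explicit form of $C_\star$ in~\eqref{eq:C_star} is the bookkeeping cost of this precision.
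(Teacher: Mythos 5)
Your proposal is correct and follows essentially the same route as the paper: after the martingale increment is removed by conditioning, your residual term is the paper's $A_1$, and the Abel summation of the coboundary reproduces exactly the paper's iterate-increment term $A_2$, weight-increment term $A_3$, and boundary terms $A_4$, each bounded by Young's inequality together with the Lipschitz and second/fourth moment bounds of Proposition~\ref{assump:U} and Lemma~\ref{lem:bounded-growth}, and collapsed via Lemma~\ref{lem:o_gamma_k}. The only cosmetic differences are your explicit use of the projection's non-expansiveness for $\|\vartheta_s-\vartheta_{s-1}\|\le\eta_{s-1}\|H(\vartheta_{s-1},x_{s-1})\|$ (slightly more careful than the paper's unprojected identity) and a harmless re-indexing of the $s=1$ boundary term.
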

\begin{proof}{Proof of Lemma~\ref{lem:cross}}
	The proof can be found in~\ref{proof:lem:cross}.
\end{proof}

To proceed the proof, we substitute Lemma~\ref{lem:cross} and Lemma~\ref{lem:o_gamma_k} into~\eqref{eq:iterate} and obtain
\begin{align}
	\label{eq:Delta-final-iterate}
	\Delta_{t+1} & \leq G_{1:t} \Delta_1 + \frac{2\kappa(\sigma_H^2 + 4C_{\Theta}^2C_H^2)}{K_0} \eta_t + 2G_{2:t} C_0  + 2\eta_t \left(\frac{1}{2}\Delta_t + C_3\right) \notag \\
 &\qquad + 2\sum_{s=2}^{t}  \eta_{s-1}\eta_sG_{s+1:t}  \left(
	C_1 \Delta_s + C_2 \Delta_{s-1}
	\right) 
 \notag \\
	&\le G_{1:t} (\Delta_1 + 2C_0 ) +  \sum_{s=2}^{t}  \eta_{s-1}\eta_s G_{s+1:t}  \left(
	L_U\Delta_s + 2C_2 \Delta_{s-1}
	\right)  + \eta_t \left( \Delta_t + 4C_3\right) \notag \\
	&= G_{1:t}  \tconst_0 + \sum_{s=2}^{t}  \eta_{s-1}\eta_s G_{s+1:t} (\tconst_1 \Delta_s + \tconst_2 \Delta_{s-1}) + \eta_t (\Delta_t + 4C_3 ),
\end{align}
where we introduce constants $\{ \tconst_i \}_{0\le i \le 3}$ for notation simplicity
\begin{equation}
			\label{eq:convergence-constant}
	\tconst_0 = \Delta_1 + 2C_0,  \quad
	\tconst_1 = L_U,\quad
	\tconst_2 = 2C_2 = \varkappa + K_0,\quad
	  \tconst_3= 3C_3.
\end{equation}

\begin{lemma}
	\label{lem:convergence}
	Suppose that $\{ \Delta_t \}_{t \geq 1}$ satisfy \eqref{eq:Delta-final-iterate} and the step sizes $\{ \eta_t\}_{t \geq 1}$ satisfy Assumption~\ref{assump:step-size}. 
	It holds {(a)}
	\begin{equation}
		 \label{eq:olDelta}
			\sup_{t \geq 1} \Delta_t \leq \overline{\Delta} :=   \min \left\{ 6 \left(\Delta_1 + \kappa K_0   \sum_{s=1}^{\infty} \eta_s^2\right), 4C_{\Theta}^2 \right\}
	\end{equation}
	and {(b)} the following inequality holds for any $t \geq 1$,
	\begin{equation}
	\label{eq:bdDelta}
 \Delta_{t+1} \le
	\prod_{s=2}^t \left( 1 -K_0 \eta_s \right) \Delta_1 
		+ \eta_t \left[  \left(1 + 8 \kappa    \right) \overline{\Delta} + 18\kappa C_3\right],
	\end{equation}
where $C_3$ is given in Lemma~\ref{lem:cross}.

\end{lemma}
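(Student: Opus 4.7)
The plan is to prove the uniform bound in (i) first and then feed it back into recursion~\eqref{eq:Delta-final-iterate} to derive the sharper rate in (ii).

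For part (i), the bound $\Delta_t \leq 4C_\Theta^2$ is immediate: the update projects onto $\Theta$ and $\vtheta \in \Theta$ with $\sup_{\vartheta \in \Theta}\|\vartheta\| \leq C_\Theta$, so $\|\vartheta_t-\vtheta\| \leq 2C_\Theta$. For the other half of the minimum, I will set $M_t := \sup_{s \leq t}\Delta_s$ and induct. Substituting $\Delta_s, \Delta_{s-1} \leq M_t$ into~\eqref{eq:Delta-final-iterate} gives
\[
\Delta_{t+1} \leq G_{1:t}\tconst_0 + M_t(\tconst_1+\tconst_2)\sum_{s=2}^t \eta_{s-1}\eta_s G_{s+1:t} + \eta_t(M_t+\tconst_3).
\]
Assumption~\ref{assump:step-size}(b) gives $\eta_{s-1}/\eta_s \leq 1+\varkappa\eta_{s-1} \leq 2$, hence $\eta_{s-1}\eta_s \leq 2\eta_s^2$, and Lemma~\ref{lem:o_gamma_k} (with $a=2K_0$) gives $\sum_{s=2}^t \eta_s^2 G_{s+1:t} \leq \eta_t/K_0$. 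Because $\varkappa \leq K_0/2$ and the coefficient of $M_t$ becomes strictly less than $1$, the induction closes into a self-bounding inequality of the form $M_{t+1} \leq c_1 \Delta_1 + c_2 \sum_{s=1}^{\infty}\eta_s^2$, with constants tuned to recover the factor $6$ and the parameter $\kappa K_0$ claimed in~\eqref{eq:olDelta}.

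For part (ii), I will feed $\sup_t \Delta_t \leq \overline{\Delta}$ back into~\eqref{eq:Delta-final-iterate}. Since $1-2K_0\eta_s \leq 1-K_0\eta_s$, we have $G_{1:t} \leq \prod_{s=2}^t(1-K_0\eta_s)$, so $G_{1:t}\Delta_1$ yields the leading geometric term $\prod_{s=2}^t(1-K_0\eta_s)\Delta_1$ in~\eqref{eq:bdDelta}; the residual $2 C_0 G_{1:t}$ is $O(\eta_t)$ because the step-size conditions imply $G_{1:t}/\eta_t$ is uniformly bounded (equivalently, $G_{1:t}\leq \kappa \eta_t$ for an appropriate $\kappa$). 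Next, the summation term is bounded by $(\tconst_1+\tconst_2)\overline{\Delta}\cdot 2\sum_{s=2}^t\eta_s^2 G_{s+1:t} \leq \frac{4(\tconst_1+\tconst_2)}{K_0}\overline{\Delta}\,\eta_t$, which is absorbed into $8\kappa \overline{\Delta}\eta_t$ after folding $(\tconst_1+\tconst_2)/K_0$ into $\kappa$. The last term gives $\eta_t(\Delta_t+\tconst_3)\leq \eta_t(\overline{\Delta}+\tconst_3)$. Summing these three contributions and grouping yields~\eqref{eq:bdDelta}.

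The main obstacle will be the bookkeeping of constants so the final coefficients come out as $(1+8\kappa)\overline{\Delta}+18\kappa C_3$: one has to track carefully how the generic step-size constant $\kappa$ must absorb contributions from $\tconst_1 = L_U$, $\tconst_2 = \varkappa + K_0$, the bound $\eta_{s-1}\eta_s \leq 2\eta_s^2$, the factor $2/a = 1/K_0$ from Lemma~\ref{lem:o_gamma_k}, and the estimate $G_{1:t}\leq \kappa \eta_t$. A secondary subtlety is the self-bounding step in part (i): the multiplier of $M_t$ on the right-hand side must be uniformly below $1$, and this in turn hinges on the condition $\varkappa \leq K_0/2$ in Assumption~\ref{assump:step-size} together with $\eta_t \leq \gamma_0$. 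Once these constant chases are handled, both the induction in (i) and the substitution in (ii) reduce to straightforward manipulations.
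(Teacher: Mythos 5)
Your part (i) hinges on the claim that, after substituting $M_t:=\sup_{s\le t}\Delta_s$ into \eqref{eq:Delta-final-iterate}, the multiplier of $M_t$ is strictly below $1$ under Assumption~\ref{assump:step-size}; that is where the argument breaks. The multiplier you produce is $\eta_t\bigl[1+\kappa(\tconst_1+\tconst_2)/K_0\bigr]$ (with $\kappa$ the bound on $\eta_{s-1}/\eta_s$, and the $\eta_t/K_0$ coming from Lemma~\ref{lem:o_gamma_k}), but the step-size condition only guarantees $\eta_t\tconst_1=\eta_t L_U\lesssim K_0$ (up to a factor $2$) and $\eta_t(\varkappa+3K_0)\le K_0$. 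Hence the $L_U$-contribution alone is of size $\kappa$ to $2\kappa\ge 1$, and nothing in ``$\varkappa\le K_0/2$ together with $\eta_t\le\gamma_0$'' forces the multiplier below one; the self-bounding induction therefore does not close, and $\sup_t\Delta_t\le 6(\Delta_1+\kappa K_0\sum_s\eta_s^2)$ is not reached by this route. The paper's proof is structured precisely to avoid needing a step size that is smaller by the extra factor $\kappa L_U/K_0$: it introduces the auxiliary sequence ${\rm U}_t$ defined with equality in \eqref{eq:U-iterate}, rewrites it as the one-step inequality \eqref{eq:U-iterate-new}, in which (a) the $\tconst_1{\rm U}_s$ term is absorbed \emph{locally} into the contraction, $1-2K_0\eta_s+\eta_{s-1}\eta_s\tconst_1\le 1-K_0\eta_s$ (this is exactly what $\eta\tconst_1\le K_0$ buys), (b) the $\tconst_2{\rm U}_{s-1}$ terms cancel after summation against $-K_0\eta_{s-1}{\rm U}_{s-1}$, and (c) the dangerous $\eta_s\Delta_s$-type contributions enter only through the telescoping difference $\eta_s{\rm U}_s-\eta_{s-1}{\rm U}_{s-1}$, so that after summing a single term $\eta_t{\rm U}_t$ with coefficient at most $1/2$ survives and the factor $6$ comes from iterating ${\rm U}_{t+1}\le 0.5\,{\rm U}_t+B$. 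Your global supremum bound replaces the local cancellation (a) by $\tconst_1 M_t\sum_s\eta_{s-1}\eta_s G_{s+1:t}$, which is where the uncontrolled factor $\kappa L_U/K_0$ enters.

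A secondary problem is in part (ii): ``folding $(\tconst_1+\tconst_2)/K_0$ into $\kappa$'' is not legitimate, since $\kappa$ is a fixed constant determined by the step-size ratio, not a slack parameter. With the crude bound $\Delta_s\le\overline{\Delta}$ inside the sum, your final constant necessarily carries a factor of order $L_U\kappa/K_0$, whereas the constant in \eqref{eq:bdDelta}, namely $(1+8\kappa)\overline{\Delta}+18\kappa C_3$, is free of $L_U$ — again because the paper absorbs the $L_U$ term into the contraction so that only $\tconst_2=\varkappa+K_0\le\tfrac{3}{2}K_0$ reaches the constant. So even granting part (i), your part (ii) would deliver the correct $O(\eta_t)$ order but with different, $L_U$-dependent constants, not the lemma as stated. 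The ingredients you do have right — the projection bound $\Delta_t\le 4C_{\Theta}^2$, $G_{1:t}=O(\eta_t)$, the use of Lemma~\ref{lem:o_gamma_k}, and bounding the last term by $\eta_t(\overline{\Delta}+\tconst_3)$ — all agree with the paper; the missing idea is the auxiliary-sequence and telescoping device that converts \eqref{eq:Delta-final-iterate} into a genuine one-step contraction.
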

\begin{proof}{Proof of Lemma~\ref{lem:convergence}}
	The proof can be found in~\ref{proof:lem:convergence}.
\end{proof}

Proving the above lemma requires one to establish the stability of the system \eqref{eq:Delta-final-iterate}, which demands a sufficiently small $\eta_t$ to control the remainder term $\eta_t   \Delta_t$. Our analysis relies on the special structure of this inequality system which is also used by~\citet{li2022state}.
The convergence bound \eqref{eq:bdDelta} follows from the boundedness of $\Delta_t$ in~\eqref{eq:bdDelta}. 
Applying Lemma~\ref{lem:convergence} finishes the proof of Theorem~\ref{thm:L2-convergence}.

\subsubsection{Proof of Lemma~\ref{assump:U}}\label{proof:assump:U}

\begin{proof}{Proof of Lemma~\ref{assump:U}}
   For point \((a)\), by applying Assumption~\ref{ass:continue_H}, we have  
\[
\norm{P_{\vartheta^*}H(\vartheta^*,x) - P_{\vartheta}H(\vartheta,x)} \leq L\big(V(x) + 1\big)\norm{\vartheta - \vartheta^*}.
\]  

For point \((b)\), using Neumann's expansion, we obtain:
\begin{align*}
P_{\vartheta}U_H(\vartheta,x) &= P_{\vartheta} \sum_{n=0}^\infty P_{\vartheta}^n \big(H(\vartheta,x) - \inner{\pi_{\vartheta}}{H(\vartheta,\cdot)}\big) \\
&= \sum_{n=0}^\infty P_{\vartheta}^n \big(P_{\vartheta}H(\vartheta,x) - \inner{\pi_{\vartheta}}{P_{\vartheta}H(\vartheta,\cdot)}\big) 
= U_{P_{\vartheta}H}(\vartheta,x).
\end{align*}  

Applying Lemma~\ref{lem:reg_poisson_eq}, we then have:
\begin{align*}
\norm{P_{\vartheta}U_H(\vartheta,x) - P_{\vartheta^\prime}U_H(\vartheta^\prime,x)} &= \norm{U_{P_{\vartheta}H}(\vartheta,x) - U_{P_{\vartheta^\prime}H}(\vartheta^\prime,x)} \\
&\leq C\left(1 + V(x)^4 + \EB_{\pi_{\vartheta}+\pi_{\vartheta^\prime}}[V^4]\right)\norm{\vartheta - \vartheta^\prime}.
\end{align*}  

By taking $L_H(x)$ as $V(x)+1$, we know that
\[
\sup\limits_{t\ge 1} L_{H}^\alpha \precsim L\sup\limits_{t\ge 1}\left\{\EB V(x_t)^\alpha + 1\right\} \eqqcolon C_{H}^\alpha < \infty.
\]
Similarly, let $L_U(x) = C\left(1 + V(x)^4 + \sup\limits_{\vartheta\in \Theta}\EB_{\pi_{\vartheta}}[V^4]\right)$.
The same analysis can be used on $L_U(x_t)$.
\end{proof}

\subsubsection{Proof of Lemma~\ref{lem:bounded-growth}}
\label{proof:lem:bounded-growth}

\begin{proof}{Proof of Lemma~\ref{lem:bounded-growth}}
\begin{equation*}
\begin{aligned}
\EB \|P_{\vartheta_t} H(\vartheta_t, x_{t}) - P_{\vartheta^*} H(\vartheta^*, x_{t})  \|^2
&\le \EB \left[ L_H(x_{t-1})^2 \|\vartheta_t -\vartheta^*\|^2 \right]\\
&\overset{(a)}{\le} 4C_{\Theta}^2 \cdot \EB L_H(x_{t-1})^2 \overset{(b)}{\le} 4C_{\Theta}^2 C_H^2
\end{aligned}
\end{equation*}
where (a) uses $\|\vartheta_t -\vartheta^*\| \le 2 C_{\Theta}$ and (b) uses Lemma~\ref{assump:U}.

Similarly, we have
\begin{equation*}
\begin{aligned}
    \EB \norm{P_{\vartheta_t}H(\vartheta_t, x_t) - P_{\vartheta^*}H(\vartheta^*, x_t)}^4 &\le L^4 \EB (\norm{x_t} + 1)^4\norm{\vartheta_t - \vartheta^\star}^4\\
    &\le 16L^4C_\Theta^4\EB (\norm{x_t} + 1)^4\\
    &\le 16L^4C_\Theta^4 \EB L_H(x_t)^4 = 16L^4C_\Theta^4C_H^4,
\end{aligned}
\end{equation*}
and
\begin{equation*}
\begin{aligned}
\EB \left\|P_{\vartheta_{t}}U_H(\vartheta_{t}, x_{t})-P_{\vartheta^*}U_H(\vartheta, x_{t})
\right\|^2 
&\le \EB \left[ L_U^2(x_{t}) \|\vartheta_{t} - \vartheta^*\|^2 \right]\\
&\le 4C_{\Theta}^2 \cdot \EB L_U(x_{t})^2 \le 4C_{\Theta}^2 C_U^2.
\end{aligned}
\end{equation*}
By the same argument, we have $\EB \left\|P_{\vartheta_{t}}U_H(\vartheta_{t}, x_{t-1})-P_{\vartheta^*}U_H(\vartheta, x_{t-1})
\right\|^2  \le 4C_{\Theta}^2 C_U^2$ for any $t \ge 1$.
Combining the above results with the following inequalities yields the lemma,
\begin{align*}
    \EB\norm{H(\vartheta_t,x_t) - P_{\vartheta_t}H(\vartheta_t,x_{t-1})}^2 &\le \sigma_H^2;\\
    \EB\norm{H(\vartheta_t,x_t) - P_{\vartheta_t}H(\vartheta_t,x_{t-1})}^4 &\le \sigma_H^4;\\
    \EB\norm{P_{\vartheta^*}U_H(\vartheta^*, x_t)} \le \sigma_U^2.
\end{align*}
\end{proof}

\subsubsection{Proof of Lemma~\ref{lem:itera}}
\label{proof:lem:itera}
\begin{proof}{Proof of Lemma~\ref{lem:itera}}
	Let $\FM_t = \sigma(\{ x_{\tau}\}_{\tau \le t})$ be the $\sigma$-field generated by all randomness before iteration $t$.
	Then $\vartheta_{t+1} \in \FM_{t}$.
 By the non-expansiveness of projections, it follows that
        \begin{equation*}
            \begin{aligned}
		\EB\left\|\vartheta_{t+1}-\vartheta^*\right\|^{2} &
    = \EB\left\|\Pi \left( \vartheta_{t}-\eta_t H( \vartheta_{t}, x_{t} ) \right)  - \vartheta^*
 \right\|^{2} 
  \le \EB\left\|\vartheta_{t}-\vartheta^*-\eta_t H( \vartheta_{t}, x_{t} )\right\|^{2} \\
		& = \EB\left\|\vartheta_{t}-\vartheta^*\right\|^{2}- \, 2 \eta_{t} \EB\underbrace{ \langle\vartheta_t -\vartheta^*, H( \vartheta_t, x_{t} ) \rangle } _{=:B_1} + \eta_{t}^2 \underbrace{ \EB \left\|  H(\vartheta_t, x_t)  \right\|^{2} }_{ =: B_2 }.
	\end{aligned}
        \end{equation*}
	The inner product can be lower bounded as
	\begin{equation*}
		\begin{aligned}
			B_{1}&= \EB \myinner{ \vartheta_t -\vartheta^*}{  H(\vartheta_t, z_t) }\\
			&= \myinner{  \vartheta_t -\vartheta^* } { \nabla f (\vartheta_t) } + \EB\myinner { \vartheta_t -\vartheta^* }{  H(\vartheta_t, x_t) - \nabla f (\vartheta_t) } \\
			&\ge K_0 \cdot \EB \|\vartheta_t -\vartheta^*\|^2
			+ \EB\myinner { \vartheta_t -\vartheta^* }{  H(\vartheta_t, x_t) - \nabla f (\vartheta_t) }.
		\end{aligned}
	\end{equation*}
	where the last inequality is due to the $\mu$-strong convexity of $g(\cdot)$.

	Furthermore, by Lemma~\ref{lem:bounded-growth},
        $
        B_2 = \EB \| H(\vartheta_t, x_{t}) \|^2 \le 2(\sigma_H^2 + 4C_{\Theta}^2 C_H^2).
        $
	Combing the bounds for $B_1$ and $B_{2}$, we can get the desired inequality.
	\begin{equation*}
		\begin{aligned}
			\EB\left\|\vartheta_{t+1}-\vartheta^*\right\|^2 
			&\leq\EB \left\|\vartheta_{t}-\vartheta^*\right\|^2+ 2(\sigma_H^2 + 4C_{\Theta}^2 C_H^2)\eta_t^2  \\
			&\quad -2\eta_t \left( K_0 \cdot \EB \|\vartheta_t -\vartheta^*\|^2
			+ \EB\myinner { \vartheta_t -\vartheta^* }{ H(\vartheta_t, x_t) - \nabla f (\vartheta_t)  } \right)\\
			&\le \left( 1-2 \eta_t K_0\right)\EB \left\|\vartheta_{t}-\vartheta^*\right\|^2 + 2(\sigma_H^2 + 4C_{\Theta}^2 C_H^2)\eta_t^2  \\
			&\qquad - 2\eta_t\EB\myinner { \vartheta_t -\vartheta^* }{ H(\vartheta_t, x_t) - \nabla f (\vartheta_t)  } .
		\end{aligned}
	\end{equation*}
\end{proof}

\subsubsection{Proof of Lemma~\ref{lem:o_gamma_k}}
\label{proof:lem:o_gamma_k}
\begin{proof}{Proof of Lemma~\ref{lem:o_gamma_k}} It follows that
\begin{equation*}
\begin{aligned}
	\sum_{j=1}^{t} \eta_{j}^{2} &\prod_{s=j+1}^{t}\left(1-\eta_{s} a\right)   = \eta_t\sum_{j=1}^{t} \eta_{j} \prod_{s=j+1}^{t} \frac{ \eta_{s-1} }{ \eta_s } \left(1-\eta_{s} a\right) \\
		& \leq \eta_t\sum_{j=1}^{t} \eta_{j} \prod_{s=j+1}^{t} (1 + (a/2) \eta_s ) \left(1-\eta_{s} a\right) 
		 \leq \eta_t\sum_{j=1}^{t} \eta_{j} \prod_{s=j+1}^{t} \left(1-\eta_{s} (a/2) \right) \\
		& = \frac{2 \eta_t}{a} \sum_{j=1}^{t} \left( \prod_{s=j+1}^{t} (1 - \eta_s a/2 ) - \prod_{s'=j}^{t} (1 - \eta_{s'} a/2 ) \right) 
		 = \frac{2 \eta_t}{a} \left( 1 - \prod_{s'=1}^{t} (1 - \eta_{s'} a/2 ) \right) \leq \frac{2 \eta_t}{a}.
\end{aligned}
\end{equation*}
\end{proof}

\subsubsection{Proof of Lemma~\ref{lem:cross}}
\label{proof:lem:cross}
\begin{proof}{Proof of Lemma~\ref{lem:cross}}
It follows from $x_t \sim P_{\vartheta_t}(x_{t-1}, \cdot)$ that
    \[
    \EB \myinner{\vartheta_t-\vartheta^*}{U_H(\vartheta_t, x_t)} = \EB \myinner{\vartheta_t-\vartheta^*}{\EB[U_H(\vartheta_t, x_t)|\FM_{t-1}]} = \EB \myinner{\vartheta_t-\vartheta^*}{P_{\vartheta_{t}}U_H(\vartheta_t, x_{t-1})}
    \]
   
	By applying the Poisson equation, we can simplify the sum of inner products into
    \begin{equation*}
        \begin{aligned}
        \lefteqn{\sum_{s=1}^{t} G_{s+1:t} \; \eta_{s} \EB [\myinner{ \vartheta - \vartheta_s }{  H(\vartheta_s, x_s) - g(\vartheta_s) } ] } \\
        &=
        \sum_{s=1}^{t} G_{s+1:t} \; \eta_{s} \EB [\myinner{ \vartheta^* - \vartheta_s }{ P_{\vartheta_{s}}U_H(\vartheta_s, x_{s-1})- P_{\vartheta_s} U_H(\vartheta_s, x_s)   } ].
        \end{aligned}
        \end{equation*}
	In the following, the key idea is to decompose 
	\[
	\sum_{s=1}^{t} G_{s+1:t} \eta_{s} \EB \myinner{ \vartheta^* - \vartheta_s }{ P_{\vartheta_{s}}U_H(\vartheta_s, x_{s-1})- P_{\vartheta_s} U_H(\vartheta_s, x_s)   } ] = A_1 + A_2 + A_3 + A_4
	\]
	into four sub-terms $\{A_i\}_{i \in [4]}$ which can be bounded using the smoothness assumption.
	In particular, we have 
        \begin{equation*}
	\begin{aligned}
		&A_{1}:=-\sum_{s=2}^{t}\eta_{s}G_{s+1:t} \EB \myinner{ \vartheta_{s}-\vartheta^* }{
			P_{\vartheta_{s}}U_H(\vartheta_s, x_{s-1})-P_{\vartheta_{s-1}}U_H(\vartheta_{s-1}, x_{s-1})
		}, \\
		&A_{2}:=-\sum_{s=2}^{t}\eta_{s}G_{s+1:t} \EB\myinner{ \vartheta_{s}-\vartheta_{s-1} } { 
			P_{\vartheta_{s-1}}U_H(\vartheta_{s-1}, x_{s-1})
		}, \\
		&A_{3}:=-\sum_{s=2}^{t}(\eta_{s}G_{s+1:t}-\eta_{s-1}G_{s:t}) \EB \myinner{ \vartheta_{s-1}-\vartheta^* }{ P_{\vartheta_{s-1}}U_H(\vartheta_{s-1}, x_{s-1})
		}, \\
		&A_{4}:=-\eta_{1}G_{2:t}\EB\myinner{ \vartheta_{1}-\vartheta^* } { 
			P_{\vartheta_{1}}U_H(\vartheta_{1}, x_{0})
		} +	
		\eta_{t}  \EB\myinner{ \vartheta_{t}-\vartheta^* }{ P_{\vartheta_{t}}U_H(\vartheta_{t}, x_{t}) }.
	\end{aligned}
        \end{equation*}
	
	For $A_{1}$, as $\vartheta_{t+1}=\vartheta_{t}-\eta_{t} H(\vartheta_t, x_t)$, we get $\vartheta_{s}-\vartheta_{s-1}=-\eta_{s-1} H(\vartheta_{s-1}, x_{s-1})$. Applying the smoothness condition of $P_{\vartheta} U_H(\vartheta, x)$ in $\vartheta$, we have that
    \begin{align*}
		A_{1}&=-\sum_{s=2}^{t}\eta_{s}G_{s+1:t}\EB \myinner{ \vartheta_{s}-\vartheta^* } { 
			P_{\vartheta_{s}}U_H(\vartheta_{s}, x_{s-1})-
			P_{\vartheta_{s-1}}U_H(\vartheta_{s-1}, x_{s-1})
		} \notag \\
		&
		\leq  \sum_{s=2}^t \eta_{s} G_{s+1:t} \EB \left[\left\|\vartheta_{s}-\vartheta^*\right\| L_U(x_{s-1}) \left\|\vartheta_{s}-\vartheta_{s-1}\right\| \right] \notag
		\\
		&= \sum_{s=2}^t  \eta_{s-1} \eta_{s} G_{s+1:t} \EB \left[\left\|\vartheta_{s}-\vartheta^*\right\| L_U(x_{s-1}) \left\| H(\vartheta_{s-1}, x_{s-1}) \right\| \right] \notag \\
		&\overset{(a)}{\le} \frac{1}{2}\sum_{s=2}^t  \eta_{s-1} \eta_{s} G_{s+1:t}  \left(  
		\EB\|\vartheta_{s}-\vartheta^*\|^2 + \EB \left[L_U(x_{s-1})^2\left\| H(\vartheta_{s-1}, x_{s-1})\right\| ^2 \right]
		\right) \notag \\
		&  {\le} \frac{1}{2} \sum_{s=2}^t  \eta_{s-1} \eta_{s} G_{s+1:t} 
		\left( \Delta_s + 4(\sigma_U^4 + \EB\norm{H(\vartheta_{s-1}, x_{s-1})}^4 \right)\nonumber \\
            & \overset{(b)}{\le}  \frac{1}{2} \sum_{s=2}^t  \eta_{s-1} \eta_{s} G_{s+1:t} 
		\left( \Delta_s + 4(\sigma_U^4 + 8(\sigma_H^4 + 16L^4C_\Theta^4C_H^4) \right).
        \end{align*}
	where (a) uses $ab \leq \frac{1}{2}(a^2+ b^2) $ for any $a,b\in \RB$ and (b) uses Lemma~\ref{lem:bounded-growth}.

	For $A_{2}$, we observe that
	\begin{align*}
		\label{eq:A2}
		A_{2}&=-\sum_{s=2}^{t}\eta_{s}G_{s+1:t} \EB \myinner{ \vartheta_{s}-\vartheta_{s-1} } { 
			P_{\vartheta_{s-1}}U_H(\vartheta_{s-1}, x_{s-1})
		} \notag \\ 
		&\leq \sum_{s=2}^{t}\eta_{s} G_{s+1:t}\EB\left\|\vartheta_{s}-\vartheta_{s-1}\right\|\cdot \left\|
		P_{\vartheta_{s-1}}U_H(\vartheta_{s-1}, x_{s-1})
		\right\|  \notag \\
		&= \sum_{s=2}^{t}\eta_{s} \eta_{s-1} G_{s+1:t}\EB\left\| H(\vartheta_{s-1}, x_{s-1})\right\|\cdot \left\|
		P_{\vartheta_{s-1}}U_H(\vartheta_{s-1}, x_{s-1})
		\right\|  \notag \\
		&\le \frac{1}{2} \sum_{s=2}^{t}\eta_{s} \eta_{s-1} G_{s+1:t}
		\left(
		\EB	\left\| H(\vartheta_{s-1}, x_{s-1})\right\|^2 + \EB \left\|
		P_{\vartheta_{s-1}}U_H(\vartheta_{s-1}, x_{s-1})
		\right\|^2
		\right)   \notag \\
		&\overset{(a)}{\le}  \sum_{s=2}^{t}\eta_{s} \eta_{s-1} G_{s+1:t}
		\left(
		\sigma_H^2 + 4C_{\Theta}^2 C_H^2 + \sigma_U^2 +4C_{\Theta}^2 C_U^2
		\right)  
	\end{align*}
	where (a) uses Lemma~\ref{lem:bounded-growth}.
	
	For $A_{3}$, notice that $|\eta_{s}-\eta_{s-1}| \le \eta_s \eta_{s-1} \varkappa$ and thus
        \begin{equation*}
	\begin{aligned}
		| \eta_{s}G_{s+1:t}-\eta_{s-1}G_{s:t} | 
		&= | \eta_s - \eta_{s-1} (1-K_0\eta_s)| \cdot G_{s+1:t}\\
		&\le \left( |\eta_s - \eta_{s-1}| + \eta_{s-1}\eta_s K_0
		\right)\cdot G_{s+1:t} 
		\le  \left(  \varkappa + K_0 \right) \eta_{s-1}\eta_s G_{s+1:t}.
	\end{aligned}
        \end{equation*}
	Using this inequality, we have that
        \begin{equation}
	\begin{aligned}
		\label{eq:A3}
		A_{3}&=-\sum_{s=2}^{t}(\eta_{s}G_{s+1:t}-\eta_{s-1}G_{s:t}) \EB \myinner{ \vartheta_{s-1}-\vartheta^* }{ P_{\vartheta_{s-1}}U_H(\vartheta_{s-1}, x_{s-1})
		} \notag \\
		&\leq \sum_{s=2}^{t} | \eta_{s}G_{s+1:t}-\eta_{s-1}G_{s:t} | \cdot  \left\|\vartheta_{s-1}-\vartheta^* \right\|\cdot \left\| P_{\vartheta_{s-1}}U_H(\vartheta_{s-1}, x_{s-1}) \right\|   \notag \\
		&\le  \left(  \varkappa + K_0 \right)  \sum_{s=2}^{t} \eta_{s-1}\eta_s G_{s+1:t} \EB \left\|\vartheta_{s-1}-\vartheta^* \right\|\cdot \left\| P_{\vartheta_{s-1}}U_H(\vartheta_{s-1}, x_{s-1}) \right\|   \notag \\
		&\le \left(  \varkappa + K_0 \right)  \sum_{s=2}^{t} \eta_{s-1}\eta_s G_{s+1:t}
  \left( \frac{1}{2}\EB\left\|\vartheta_{s-1}-\vartheta^* \right\|^2 + \frac{1}{2} \EB \left\| P_{\vartheta_{s-1}}U_H(\vartheta_{s-1}, x_{s-1}) \right\|^2    \right)
  \notag \\
		&\le  \left(  \varkappa + K_0 \right)  \sum_{s=2}^{t} \eta_{s-1}\eta_s G_{s+1:t}  \left(
		\frac{1}{2} \Delta_{s-1} + \sigma_U^2 +4C_{\Theta}^2 C_U^2
		\right),
	\end{aligned}
        \end{equation}
 where the last inequality again uses Lemma~\ref{lem:bounded-growth}.
 
	Finally, for $A_{4}$, we have 
        \begin{equation}
	\begin{aligned}
        \label{eq:A4}
        A_{4}&=-\eta_{1}G_{2:t}\EB\myinner{ \vartheta_{1}-\vartheta^* } { 
        P_{\vartheta_{1}}U_H(\vartheta_{1}, x_{0})
        } +	
        \eta_{t}  \EB\myinner{ \vartheta_{t}-\vartheta^* }{ P_{\vartheta_{t}}U_H(\vartheta_{t}, x_{t}) } \notag\\
        &\le \eta_1 G_{2:t} \EB\| \vartheta_{1}-\vartheta^*\| \|	P_{\vartheta_{1}}U_H(\vartheta_{1}, x_{0})\| + \eta_t \EB\|\vartheta_{t}-\vartheta^*\| \|P_{\vartheta_{t}}U_H(\vartheta_{t}, x_{t}) \|\notag\\
        &\le  G_{2:t} \frac{\eta_1}{2}\left(\EB\|\vartheta_1-\vartheta^*\|^2 + \EB\|	P_{\vartheta_{1}}U_H(\vartheta_{1}, x_{0})\|^2 \right) \notag \\
        &\qquad   + \frac{\eta_t}{2} \left( 
        \EB\|\vartheta_{t}-\vartheta^*\|^2 + \EB \|P_{\vartheta_{t}}U_H(\vartheta_{t}, x_{t})\|^2
        \right)\notag \\
        &\le \eta_1 G_{2:t} \left(
        \frac{\Delta_{1}}{2}+ \sigma_U^2 +4C_{\Theta}^2 C_U^2
        \right)  + \eta_t \left(
       \frac{\Delta_{t} }{2}+ \sigma_U^2 +4C_{\Theta}^2 C_U^2
        \right).
	\end{aligned}
        \end{equation}
	
	Summing up $A_{1}$ to $A_{4}$ yields
        \begin{equation}
	\begin{aligned} 
		\label{eq:sum-A}
		\sum_{i=1}^4 A_i
		&\le  \frac{L_U}{2} \sum_{s=2}^t  \eta_{s-1} \eta_{s} G_{s+1:t} 
		\left( \Delta_s + 2(\sigma_H^2 + 4C_{\Theta}^2 C_H^2) \right) \notag  \\
            & \qquad +  \sum_{s=2}^{t}\eta_{s} \eta_{s-1} G_{s+1:t}
		\left(
		\sigma_H^2 + 4C_{\Theta}^2 C_H^2 + \sigma_U^2 +4C_{\Theta}^2 C_U^2
		\right)   \notag \\
		&\qquad + \left(  \varkappa + K_0 \right)  \sum_{s=2}^{t} \eta_{s-1}\eta_s G_{s+1:t}  \left(
		\frac{1}{2} \Delta_{s-1} + \sigma_U^2 +4C_{\Theta}^2 C_U^2
		\right), \notag \\
            &\qquad + \eta_1 G_{2:t} \left(
        \frac{\Delta_{1}}{2}+ \sigma_U^2 +4C_{\Theta}^2 C_U^2
        \right)  + \eta_t \left(
       \frac{\Delta_{t} }{2}+ \sigma_U^2 +4C_{\Theta}^2 C_U^2
        \right) \notag\\
		&= \sum_{s=2}^{t}  \eta_{s-1}\eta_s G_{s+1:t}  \left( \frac{L_U }{2}
		\Delta_s +   \frac{\varkappa+K_0}{2}	 \Delta_{s-1}
		\right) \notag\\
		&\qquad +  \sum_{s=2}^{t} \eta_{s-1} \eta_s G_{s+1:t}  \left( 
(L_U+1)(\sigma_H^2 + 4C_{\Theta}^2 C_H^2) + (1+\varkappa+K_0)(\sigma_U^2 + 4C_{\Theta}^2 C_U^2)
		\right) \notag \\ 
            &\qquad + \eta_1 G_{2:t} \left(
        \frac{\Delta_{1}}{2}+ \sigma_U^2 +4C_{\Theta}^2 C_U^2
        \right)  + \eta_t \left(
       \frac{\Delta_{t} }{2}+ \sigma_U^2 +4C_{\Theta}^2 C_U^2
        \right).
	\end{aligned} 
        \end{equation}
	To further simplify the expression, we make use of Lemma~\ref{lem:o_gamma_k}, which, together with the fact that $\eta_{s-1} \le \kappa \eta_{s}$ and $\varkappa \le K_0$, imply that 
	\[
	\sum_{s=2}^{t}  \eta_{s-1} \eta_s G_{s+1:t}  \le  \kappa
	\sum_{s=2}^{t}  \eta_s^2 G_{s+1:t}  \le  \kappa \sum_{s=1}^{t}  \eta_s^2 G_{s+1:t}  \le \frac{\kappa }{K_0} \eta_t.
	\]
	Plugging the last inequality into~\eqref{eq:sum-A} yields that
	\[
	\sum_{i=1}^4 A_i\le G_{2:t} C_0 + \sum_{s=2}^{t}  \eta_s^2 G_{s+1:t}  \left(
	C_1 \Delta_s + C_2 \Delta_{s-1}
	\right)  + \eta_t \left( \Delta_t + C_3\right).
	\]
	where $\{C_i\}_{0 \le i \le 3}$ are define in the lemma.
\end{proof}

\subsubsection{Proof of Lemma~\ref{lem:convergence}}
\label{proof:lem:convergence}
\begin{proof}{Proof of Lemma~\ref{lem:convergence}}
	Recall that the sequence $\{\Delta_t\}_{t \ge 0}$ satisfies the following inequality
	\begin{equation*}
		\Delta_{t+1}  \le G_{1:t}  \tconst_0 + \sum_{s=2}^{t}   \eta_{s-1}\eta_s G_{s+1:t} (\tconst_1 \Delta_s + \tconst_2 \Delta_{s-1}) + \eta_t ( \Delta_t + \tconst_3).
	\end{equation*}
	The inequality in~\eqref{eq:Delta-final-iterate} is not easy to handle.
	We then introduce a non-negative auxiliary sequence $\{ {\rm U}_t \}_{t \geq 0}$ that always serves as a valid upper bound of $\Delta_t$.
	This sequence is defined by the recursion: ${\rm U}_{1} = \Delta_1 $ and 
	\begin{equation}
		\label{eq:U-iterate}
		{\rm U}_{t+1}  = G_{2:t}  \tconst_1 + \sum_{s=2}^{t} \eta_{s-1} \eta_s G_{s+1:t} (\tconst_1 {\rm U}_{s}  + \tconst_2 {\rm U}_{s-1}  ) + \eta_t (  {\rm U}_{t}   + \tconst_3).
	\end{equation}
	By induction, it is easy to see that $\Delta_t \leq {\rm U}_t$ for any $t \geq 1$. 
	
	Note that $G_{s, t} = G_{s, t-1}(1-2\eta_t K_0)$ for any $s \ge 1$. Arrangement on~\eqref{eq:U-iterate} implies that
        \begin{equation*}
	\begin{aligned}
		{\rm U}_{t+1} &= (1-2\eta_tK_0) {\rm U}_{t} +\eta_{t-1} \eta_t (\tconst_1{\rm U}_{t} + \tconst_2{\rm U}_{t-1}) \\
		&\qquad + 
		 (\eta_t {\rm U}_{t} -   (1-2\eta_tK_0)\eta_{t-1}  {\rm U}_{t-1} ) +
		\tconst_3(\eta_t - (1-2\eta_tK_0)\eta_{t-1})\\
		&= \left(  1-2\eta_tK_0 + \eta_{t-1}\eta_t\tconst_1 \right){\rm U}_{t} 
		+  \eta_{t-1}\eta_t \left( \tconst_2  + 2K_0  	\right){\rm U}_{t-1} \\
		&\qquad +  (\eta_t{\rm U}_{t} -\eta_{t-1}{\rm U}_{t-1} )
		+ 	\tconst_3(\eta_t - (1-2\eta_tK_0)\eta_{t-1})\\
		&\le \left(1 - K_0 \eta_t \right){\rm U}_{t}  + \eta_{t-1} \eta_t (\tconst_2  + 2 K_0   ) {\rm U}_{t-1} 
		+  (\eta_t{\rm U}_{t} -\eta_{t-1}{\rm U}_{t-1} ) + 3\eta_{t-1}\eta_t K_0 \tconst_3,
	\end{aligned}
        \end{equation*}
	where the last inequality uses the inequalities on step sizes in Assumption~\ref{assump:step-size}, namely the following results
	\begin{itemize}
		\item  $\eta_t \le \frac{K_0}{\tconst_1} = \frac{2K_0}{L_U}$ for all $t \ge 1$;
		\item $\eta_t - (1-2\eta_tK_0)\eta_{t-1}) \le \eta_t \eta_{t-1}(\varkappa + 2K_0) \le 3 \eta_t \eta_{t-1} K_0$.
	\end{itemize}
	
	Given the established inequality
	\begin{equation}
		\label{eq:U-iterate-new}
		{\rm U}_{s+1}  \le \left(1 - K_0 \eta_s \right){\rm U}_{s}  + \eta_{s-1} \eta_s (\tconst_2  +  2K_0   ) {\rm U}_{s-1} 
		+  (\eta_s{\rm U}_{s} -\eta_{s-1}{\rm U}_{s-1} ) + 3\eta_{s-1}\eta_s  K_0\tconst_3,
	\end{equation}
	we are ready to establish the lemma.
	
	We first prove the part ${(a)}$.
	Summing up the inequality~\eqref{eq:U-iterate-new} over $s=1$ to $s=t$ yields 
 \begin{equation*}
 \begin{aligned}
    \sum_{s=1}^t {\rm U}_{s+1}  &\le	
    \sum_{s=1}^t   \left[  \left(1 - K_0 \eta_s \right){\rm U}_{s} +
    3\eta_{s-1}\eta_s K_0\tconst_3
    \right] \\
    & \qquad +
    \sum_{s=1}^t     \eta_{s-1} \eta_s (\tconst_2  + 2 K_0   ) {\rm U}_{s-1} 
    +  \sum_{s=1}^t  (\eta_s{\rm U}_{s} -\eta_{s-1}{\rm U}_{s-1} ) .
 \end{aligned}
 \end{equation*}
	Rearranging the last inequality give
        \begin{equation*}
	\begin{aligned}
		{\rm U}_{t+1} 
		&\overset{(a)}{\le}  {\rm U}_{1} + \sum_{s=1}^t  \left[
		\eta_{s-1} \eta_s (\tconst_2  + 2 K_0   )  {\rm U}_{s-1}  - K_0 \eta_s {\rm U}_{s} 
		\right]    +  \eta_t	{\rm U}_{t}  + 3\kappa K_0 \tconst_3 \sum_{s=1}^t \eta_s^2 \\
		&\le  {\rm U}_{1} + \sum_{s=1}^t 
		\left(\eta_{s-1} \eta_s (\tconst_2  +  2K_0   )- K_0\eta_{s-1}  \right) {\rm U}_{s-1}   
		+  \eta_t	{\rm U}_{t}  + 3 \kappa K_0 \tconst_3 \sum_{s=1}^t \eta_s^2 \\
		&\overset{(b)}{\le} {\rm U}_{1} +  \eta_t	{\rm U}_{t}  + 3 \kappa K_0 \tconst_3 \sum_{s=1}^{\infty} \eta_s^2 
		\overset{(c)}{\le} 0.5 \cdot {\rm U}_{t} + \left( {\rm U}_{1}  + 3 \kappa K_0 \tconst_3 \sum_{s=1}^{\infty} \eta_s^2  \right) \\
		&\overset{(d)}{\le} 0.5^t \cdot {\rm U}_1  + \sum_{\tau=0}^{t-1} 0.5^{\tau} \cdot  \left( {\rm U}_{1}  + 3\kappa K_0 \tconst_3 \sum_{s=1}^{\infty} \eta_s^2  \right) 
		\le 6\left({\rm U}_1 + \kappa K_0 \tconst_3 \sum_{s=1}^{\infty} \eta_s^2 \right),
	\end{aligned}
        \end{equation*}
	where (a) uses $\eta_{s-1} \le \kappa \eta_{s}$, (b) uses $\eta_t \le \frac{K_0}{\tconst_2+2K_0 )} = \frac{K_0}{\varkappa + 3K_0}$, (c) uses $ \eta_t \le \frac{K_0 }{\tconst_2+2K_0} \le 0.5$, and (d) is due to repeatedly using (c) $t-1$ times

	We now proceed to prove part ${(b)}$ of the lemma. 
	For simplicity, we define $\bG_{m:n} = \prod_{s=m}^n (1 - \eta_s K_0)$ for $n \ge m$ and $\bG_{m:n} =1$ if $n < m$.
	We observe from~\eqref{eq:U-iterate-new} that
        \begin{equation*}
	\begin{aligned}
		{\rm U}_{t+1}  &\le  \bG_{2:t} {\rm U}_{1}  + \eta_{s-1} \eta_s \sum_{s=1}^t \bG_{s+1:t} \left[ (\tconst_2  +  2K_0   ) {\rm U}_{s-1}   + 3K_0\tconst_3 \right]\\
		&\qquad +   \sum_{s=1}^t \bG_{s+1:t} (\eta_s{\rm U}_{s} -\eta_{s-1}{\rm U}_{s-1} ). 
	\end{aligned}
        \end{equation*}
	Notice that 
        \begin{equation*}
	\begin{aligned}
		& \sum_{s=2}^t \bG_{s+1:t} \big( \eta_s {\rm U}_{s} - \eta_{s-1} {\rm U}_{s-1} \big) \\
		& = \sum_{s=2}^t \bG_{s+1:t} \big( \eta_s {\rm U}_{s} + (1 - \eta_s K_0) ( \eta_{s-1} {\rm U}_{s-1} - \eta_{s-1} {\rm U}_{s-1} ) - \eta_{s-1} {\rm U}_{s-1} \big) \\
		& = \sum_{s=2}^t \Big\{ \Big( \bG_{s+1:t} \eta_s {\rm U}_{s} - \bG_{s:k} \eta_{s-1} {\rm U}_{s-2} \Big) - \eta_s \eta_{s-1} K_0{\rm U}_{s-1}  \Big\} \leq \eta_t{\rm U}_{t} \leq \eta_t\overline{\Delta}.
	\end{aligned}
        \end{equation*}
	On the other hand, Lemma~\ref{lem:o_gamma_k} implies that $\sum_{s=1}^t \bG_{s+1:t} \eta_{s-1}\eta_s \le \kappa\sum_{s=1}^t \bG_{s+1:t}  \eta_s^2  \leq \frac{2\kappa}{K_0} \eta_t$.
	Hence, the following is obtained
        \begin{equation*}
	\begin{aligned}
		{\rm U}_{t+1} 
		& \le  \bG_{1:t} \Delta_1
		+ \frac{2\kappa}{K_0} \left[ (\tconst_2  + 2 K_0   )\overline{\Delta}  + 3 K_0\tconst_3 \right] \eta_t
		+ \eta_t    \overline{\Delta}  \\
		&\le \prod_{s=2}^t \left( 1 -K_0 \eta_s \right) \Delta_1 
		+ \eta_t \left[ \left(   \frac{2\kappa\tconst_2}{K_0} + (4\kappa + 1)    \right) \overline{\Delta} + 6\kappa\tconst_3  \right]\\
  &	\le \prod_{s=2}^t \left( 1 -K_0 \eta_s \right) \Delta_1 
		+ \eta_t \left[  \left(1 + 8 \kappa \right) \overline{\Delta} + 6\kappa\tconst_3  \right],
	\end{aligned}
        \end{equation*}
 where the last inequality uses
 $\kappa \tconst_2/K_0 = \kappa (\varkappa + K_0)/K_0 \le 2 \kappa$.
\end{proof}

\subsection{Proofs for Regret Bound}
\begin{proof}{Proof of Lemma~\ref{lem:bd_trans_err_of_regret}}
    For any state \( x \) in the state space $\mathcal{X}$ and any parameter \( \vartheta \), we denote by \( \{X_n(x, \vartheta)\}_{n=0}^\infty \) the Markov chain generated with \( x \) as the initial state and \( \gP_{\vartheta} \) as the transition kernel. Similarly, we use \( \{X_n(x)\}_{n=0}^\infty \) to represent the state sequence generated during the invocation of the stream SGD algorithm. By Assumptions~\ref{ass:W-contraction} and~\ref{ass:cont_cost}, it follows that for all $j \ge 1$,
    \begin{equation*}
    \begin{aligned}
        \EB~ c(\vartheta, X_j(x,\vartheta))& - \EB~  c(\vartheta, X_\infty(\vartheta))\\
        &= \EB~ c(\vartheta, X_j(x,\vartheta)) - \EB_{x^\prime \sim \pi_\vartheta}c(\vartheta, X_j(x^\prime, \vartheta))\\
        &\le \int L_c W_{\norm{\cdot},1}(X_j(x,\vartheta), X_j(x^\prime, \vartheta)) \rd \pi_{\vartheta}(x^\prime)\\
        &\le 2CL_c \int  e^{-cj} \rd \pi_{\vartheta}(x^\prime) \le C^\prime e^{-cj},
    \end{aligned}
    \end{equation*}
    where the first inequality holds because the cost function $c(\vartheta, x) \in \gF_{\norm{\cdot}, 1}$, where $L_c$ denotes the Lipschitz continuity constant of $c(\vartheta, x)$, and $C^\prime \leq 2CL_c$.  

Now, assume that \( x_{t-j} \) is the state at the \( (t-j) \)-th iteration, obtained through the transition dynamics \( \prod_{r = 1}^{t-j} P_{\vartheta_r} \), where \( \vartheta_r \) is generated by the stream SGD algorithm. Substituting \( x_{t-j} \) as the initial state into the inequality derived above, we have

    \begin{equation*}
    \begin{aligned}
        \EB_{x_{t-j}}\left[\gP_{\vartheta}^j c(\vartheta, x_{t-j})\right] &- \EB~[c(\vartheta, X_\infty(\vartheta))]\\ &= \EB_{x_{t-j}}\left[
        \EB\left[c\big(\vartheta, X_j(x_{t-j},\vartheta)\big)\mid \gF_{t-j}\right] - \EB [c\big(\vartheta, X_\infty(x,\vartheta)\big)]
        \right]\\
        &\le C^\prime  e^{-cj}.
    \end{aligned}
    \end{equation*}
Thus, for any given iteration index $t$, let $j = \left\lceil \frac{2}{c}\log t + \frac{\log C'}{c}\right\rceil$. Then, we have the following result
    \[
    \EB_{x_{t-j}}\left[\gP_{\vartheta_{t}}^j c(\vartheta_t, x_{t-j})\right] - \EB~ c(\vartheta_t, X_\infty(\vartheta_t)) \le t^{-2}.
    \]
    By applying the telescoping technique and the mixing bound above, we obtain
    \begin{equation}\label{eq:lem_trans_err_eq1}
    \begin{aligned}
        \EB~ c(\vartheta_t, X_t) &- \EB~ c(\vartheta_t, X_\infty(\vartheta_t))\\
        &= \EB\left[\EB[c(\vartheta_t, X_t)\mid \gF_{t-j}]\right] - 
        \EB_{x_{t-j}}\gP_{\vartheta_t}^j c(\vartheta_t, x_{t-j})\\
        &\qquad + \EB_{x_{t-j}}\gP_{\vartheta_t}^j c(\vartheta_t, x_{t-j}) - 
        \EB~ c(\vartheta_t, X_\infty(\vartheta_t))\\
        &\le \left\{\EB\left[\EB[c(\vartheta_t, X_t)\mid \gF_{t-j}]\right] - 
        \EB_{x_{t-j}}\gP_{\vartheta_t}^j c(\vartheta_t, x_{t-j})\right\} + \gO(t^{-2}).
    \end{aligned}
    \end{equation}
    For the first term, by making use of the telescoping technique again and noting that $X_t \overset{d}{=} X_{j}(x_{t-j})$, we can derive that,
    \begin{equation}\label{eq:lem_trans_err_eq2}
    \begin{aligned}
        \EB\left[\EB[c(\vartheta_t, X_t)\mid \gF_{t-j}]\right] &- 
        \EB_{x_{t-j}}\gP_{\vartheta_t}^j c(\vartheta_t, x_{t-j})\\
        &= \EB_{x_{t-j}}\left[\prod\limits_{i=t-j+1}^{t} \gP_{\vartheta_{i}} c(\vartheta_t, x_{t-j})\right] - \EB_{x_{t-j}}\left[\gP_{\vartheta_t}^j c(\vartheta_t, x_{t-j})\right]\\
        &= \ssum{k}{1}{j} \EB_{x_{t-j}}\left[\left(\prod\limits_{i=t-j+1}^{t-k} \gP_{\vartheta_{i}}\right) (\gP_{\vartheta_{t-k+1}} - \gP_{\vartheta_{t}})\gP_{\vartheta_t}^{k-1}
        c(\vartheta_t, x_{t-j})
        \right].
    \end{aligned}
    \end{equation}
    According to Lemma~\ref{lem:prop_L_mf}, since $c(\vartheta, x)$ is $L_c$-Lipschitz, we can find a universal constant $C$ and a function $V(x) \in \gV$ such that for any $1 \le k \le j$,
    \[
    (\gP_{\vartheta_{t-k}} - \gP_{\vartheta_t})\gP_{\vartheta_t}^{k-1} c(\vartheta_t, x_{t-j}) \le C\norm{\vartheta_{t-k} - \vartheta_t}e^{-c(k-1)}(V(x_{t-j}) + 1).
    \]
    Plugging this inequality into \eqref{eq:lem_trans_err_eq2} yields
    \begin{equation}\label{eq:lem_trans_err_eq3}
    \begin{aligned}
        \EB\left[\EB[c(\vartheta_t, X_t)\mid \gF_{t-j}]\right] &- 
        \EB_{x_{t-j}}\gP_{\vartheta_t}^j c(\vartheta_t, x_{t-j})\\
        &= \ssum{k}{1}{j} \EB_{x_{t-j}}\left[\prod\limits_{i=t-j}^{t-k-1} \gP_{\vartheta_{i}} (\gP_{\vartheta_{t-k}} - \gP_{\vartheta_{t}})\gP_{\vartheta_t}^{k-1}
        c(\vartheta_t, x_{t-j})
        \right]\\
        &\le \ssum{k}{1}{j} Ce^{-c(k-1)} \EB_{x_{t-j}}\left[\norm{\vartheta_{t-k} - \vartheta_t}\prod\limits_{i=t-j}^{t-k-1} \gP_{\vartheta_{i}}(V(x_{t-j}) + 1)
        \right]\\
        &\overset{(a)}{\le}
        \ssum{k}{1}{j} C^\prime \frac{k}{t}e^{-c(k-1)} \EB\left[ V(X_{t-k}) + 1
        \right]\overset{(b)}{\le}
        \frac{C^{''}}{t}.
    \end{aligned}
    \end{equation}
    Here $(a)$ holds by using the following inequality that
    \[
    \EB\left[\norm{\vartheta_{t-k} - \vartheta_t}\mid \gF_{t-j}\right] = \EB\left[
    \norm{\ssum{i}{t-k}{t-1}\eta_i H(\vartheta_i, x_i)}\bigg|~ \gF_{t-j}
    \right] \le \frac{2L_c k}{t}
    \]
    and $(b)$ follows from the fact that $\EB[V(x_{t-j})]$ are uniformly bounded for any $V\in \gV$. Ultimately, the lemma is proved by combining \eqref{eq:lem_trans_err_eq1} and \eqref{eq:lem_trans_err_eq3}.
\end{proof}

\section{Proofs for Section~\ref{subsec: inference}}\label{sec:prf_of_inference}
\subsection{Proofs for Functional Central Limit Theorem}
\label{proof:fclt}
In the following, we present the proof idea of Theorem~\ref{thm:fclt}.
Recall that the update rule is $\vartheta_{t+1} = \Pi_{\Theta} \left(\vartheta_t - \eta_t  H(\vartheta_t, x_t) \right)$.
We first provide an almost sure convergence for $\{\vartheta_t\}_{t \ge 0}$ which would be useful later on.
\begin{lemma}
	\label{lem:a.s.}
	Under the conditions of Theorem~\ref{thm:fclt}, we have $\vartheta_{t} \overset{a.s.}{\to} \vtheta$ and $\eta_t H(\vartheta_t, x_t) \overset{a.s.}{\to} 0$.
\end{lemma}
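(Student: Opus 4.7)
The plan is to upgrade the $L^2$ convergence rate of Theorem~\ref{thm:L2-convergence} to almost sure convergence via the Robbins--Siegmund almost-supermartingale theorem. I would start from the projection inequality combined with strong monotonicity (Assumption~\ref{assmpt: convexity}(b)):
\[
D_{t+1} \le (1 - 2\eta_t K_0) D_t - 2\eta_t \langle \vartheta_t - \vtheta,\, H(\vartheta_t, x_t) - g(\vartheta_t)\rangle + \eta_t^2 \|H(\vartheta_t, x_t)\|^2,
\]
where $D_t := \|\vartheta_t - \vtheta\|^2$. Then, following the decomposition used in Lemma~\ref{lem:cross}, I would split the bias via the Poisson equation as $H(\vartheta_t,x_t) - g(\vartheta_t) = \xi_t + \zeta_t$, with $\xi_t := U(\vartheta_t,x_t) - P_{\vartheta_t}U(\vartheta_t,x_{t-1})$ an $\mathcal{F}_t$-martingale increment (since $x_t \sim P_{\vartheta_t}(x_{t-1},\cdot)$) and coboundary $\zeta_t := P_{\vartheta_t}U(\vartheta_t,x_{t-1}) - P_{\vartheta_t}U(\vartheta_t,x_t)$, which by Abel summation can be rewritten as the first-order difference of $\eta_t\langle\vartheta_t-\vtheta, P_{\vartheta_t}U(\vartheta_t,x_{t-1})\rangle$ plus a remainder $r_t$ controlled by Proposition~\ref{assump:U}(b) and the step-size regularity $(\eta_{t-1}-\eta_t)/\eta_{t-1} = o(\eta_{t-1})$.

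With this in hand, I would introduce the corrected Lyapunov sequence $V_t := D_t + 2\eta_t \langle \vartheta_t - \vtheta,\, P_{\vartheta_t}U(\vartheta_t,x_{t-1})\rangle + C\eta_t$ for a sufficiently large constant $C>0$, which guarantees $V_t \ge 0$ via $2|\langle u, v\rangle| \le \|u\|^2 + \|v\|^2$ and the uniform $L^2$ boundedness of $P_{\vartheta_t}U(\vartheta_t,x_{t-1})$ (Proposition~\ref{assump:U}(d)). The decomposition above then produces an almost-supermartingale recursion of the form
\[
\mathbb{E}[V_{t+1} \mid \mathcal{F}_t] \le V_t - \eta_t K_0 D_t + \gamma_t,
\]
where $\sum_t \mathbb{E}\gamma_t < \infty$ thanks to the quadratic martingale bound $\eta_t^2 \mathbb{E}\|\xi_t\|^2 \lesssim \eta_t^2$, the control on the Abel-summation remainder provided by Propositions~\ref{assumpt: H}--\ref{assump:U}, and the conditions $\sum_t \eta_t^{1.5} < \infty$ in Assumption~\ref{assmpt: more condtion FCLT}(b). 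Robbins--Siegmund then yields a.s.\ convergence of $V_t$ to a finite limit and $\sum_t \eta_t D_t < \infty$ a.s. Since $\eta_t \to 0$ and the corrective terms have uniformly bounded fourth moments, a Borel--Cantelli argument applied to Markov's inequality shows $V_t - D_t \to 0$ a.s., so $D_t$ itself converges a.s. Combining this with $\mathbb{E} D_t \to 0$ from Theorem~\ref{thm:L2-convergence} via Fatou's lemma forces the almost-sure limit to be zero, establishing $\vartheta_t \to \vtheta$ a.s.

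For the second claim, since $\vtheta$ is an interior point of $\Theta$ (Assumption~\ref{assmpt: convexity}(a)), it suffices to show $\eta_t \|H(\vartheta_t,x_t)\| \to 0$ a.s. Using the growth bound $\|H(\vartheta_t,x_t)\| \le C'(1+\|x_t\|)$ (from Proposition~\ref{assumpt: H} combined with $\sup_t\|\vartheta_t\|\le C_\Theta$) together with $\sup_t \mathbb{E}\|x_t\|^4 < \infty$ from Lemma~\ref{lem:bounded-moment}, Markov's inequality yields $\mathbb{P}(\eta_t\|H(\vartheta_t,x_t)\| > \epsilon) \lesssim \eta_t^4/\epsilon^4$, which is summable because $\sum_t \eta_t^{1.5} < \infty$ implies $\sum_t \eta_t^4 < \infty$, and Borel--Cantelli closes the argument. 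The hardest step is verifying the summability of $\mathbb{E}\gamma_t$ coming from the Abel-summation remainder and martingale quadratic variation in a pathwise rather than mean-square sense; this is precisely where the strengthened step-size condition $\sum \eta_t^{1.5} < \infty$ beyond Assumption~\ref{assump:step-size} is essential, as the $L^2$ bookkeeping behind Theorem~\ref{thm:L2-convergence} only required $\sum \eta_t^2 < \infty$.
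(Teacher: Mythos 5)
Your overall strategy (Robbins--Siegmund plus the rate from Theorem~\ref{thm:L2-convergence} and $\sum_t \eta_t^{1.5}<\infty$) is the right one, but the step where you apply Robbins--Siegmund has a genuine gap: the corrected Lyapunov sequence $V_t = D_t + 2\eta_t \langle \vartheta_t-\vtheta, P_{\vartheta_t}U(\vartheta_t,x_{t-1})\rangle + C\eta_t$ is \emph{not} almost surely nonnegative for any finite constant $C$, and nonnegativity is exactly what the theorem requires. Your own bound $2|\langle u,v\rangle| \le \|u\|^2+\|v\|^2$ gives $V_t \ge (1-\eta_t)D_t + \eta_t\bigl(C - \|P_{\vartheta_t}U(\vartheta_t,x_{t-1})\|^2\bigr)$ (and the sharper Cauchy--Schwarz/AM--GM computation still needs $\|P_{\vartheta_t}U(\vartheta_t,x_{t-1})\|^2 \lesssim C/\eta_t$ pathwise), so you need a \emph{pointwise} bound on $P_{\vartheta_t}U(\vartheta_t,x_{t-1})$, not the $L^2$/$L^4$ moment bounds of Proposition~\ref{assump:U}(d). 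In this queueing model the state $x_{t-1}=(w_{t-1},y_{t-1})$ is unbounded (it only has sub-exponential tails, Lemma~\ref{lem:subexp_of_w&y}) and $U(\vartheta,\cdot)$ grows with the state, so $\PB(V_t<0)>0$ at every $t$ and the Robbins--Siegmund step, as written, does not apply. The gap is repairable --- e.g.\ put the pathwise quantity $\eta_t\|P_{\vartheta_t}U(\vartheta_t,x_{t-1})\|^2$ into the Lyapunov function instead of $C\eta_t$ and re-verify that its drift contributes only nonnegative, a.s.\ summable errors, or run a truncation/stopping argument --- but those verifications are precisely the work your sketch skips.

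It is also worth noting that the paper avoids your Poisson-equation/Abel-summation machinery altogether in this lemma, and its route shows the correction term (the source of your nonnegativity problem) is unnecessary. Conditioning on $\mathcal{F}_{t-1}$ directly gives $\mathbb{E}[D_{t+1}\mid\mathcal{F}_{t-1}] \le (1-2\eta_t K_0)D_t + 2\eta_t^2\, P_{\vartheta_t}\|H(\vartheta_t,x_{t-1})\|^2 + 2\eta_t\|\vartheta_t-\vtheta\|\,\|P_{\vartheta_t}H(\vartheta_t,x_{t-1})-g(\vartheta_t)\|$; the conditional bias is bounded crudely by $\|P_{\vartheta_t}H(\vartheta_t,x_{t-1})\| + K_1\|\vartheta_t-\vtheta\|$ (Assumption~\ref{assmpt: convexity}(c)), and all the resulting error terms are nonnegative, $\mathcal{F}_{t-1}$-measurable, with expectations of order $\eta_t^{1.5}$ or $\eta_t^2$ by Cauchy--Schwarz, Lemma~\ref{lem:bounded-growth}, and $\mathbb{E}D_t = O(\eta_t)$ from Theorem~\ref{thm:L2-convergence}; hence they are a.s.\ summable and Robbins--Siegmund applies to the trivially nonnegative $D_t$ itself, with the limit identified as zero since $\sum_t \eta_t D_t<\infty$ a.s.\ while $\sum_t\eta_t=\infty$ (or via your bounded-convergence argument). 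Your argument for the second claim is essentially fine and only mildly different from the paper's (the paper reads $\eta_t H(\vartheta_t,x_t)\to 0$ off the a.s.\ finiteness of $\sum_t\eta_t^2\|H(\vartheta_t,x_t)\|^2$, you use Markov plus Borel--Cantelli); the interior-point property of $\vtheta$ plays no role there.
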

\begin{proof}{Proof of Lemma~\ref{lem:a.s.}}
	The proof can be found in~\ref{proof:lem:a.s.}
\end{proof}
We then introduce
\begin{equation}
	\label{eq:w}
	\sw_t = \frac{\vartheta_{t+1} -\vartheta_{t} }{\eta_t} -  H(\vartheta_t, x_t).
\end{equation}
to incorporate the truncation effect of projections on the iterate updates.
As we will show in Lemma~\ref{lem:error-analysis}, Lemma~\ref{lem:a.s.}, together with the condition that $\vtheta$ is an inner point of the domain $\Theta$, implies $\sw_t$ is non zero only for a finite number of iterations.
Here, we still maintain the dependence on $\sw_t$ and obtain
\[
\vartheta_{t+1} = \vartheta_t - \eta_t  (H(\vartheta_t, x_t) + \sw_t).
\]
By utilizing the Possion equation, we decompose $ H(\vartheta_t, x_t)$ into two terms:
\begin{equation*}
\begin{aligned}
	H(\vartheta_t, x_t) 
	&=  \nabla f (\vartheta_t)
	+ \left[ H(\vartheta_t, x_t)  - \nabla f (\vartheta_t)\right]\\
	&=\nabla f (\vartheta_t)
	+ \left[  U_H(\vartheta_t, x_t)  - P_{\vartheta_t}U_H(\vartheta_t, x_t)  \right].
\end{aligned}
\end{equation*}
Inspired by the martingale-residual-coboundary decomposition in~\citep{liang2010trajectory,li2023online}, we then further decompose $U_H(\vartheta_t, x_t)  - P_{\vartheta_t}U_H(\vartheta_t, x_t)$ into three terms:
\begin{equation}
\begin{aligned}
	\label{eq:decompose0}
		U_H(\vartheta_t, x_t)  - P_{\vartheta_t}U_H(\vartheta_t, x_t)
		&= \underbrace{\vphantom{ \left(\frac{a^{0.3}}{b}\right)}   \left[  U_H(\vartheta_t, x_t) - P_{\vartheta_{t}} U_H(\vartheta_t, x_{t-1}) \right]}_{martingale} \\
		& \qquad +  \underbrace{	\vphantom{ \left(\frac{a^{0.3}}{b}\right)}  \left[ \frac{\eta_{t+1}}{\eta_t} P_{\vartheta_{t+1}} U_H(\vartheta_{t+1}, x_t) - P_{\vartheta_{t}}  U_H(\vartheta_t, x_{t}) \right]  }_{residual}
		\\
		& \qquad +  \underbrace{ \vphantom{ \left(\frac{a^{0.3}}{b}\right)}   \left[  P_{\vartheta_{t}}  U_H(\vartheta_t, x_{t-1})-  \frac{\eta_{t+1}}{\eta_t} P_{\vartheta_{t+1}}  U_H(\vartheta_{t+1}, x_t) \right]}_{coboundary}.
\end{aligned}
\end{equation}
The telescoping structure in the coboundary term motivates us to introduce an auxiliary process $\{\ttheta_t\}_{t\ge0}$ to remove its effect where 
\[
\ttheta_t = \vartheta_t -\eta_tP_{\vartheta_{t}}U_H(\vartheta_{t}, x_{t-1}).
\]
As a result, we have
\begin{equation*}
\begin{aligned}
	\ttheta_{t+1} = \ttheta_t - &\eta_t \left( \nabla f (\vartheta_t)  + \sw_t +  U_H(\vartheta_t, x_t) - P_{\vartheta_t} U_H(\vartheta_t, x_{t-1}) \right) \\
	- &\eta_t \left( \frac{\eta_{t+1}}{\eta_t} P_{\vartheta_{t+1}} U_H(\vartheta_{t+1}, x_t) - P_{\vartheta_t} U_H(\vartheta_t, x_{t})  \right).  
\end{aligned}
\end{equation*}

We then focus on the convergence of $\{\ttheta_t\}_{t\ge0}$ and simplify the last equation by introducing the following shortcuts: $\De_t = \ttheta_t - x^{\star}$ and
\begin{subequations}
\begin{align}
	\sr_t  &=  \nabla f (\vartheta_t)- G \De_t, \label{eq:r} \\
	\su_t  &= U_H(\vartheta_t, x_t) - P_{\vartheta_t} U_H(\vartheta_t, x_{t-1}), \label{eq:u}  \\
	\snu_t &=\frac{\eta_{t+1}}{\eta_t} P_{\vartheta_{t+1}} U_H(\vartheta_{t+1}, x_t) - P_{\vartheta_t} U_H(\vartheta_t, x_{t}). \label{eq:nu}
\end{align}
\end{subequations}
With the notation, the update rule becomes
\begin{equation}\label{eq:help0}
	\De_{t+1} = \De_t - \eta_t  \left[
	G \De_t + \sr_t + \su_t +  \snu_t  + \sw_t
	\right]  \notag 
	= (I - \eta_t G) \De_t + \eta_t \left[ \sr_t + \su_t + \snu_t + \sw_t\right].
\end{equation}	
The following lemma explains the reason why we perform the decomposition~\eqref{eq:decompose0}.
It shows, while $\{H(\vartheta_t, x_t)  - g(\vartheta_t)\}_{t\ge 0}$ is not a martingale difference sequence, the decomposed $\{\su_t\}_{t \ge 0}$ is.
Furthermore, $\{\su_t\}_{t \ge 0}$ admits an FCLT via a standard argument of multidimensional martingale FCLT (e.g., Theorem 2.1 in~\citet{whitt2007proofs}).
The remaining terms namely $\{\sr_t\}_{t\ge 0}, \{\snu_t\}_{t\ge 0}$ and $\{\sw_t\}_{t\ge 0}$ have negligible effects because they vanish asymptotically.

\begin{lemma}[Properties of decomposed terms]
	\label{lem:error-analysis}
	Under the same conditions of Theorem~\ref{thm:fclt}, 
	\begin{enumerate}
		\item \label{lem:error-r}  As $T \to \infty$, $\frac{1}{\sqrt{T}} \sum_{t=1}^T \EB \| \sr_t\| \to 0$.
		\item  \label{lem:error-u} $\{ \su_t \}_{t \ge 0}$ is a martingale difference sequence satisfying $\sup_{t \ge 0}\EB\|\su_t\|^p < \infty$ with $p \in (2, 4]$
		Furthermore, the following FCLT holds $	\frac{1}{\sqrt{T}} \sum_{t=1}^{\lfloor{Tr}\rfloor} \su_t \overset{w}{\to} S^{1/2} W(r)$.
		\item \label{lem:error-nu}  As $T \to \infty$, $\frac{1}{\sqrt{T}} \sum_{t=1}^T \EB \| \snu_t\| \to 0.$
		\item \label{lem:error-w}  As $T \to \infty$, $ \frac{1}{\sqrt{T}} \sum_{t=1}^T \|\sw_t\| \overset{a.s.}{\to} 0$.
	\end{enumerate}
\end{lemma}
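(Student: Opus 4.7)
The plan is to handle the four items in order, exploiting the convergence rate in Theorem~\ref{thm:L2-convergence}, the step-size regularity in Assumption~\ref{assmpt: more condtion FCLT}, and the continuity bounds in Proposition~\ref{assump:U}. Throughout, I will use that $\EB\|\vartheta_t-\vtheta\|^2=O(\eta_t)$, $\|\vartheta_{t+1}-\vartheta_t\|\le\eta_t\|H(\vartheta_t,x_t)\|$ with uniformly bounded moments of $H$ (Lemma~\ref{lem:bounded-growth}), and $\|\ttheta_t-\vartheta_t\|=\eta_t\|P_{\vartheta_t}U(\vartheta_t,x_{t-1})\|$ with $\EB\|P_{\vartheta_t}U(\vartheta_t,x_{t-1})\|^2$ uniformly bounded.

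For item (1), write $\sr_t=[g(\vartheta_t)-G(\vartheta_t-\vtheta)]+G(\vartheta_t-\ttheta_t)$. On the event $\{\|\vartheta_t-\vtheta\|\le\delta_G\}$ the first bracket is bounded by $L_G\|\vartheta_t-\vtheta\|^2$ by Assumption~\ref{assmpt: more condtion FCLT}(a); on the complementary event, use $\|\vartheta_t-\vtheta\|\le 2C_\Theta$ together with Markov's inequality to absorb the probability into the rate $\EB\|\vartheta_t-\vtheta\|^2=O(\eta_t)$. The second bracket is $\eta_t G P_{\vartheta_t}U(\vartheta_t,x_{t-1})$ whose norm has expectation $O(\eta_t)$. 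Hence $\EB\|\sr_t\|=O(\eta_t)$, and because $\eta_t=\gamma t^{-\alpha}$ with $\alpha\in(2/3,1)$, the partial sum satisfies $T^{-1/2}\sum_{t=1}^T\eta_t=O(T^{1/2-\alpha})\to 0$.

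For item (2), the martingale property is immediate: since $x_t\sim P_{\vartheta_t}(x_{t-1},\cdot)$ with $\vartheta_t\in\mathcal{F}_{t-1}$, one has $\EB[U(\vartheta_t,x_t)\mid\mathcal{F}_{t-1}]=P_{\vartheta_t}U(\vartheta_t,x_{t-1})$. Uniform $L^p$ boundedness with $p\in(2,4]$ follows from the Lipschitz bound $\|U(\vartheta_t,x)-U(\vtheta,x)\|\le L_U(x)\cdot 2C_\Theta$, the uniform moment bound $\EB L_U^4(x_t)\le C_U^4$, and $\sup_t\EB\|P_{\vtheta}U(\vtheta,x_t)\|^4\le\sigma_U^4$ from Proposition~\ref{assump:U}. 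To invoke the multidimensional martingale FCLT (e.g., Theorem~2.1 of Whitt~2007), I need (i) the Lindeberg condition, which is implied by uniform $L^p$ boundedness with $p>2$ via a standard truncation, and (ii) convergence of the conditional covariance sums $T^{-1}\sum_{t=1}^{\lfloor Tr\rfloor}\EB[\su_t\su_t^\top\mid\mathcal{F}_{t-1}]\to rS$. The conditional covariance equals $\mathrm{Var}_{\vartheta_t}(U(\vartheta_t,x_{t-1}))$, and I will compare it to $\mathrm{Var}_{\vartheta_t}(U(\vtheta,x_{t-1}))$ using Proposition~\ref{assump:U} and $\vartheta_t\to\vtheta$ a.s.\ (Lemma~\ref{lem:a.s.}), then invoke Proposition~\ref{prop: covariance convergence}. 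This Cesàro-type comparison between the two conditional covariances is the main obstacle, since the Lipschitz bound for $U$ introduces the random modulus $L_U(x_{t-1})$; I will control it in $L^2$ using $\EB L_U^2(x_t)\le C_U^2$ together with $\EB\|\vartheta_t-\vtheta\|^2\to 0$.

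For item (3), decompose
\[
\snu_t=\Bigl(\tfrac{\eta_{t+1}}{\eta_t}-1\Bigr)P_{\vartheta_{t+1}}U(\vartheta_{t+1},x_t)+\bigl[P_{\vartheta_{t+1}}U(\vartheta_{t+1},x_t)-P_{\vartheta_t}U(\vartheta_t,x_t)\bigr].
\]
The prefactor on the first piece equals $-(\eta_t-\eta_{t+1})/\eta_t=o(\eta_t)$ by Assumption~\ref{assmpt: more condtion FCLT}(b), while $P_{\vartheta_{t+1}}U(\vartheta_{t+1},x_t)$ has uniformly bounded expected norm. For the second piece, Proposition~\ref{assump:U}(b) gives an upper bound $L_U(x_t)\|\vartheta_{t+1}-\vartheta_t\|\le L_U(x_t)\eta_t\|H(\vartheta_t,x_t)\|$; taking expectations and applying Cauchy--Schwarz with the $L^2$ bounds of $L_U(x_t)$ and $H(\vartheta_t,x_t)$ yields $O(\eta_t)$. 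Summation as in item (1) finishes the step. For item (4), combine $\vartheta_t\to\vtheta$ and $\eta_t H(\vartheta_t,x_t)\to 0$ almost surely (Lemma~\ref{lem:a.s.}) with the assumption that $\vtheta$ is an interior point of $\Theta$. Therefore, almost surely, for all sufficiently large $t$ the pre-projected point $\vartheta_t-\eta_t H(\vartheta_t,x_t)$ lies inside $\Theta$, so the projection is inactive and $\sw_t=0$. Consequently $\sum_{t=1}^\infty\|\sw_t\|<\infty$ almost surely, and the Cesàro average normalized by $\sqrt{T}$ tends to zero.
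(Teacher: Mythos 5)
Your treatment of items (1), (3) and (4) is essentially the paper's own argument: the same splitting of $\sr_t$ into the local-linearization error plus $\eta_t G P_{\vartheta_t}U(\vartheta_t,x_{t-1})$, the same two-piece bound on $\snu_t$ with Cauchy--Schwarz against $L_U(x_t)$ and $\|H(\vartheta_t,x_t)\|$, and the same interior-point argument showing $\sw_t$ vanishes for all large $t$. One small caveat on (1): you invoke $\eta_t=\gamma t^{-\alpha}$, $\alpha\in(2/3,1)$, but Assumption~\ref{assmpt: more condtion FCLT}(b) only guarantees $\sum_t \eta_t^{1.5}<\infty$; the conclusion $T^{-1/2}\sum_{t\le T}\eta_t\to 0$ still follows in that generality by H\"older, $\sum_{t\le T}\eta_t\le \bigl(\sum_t\eta_t^{1.5}\bigr)^{2/3}T^{1/3}$, so state it that way rather than for a particular step size. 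Where you genuinely diverge is item (2). The paper splits $\su_t=\su_{t,1}+\su_{t,2}$ with $\su_{t,2}=U(\vtheta,x_t)-P_{\vartheta_t}U(\vtheta,x_{t-1})$, shows $\sup_{r}\|T^{-1/2}\sum_{t\le Tr}\su_{t,1}\|\to 0$ via Doob's maximal inequality (needing only $\EB\|\su_{t,1}\|^2=O(\eta_t^{1-2/p})$ from a H\"older argument), and then applies the martingale FCLT to $\su_{t,2}$, whose conditional covariance is \emph{exactly} $\Var_{\vartheta_t}(U(\vtheta,x_{t-1}))$, i.e.\ the object handled by Proposition~\ref{prop: covariance convergence}. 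You instead apply the FCLT directly to $\su_t$ and transfer the covariance convergence by a Ces\`aro comparison between $\Var_{\vartheta_t}(U(\vartheta_t,x_{t-1}))$ and $\Var_{\vartheta_t}(U(\vtheta,x_{t-1}))$; this works, but the matrix difference involves products such as $U(\vartheta_t,\cdot)U(\vartheta_t,\cdot)^\top-U(\vtheta,\cdot)U(\vtheta,\cdot)^\top$, so controlling it requires fourth-moment bounds on $L_U(x_t)$, $\|U(\vtheta,x_t)\|$ (via the Poisson identity $U=H-\nabla f+P_\vartheta U$) and Cauchy--Schwarz, not merely the second moments you cite. Those moments are available in Proposition~\ref{assump:U} (up to $\alpha=4$), so your route closes, but the paper's path-level cancellation is cleaner: it needs only an $L^2$ bound on $\su_{t,1}$ and avoids matrix-product estimates entirely, while also delivering the uniform-in-$r$ control for free. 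Finally, note that either route needs $\tfrac1T\sum_{t\le \lfloor Tr\rfloor}\EB[\cdot\mid\FM_{t-1}]\to rS$ for each $r$, which follows from Proposition~\ref{prop: covariance convergence} applied along $T'=\lfloor Tr\rfloor$; make that rescaling explicit.
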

\begin{proof}{Proof of Lemma~\ref{lem:error-analysis}}
	The proof can be found in~\ref{proof:lem:error-analysis}.
\end{proof}

Setting $B_t = I - \eta_t G$  and recurring~\eqref{eq:help0} give
\[
\De_{t+1} = \left(\prod_{j =1}^tB_j\right) \De_0 + \sum_{j=1}^t \left(\prod_{i=j+1 }^tB_i\right) \eta_j \left[ \sr_j + \su_j + \snu_j + \sw_j \right].
\]
Here we use the convention that $\prod_{j = t+1}^tB_j =I$ for any $t \ge 0$.
As a result, for any $r \in [0, 1]$,
\begin{align*}
	\widetilde{\ph}_T(r) 
	&:= \frac{1}{\sqrt{T}} \sum_{t=1}^{\floor{Tr}} \widetilde{\Delta}_t  
	=\frac{1}{\sqrt{T}} \sum_{t=1}^{\floor{Tr}} \left\{   \left(\prod_{j = 1}^tB_j\right) \De_0 + \sum_{j=1}^t \left(\prod_{i=j+1 }^tB_i\right) \eta_j \left[ \sr_j + \su_j + \snu_j + \sw_j\right]  \right\}\\
	&=\frac{1}{\sqrt{T}} \sum_{t=1}^{\floor{Tr}}  \left(\prod_{j = 1}^tB_j\right) \De_0 
	+ \frac{1}{\sqrt{T}} \sum_{j=1}^{\floor{Tr}} \sum_{t=j}^{\floor{Tr}} \left(\prod_{i=j+1 }^tB_i\right) \eta_j \left[ \sr_j + \su_j  + \snu_j + \sw_j\right]. 
\end{align*}
In the following, for simplicity we define
\begin{equation}
	\label{eq:A}
	A_{j}^n : =\sum\limits_{t=j}^{n}\left(\prod\limits_{i=j+1}^{t}B_i\right) \eta_j.
\end{equation}
Using the notation in~\eqref{eq:A}, we further simplify the last equation as
\begin{equation*}
	\label{eq:phi1}
	\widetilde{\ph}_T(r) 
	= \frac{1}{\sqrt{T}\eta_1} A_1^{\floor{Tr}} B_1\De_1
	+ \frac{1}{\sqrt{T}} \sum_{j=1}^{\floor{Tr}} A_j^{\floor{Tr}} \left[ \sr_j  + \su_j  + \snu_j +\sw_j\right].
\end{equation*}
Arrangement yields
\begin{align}\label{eq:bpsi}
	\widetilde{\ph}_T(r)  - \frac{1}{\sqrt{T}} \sum_{j=0}^{\floor{Tr}} G^{-1}  \su_t
	&= \frac{1}{\sqrt{T}\eta_1} A_1^{\floor{Tr}} B_1\De_1
	+ \frac{1}{\sqrt{T}} \sum_{t=1}^{\floor{Tr}} A_t^{\floor{Tr}}(\sr_t + \snu_t + \sw_t ) \notag
	\\
	&\qquad + \frac{1}{\sqrt{T}} \sum_{t=1}^{\floor{Tr}} \left(A_t^T -G^{-1}\right)\su_t
	+ \frac{1}{\sqrt{T}} \sum_{t=0}^{\floor{Tr}} \left(A_t^{\floor{Tr}} -A_t^T\right)\su_t \nonumber \\
	&:= \Bpsi_0(r) + \Bpsi_1(r) + \Bpsi_2(r) +  \Bpsi_3(r).
\end{align}

By~\eqref{eq:bpsi}, we are ready to prove Theorem~\ref{thm:fclt}.
We introduce the uniform norm (or maximum norm) $\vertiii{\cdot}$ for a random process $\phi$ defined in $[0, 1]$:
\[
\vertiii{\phi} = \sup_{r \in [0, 1]} \|\phi(r)\|.
\]
\begin{itemize}
	\item First, from Lemma~\ref{lem:error-analysis}, the functional weak convergence follows that $\frac{1}{\sqrt{T}} \sum_{t=1}^{\floor{Tr}} G^{-1}\su_t \overset{w}{\to} \Bpsi(r) = S^{1/2} G^{-1} W(r)$ uniformly over $r \in [0, 1]$.
	\item Second, by a similar argument in Lemma~\ref{lem:bounded-growth}, we can prove that
 $\EB \left\|P_{\vartheta_{t}}U_H(\vartheta_{t}, x_{t-1})\right\|^2 \le 2(\sigma_U^2 +4C_{\Theta}^2 C_U^2)$ is uniformly bounded and thus
	\begin{align*}
		\EB\vertiii{ \widetilde{\ph}_T-\ph_T} 
		&\le \frac{1}{\sqrt{T}} \sum_{t=1}^T \eta_t \EB\| P_{\vartheta_t} U_H(\vartheta_t, x_{t-1}) \|  \to 0.
	\end{align*}
	It implies the random function $\ph_T$ has the same asymptotic behavior as $\widetilde{\ph}_T$, i.e., ${\ph}_T =\widetilde{\ph}_T + o_{P}(1)$.
	\item To complete the proof, it suffices to show that 
	\begin{equation}
		\label{eq:fclt-final}
		\vertiii{\widetilde{\ph}_T  -  \Bpsi} =
		\sup_{r \in [0 ,1]}\left\|
		\widetilde{\ph}_T(r)  -  \Bpsi(r)
		\right\| = o_{P}(1).
	\end{equation}
	In this way, one has $\widetilde{\ph}_T = \Bpsi + o_{P}(1)$ and thus $\ph_T = \Bpsi  + o_{P}(1)$ due to Slutsky's theorem.
	Lemma~\ref{lem:residual-terms} validates~\eqref{eq:fclt-final} by showing the four separate terms $\sup_{r \in [0, 1]} \|\Bpsi_k(r)\|(0 \le k \le 3)$ respectively converge to zero in probability.
	We then complete the proof.
	\begin{lemma}
		\label{lem:residual-terms}
		Under the same conditions of Lemma~\ref{lem:error-analysis}, for all $0 \le k \le 3$, when $T \to \infty$,
		\[
		\vertiii{\Bpsi_k} =\sup_{r \in [0, 1]} \|\Bpsi_k(r)\| = o_{P}(1).
		\]
	\end{lemma}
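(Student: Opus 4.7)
The plan is to bound $\vertiii{\Bpsi_k}$ for $k=0,1,2,3$ separately, relying on a common pair of deterministic estimates for the matrix $A_j^n = \sum_{t=j}^n \bigl(\prod_{i=j+1}^t B_i\bigr) \eta_j$. Since $G$ is positive definite with eigenvalues bounded below by $K_0$ (Assumptions~\ref{assmpt: convexity}(b) and~\ref{assmpt: more condtion FCLT}(a)) and $\eta_t\downarrow 0$, the partial products $M_j^t := \prod_{i=j+1}^t(I-\eta_i G)$ decay essentially like $\exp\bigl(-K_0\sum_{i=j+1}^t \eta_i\bigr)$. Combined with the relations in~\eqref{eq:new-step-size}, standard step-size calculus yields (i) $\sup_{j,n} \|A_j^n\| \le C$, and (ii) $\|A_j^n - G^{-1}\| \to 0$ as $n-j \to \infty$, uniformly in $j$. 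I will first establish these two facts; they are the workhorse estimates for every subsequent bound.

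For $\Bpsi_0(r) = \tfrac{1}{\sqrt{T}\eta_1} A_1^{\lfloor Tr\rfloor} B_1 \Delta_1$, the uniform bound on $A_1^{\lfloor Tr\rfloor}$ immediately gives $\vertiii{\Bpsi_0} = O(T^{-1/2})$, which is trivially $o_P(1)$. For $\Bpsi_1(r)$, the bound $\|A_t^{\lfloor Tr\rfloor}\|\le C$ lets me pass the supremum inside,
\[
\vertiii{\Bpsi_1} \le \frac{C}{\sqrt{T}} \sum_{t=1}^T \bigl(\|\sr_t\| + \|\snu_t\| + \|\sw_t\|\bigr),
\]
whose expectation vanishes by Lemma~\ref{lem:error-analysis}(\ref{lem:error-r})(\ref{lem:error-nu}), while the $\sw_t$ contribution converges a.s.\ (hence in probability) by Lemma~\ref{lem:error-analysis}(\ref{lem:error-w}).

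For $\Bpsi_2(r)$, the coefficients $A_t^T - G^{-1}$ are deterministic, so $\{(A_t^T - G^{-1})\su_t\}$ is a martingale difference sequence. Doob's $L^2$ maximal inequality gives
\[
\EB \vertiii{\Bpsi_2}^2 \;\le\; \frac{4}{T} \sum_{t=1}^T \|A_t^T - G^{-1}\|^2\,\EB \|\su_t\|^2,
\]
and since $\sup_t \EB\|\su_t\|^2 < \infty$ by Lemma~\ref{lem:error-analysis}(\ref{lem:error-u}), it suffices to prove $\tfrac1T \sum_{t=1}^T \|A_t^T - G^{-1}\|^2 \to 0$. This follows by splitting the sum at $t = T - T^{1/2}$: the bulk uses the uniform rate $\|A_t^T - G^{-1}\| \to 0$ whenever $T-t \to \infty$, while the tail contributes only $O(T^{1/2}/T)$ bounded terms.

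The genuine obstacle is $\Bpsi_3(r)$, where both the summation range and the coefficient $A_t^{\lfloor Tr\rfloor} - A_t^T$ depend on $r$. Since the process is piecewise constant in $r$, the supremum reduces to a maximum over $r = k/T$, $0 \le k \le T$. I will use the identity $A_t^T - A_t^{\lfloor Tr\rfloor} = \eta_t \, M_t^{\lfloor Tr\rfloor}\,\Lambda^T_{\lfloor Tr\rfloor}$, where $\Lambda^T_k$ is uniformly bounded and $M_t^{\lfloor Tr\rfloor}$ decays exponentially in the gap $\lfloor Tr\rfloor - t$, to split the inner sum into a ``far'' regime $t \le \lfloor Tr\rfloor - L_T$ (for a slowly growing $L_T$) and a ``near'' regime. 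The far part, after the factorization, becomes a martingale in $t$ whose $L^2$ bound across all $k$ is handled by Doob's inequality plus a union bound of at most $\log T$ cost, exploiting the $L^p$ moments of $\su_t$ in Lemma~\ref{lem:error-analysis}(\ref{lem:error-u}); the near part contains $O(L_T)$ terms whose $L^2$ norm is $O(L_T/\sqrt{T})$. Choosing $L_T$ to grow slowly (e.g., $L_T = \log^2 T$) balances both regimes and yields $\vertiii{\Bpsi_3} = o_P(1)$. This is exactly the chaining/union-bound argument developed in \citet{li2023online}, transported to our step-size schedule and martingale differences $\su_t$.
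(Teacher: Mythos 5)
Your handling of $\Bpsi_0$ and $\Bpsi_1$ coincides with the paper's, and your $\Bpsi_2$ argument (Doob's $L^2$ maximal inequality plus $\frac1T\sum_{t}\|A_t^T-G^{-1}\|^2\to0$) is also the paper's route. One caveat on your workhorse estimate (ii): the claim that $\|A_j^n-G^{-1}\|\to0$ as $n-j\to\infty$ \emph{uniformly in $j$} is false as stated. For a fixed gap $m$, $A_{n-m}^n=\eta_{n-m}\sum_{t=n-m}^{n}\prod_{i=n-m+1}^{t}(I-\eta_iG)\approx(m+1)\eta_{n-m}\to0$, and convergence to $G^{-1}$ requires the gap to be large compared with $1/\eta_j$, which diverges. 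In particular your split at $t=T-\sqrt T$ does not deliver the needed Cesàro fact, since for $\eta_t\asymp t^{-\alpha}$ with $\alpha\in(2/3,1)$ one has $1/\eta_t\asymp T^{\alpha}\gg\sqrt T$ near the split point. The correct statement—and the one the paper invokes—is the averaged convergence $\frac1n\sum_{j}\|A_j^n-G^{-1}\|\to0$ of Lemma~\ref{lem:A} (Polyak--Juditsky), which, combined with uniform boundedness, still rescues your $\Bpsi_2$ bound; so this is a fixable mis-justification rather than a fatal error.

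The genuine gap is in $\Bpsi_3$. First, your factorization is wrong as stated: one has $A_t^T-A_t^{\floor{Tr}}=\eta_t\bigl(\prod_{i=t+1}^{\floor{Tr}}B_i\bigr)B_{\floor{Tr}+1}A_{\floor{Tr}+1}^T/\eta_{\floor{Tr}+1}$, so the factor you call $\Lambda^T_{\floor{Tr}}$ is \emph{not} uniformly bounded—it carries the diverging weight $1/\eta_{\floor{Tr}+1}$. This is precisely why the paper reduces $\Bpsi_3$ to controlling $\sup_{n\le T}\frac{1}{\sqrt T\,\eta_{n+1}}\bigl\|\sum_{t\le n}\bigl(\prod_{i=t+1}^{n}B_i\bigr)\eta_t\su_t\bigr\|$ and then imports Lemma~\ref{lem:error} (Lemma 3.3 of \citet{li2023online}). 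Second, your near/far split with $L_T=\log^2T$ rests on a mis-estimate of the decay: the contraction of $\prod_{i=n-L}^{n}(I-\eta_iG)$ is of order $\exp(-cL\eta_n)$, and with $\eta_n\asymp n^{-\alpha}$ a window of polylogarithmic length near $n\approx T$ gives $\exp(-cT^{-\alpha}\log^2T)=1-o(1)$—no decay at all. The memory of the recursion near time $T$ is $\asymp1/\eta_T\asymp T^{\alpha}$ steps, so the "far" part is not killed by your choice of $L_T$, and a corrected window must have polynomial length; then the "near" part has polynomially many terms and your crude bound no longer closes, so one needs a genuine maximal-inequality argument for the weighted martingale recursion $y_{t+1}=(I-\eta_tG)y_t+\eta_t\su_t$ (using the $p$-th moments of $\su_t$, $p>2$) showing $\sup_{r}\|y_{\floor{Tr}}\|/(\sqrt T\,\eta_{\floor{Tr}})=o_P(1)$. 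That is exactly the content of the paper's Lemma~\ref{lem:error}; your sketch points to the right reference but, as written, the decomposition you propose would fail.
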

	\begin{proof}{Proof of Lemma~\ref{lem:residual-terms}}
		The proof can be found in~\ref{proof:lem:residual-terms}.
	\end{proof}
\end{itemize}

\subsection{Proof of Lemma~\ref{lem:a.s.}}
\label{proof:lem:a.s.}
\begin{proof}{Proof of Lemma~\ref{lem:a.s.}}
	Recall that $\FM_t = \sigma(\{ x_{\tau}\}_{\tau \le t})$ is the $\sigma$-field generated by all randomness before iteration $t$ and $\vartheta_{t+1} \in \FM_{t}$.
	We denote $\EB_t[ \cdot] = \EB[\cdot|\FM_{t-1}]$.
	By a similar argument in Lemma~\ref{lem:itera}, it follows that
        \begin{equation*}
	\begin{aligned}
		\EB_t \left\|\vartheta_{t+1}-\vtheta\right\|^{2} 
		&\le \EB_t\left\|\vartheta_{t}-\vtheta-\eta_t H( \vartheta_{t}, x_{t} )\right\|^{2} \\
		& = \left\|\vartheta_{t}-\vtheta\right\|^{2}- \, 2 \eta_{t} \langle\vartheta_t -\vtheta, P_{\vartheta_t}H( \vartheta_t, x_{t-1} )  \rangle + \eta_{t}^2  P_{\vartheta_t}\left\|  H(\vartheta_t, x_{t-1})  \right\|^{2} \\
		&\le \left( 1- 2\eta_t K_0 \right) \left\|\vartheta_{t}-\vtheta\right\|^{2} + 2 \eta_t^2 P_{\vartheta_t}\left\|  H(\vartheta_t, x_{t-1})  \right\|^{2} \\
		&\qquad + 2\eta_t \|\vartheta_t -\vtheta\| \|P_{\vartheta_t}H( \vartheta_t, x_{t-1}) - \nabla f (\vartheta_t)\|
	\end{aligned}
        \end{equation*}
	where the last inequality uses the strong convexity of $g$ and the smoothness of $H$.
	
	Note that we have the following intermediate results.
	\begin{itemize}
		\item $\sum_{t=1}^{\infty} \eta_t^2 P_{\vartheta_t}\left\|  H(\vartheta_t, x_{t-1}) \right\|^{2} < \infty$ almost surely. This is because Lemma~\ref{lem:bounded-growth} implies that
		\[
		\EB \sum_{t=1}^{\infty} \eta_t^2 P_{\vartheta_t}\left\|  H(\vartheta_t, x_{t-1}) \right\|^{2}  =  \sum_{t=1}^{\infty} \eta_t^2 \EB \left\|  H(\vartheta_t, x_{t}) \right\|^{2} \le \gO(1) \cdot \sum_{t=1}^{\infty} \eta_t^2 < \infty.
		\]
  As a byproduct, $\sum_{t=1}^{\infty} \eta_t^2 \left\|  H(\vartheta_t, x_{t}) \right\|^{2} < \infty$ almost surely, which implies that $\eta_t H(\vartheta_t, x_{t}) \overset{a.s.}{\to} 0$.
		\item By Lemma~\ref{assump:U},
  \begin{equation*}
  \begin{aligned}
            \|P_{\vartheta_t}H( \vartheta_t, x_{t-1}) - \nabla f (\vartheta_t)\| &\le \|P_{\vartheta_t}H( \vartheta_t, x_{t-1})\| + \|\nabla f (\vartheta_t)\| \\
            &\le\|P_{\vartheta_t}H( \vartheta_t, x_{t-1})\| + K_1 \|\vartheta_t-\vtheta\|.
  \end{aligned}
  \end{equation*}
		\item $\sum_{t=1}^{\infty} \eta_t \|\vartheta_t-\vtheta\| \|P_{\vartheta_t}H( \vartheta_t, x_{t-1}) \| < \infty$ almost surely.
  Again this is because it has a finite expectation:
            \begin{equation*}
		\begin{aligned}
			\EB \sum_{t=1}^{\infty} &\eta_t \|\vartheta_t-\vtheta\| \|P_{\vartheta_t}H( \vartheta_t, x_{t-1}) \|\\
			&\le  \sum_{t=1}^{\infty} \frac{\eta_t}{2}  \left(
			\EB \frac{\EB\|\vartheta_t-\vtheta\|^2 }{\sqrt{\eta_t}} + \sqrt{\eta_t}  \EB 
			\|P_{\vartheta_t}H( \vartheta_t, x_{t-1}) \|^2
			\right)
			\le \gO(1) \cdot  \sum_{t=1}^{\infty} \eta_t^{1.5} < \infty,
		\end{aligned}
            \end{equation*}
		where the last inequality uses $\EB\|\vartheta_t-\vtheta\|^2 = \gO(\eta_t)$ that is inferred from Theorem~\ref{thm:L2-convergence} and $\EB \|P_{\vartheta_t}H( \vtheta, x_{t-1})\|^2 = \gO(1)$ from Lemma~\ref{lem:bounded-growth}.
		\item By a similar argument, we can also show $\sum_{t=1}^{\infty} \eta_t\|\vartheta_t-\vtheta\|^2 < \infty$ almost surely.
	\end{itemize}
	
	Hence, it follows that
	\[
	\EB_t \left\|\vartheta_{t+1}-\vtheta\right\|^{2} \le (1-2\eta_tK_0) \left\|\vartheta_{t}-\vtheta\right\|^{2} + R_t,
	\]
	where $R_t$ is a summable non-negative random sequence defined by  
    \[
		R_t := 2K_1\eta_t\left\|\vartheta_{t}-\vtheta\right\|^2 + 2 \eta_t^2 P_{\vartheta_t}\left\|  H(\vartheta_t, x_{t-1}) \right\|^{2} +
		2\eta_t \|\vartheta_t-\vtheta\| \|P_{\vartheta_t}H( \vartheta_t, x_{t-1})\|.
    \]
	As discussed, $\sum_{t=1}^\infty R_t$ is finite almost surely.
	Then by the theorem of Robbins and Siegmund~\citep{robbins1971convergence}, the almost sure convergence of $\vartheta_t -\vtheta$ follows.
\end{proof}

\subsection{Proof of Lemma~\ref{lem:error-analysis}}
\label{proof:lem:error-analysis}
\begin{proof}{Proof of Lemma~\ref{lem:error-analysis}}
In the following, we use $a \precsim b$ or $a = \gO(b)$ to denote $a \le C b$ for an unimportant positive constant $C > 0$ (which doesn't depend on $t$) for simplicity.
\begin{enumerate}
    \item By the local linearity in Lemma~\ref{assump:U}, it follows that
        \begin{equation*}
	\begin{aligned}
		\| \sr_t\| &= \|\nabla f (\vartheta_t) - G\De_t\|\\
		&\le \|\nabla f (\vartheta_t) - G (\vartheta_t -\vtheta)\| + \eta_t \|G\| \cdot \| P_{\vartheta_t} U_H(\vartheta_t, x_{t-1})\|\\
		&\le 
		\left\{ \begin{array}{ll}
			L_G \cdot\|\vartheta_t -\vtheta\|^2 + \eta_t \|G\|\cdot  \| P_{\vartheta_t} U_H(\vartheta_t, x_{t-1})\|&  \text{if} \ \|\vartheta_t -\vtheta\| \le \delta_G\\
			(K_1 +  \| G\|  )\cdot \| \vartheta_t -\vtheta\| + \eta_t \|G\| \cdot\| P_{\vartheta_t} U_H(\vartheta_t, x_{t-1})\|& \text{if} \ \|\vartheta_t -\vtheta\| \ge \delta_G
		\end{array}
		\right.\\
		&\le \max\left\{L_G , \frac{K_1 +  \| G\|}{\delta_G}\right\}\|  \vartheta_t -\vtheta  \|^2 +\eta_t \|G\|  \cdot \| P_{\vartheta_t} U_H(\vartheta_t, x_{t-1})\|
	\end{aligned}
        \end{equation*}
By Theorem~\ref{thm:L2-convergence}, it follows that $\EB \|  \vartheta_t -\vtheta  \|^2 = \gO(\eta_t)$.
By Lemma~\ref{lem:bounded-growth}, we have $\EB \left\|P_{\vartheta_{s-1}}U_H(\vartheta_{s-1}, x_{s-1})
\right\| = \gO(1)$.
	As a result, when $T \to \infty$,
	\[
	\frac{1}{\sqrt{T}} \sum_{t=1}^T \EB \| \sr_t\|
	\precsim \frac{1}{\sqrt{T}}  \sum_{t=1}^T\EB\|\vartheta_t -\vtheta\|^2 + \frac{1}{\sqrt{T}}  \sum_{t=1}^T \eta_t
	\precsim \frac{1}{\sqrt{T}}  \sum_{t=1}^T  \eta_t \to 0.
	\]
 \item By the data generation mechanism, $x_t \sim P_{\vartheta_t}(x_{t-1}, \cdot)$, we have that $\su_t  = U_H(\vartheta_t, x_t) - P_{\vartheta_t}U_H(\vartheta_t, x_{t-1})$ is a martingale difference sequence.
	By using the Possion equation and Lemma~\ref{lem:bounded-growth}, we know that $\sum_{t=1}^{\floor{Tr}} \su_t $ is square integrable for all $r \in [0, 1]$.
	
	In order to figure out the asymptotic behavior of $\sum_{t=1}^{\floor{Tr}} \su_t$, we decompose $\su_t$ into two parts  
	$\su_t = \su_{t,1} + \su_{t,2}$ where
        \begin{equation}
	\begin{aligned}
		\label{eq:U-12}
		\su_{t, 1} &=  \left[  U_H(\vartheta_t, x_t) - P_{\vartheta_t} U_H(\vartheta_t, x_{t-1}) \right] - \left[  U_H(\vtheta, x_t) -  P_{\vartheta_t} U_H(\vtheta, x_{t-1}) \right],\\
		\su_{t, 2}&=  \left[  U_H(\vtheta, x_t) -  P_{\vartheta_t} U_H(\vtheta, x_{t-1}) \right].
	\end{aligned}
        \end{equation}
	It's clear that both $\{ \su_{t,1} \}_{t \ge 0}$ and $\{ \su_{t,2} \}_{t \ge 0}$ are also martingale difference sequences.
	
	We assert that $\frac{1}{\sqrt{T}} \sum_{t=1}^{\floor{Tr}} \su_t$ has the same asymptotic behavior as $\frac{1}{\sqrt{T}} \sum_{t=1}^{\floor{Tr}} \su_{t,2}$ due to
	\[
	\EB \sup_{r \in [0, 1]} \left\|\frac{1}{\sqrt{T}} \sum_{t=1}^{\floor{Tr}} \su_t- \frac{1}{\sqrt{T}} \sum_{t=1}^{\floor{Tr}} \su_{t,2} \right\|^2=
	\EB \sup_{r \in [0, 1]} \left\|\frac{1}{\sqrt{T}} \sum_{t=1}^{\floor{Tr}} \su_{t,1} \right\|^2= o(1).
	\]
	This is because Holder's inequality implies that
        \begin{equation*}
	\begin{aligned}
		\EB\|\su_{t,1}\|^2
		&\le \EB\|U_H(\vartheta_t, x_t)-U_H(\vtheta, x_t)\|^2
		= \EB P_{\vartheta_t} \|U_H(\vartheta_t, x_{t-1})-U_H(\vtheta, x_{t-1})\|^2\\
		&\le \EB \left[ L(x_{t-1})^2 \cdot \|\vartheta_t-\vtheta\|^2  \right] 
		\le (\EB L(x_{t-1})^p)^{\frac{2}{p}} \cdot (\EB  \|\vartheta_t-\vtheta\|^{\frac{2p}{p-2}})^{1-\frac{2}{p}}\\
		&\le \gO(1) \cdot (\EB  \|\vartheta_t-\vtheta\|^2)^{1-\frac{2}{p}} 
		\le \gO(1) \cdot \eta_t^{1-\frac{2}{p}},
	\end{aligned}
        \end{equation*}
and Doob's martingale inequality implies that
	\[
	\EB \sup_{r \in [0, 1]} \left\|\frac{1}{\sqrt{T}} \sum_{t=1}^{\floor{Tr}} \su_{t,1} \right\|^2
	\le \frac{1}{T} \sum_{t=1}^T\EB\|\su_{t,1}\|^2
	\precsim \frac{1}{T} \sum_{t=1}^T \eta_t^{1-\frac{2}{p}} \to 0.
	\]

In the following, we then focus on the partial-sum process $\frac{1}{\sqrt{T}} \sum_{t=1}^{\floor{Tr}} \su_{t,2}$.
We will verify the sufficient conditions for the martingale central limit theorem.

	For one thing, by Lemma~\ref{assump:U}, $\{\su_{t,2}\}_{t\ge 0}$ has uniformly bounded $p$-th order moments. 
	This is because of Jensen's inequality:
        \begin{equation*}
	\begin{aligned}
			\sup_{t \ge 1}\EB\|\su_{t,2}\|^p
			&\le 2^{p-1}\sup_{t \ge 1} \left[ \EB\| U_H(\vtheta, x_t)\|^p  + \EB\|P_{\vartheta_t}U_H(\vtheta, x_{t-1})\|^p \right] \\
			&\le 2^p\sup_{t \ge 1}  \EB\| U_H(\vtheta, x_t)\|^p  < \infty.
	\end{aligned}
        \end{equation*}
	As a result, for any $\eps > 0$, as $T$ goes to infinity,
        \begin{equation*}
	\begin{aligned}
		\EB \left\{
		\sum_{t=1}^{T} \EB\left[  \left\| \frac{\su_{t,2}}{\sqrt{T}} \right\|^2 {1}_{\{\|\su_{t,2}\| \ge \sqrt{T}\eps \} }\bigg| \FM_{t-1}  \right]\right\}
		&\le \frac{1}{\eps^{\frac{p}{2}-1} T^{\frac{p}{2}}}	\EB \left\{\sum_{t=0}^{T} \EB\left[  \left\| \su_{t,2}\right\|^p \bigg| \FM_{t-1}  \right] \right\} \\
		&\le \frac{\sup_{t \ge 0}\EB\|\su_{t,2}\|^p}{\eps^{\frac{p}{2}-1} T^{\frac{p}{2}-1}}   \to 0,
	\end{aligned}
        \end{equation*}
	which implies
	\[
	\sum_{t=1}^{T} \EB\left[  \left\| \frac{\su_{t,2}}{\sqrt{T}} \right\|^2 {1}_{\{\|\su_{t,2}\| \ge \sqrt{T}\eps \} }\bigg| \FM_{t-1}  \right] \overset{p}{\to} 0.
	\]
	
	For another thing, we notice that
        \begin{equation*}
	\begin{aligned}
		&\EB \left[  \left[  U_H(\vtheta, x_t) - P_{\vartheta_t} U_H(\vtheta, x_{t-1}) \right]  \left[  U_H(\vtheta, x_t) - P_{\vartheta_t} U_H(\vtheta, x_{t-1}) \right]^\top \big|\FM_{t-1}   \right]\\
		=& \EB \left[U_H(\vtheta, x_t)U_H(\vtheta, x_t)^\top \big|\FM_{t-1}   \right] -  P_{\vartheta_t} U_H(\vtheta, x_{t-1}) P_{\vartheta_t} U_H(\vtheta, x_{t-1})^\top \\
		=& P_{\vartheta_t} \left[ U_H(\vtheta, x_{t-1})U_H(\vtheta, x_{t-1})^\top \right]-  P_{\vartheta_t} U_H(\vtheta, x_{t-1}) P_{\vartheta_t} U_H(\vtheta, x_{t-1})^\top\\
  =& \Var_{\vartheta_t}(U_H(\vtheta, x_{t-1})).
	\end{aligned}
        \end{equation*}
	Hence, we have
	\begin{equation}\label{eq:conv_asy_var}
	\frac{1}{T} \sum_{t=1}^T \EB[\su_{t, 2}\su_{t, 2}^\top|\FM_{t-1}] =
 \frac{1}{T} \sum_{t=1}^T \Var_{\vartheta_t}(U_H(\vtheta, x_{t-1}))
 \overset{p}{\to} S.
	\end{equation}
	Hereto, we have shown $\{\su_{t,2}\}_{t \ge 0}$ satisfies the Lindeberg-Feller conditions for martingale central limit theorem.
	Then the martingale FCLT follows from Theorem 4.2 in~\citet{hall2014martingale} (or Theorem 8.8.8 in~\citet{durrett2013probability}, or Theorem 2.1 in~\citet{whitt2007proofs}).
	Therefore, we have
	\[
	\frac{1}{\sqrt{T}} \sum_{t=1}^{\floor{Tr}} \su_{t,2} \overset{w}{\to} S^{1/2} W(r)
	\ \text{and} \ 		\frac{1}{\sqrt{T}} \sum_{t=1}^{\floor{Tr}} \su_{t} \overset{w}{\to} S^{1/2} W(r).
	\]
	Finally, it is easy to show $\su_{t, 1}$ also has uniformly bounded $p$-th order moment and thus $\sup_{t \ge 1} \EB\|\su_{t}\|^p < \infty$.

 \item  By the definition in~\eqref{eq:nu} and Lemma~\ref{assump:U} ,we have 
        \begin{equation}
	\begin{aligned}
		\label{eq:help-nu-0}
		\|\snu_t\| &= \left\| \frac{\eta_{t+1}}{\eta_t} P_{\vartheta_{t+1} }U_H(\vartheta_{t+1}, x_t) - P_{\vartheta_{t}} U_H(\vartheta_t, x_{t})\right\| \nonumber \\
		&\le  \left\|  P_{\vartheta_{t+1} } U_H(\vartheta_{t+1}, x_t) - P_{\vartheta_{t} } U_H(\vartheta_t, x_{t})\right\|
		+ \left\| \frac{\eta_{t+1}-\eta_t}{\eta_t} P_{\vartheta_{t+1} } U_H(\vartheta_{t+1}, x_t) \right\| \nonumber \\
		&\le L_U(x_t) \cdot  \|\vartheta_{t+1} - \vartheta_t\| + \left| \frac{\eta_{t+1}-\eta_t}{\eta_t} \right| \cdot \left\|P_{\vartheta_{t+1} } U_H(\vartheta_{t+1}, x_t) \right\|  \notag \\
		& \precsim L_U(x_t) \cdot \|\vartheta_{t+1} - \vartheta_t\| + o(\eta_t) \left\|P_{\vartheta_{t+1} } U_H(\vartheta_{t+1}, x_t) \right\|  \notag
	\end{aligned} 
        \end{equation}
	From another hand, it follows from Lemma~\ref{lem:bounded-growth} that 
        \begin{equation*}
	\begin{aligned}
				\EB \left[L_U(x_t)\|\vartheta_{t+1} - \vartheta_t\|  \right]
		&\le \eta_t \cdot  \EB \left[L_U(x_t) \| H(\vartheta_t, x_t) \|\right]\\
		&\le  \eta_t \cdot  \sqrt{\EB L_U(x_t)^2 \cdot \EB \| H(\vartheta_t, x_t) \|^2}\\
		&\le  \eta_t \cdot \gO(1),
	\end{aligned}
        \end{equation*}
	and $\EB\left\|P_{\vartheta_{t+1} } U_H(\vartheta_{t+1}, x_t) \right\| = \gO(1)$.
	Therefore, it follows that 
	\[
	\EB \|\snu_t\| = \gO(\eta_t),
	\]
	as a result of which, as $T \to \infty$,
	\[
	\frac{1}{\sqrt{T}}\sum_{t=1}^T \EB\|\snu_t\| 
	\precsim 	\frac{1}{\sqrt{T}}\sum_{t=1}^T \eta_t \to 0.
	\]
 \item By Lemma~\ref{lem:a.s.}, we know that both $\vartheta_t -\vtheta$ and $\eta_t H(\vartheta_{t}, x_t)$ converge to zero almost surely.
Because $\vtheta$ is an inner point of $\Theta$, there must exist a sufficiently small number $r > 0$ such that the ball $\mathcal{B}(\vtheta, r)$ that centers at $\vtheta$ and has a radius $r$ is still contained in $\Theta$.
Then, as long as $t$ is sufficiently large, we have $\|\vartheta_t -\vtheta\| \le r/2$ and $\|\eta_t H(\vartheta_{t}, x_t)\| \le r/2$, which implies that 
$\vartheta_t -\eta_t H(\vartheta_{t}, x_t) \in  \mathcal{B}(\vtheta, r) \subset \Theta$ by the triangle inequality.
As a result, $\vartheta_t -\eta_t H(\vartheta_{t}, x_t)$ will not go out of the domain $\Theta$ and the projection $\Pi_{\Theta}(\cdot)$ seems to disappear, i.e.,
\[
\vartheta_{t+1} = 
\vartheta_t -\eta_t H(\vartheta_{t}, x_t).
\]
The last equality is equivalent to $\sw_t = 0$.
Hence, it holds almost surely that $\sw_t$ is non-zero for only a finite number of iterations and of course $\frac{1}{\sqrt{T}}\sum_{t=1}^T \|\sw_t\|$ converges to zero almost surely.
\end{enumerate}
\end{proof}

\subsection{Proof of Lemma~\ref{lem:residual-terms}}
\label{proof:lem:residual-terms}
\begin{proof}{Proof of Lemma~\ref{lem:residual-terms}}
	We will analyze the four separate terms $\sup_{r \in [0, 1]} \|\Bpsi_k(r)\|(0 \le k \le 3)$ respectively.
	
	\paragraph{For $\Bpsi_0$}
	Lemma~\ref{lem:A} shows $A_{j}^n$ is uniformly bounded.
	As $T \to \infty$,
	\[
	\sup_{r \in [0, 1]} \|\Bpsi_0(r)\| = 
	\frac{1}{\sqrt{T} \eta_1} \sup_{r \in [0, 1]} \|A_1^{\floor{Tr}} B_1\De_1\| \le  \frac{C_A}{\sqrt{T} \eta_1}\|B_1\De_1\| \to 0.
	\]
	
	\paragraph{For $\Bpsi_1$}
	Recall that 
	$\Bpsi_1(r) = \frac{1}{\sqrt{T}} \sum_{t=0}^{\floor{Tr}} A_t^{\floor{Tr}} (\sr_t + \snu_t + \sw_t)$.
	Since $\|A_j^n\| \le C_A$ uniformly for any $n \ge j \ge 0$, it follows that as $T \to \infty$, 
	\[
	\sup_{r \in [0, 1]} \|\Bpsi_1(r)\|  \le \frac{C_A}{\sqrt{T}}  \sum_{t=0}^T \left( \|\sr_t\| + \|\snu_t\| + \|\sw_t\|\right) \overset{p}{\to} 0
	\]
	where the last inequality uses Lemma~\ref{lem:error-analysis}.
	
	\paragraph{For $\Bpsi_2$}
	Recall that $\Bpsi_2(r) = \frac{1}{\sqrt{T}} \sum_{t=0}^{\floor{Tr}} \left(A_t^T -G^{-1}\right)\su_t$ with $\su_t$ a martingale difference.	
	In the following, we set $z_t = \Bpsi_2(t/T)$ (indexed by $t \in [T]$) for simplicity.
	It is clear that $\{z_t, \FM_t \}_{t \in [T]}$ forms a square integrable martingale difference sequence.
	As a result $\{ \|z_t\|, \FM_t \}_{t \in [T]}$ is a submartingale due to $\EB[\|\su_t\||\FM_{t-1}] \ge \|\EB[\su_t|\FM_{t-1}] \| = \|\su_{t-1}\|$ from conditional Jensen's inequality.
	By Doob’s maximum inequality for submartingales (which we use to derive the following $(*)$ inequality), 
        \begin{equation*}
	\begin{aligned}
		\EB\sup_{r \in [0, 1]} \|\Bpsi_{2}(r)\|^2
		&=\EB \sup_{t \in [T]} \|z_t\|^2 
		\overset{(*)}{\le} 4 \EB\|z_T\|^2\\
		&= \frac{4}{T} \sum_{t=0}^T \EB\|\left(A_t^T -G^{-1}\right)\su_t\|^2 \\
		&\le  4\sup_{t \ge 0} \EB\|\su_t\|^2 \cdot  \frac{1}{T} \sum_{t=0}^T\|A_t^T -G^{-1}\|^2 \to 0
	\end{aligned}
        \end{equation*}
	where we use Lemma~\ref{lem:A} and the fact that $\|A_t^T -G^{-1}\|$ is uniformly bounded by $C_0 + \|G^{-1}\|$ to prove $ \frac{1}{T} \sum_{t=0}^T\|A_t^T -G^{-1}\|^2 \to 0$.

	\paragraph{For $\Bpsi_3$}
	Recall that $\Bpsi_3(r) = \frac{1}{\sqrt{T}} \sum_{t=0}^{\floor{Tr}} \left(A_t^{\floor{Tr}} -A_t^{T}\right)\su_t$ with $\su_t$ a martingale difference.	
	Notice that for any $n \in [T]$
	\begin{equation*}
		\begin{aligned}
			\sum_{t=0}^{n}(A_t^T- A_t^{n} )\su_t
			&= \sum_{t=0}^{n} \sum_{j=n+1}^T \left(\prod_{i=t+1}^j B_i\right)\eta_t \su_t
			= \sum_{j=n+1}^{T} \sum_{t=0}^n \left(\prod_{i=t+1}^j B_i\right)\eta_t \su_t\\
			&=\sum_{j=n+1}^{T}  \left(\prod_{i=n+1}^j B_i\right)\sum_{t=1}^n \left(\prod_{i=t+1}^n B_i\right)
			\eta_t \su_t\\
			&=\frac{1}{\eta_{n+1}}A_{n+1}^TB_{n+1}\sum_{t=0}^n \left(\prod_{i=t+1}^n B_i\right)
			\eta_t \su_t.
		\end{aligned}
	\end{equation*}
	By Lemma~\ref{lem:A}, $\|A_{n+1}^TB_{n+1}\| \le C_A(1+
	\eta_1\| G\|)$ for any  $T \ge n \ge 0$.
	Hence,
        \[
		\sup_{r \in [0, 1]}\| \Bpsi_3(r)\|
		= \sup_{n \in [T]} \left\|  \frac{1}{\sqrt{T}} \sum_{t=0}^{n} \left(A_t^{n} -A_t^{T}\right)\su_t \right\|
        \precsim \sup_{n \in [T]} \left\|  \frac{1}{\sqrt{T}}
		\frac{1}{\eta_{n+1}}\sum_{t=0}^n \left(\prod_{i=t+1}^n B_i\right)
		\eta_t \su_t \right\|.
        \]
	Lemma~\ref{lem:error} implies that the r.h.s. of the last inequality converges to zero in probability.
\end{proof}

\subsection{Other Technical Lemmas}

\begin{lemma}
	\label{lem:A}
	Let $B_i := I - \eta_iG$ and $-G$ is Hurwitz (i.e., $\mathrm{Re} \lambda_i(G) > 0$ for all $i \in [d]$).
	For any $n \ge j$, define $A_{j}^{n}$ as
	\begin{gather}
		\tag{\ref{eq:A}}
		A_{j}^n =\sum\limits_{t=j}^{n}\left(\prod\limits_{i=j+1}^{t}B_i\right) \eta_j.	
	\end{gather}
	When $\{\eta_t\}_{t \ge 0}$ satisfies $\eta_t \downarrow 0, t \eta_t \uparrow \infty, \frac{\eta_{t-1}-\eta_t}{\eta_{t-1}} = o(\eta_{t-1})$ and $\sum_{t=1}^{\infty} \eta_t t^{-1/2} < \infty$, then there exist $C_A > 0$ such that $A_j^n$ is uniformly bounded with respect to both $j$ and $n$ for $0 \le j \le k$ (i.e., $\|A_j^n\| \le C_A$ for any $n \ge j \ge 0$), and
	\[
	\frac{1}{n} \sum_{j=0}^n \|A_{j}^{n} - G^{-1}\| \to 0 
	\ \text{as} \ n \to \infty.
	\]
\end{lemma}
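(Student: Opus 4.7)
The plan is to establish the lemma in three steps: an exponential decay estimate for the matrix product $\Phi_{j,t}:=\prod_{i=j+1}^t B_i$ (with $\Phi_{j,j}=I$), uniform boundedness of $A_j^n$, and a telescoping identity that drives the Cesaro convergence to $G^{-1}$. Since $-G$ is Hurwitz, the Lyapunov theorem supplies a positive definite matrix $\Lambda$ and $\lambda_0>0$ with $G^\top \Lambda+\Lambda G\succeq 2\lambda_0 \Lambda$. Writing $\|\cdot\|_\Lambda$ for the induced norm, a direct expansion yields $(I-\eta_i G)^\top \Lambda (I-\eta_i G)\preceq (1-2\lambda_0 \eta_i+\eta_i^2\|G\|_\Lambda^2)\Lambda\preceq (1-\lambda_0 \eta_i)\Lambda$ once $\eta_i\le \lambda_0/\|G\|_\Lambda^2$. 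Since $\eta_i\downarrow 0$ this holds for all $i\ge i_0$, and absorbing the finitely many early factors produces
\[
\|\Phi_{j,t}\|\le C_1\exp\!\left(-\tfrac{\lambda_0}{2}\sum_{i=j+1}^t \eta_i\right)\qquad\text{for all } n\ge t\ge j\ge 0,
\]
where the operator-norm bound comes from norm equivalence.

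For Step 2, I would combine this decay with the slow-variation condition $(\eta_{t-1}-\eta_t)/\eta_{t-1}=o(\eta_{t-1})$: iterating $\log(\eta_j/\eta_t)\le \sum_{i=j+1}^t \log(1+o(\eta_{i-1}))$ gives $\eta_j\le \eta_t \exp(\varepsilon\sum_{i=j+1}^t \eta_i)$ for any fixed $\varepsilon>0$ once $j$ is large. Taking $\varepsilon<\lambda_0/4$ converts the exponential bound into
\[
\eta_j\sum_{t=j}^n\exp\!\left(-\tfrac{\lambda_0}{2}\sum_{i=j+1}^t \eta_i\right)\le \sum_{t=j}^n \eta_t\exp\!\left(-\tfrac{\lambda_0}{4}\sum_{i=j+1}^t \eta_i\right),
\]
and by monotonicity the right-hand side is dominated by a Riemann sum for $\int_0^\infty e^{-\lambda_0 s/4}\,ds=4/\lambda_0$, hence uniformly bounded. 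Together with Step 1 this yields $\|A_j^n\|\le C_A$ for all $n\ge j\ge 0$.

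Step 3 starts from the telescoping identity $\Phi_{j,t-1}-\Phi_{j,t}=\Phi_{j,t-1}(I-B_t)=\eta_t \Phi_{j,t-1}G$. Summing in $t$ and multiplying on the right by $G^{-1}$ yields
\[
\sum_{t=j+1}^n \eta_t \Phi_{j,t-1}=(I-\Phi_{j,n})\,G^{-1}.
\]
A direct comparison with $A_j^n=\eta_j\sum_{t=j}^n\Phi_{j,t}$ gives the decomposition
\[
A_j^n = (I-\Phi_{j,n})G^{-1} + \sum_{t=j+1}^{n}(\eta_j-\eta_t)\Phi_{j,t-1} + \eta_j \Phi_{j,n}.
\]
Step 1 forces $\Phi_{j,n}\to 0$ whenever $\sum_{i=j+1}^n\eta_i\to\infty$; the slow-variation estimate of Step 2 handles the middle term, since for $t-j$ in the dominant range (where $\Phi_{j,t-1}$ has not yet decayed) one has $|\eta_j-\eta_t|/\eta_j=o(1)$, and the exponential weight makes the tail negligible. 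Hence $A_j^n\to G^{-1}$ whenever $n-j\to\infty$.

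To conclude the Cesaro statement, fix $\delta\in(0,1)$ and split $\sum_{j=0}^n\|A_j^n-G^{-1}\|$ at $j=\lfloor(1-\delta)n\rfloor$. For $j\le(1-\delta)n$ we have $\sum_{i=j+1}^n \eta_i\ge \delta n\,\eta_n\to\infty$ by $t\eta_t\uparrow\infty$, so by the previous step the error vanishes uniformly and its averaged contribution is $o(1)$. The remaining $\lceil\delta n\rceil$ indices each satisfy $\|A_j^n-G^{-1}\|\le C_A+\|G^{-1}\|$ from Step 2, contributing at most $O(\delta)$ to the Cesaro mean; letting $\delta\downarrow 0$ delivers the claim. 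The main obstacle I anticipate is the bookkeeping in Step 3, namely tracking uniformly in $j$ how the discrepancy $\sum(\eta_j-\eta_t)\Phi_{j,t-1}$ decays and coordinating the slow-variation and summability hypotheses on $\{\eta_t\}$ with the exponential weight so that the cumulative error is absorbed into the final Cesaro average.
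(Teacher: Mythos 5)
The paper itself does not prove this lemma; it simply cites Lemma~1 of Polyak--Juditsky (1992) (equivalently Lemma~B.7 of Li et al.\ 2023). Your argument is therefore a self-contained reconstruction of that classical proof, and its skeleton is the right one and essentially correct: the Lyapunov-norm contraction giving $\bigl\|\prod_{i=j+1}^{t}B_i\bigr\|\le C_1\exp\bigl(-\tfrac{\lambda_0}{2}\sum_{i=j+1}^{t}\eta_i\bigr)$, the conversion of $\eta_j$ into $\eta_t$ via the slow-variation hypothesis plus an integral comparison to obtain $\|A_j^n\|\le C_A$ uniformly, the telescoping identity $\sum_{t=j+1}^{n}\eta_t\Phi_{j,t-1}=(I-\Phi_{j,n})G^{-1}$ (unambiguous here since every $B_i$ is a polynomial in $G$ and all factors commute), and the final Cesaro split. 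The hypothesis $\sum_t \eta_t t^{-1/2}<\infty$ is indeed not needed for this particular lemma, consistent with the cited source.

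One claim is imprecise and needs a small repair. You assert at the end of Step~3 that $A_j^n\to G^{-1}$ ``whenever $n-j\to\infty$'' and then invoke this uniformly over all $j\le(1-\delta)n$. The discrepancy term $D_{j,n}=\sum_{t=j+1}^{n}(\eta_j-\eta_t)\Phi_{j,t-1}$, however, is only small when $j$ itself is large: writing $\eta_j-\eta_t=\sum_{i=j+1}^{t}(\eta_{i-1}-\eta_i)\le\bigl(\sup_{i>j}\varepsilon_i\bigr)\,\eta_j\sum_{i=j}^{t-1}\eta_i$ with $\varepsilon_i:=(\eta_{i-1}-\eta_i)/\eta_{i-1}^2\to 0$, and reusing your Step~2 comparison together with $S e^{-\lambda_0 S/2}\le C e^{-\lambda_0 S/4}$, gives $\sup_{n\ge j}\|D_{j,n}\|\le C\sup_{i>j}\varepsilon_i\to 0$ as $j\to\infty$, uniformly in $n$; but for a \emph{fixed} small $j$ the term $D_{j,n}$ converges to a generally nonzero limit as $n\to\infty$, so $A_j^n\not\to G^{-1}$ even though $n-j\to\infty$. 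The fix is a three-way split of the Cesaro sum: for any threshold $j_1$, the indices $j<j_1$ are finitely many and each term is bounded by $C_A+\|G^{-1}\|$ (your Step~2), contributing $O(j_1/n)$; for $j_1\le j\le(1-\delta)n$ one has both $\sup_n\|D_{j,n}\|\le\epsilon$ (choosing $j_1$ large) and $\sum_{i=j+1}^{n}\eta_i\ge\delta n\eta_n\to\infty$ by $t\eta_t\uparrow\infty$, so these terms are uniformly $o(1)+\epsilon$; the remaining $j>(1-\delta)n$ are at most $\delta n+1$ bounded terms. Letting $n\to\infty$ and then $\epsilon,\delta\downarrow 0$ completes the argument. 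With this adjustment, and with the middle-term estimate written out as above rather than sketched, your proof is complete.
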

\begin{proof}{Proof of Lemma~\ref{lem:A}}
	See Lemma B.7 in~\citep{li2023online} or Lemma 1 in~\citep{polyak1992acceleration}.
\end{proof}

\begin{lemma}
	\label{lem:error}
	Let $\{\varepsilon_t\}_{t \ge 0}$ be a  martingale difference sequence adapting to the filtration $\FM_t$.
	Define an auxiliary sequence $\{ y_t \}_{t\ge0}$ as follows: $y_0 = 0$ and for $t\ge 0$,
	\begin{equation*}
		y_{t+1} =(I - \eta_tG) y_t + \eta_t\varepsilon_t.
	\end{equation*}
	It is easily verified that
	\begin{equation*}
		y_{t+1} = \sum_{j=0}^t \left(\prod\limits_{i=j+1}^{t}\left( I - \eta_iG \right)\right) \eta_j  \varepsilon_j.
	\end{equation*}
	Let $\{\eta_t\}_{t \ge 0}$ satisfy the conditions in Assumption~\ref{assmpt: more condtion FCLT}.
	If $\mathrm{Re} \lambda_i(G) > 0$ for all $i \in [d]$ and $\sup_{t \ge 0}\EB \|\varepsilon_t\|^{p} < \infty$ for $p>2$,
	then we have that when $T \to \infty$, 
	\begin{equation*}
		\frac{1}{\sqrt{T}}	\sup_{r \in [0, 1]} \frac{y_{\floor{(T+1)r}}}{\eta_{\floor{(T+1)r}}} \overset{p}{\to} 0.
	\end{equation*}
\end{lemma}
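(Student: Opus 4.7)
\medskip

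\noindent\textbf{Proof proposal for Lemma~\ref{lem:error}.} The plan is to combine the explicit representation of the linear recursion with a pointwise moment bound and a maximal inequality to lift pointwise control to uniform control. First, I would exploit that $-G$ is Hurwitz: since $G$ has eigenvalues with positive real parts, there exist constants $C,c>0$ (depending only on $G$) and some $t_0$ such that for all $t > j \geq t_0$,
\[
\Big\|\prod_{i=j+1}^{t}(I-\eta_iG)\Big\| \le C\exp\!\left(-c\sum_{i=j+1}^{t}\eta_i\right).
\]
This uses the step-size conditions $\eta_t \downarrow 0$ and $\sum \eta_t = \infty$ together with a standard diagonalization (or logarithmic-norm) argument on $I - \eta_i G$. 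Denote the above product by $P_j^t$ and set $\Pi_t := \exp(c\sum_{i=1}^{t}\eta_i)$, so that $\|P_j^t\| \lesssim \Pi_t/\Pi_j$.

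Second, I would establish the pointwise moment bound $\EB\|y_t\|^p = O(\eta_t^{p/2})$. Because $y_{t+1} = \sum_{j=0}^{t} P_j^t \eta_j \varepsilon_j$ is the terminal value of a martingale transform of $\{\varepsilon_j\}$, Burkholder's inequality (valid since $p>2$) gives
\[
\EB\|y_{t+1}\|^p \lesssim \EB\Big(\sum_{j=0}^{t}\|P_j^t\|^2\eta_j^2\|\varepsilon_j\|^2\Big)^{p/2} + \sum_{j=0}^{t}\|P_j^t\|^p\eta_j^p\,\EB\|\varepsilon_j\|^p.
\]
Using $\sup_j \EB\|\varepsilon_j\|^p <\infty$, the exponential stability, and the step-size relation $(\eta_{t-1}-\eta_t)/\eta_{t-1} = o(\eta_{t-1})$ (which implies $\eta_j/\eta_t$ grows at most polynomially in $\sum_{i=j+1}^t \eta_i$), a routine computation of the sum $\sum_{j=0}^t (\Pi_t/\Pi_j)^2 \eta_j^2$ evaluates to $O(\eta_t)$ by an Abel-summation/integral comparison argument. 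The dominant Burkholder term then gives $\EB\|y_t\|^p = O(\eta_t^{p/2})$, so pointwise $\|y_t\|/(\eta_t\sqrt{T}) = O_p\!\left( (T\eta_t)^{-1/2}\right) \to 0$ by $t\eta_t \uparrow \infty$.

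Third, and this is where the real work lies, I would upgrade pointwise control to the uniform bound. My plan is to discretize $r\in[0,1]$ at a geometric grid $\{t_k\}$ chosen so that $\eta_{t_{k+1}}/\eta_{t_k}$ is bounded away from both $0$ and $1$; this requires only $O(\log T)$ grid points given the polynomial-type step size $\eta_t \propto t^{-\alpha}$ admitted by Assumption~\ref{assmpt: more condtion FCLT}. At each grid point $t_k$, Markov's inequality combined with the moment bound of Step 2 gives a tail bound of order $(T\eta_{t_k})^{-p/2}$, and a union bound over $O(\log T)$ points remains summable as long as $(T\eta_T)^{-p/2}\log T \to 0$, which follows from $\sum \eta_t^{1.5} < \infty$. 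For the fluctuation of $y_t/\eta_t$ between grid points, I would write
\[
\frac{y_{t+1}}{\eta_{t+1}} - \frac{y_t}{\eta_t} = \frac{\eta_t-\eta_{t+1}}{\eta_t\eta_{t+1}}y_t - \frac{G\, y_t}{\eta_{t+1}/\eta_t} \cdot \eta_t \cdot \frac{1}{\eta_{t+1}} + \varepsilon_t,
\]
and bound the partial sums of $\varepsilon_t$ over a block using Doob's maximal inequality on the underlying martingale $M_t = \sum_{j<t}\varepsilon_j$, while controlling the deterministic pieces via the condition $(\eta_{t-1}-\eta_t)/\eta_{t-1} = o(\eta_{t-1})$.

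The main obstacle will be the third step: the ratio $y_t/\eta_t$ is \emph{not} itself a martingale, and its increments contain a drift term of size $O(\|y_t\|)$ which cannot be ignored. Controlling this requires either (i) running the maximal inequality on $y_t$ itself and then dividing by $\eta_t$ block-by-block, exploiting that $\eta_t$ varies slowly relative to block length, or (ii) passing to a rescaled process $z_t = y_t/\sqrt{\eta_{t-1}}$ and showing it satisfies a stable discrete Ornstein-Uhlenbeck-type recursion whose supremum is $O_p(1)$. I expect approach (i) to be cleaner, and together with Steps 1--2 it will close the argument since, uniformly over each block, the oscillation of $\eta_t$ is negligible and Doob's inequality converts the pointwise $L^p$-bound into a uniform $L^p$-bound at the cost of only a constant factor.
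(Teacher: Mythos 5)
Your Steps 1 and 2 are sound: the exponential stability of the products $\prod_{i=j+1}^t(I-\eta_i G)$ for Hurwitz $-G$ and the Rosenthal/Burkholder bound $\EB\|y_t\|^p=O(\eta_t^{p/2})$ are standard, and they are indeed the right pointwise ingredients. (For context, the paper does not prove this lemma at all — it cites Lemma 3.3 of \citet{li2023online} — so your attempt is judged on its own.) The genuine gap is Step 3, and it is not merely a technical loose end. (a) With a geometric grid of $O(\log T)$ blocks, your within-block control fails: if you telescope the increments of $y_t/\eta_t$ and bound the drift piece $-(\eta_s/\eta_{s+1})Gy_s$ in norm, its contribution over a block $[t_k,2t_k]$ is of order $\sum_{s\asymp t_k}\|y_s\|\asymp t_k\sqrt{\eta_{t_k}}\asymp t_k^{1-\alpha/2}$, which at $t_k\asymp T$ is $T^{1-\alpha/2}\gg\sqrt{T}\,\eta_{t_k}\cdot\sqrt{T}$-scale targets for every admissible $\alpha<1$; the drift is mean-reverting and cannot be treated as an additive error over blocks whose length grows like $t_k$. (b) Your fallback (i) — "run the maximal inequality on $y_t$ itself" — is not available: $\|y_t\|$ is not a submartingale (the conditional mean of $y_{t+1}$ is the contracted vector $(I-\eta_tG)y_t$), so the claim that Doob converts the pointwise $L^p$ bound into a uniform one "at the cost of a constant factor" is unjustified. (c) Even your grid-point union bound is not covered by the hypotheses: $\sum_t\eta_t^{1.5}<\infty$ only upper-bounds $\eta_t$, and $t\eta_t\uparrow\infty$ gives divergence of $T\eta_T$ with no rate, so $(T\eta_T)^{-p/2}\log T\to 0$ does not follow in general (it does for $\eta_t\propto t^{-\alpha}$, but the lemma is stated for the whole class).

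The repair, which is essentially what the cited Lemma 3.3 of \citet{li2023online} does, is to block at the scale dictated by the dynamics: take blocks $[t_k,t_{k+1})$ of length $\asymp 1/\eta_{t_k}$, so that $\sum_{j\in\text{block}}\eta_j=O(1)$ and, by the slow-variation condition, $\eta_t\asymp\eta_{t_k}$ on the block. Write $y_t=\bigl(\prod_{i=t_k}^{t-1}(I-\eta_iG)\bigr)y_{t_k}+S_t$, where the restarted tail $S_t$ solves the same recursion with $S_{t_k}=0$. For $\max_{t}\|S_t\|$ use summation by parts against the block martingale $N_s=\sum_{j=t_k}^{s}\eta_j\varepsilon_j$: since $\|A_{j+1}^{t-1}-A_j^{t-1}\|\le C\eta_{j+1}\|G\|$ and $\sum_{\text{block}}\eta_j=O(1)$, one gets $\max_t\|S_t\|\precsim\max_s\|N_s\|$, and Doob plus Burkholder give $\EB\max_s\|N_s\|^p\precsim(\sum_{\text{block}}\eta_j^2)^{p/2}\precsim\eta_{t_k}^{p/2}$. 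Then Markov at level $\epsilon\sqrt{T}\eta_{t_k}$ (for both $\|y_{t_k}\|$ and $\max_t\|S_t\|$) and a union bound over blocks cost $\sum_k(T\eta_{t_k})^{-p/2}\precsim T^{-p/2}\sum_{t\le T}\eta_t^{1-p/2}\le(T\eta_T)^{1-p/2}\to 0$, using only $p>2$ and $T\eta_T\to\infty$ — no rate and no $\log T$ factor needed. Note the block count here is of order $T\eta_T$, not $O(\log T)$; the crude Abel bound above fails on geometric blocks precisely because it discards the within-block contraction, which is why the choice of block length is the essential missing idea in your Step 3.
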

\begin{proof}{Proof of Lemma~\ref{lem:error}}
	See Lemma 3.3 in~\citep{li2023online}.
\end{proof}

\subsection{Proof of Proposition~\ref{prop: covariance convergence}}
\label{sec:prf of prop_covar_conver}
In this section, we will prove Proposition~\ref{prop: covariance convergence}. 
In fact, if all $x_t$ are generated from the same transition kernel, the convergence of the average of the conditional variances immediately follows by using the mixing property and Birkhoff's ergodic theorem. However, in our case, the transition kernel that $x_t$ generated from changes with iteration, making $\{x_t\}$ no longer a time-homogeneous Markov process. 
However, by making use of Assumption~\ref{ass:continue_P} and~\ref{ass:W-contraction}, we can derive the following two properties.
    \begin{itemize}
        \item For any two series of parameters $\{\vartheta_{t}^{(i)}\}_{t = 0}^\infty,~ i=1,2$ with corresponding stochastic process generated by $X_{t+1}^{(i)}\sim \gP_{\vartheta_{t}^{(i)}}(\cdot | X_{t}^{(i)})$ and $X_0^{(i)} = x \in \Omega$ (it is possible that $\vartheta_{t+1}^{(i)} \in \sigma\left(\{X_l^{(i)}\}_{l=0}^{t}\right)$). Then there exists a coupling between these two stochastic processes such that
        \[
        \EB\norm{X_t^{(1)} - X_t^{(2)}} \precsim (x^{\alpha_1} + 1)t^{\alpha_2}\ssum{s}{0}{t-1}\left\{\EB_x\norm{\vartheta_{s}^{(1)} - \vartheta_{s}^{(2)}}^2\right\}^{\frac{1}{2}},
        \]
        where $\alpha_1$, $\alpha_2$ and $\delta$ are three positive definite constants.
        \item For any two stochastic processes $X_t^{(i)},~ i = 3,4$ who are both generated as $X_{t+1}^{(i)} \sim \gP_{\vartheta_t}(\cdot | X_{t}^{(i)})$ but with different initialization $X_0^{(3)} = x_3,~ X_0^{(4)} = x_4$. Then there exists a coupling such that
        \[
        \EB \norm{X_t^{(3)} - X_t^{(4)}} \precsim (x_3^\beta + x_4^\beta + 1)e^{-\iota t},
        \]
        where $\beta,~\iota$ are positive definite constants.
    \end{itemize}

Then by the telescoping technique, we can construct a series of auxiliary processes $\{X_t^{(l)}\}_{t=0}^T$ with $l = 1,\cdots, T-1$ as follows,
\begin{itemize}
    \item when $t = 0, \cdots, l$, we set $X_t^{(l)} = X_t^*$;
    \item when $t = l+1, \cdots, T$, we set $X_{t+1}^{(l)} \sim \gP_{\vartheta_t}(\cdot| X_t^{(l)})$ and share the same randomness with the generated mechanism of $X_{t+1}^{(0)} \sim \gP_{\vartheta_t}(\cdot | X_t^{(0)})$.
\end{itemize}
Fix an integer $m$ whose value will be determined later on and define $k = [T/m]$. Then
\begin{equation}\label{eq:tele_from_tt_i_to_tt_star}
\begin{aligned}
    \frac{1}{T}\ssum{t}{1}{T}\{f(X_t^{(0)}) - f(X_t^{*})\} 
    &= \frac{1}{T}\ssum{l}{0}{k-1}\ssum{t}{lm+1}{(l+1)m}\{(f(X_t^{(0)}) - f(X_t^{(lm)})) + (f(X_t^{(lm)}) - f(X_t^{*}))\} \nonumber \\
    &= \frac{1}{T}\ssum{l}{0}{k-1}\ssum{t}{lm+1}{(l+1)m}\{(f(X_t^{(0)}) - f(X_t^{(lm)})) + (f(X_t^{(lm)}) - f(X_t^{((l+1)m)}))\}.
\end{aligned}
\end{equation}
In the above equation, we separate the summation into $k$ parts. For every part, we use different auxiliary processes to perform the telescoping. Note the last equation holds because $lm+1 \le t \le (l+1)m \Rightarrow X_t^{((l+1)m)} = X_t^*$.
To prove this lemma, we only need to show the formula of \eqref{eq:tele_from_tt_i_to_tt_star} converges to zero in absolute value.

For any given $l$ and $t \in [lm+1, (l+1)m]$, utilizing Assumption~\ref{ass:W-contraction} yields
\begin{equation*}
\begin{aligned}
    \EB \left|f(X_t^{(0)}) - f(X_t^{(lm)})\right| 
    &\le L_f\EB \left( \norm{X_{lm}^{(0)}}^\beta + \norm{X_{lm}^*}^\beta + 1
    \right)e^{-\iota (t - lm)}\\
    &\precsim e^{-\iota (t - lm)}.
\end{aligned}
\end{equation*}
On the other hand, as for the second part in every term of the double summation \eqref{eq:tele_from_tt_i_to_tt_star}, by noting $X_{lm}^{(lm)} = X_{lm}^{((l+1)m)} = X_{lm}^*$, we can use the first property of $\gP_{\vartheta_1}$ and get
\begin{equation*}
\begin{aligned}
    \EB \left|f(X_t^{(lm)}) - f(X_t^{((l+1)m)})\right| 
    &\precsim
    \EB \left(\norm{X_{lm}^*}^{\alpha_1} + 1\right)(t - lm)^{\alpha_2}\ssum{s}{0}{t - lm}\left\{\EB[\norm{\vartheta_{s + lm} - \vtheta}^2|\gF_{lm}]\right\}^{1/2}\\
    &\precsim m^{\alpha_2}\left(\EB \norm{X_{lm}^*}^{2\alpha_1} + 1\right)^{1/2}\ssum{s}{0}{t - lm}\left(\EB \norm{\vartheta_{s+lm} - \vtheta}^2\right)^{1/2}\\
    &\precsim m^{\alpha_2 + 1/2}\sup\limits_{lm < t \le (l+1)m}\left(\EB \norm{\vartheta_t - \vtheta}^2\right)^{1/2}\\
    &\le m^{\alpha_2 + 1/2}(lm)^{-\frac{\gamma}{2}} = m^{\alpha_2 + 1/2 - \gamma/2} l^{-\gamma/2} =: m^{\alpha_3}l^{-\gamma/2},
\end{aligned}
\end{equation*}
where the last inequality holds by using the convergence rate of $\EB \norm{\vartheta_t - \vtheta}$.

Now we can analyze the right hand of \eqref{eq:tele_from_tt_i_to_tt_star}. It follows that
\begin{equation*}
\begin{aligned}
    &\EB \left|
    \frac{1}{T}\ssum{l}{0}{k-1}\ssum{t}{lm+1}{(l+1)m}\left\{
    (f(X_t^0) - f(X_t^{(lm)})) + (f(X_t^{(lm)}) - f(X_t^{((l+1)m)}))
    \right\}
    \right|\\
    &\precsim \frac{1}{T}\ssum{l}{0}{k-1}\ssum{t}{lm+1}{(l+1)m}[e^{-\iota (t-lm)} + m^{\alpha_3}l^{-\gamma/2}]\le \frac{1}{T}\ssum{l}{0}{k-1}[C + m^{\alpha_3 + 1}l^{-\gamma/2}]\\
    &\precsim \frac{Ck}{T} + \frac{m^{\alpha_3 + 1}k^{1 - \gamma/2}}{T} = \frac{C}{m} + m^{\alpha_3}k^{-\gamma/2}.
\end{aligned}
\end{equation*}
Without loss of generality, we suppose $\alpha_3 + 1 \ge 0$ and let $m = T^{\frac{\gamma}{2\alpha_3 + 2 + \gamma}}$. Then $m^{\alpha_3}k^{-\gamma/2} = \gO\left(T^{-\frac{\gamma}{2\alpha_3 + 2 + \gamma}}\right) = o(1)$.
    Ultimately we have
    \[
        \frac{1}{T}\ssum{t}{1}{T}f(X_t^{(0)}) \overset{\gL_1}{\longrightarrow} \frac{1}{T}\ssum{t}{1}{T}f(X_t^*).
    \]

\section{Proofs for Section~\ref{sec:examples}}
\subsection{Proofs for the Queueing Example}\label{sec:proof_of_queue}

\subsubsection{Proof of Proposition~\ref{prop:verify_queue}}
In the following, we verify that the gradient estimator \( H(\vartheta, x) \) and the queue dynamics of the GI/GI/1 system satisfy the assumptions outlined in Section~\ref{sec: main results} for the pricing and capacity sizing problem.

We begin by confirming Assumption~\ref{ass:continue_H}. Specifically, this verification is based on the exact form of \( H(\vartheta, x) \) given by equation~\eqref{eq: Q gradient}. The details are provided in the following lemma.
\begin{lemma}
    For the GI/GI/1 system, Assumption~\ref{ass:continue_H} holds by choosing $V(x) = \norm{x}$ with $x = (w,y)$.
\end{lemma}
This part of the proof can be readily obtained by incorporating the specific form of \( H(\vartheta, x) \) in the queueing system, and therefore, we omit the details.

Next, we verify the Lipschitz continuity of the transition kernel with respect to the parameter \( \vartheta \), as required by Assumption~\ref{ass:continue_P}. This property is established in the following lemma.
\begin{lemma}[Parameter continuity in GI/GI/1 queues]\label{lmm:para_cont_queue}
    Consider two GI/GI/1 queues initialized at the same state \((w, y)\) with parameter settings \(\vartheta_i = (\mu_i, p_i)\) for \(i = 1, 2\), where \(\mu_i\) and \(p_i\) represent the service rate and arrival probability, respectively. Let \(P_{\vartheta_i}\) denote the transition kernel corresponding to parameter \(\vartheta_i\). 
    
    For any \(\kappa < \frac{\iota_w}{4} \wedge \frac{\mu\theta}{4}\) and \(\alpha > 0\), there exists a universal constant \(L > 0\) such that:
    \[
    \gW_{\rho, V}(\delta_{w,y}P_{\vartheta_1}, \delta_{w,y}P_{\vartheta_2}) \leq L \norm{\vartheta_1 - \vartheta_2}\big(1 + V(w, y)\big),
    \]
    where:
    \begin{itemize}
        \item \(\gW_{\rho, V}\) is the Wasserstein-type divergence for the cost function \(\rho(x, y)(1 + V(x) + V(y))\),
        \item \(\rho(x, y)\) denotes the Euclidean norm, and
        \item \(V(w, y) = \norm{y}^{\alpha} + e^{\kappa w}\) is a potential function reflecting the system's state.
    \end{itemize}
\end{lemma}

Before establishing the Wasserstein contraction property for the GI/GI/1 queue system, we present a key lemma that highlights the uniformly light-tailed properties of the waiting time sequence \( \{W_n\}_{n=1}^\infty \) and the busy period sequence \( \{Y_n\}_{n=1}^\infty \). These properties are critical for selecting a potential function \( V \) that satisfies Assumptions~\ref{ass:continue_P} and \ref{ass:W-contraction}.

\begin{lemma}[Uniformly light-tailed properties]\label{lem:subexp_of_w&y}
    Under Assumptions~\ref{assmpt: uniform} and \ref{assmpt: light tail}, with fixed parameters \((\lambda, \mu)\), the non-negative random processes \(\{W_n\}_{n=1}^\infty\) and \(\{Y_n\}_{n=1}^\infty\), initialized at \((w, y)\) and following the update rule \eqref{eq:gnr rule of w&y}, are uniformly sub-exponential. Specifically, there exist positive constants \( C_w, \iota_w, C_y, \iota_y \) such that, for any \( n \geq 0 \),
    \[
    \PB(W_n \geq a) \leq C_w e^{-\iota_w (a - w)_+}, \quad 
    \PB(Y_n \geq a) \leq C_y e^{-\iota_y (a - y - \iota^\prime_y w)_+}, \quad \forall a > 0.
    \]
\end{lemma}

Building on these results, we establish the Wasserstein contraction property through the following two lemmas. These results leverage the definition of Wasserstein-type divergence~\eqref{eq:def_w_dist}, combined with the synchronous coupling technique and the update rule~\eqref{eq:gnr rule of w&y}.

\begin{lemma}[Contraction property for waiting times]\label{lmm:init_lip_of_W_n}
Consider two GI/GI/1 queues with parameters \((\mu, p) \in \mathcal{B}\), synchronously coupled with initial waiting times \( W_0^1 \) and \( W_0^2 \). Suppose Assumption~\ref{assmpt: light tail} holds. Then, for any \( \kappa \leq \iota_w/4 \wedge \mu\theta/4 \) and \( \alpha > 0 \), there exist constants \( \iota > 0 \) and \( C > 0 \) such that, conditional on \( W_0^1 \) and \( W_0^2 \),
\begin{equation*}
\begin{aligned}
\EB&\left[\norm{W_n^1-W_n^2}\left(\ssum{i}{1}{2}\norm{Y_n^i}^\alpha + e^{\kappa W_n^i} \right) \bigg|~ W_0^1, W_0^2\right]\\ 
&\quad\quad \leq C e^{-\iota n}\norm{W_0^1 - W_0^2}\left(\ssum{i}{1}{2} \norm{Y_0^i}^\alpha +  e^{2\kappa W_0^i}\right).
\end{aligned}
\end{equation*}
\end{lemma}

\begin{lemma}[Contraction property for busy periods]\label{lmm:init_lip_of_Y_n}
Consider two GI/GI/1 queues with parameters \((\mu, p) \in \mathcal{B}\), synchronously coupled with initial states \((W_0^1, Y_0^1)\) and \((W_0^2, Y_0^2)\). Suppose Assumption~\ref{assmpt: light tail} holds. Then, for any \( \kappa \leq \frac{\iota_w}{8} \wedge \frac{\mu\theta}{8} \wedge \frac{\gamma (\lambda^{-1} - \mu^{-1})}{8} \) and \( \alpha > 0 \), there exist constants \( \iota > 0 \) and \( C > 0 \) such that:
\begin{equation*}
\begin{aligned}
    &\EB \left[\norm{Y_n^1 - Y_n^2}\left(\ssum{i}{1}{2} \norm{Y_n^i}^\alpha + e^{\kappa W_0^i}\right) \bigg|~ \{(W_0^i, Y_0^i\}_{i=1}^2\right]\\
    &\le C\left\{\norm{Y_0^1 - Y_0^2} + \norm{W_0^1 - W_0^2}\right\}\left(\ssum{i}{1}{2}\norm{Y_0^i}^{2\alpha} + e^{2\kappa W_0^i}\right) e^{-\iota n}.
\end{aligned}
\end{equation*}
\end{lemma}

By combining Lemmas~\ref{lmm:init_lip_of_W_n} and \ref{lmm:init_lip_of_Y_n}, we verify that Assumption~\ref{ass:W-contraction} holds for the metric \( \gW_{\norm{\cdot},V} \). Similarly, for the \( \gW_{\mathbbm{1},V} \)-type Wasserstein-type divergence, its contractive property can also be established using synchronous coupling and analogous arguments, as detailed in \citet{chen2023online}.
Building on these results, we construct the required Lyapunov function. Specifically, the lemmas suggest two potential forms for the function \( V \): \( V(w, y) = y + w \) and \( V(w, y) = y + e^{\kappa w} \). Since \( w \precsim e^{\kappa w} \), we adopt \( V(w, y) = y + e^{\kappa w} \) as the potential function, where \( \kappa = \frac{\iota_w}{8} \wedge \frac{\mu\theta}{8} \wedge \frac{\gamma (\lambda^{-1} - \mu^{-1})}{8} \). With this construction, we establish Proposition~\ref{prop:verify_queue}.


\subsubsection{Proof of Lemma~\ref{lmm:para_cont_queue}}
\begin{proof}{Proof of Lemma~\ref{lmm:para_cont_queue}}
Using the triangle inequality, we have 
\[
\norm{(W_1, Y_1) - (W_2, Y_2)} \leq \abs{W_1 - W_2} + \abs{Y_1 - Y_2}.
\]
This allows us to verify separately that both the \( W \) and \( Y \) components satisfy the bound specified in Assumption~\ref{ass:continue_P}. Under synchronous coupling, the dynamics for \( W \) and \( Y \) can be analyzed independently to ensure the required Lipschitz continuity properties.

We focus on the analysis for $W$ as an example.
It follows that
    \begin{equation*}
    \begin{aligned}
    \gW_{\rho,V}(\delta_w P_{\vartheta_1}, \delta_w P_{\vartheta_2}) &= \EB \left[
    \abs{\left(w - \frac{U}{\lambda_1} + \frac{V}{\mu_1}\right)_+ - \left(w - \frac{U}{\lambda_2} + \frac{V}{\mu_2}\right)_+} \big(1 + \ssum{i}{1}{2}V(W_i,Y_i)\big)
    \right]\\
    &\overset{(a)}{\le} \EB \left[\left(\abs{\frac{1}{\lambda_1} - \frac{1}{\lambda_2}}U + \abs{\frac{1}{\mu_1} - \frac{1}{\mu_2}}V\right)\big(1 + \ssum{i}{1}{2}V(W_1,Y_1)\big)\right]\\
    &\precsim \norm{\vartheta_1 - \vartheta_2}\EB \left[  
    (U+V)\left(1 + \ssum{i}{1}{2}\left\{\abs{y + \frac{U}{\lambda_i}}^\alpha + e^{{\kappa}\left(w - \frac{U}{\lambda_i} + \frac{V}{\mu_i}\right)_+}
    \right\}
    \right)
    \right].
    \end{aligned}
    \end{equation*}
    Using Jensen's inequality in conjunction with Assumption~\ref{assmpt: light tail}, we have that
    \begin{equation*}
    \begin{aligned}
        \EB &\left[  
    (U+V)\left(1 + \ssum{i}{1}{2}\left\{\abs{y + \frac{U}{\lambda_i}}^\alpha + e^{{\kappa}\left(w - \frac{U}{\lambda_i} + \frac{V}{\mu_i}\right)_+}
    \right\}
    \right)
    \right]\\
    &\precsim \EB\left[
    (U+V)\left(1 + \abs{y}^\alpha + \frac{\abs{U}^\alpha}{\underline{\lambda}} + e^{\frac{{\kappa}}{\underline{\mu}}V}e^{{\kappa}w}\right)
    \right]
    \precsim \left(1 + \abs{y}^\alpha + e^{{\kappa}w}\right).
    \end{aligned}
    \end{equation*}
      The last inequality follows from the fact that both \( \EB\left[(U+V)\left(1 + \frac{\abs{U}^\alpha}{\underline{\lambda}}\right)\right] \) and \( \EB\left[(U+V)e^{\frac{{\kappa}V}{\underline{\mu}}}\right] \) are uniformly bounded. Returning to the initial bound, we conclude that
    \[
    \gW_{\rho,V}\left(\delta_w P_{\vartheta_1}, \delta_w P_{\vartheta_2}\right) \precsim \norm{\vartheta_1 - \vartheta_2}\left(1 + V(w,y)\right).
    \]
    Similar routines can be used on the analysis of $\gW_{\rho, V}(\delta_y P_{\vartheta_1}, \delta_y P_{\vartheta_2})$ to get the same bound. Consequently, the lemma has been proved.
\end{proof}

\subsubsection{Proof of Lemma~\ref{lem:subexp_of_w&y}} \label{sec:prf_of_subexp_wy}

\begin{proof}{Proof of Lemma~\ref{lem:subexp_of_w&y}}
We first introduce several notations which are also used in the proof of Lemma~\ref{lmm:init_lip_of_Y_n}.
We define \( \tau_n \) as the largest index \( \tau \) in \([0, n]\) such that \( W_\tau = 0 \), that is,
\begin{equation*}
\tau_n = \sup\{-1 < \tau \le n:~ W_\tau = 0\}.
\end{equation*}
Here we use $\tau_n = -1$ to represent the event that $\inf_{t\in [n]}W_t > 0$, then it is obvious that $-1\le \tau_n \le n$ from this definition.
We define
\begin{subequations}
\label{eq:R}
\begin{align}
      R_n &:=  \frac{V_n}{\mu} - \frac{U_n}{\lambda}\\
    r &:= -\EB R_n =  \frac{1}{\lambda} - \frac{1}{\mu} > 0\\
    \Tilde{R}_n&:= R_n + r.  
\end{align}
\end{subequations}
Clearly, $\Tilde{R}_n$ is a zero mean i.i.d random sequence. 
 An important observation is that 
 \begin{equation*}
\ssum{i}{\tau_n}{\tau} R_i \ge 0~\text{for any}~n \ge \tau \ge \tau_n.
 \end{equation*}

    \paragraph{Sub-exponential of W.} We first handle the bound of the waiting time $W_n(w,y) =: W_n$.
    Owing to the universal bound, we have
    \begin{equation*}
    \begin{aligned}
        \PB(W_n > a) &= \ssum{\tau}{-1}{n}\PB(W_n > a, \tau_n = \tau)
        = \PB(W_n > a,\tau_n = -1) + \ssum{\tau}{0}{n-1}\PB(W_n > a,\tau_n = \tau),
    \end{aligned}
    \end{equation*}
    where the last equation holds because $W_n > a \ge 0$ implies $\tau_n \neq n$. We consider the two parts of the above equation separately. Note that $\tau_n = -1$ implies $W_n = w + \ssum{i}{1}{n} R_n$. This means if $a > w$, then there exists a constant $\iota_1$ such that
    \begin{equation*}
    \begin{aligned}
        \PB(W_n > a;&\tau_n = -1) \le \PB\left(w + \ssum{i}{1}{n} R_n > a\right) = \PB\left(\ssum{i}{1}{n}R_n > a - w\right)\\
        &= \PB\left( \ssum{i}{1}{n}\tilde{R}_n > a - w + rn \right)
        \overset{(a)}{\le} \exp\left\{- \iota_1\left(\frac{(a-w+rn)^2}{n}\right)\wedge (a-w+rn)\right\}\\
        &\le \exp\left\{ -\iota_1 (r\wedge 1)(a-w+rn) \right\} = e^{-\iota_1(r\wedge 1)(a-w)}e^{-\iota_1(r\wedge 1)rn}
        \le e^{-\iota_1(r\wedge 1)(a-w)_+}.
    \end{aligned}
    \end{equation*}
    At $(a)$ we use Bernstein's inequality with respect to the i.i.d sub-exponential sequence $\{\tilde{R}_i\}_1^n$ with the mean of zero. While if $a \le w$, $e^{-\iota_1(r\wedge 1)(a-w)_+} = 1$, which means the inequality $\PB(W_n > a;\tau_n=-1) \le e^{-\iota_1(r\wedge 1)(a-w)_+}$ is still correct.
    On the other hand, for $\ssum{\tau}{0}{n-1}\PB(W_n>a;\tau_n = \tau)$, we have
    \begin{equation}
        \begin{aligned}
            & \ssum{\tau}{0}{n-1}\PB(W_n > a, ~ \tau_n = \tau)
            \le \ssum{\tau}{0}{n-1}\PB\left(\ssum{i}{\tau}{n-1} R_i > a\right)
            = \ssum{\tau}{0}{n-1}\PB\left(\ssum{i}{\tau}{n-1} \tilde{R}_i > a + r (n - \tau)\right) \nonumber \\
            &\overset{(a)}{\le} \ssum{\tau}{0}{n-1} e^{- \iota_1 \left(\frac{(a + r (n-\tau))^2}{n - \tau}\right)\wedge (a + r (n-\tau))}
            \overset{(b)}{\le}  e^{-\iota_1(r \wedge 1) a}\ssum{\tau}{0}{n-1} e^{-\iota_1 (r\wedge 1)r (n-\tau)}
            \le \frac{1}{\iota_1 (r\wedge 1)r}e^{-\iota (r \wedge 1) a}. 
        \end{aligned}
    \end{equation}
    Here $(a)$ uses Bernstein's inequality and $(b)$ holds for $\frac{(a + r(n-\tau))}{n-\tau} \ge r$.

    By taking $\iota_w$ as $\iota_1(r\wedge 1)$, we can get
    \begin{equation*}
    \begin{aligned}
        \PB(W_n > a) &= \PB(W_n > a;\tau_n = -1) + \ssum{\tau}{0}{n-1}\PB(W_n > a;\tau_n = \tau)\\
        &\le e^{-\iota_w(a-w)_+} + \frac{1}{\iota_w r}e^{-\iota_w a}\\
        &\le \left(1 + (\iota_w r)^{-1}\right)e^{-\iota_w (a-w)_+},
    \end{aligned}
    \end{equation*}
    and the uniformly sub-exponential property of $W_n$ is verified.

    \paragraph{Sub-exponential of Y.} 
    At a high level, the derivation for the bound of $Y_n$ is similar, i.e., to consider the latest hitting time $\tau_n$ where $W_{\tau_n}$ reaches zero before $n$. 
    Our goal is to prove the following bound,
    \[
    \PB(Y_n > a) \le C e^{-\iota_y(a-y-\iota_y^\prime w)_+}
    \]
    with some positive universal constants $C,\iota_y$ and $\iota_y^\prime$. In fact, when $a \le y + \frac{4}{\lambda r}w$, it holds by taking $C=1$ and $\iota_y^\prime \ge \frac{4}{\lambda r}$. So in the rest of this proof, we assume that $a > y + \frac{4}{\lambda r}w$ always holds.
    
    First, we still need to analyze the condition that $\tau_n = -1$ exclusively. It is obvious in this scenario it's true that $Y_n = y + \ssum{i}{0}{n-1} \frac{U_i}{\lambda}$. At this time we should adopt two different analyzing schemes according to the relative magnitude between $n$ and $\frac{\lambda (a-y)}{4} + \frac{2w}{r}$.

    If $n < \frac{\lambda(a-y)}{4} + \frac{2w}{r}$, we can find a positive constant $\iota_2$ such that
    \begin{equation}\label{eq:bound of P(Y_n > a; tau_n=-1)}
    \begin{aligned}
        \PB\left(Y_n > a; \tau_n = -1\right) &\le \PB\left(y + \ssum{i}{0}{n-1}\frac{U_i}{\lambda} > a\right)
        = \PB\left( \ssum{i}{0}{n-1}\frac{U_i}{\lambda} > a-y \right) = \PB\left( \ssum{i}{0}{n-1}\frac{U_i - 1}{\lambda} > a-y - \frac{n}{\lambda} \right)\\
        &\overset{(b)}{\le} \exp\left\{-\iota_2\frac{\left(a-y - \frac{n}{\lambda}\right)^2}{n}\wedge \left(a-y - \frac{n}{\lambda}\right)\right\}.
    \end{aligned}
    \end{equation}
     $(b)$ uses Bernstein's inequality with respect to i.i.d sub-exponential sequence $\left\{ (U_i - 1)/\lambda \right\}_{i=0}^{n-1}$ with the mean of zero.
    Because $n < \frac{\lambda(a-y)}{4} + \frac{2w}{r}$, we have
    \begin{equation*}
    \begin{aligned}
        a - y -\frac{n}{\lambda} &\ge \frac{3}{4}(a - y) - \frac{2}{\lambda r}w > \frac{3}{4}\left( a - y -\frac{4}{\lambda r}w \right)\\
        \frac{a-y-\frac{n}{\lambda}}{n} &\ge \frac{\frac{3}{4}(a-y) - \frac{2}{\lambda r}w}{\frac{1}{4}(a-y) + \frac{2w}{\lambda r}}
        \overset{(a)}{\ge} \frac{\frac{a-y}{4}}{\frac{(a-y)}{4}+ \frac{2w}{\lambda}} \overset{(b)}{\ge} \frac{1}{3},
    \end{aligned}
    \end{equation*}
    where both $(a)$ and $(b)$ use the fact $a > y + \frac{4}{\lambda r}w$.
    Back to \eqref{eq:bound of P(Y_n > a; tau_n=-1)}, now we can derive
    \[
    \PB(Y_n > a; \tau_n = -1) \le e^{-\frac{\iota_2}{3}\left(a-y-\frac{4}{\lambda r}w\right)}.
    \]
    If $n \ge \frac{\lambda(a-y)}{4} + \frac{2}{r}w$, we instead consider what information we can get from $W_n$. At this time, not only $W_n = w + \ssum{i}{0}{n-1}R_i$ holds, but also $W_n > 0$ is true. This yields the following bound with some positive constant $\iota_1$ so that
    \begin{equation*}
    \begin{aligned}
        \PB(Y_n > a;\tau_n = -1) 
        &\le \PB(\tau_n = -1)\\
        &\le \PB\left( w + \ssum{i}{0}{n-1}R_i > 0\right) = \PB\left(\ssum{i}{0}{n-1}\tilde{R}_n > rn - w\right)\\
        &\overset{(a)}{\le} e^{-\iota_1\frac{(rn-w)^2}{n}\wedge (rn-w)}\overset{(b)}{\le} e^{-\iota_1\left(\frac{r}{2}(rn-w)\right)\wedge (rn-w)}\\
        &= e^{-\iota_1(\frac{r}{2}\wedge 1)(rn-w)} \le e^{- \iota_3 \left(a-y-\frac{4}{\lambda r}w\right)}.
    \end{aligned}
    \end{equation*}
    Here $(a)$ holds by using Bernstein's inequality, $(b)$ holds because $\frac{rn-w}{n} = r - \frac{w}{n} \ge \frac{r}{2}$. $\iota_3$ is defined as $\frac{\iota_1\lambda r(r\wedge 2)}{8}$.
    Consequently, by letting $C=1,~\iota_y \le \iota_3\wedge \frac{\iota_2}{3}$ and $\iota_y^\prime \ge \frac{4}{\lambda r}$, we have 
    \[
    \PB(Y_n > a;\tau_n = -1) \le Ce^{-\iota_y(a - y - \iota_y^\prime w)_+}.
    \]

    Then we put our focus on the condition that $\tau_n \ge 0$. Note at this time $Y_n = \ssum{i}{\tau_n}{n-1} \frac{U_i}{\lambda}$ and $W_n = \ssum{i}{\tau_n}{n-1}R_i$ (since $W_{\tau_n} = Y_{\tau_n} = 0$). Define $\tilde{\tau} = \left(n - \frac{\lambda a}{2}\right)_+$, we then have
    \begin{equation}\label{eq:subexp_y_1}
    \begin{aligned}
        \PB(Y_n > a) & = \ssum{\tau}{0}{n-1}\PB(Y_n > a, \tau_n = \tau)
        = \sum\limits_{\tau \le \tilde{\tau}} \PB(Y_n > a, \tau_n = \tau) + \sum\limits_{\tau > \tilde{\tau}} \PB(Y_n > a, \tau_n = \tau).
    \end{aligned}
    \end{equation}
    When $\tau > \tilde{\tau}$, we have a constant $\iota_2 > 0$ satisfying
    \begin{equation*}
    \begin{aligned}
    \PB(Y_n > a, \tau_n = \tau) &\le \PB\left(\ssum{i}{\tau}{n-1} U_i/\lambda > a\right)
    = \PB\left( \ssum{i}{\tau}{n-1}\frac{U_i - 1}{\lambda} > a - \frac{n - \tau}{\lambda}\right)\\
    &\overset{(a)}{\le} \exp\left\{
    -\iota_2 \frac{(a - (n-\tau)/\lambda)^2}{n-\tau}\wedge \left(a - \frac{n-\tau}{\lambda}\right)
    \right\} \\
    &\overset{(b)}{\le} \exp\left\{ -\frac{\iota_2}{\lambda \vee 1} \left( a - \frac{n - \tau}{\lambda}\right) \right\}
    \overset{(c)}{\le} \exp \left\{ - \frac{\iota_2 a}{2(\lambda \vee 1)}\right\},
    \end{aligned}
    \end{equation*}
    where $(a)$ holds due to Bernstein's inequality and $(b)-(c)$ follow from the condition $\tau > \tilde{\tau}$ which implies $a > \frac{2(n -\tau)}{\lambda}$.

    When $\tau \le \tilde{\tau}$, by Bernstein's inequality, the following can be obtained with some constant $\iota_1$
    \begin{equation*}
    \begin{aligned}
        \PB(Y_n > a, \tau_n = \tau) &\le \PB\left(\ssum{i}{\tau}{n-1}R_i > 0\right)
        = \PB\left(\ssum{i}{\tau}{n-1}\tilde{R}_i > r(n -\tau)\right)\\
        &\le \exp\left\{ -\iota_1 (r\wedge 1)r(n-\tau) \right\}.
    \end{aligned}
    \end{equation*}
    
    We define $\iota_4$ as $\min \left\{\frac{\iota_2}{2(\lambda \vee 1)}, \iota_1 r(r\wedge 1)\right\}$ so that we have
    \[
    \PB(Y_n > a, \tau_n = \tau) \le   \begin{cases}
      e^{-\iota_4 (n -\tau)}  & \text{if}~\tau \le \tilde{\tau}; \\
      e^{-\iota_4 a}  & \text{if}~\tau > \tilde{\tau}.\\
    \end{cases}  
    \]
    Plugging the above results into \eqref{eq:subexp_y_1} yields
    \begin{equation*}
    \begin{aligned}
        \ssum{\tau}{0}{n-1}\PB(Y_n &> a;\tau_n = \tau) = \sum\limits_{\tau \le \tilde{\tau}} \PB(Y_n > a, \tau_n = \tau) + \sum\limits_{\tau > \tilde{\tau}} \PB(Y_n > a, \tau_n = \tau)\\
        &\le \ssum{\tau}{0}{[\tilde{\tau}]} e^{-\iota_4 (n -\tau)} + \ssum{\tau}{[\tilde{\tau}] + 1}{n-1} e^{-\iota_4 a} = \ssum{i}{\lambda a/2}{n} e^{-\iota_4 i} + (n -\tilde{\tau})e^{-\iota_4 a}\\
        &\le e^{-\frac{\iota_4 \lambda a}{2}} \ssum{i}{0}{n - \lambda a/2} e^{-\iota_4 i} + \frac{\lambda a}{2}e^{-\iota_4 a} \le \left(\frac{\lambda a}{2} + 1\right) e^{- \left(\frac{\lambda}{2}\wedge 1\right)\iota_4 a} \le C e^{-\iota_y a}.
    \end{aligned}
    \end{equation*}
    The last inequality is derived by letting $C$ satisfying $\frac{\lambda a}{2}+ 1 \le Ce^{\frac{\lambda \wedge 2}{4}\iota_4 a},~\forall a > 0$ and $\iota_y := \frac{\lambda \wedge 2}{4}\iota_4$.
    Here, we assume that $\lambda a/2$ is an integer. In cases where it is not, we can simply replace it with its floor or ceiling value.
    Finally, by combining the results of the different cases $\tau_n = -1$ and $\tau_n \ge 0$, we have the following bound with $C,~\iota_y$ and $\iota_y^\prime$,
    \[
    \PB(Y_n > a) \le Ce^{-\iota_y(a-y-\iota_y^\prime w)_+}.
    \]
\end{proof}

\subsubsection{Proof of Lemma~\ref{lmm:init_lip_of_W_n}}
\begin{proof}{Proof of Lemma~\ref{lmm:init_lip_of_W_n}}
    We define the hitting times by $\tau_i := \min\{t: W_t^i = 0\}, i\in \{1,2\}$. Without loss of generality, we assume $W_0^1 < W_0^2$, which implies $\tau_1 \le \tau_2$.
    It is not difficult to find that $n \ge \tau_2$ implies $W_n^1 = W_n^2$. At this time, we have $\norm{W_n^1 - W_n^2}(e^{\kappa W_n^1} + e^{\kappa W_n^2}) = 0$. Besides this, another finding is that when $n < \tau_2$, $\norm{W_n^1 - W_n^2} \le \norm{W_0^1 - W_0^2}$ under the synchronous coupling. As a result, we have
   \begin{equation*}
   \begin{aligned}
       \EB\left[\norm{W_n^1-W_n^2}(e^{\kappa W_n^1} + e^{\kappa W_n^2}) \mid W_0^1, W_0^2\right] 
       &= \EB\left[\norm{W_n^1-W_n^2}(e^{\kappa W_n^1} + e^{\kappa W_n^2})\mathbbm{1}_{\{n \le \tau_2\}} \mid W_0^1, W_0^2\right]\\
       &\leq
       \EB\left[\norm{W_0^1-W_0^2}(e^{\kappa W_n^1} + e^{\kappa W_n^2})\mathbbm{1}_{\{n \le \tau_2\}} \mid W_0^1, W_0^2\right]\\
       &= \norm{W_0^1-W_0^2} \EB\left[(e^{\kappa W_n^1} + e^{\kappa W_n^2})\mathbbm{1}_{\{n \le \tau_2\}} \mid W_0^1, W_0^2\right].
   \end{aligned}
   \end{equation*}
   Define several notations
   \[
   R_n:= \frac{V_n}{\mu} - \frac{U_n}{\lambda},~ r:= -\EB R_n = \frac{1}{\lambda} - \frac{1}{\mu} >0,~ \text{and } \tilde{R}_n := R_n + r.
   \]
   Given $n \le \tau_2$, we have $ W_n^i = W_0^i + \ssum{i}{1}{n}R_i$ for $i = 1,2$. Hence,
   \begin{equation*}
   \begin{aligned}
   \EB[(e^{\kappa W_n^1} + e^{\kappa W_n^2})\mathbbm{1}_{\{n \le \tau_2\}}\mid W_0^1, W_0^2] 
   &\le 2e^{\kappa W_0^2} \EB\left[ \exp\left\{\kappa \ssum{i}{1}{n} R_i\right\}\mathbbm{1}\left\{W_0^2 + \ssum{i}{1}{n}R_n \ge 0\right\}\Big| W_0^1, W_0^2\right]\\
   &\overset{(a)}{=} 2 \int_0^\infty \kappa e^{\kappa s}\PB\left(\ssum{i}{1}{n}R_i \ge s - W_0^2\right)d s \\
   &= 2 \int \kappa e^{\kappa s}\PB\left(\ssum{i}{1}{n}\tilde{R}_i \ge rn + s - W_0^2\right) ds \\
   &\overset{(b)}{\le} 2\int_0^\infty \kappa e^{\kappa s}e^{-(\gamma r n + \mu\theta s - \mu\theta W_0^2)_+}ds.
   \end{aligned}
   \end{equation*}
   Here $(a)$ holds for Fubini's lemma and $(b)$ holds by combining Assumption~\ref{assmpt: light tail} with Bernstein's inequality.
   As for the exponential term $e^{\kappa s -(\gamma r n + \mu\theta s - \mu\theta W_0^2)_+}$, because $\kappa < (\iota_w \wedge \mu\theta)/4$, we have $2\kappa < (\iota_w \wedge \mu\theta)/2$. This implies $\frac{2\kappa}{\mu\theta} < 1$. Then the following holds
   \[
   e^{\kappa s -(\gamma r n + \mu\theta (s-W_0^2))_+} \le e^{\kappa s - \frac{2\kappa}{\mu\theta}(\gamma r n + \mu\theta (s-W_0^2))_+} \le e^{-\iota n}e^{2\kappa W_0^2}e^{-\kappa s}
   \]
   with $\iota := \frac{2\kappa \gamma r}{\mu\theta} > 0$.
   This yields
   \[
   \int_0^\infty \kappa e^{\kappa s - (\gamma r n  + \mu\theta(s - W_0^2))_+} \le \kappa e^{2\kappa W_0^2} e^{-\iota n}\int_0^\infty e^{- \kappa s} = e^{2\kappa W_0^2} e^{-\iota n}.
   \]

   Combining the above results, we finally get
   \begin{equation*}
   \begin{aligned}
       &\EB\left[\norm{W_n^1-W_n^2}(e^{\kappa W_n^1} + e^{\kappa W_n^2}) \mid W_0^1, W_0^2\right] \le \norm{W_0^1 - W_0^2}\EB\left[(e^{\kappa W_n^1} + e^{\kappa W_n^2})\mathbbm{1}\{n \le \tau_2\}\mid W_0^1, W_0^2\right]\\
       &\le 4\norm{W_0^1 - W_0^2}e^{2\kappa W_0^2}e^{-\iota n} \le 4\norm{W_0^1 - W_0^2}(e^{2\kappa W_0^2} + e^{2\kappa W_0^1})e^{-\iota n}.
   \end{aligned}
   \end{equation*}
   The lemma is concluded by letting $C = 4$.
\end{proof}

\subsubsection{Proof of Lemma~\ref{lmm:init_lip_of_Y_n}}
\begin{proof}{Proof of Lemma~\ref{lmm:init_lip_of_Y_n}}
    We first define the hitting times by $\tau_i := \min\{t : W_t^i = 0\},~ i\in \{1,2\}$.
    The following are several useful notations
    \[
    R_n :=  \frac{V_n}{\mu} - \frac{U_n}{\lambda},~r := -\EB R_n =  \frac{1}{\lambda} - \frac{1}{\mu} > 0,~\text{and}~\Tilde{R}_n:= R_n + r.
    \]
    Clearly, $\Tilde{R}_n$ is a zero mean i.i.d random sequence. These quantities are crucial for our following analysis.
    We start the proof by considering three cases based on whether $W_0^1, W_0^2$ and $Y_0^1, Y_0^2$ are equal. 
    
    \paragraph{Case 1: $W_0^1 = W_0^2 =: W_0$ and $Y_0^1 \neq Y_0^2$}

    In this scenario, it is straightforward to observe that $W_n^1 = W_n^2$ by utilizing the recursion~\eqref{eq:gnr rule of w&y} and the nature of synchronous coupling. This implies $\tau_1 = \tau_2 =: \tau$ and we denote it by $\tau$ for simplicity. Moreover, from the recursion of $Y_n$, we can deduce $Y_\tau^1 = Y_\tau^2 = 0$. Consequently, for any $n \geq \tau$, it is always true that $\norm{Y_n^1 - Y_n^2} = 0$. On the other hand, if $n < \tau$, a crucial observation is that $\norm{Y_n^1 - Y_n^2}$ does not increase as $n$ varies from $0$ to $\tau$ due to the synchronous coupling.
    Hence,
    \[
    \EB \norm{Y_n^1 - Y_n^2}(e^{\kappa W_n^1} + e^{\kappa W_n^2}) = \EB \norm{Y_n^1 - Y_n^2} (e^{\kappa W_n^1} + e^{\kappa W_n^2})\mathbbm{1}_{\{n < \tau\}} \le \norm{Y_0^1 - Y_0^2}\EB e^{\kappa W_n}\mathbbm{1}\{n <\tau \}.
    \]
    Given $n < \tau$, we have $W_n = W_0 + \ssum{i}{1}{n}R_i$. Hence, by making use of Fubini's formula,
    \begin{equation*}
    \begin{aligned}
        \EB e^{\kappa W_n}\mathbbm{1}\{n < \tau\} &= e^{\kappa W_0}\EB \exp\left\{\kappa \ssum{i}{1}{n} R_i\right\}\mathbbm{1}\left\{W_0 + \ssum{i}{1}{n}R_n \ge 0\right\}\\
        &=  \int_0^\infty \kappa e^{\kappa s}\PB\left(\ssum{i}{1}{n}R_i \ge s - W_0\right)\rd s = \int \kappa e^{\kappa s}\PB\left(\ssum{i}{1}{n}\tilde{R}_i \ge rn + s - W_0\right) \rd s \\
   &\le \int_0^\infty \kappa e^{\kappa s}e^{-(\gamma r n + \mu\theta s - \mu\theta W_0)_+}\rd s.
    \end{aligned}
    \end{equation*}
     As for the exponential term $e^{\kappa s -(\gamma r n + \mu\theta s - \mu\theta W_0)_+}$, because $\kappa < (\iota_w \wedge \mu\theta)/4$, we have that $2\kappa < (\iota_w \wedge \mu\theta)/2$. This implies $\frac{2\kappa}{\mu\theta} < 1$. Then the following holds
   \[
   e^{\kappa s -(\gamma r n + \mu\theta (s-W_0))_+} \le e^{\kappa s - \frac{2\kappa}{\mu\theta}(\gamma r n + \mu\theta (s-W_0))_+} \le e^{-\iota n}e^{2\kappa W_0}e^{-\kappa s}
   \]
   with $\iota := \frac{2\kappa \gamma r}{\mu\theta} > 0$.
   This yields
   \[
   \int_0^\infty \kappa e^{\kappa s - (\gamma r n  + \mu\theta(s - W_0))_+} \le \kappa e^{2\kappa W_0} e^{-\iota n}\int_0^\infty e^{- \kappa s} = e^{2\kappa W_0} e^{-\iota n}.
   \]
   Consequently, in this case, we have
   \[
   \EB \norm{Y_n^1 - Y_n^2}(e^{\kappa W_n^1} + e^{\kappa W_n^2}) \le \norm{Y_0^1 - Y_0^2}e^{2\kappa W_0 - \iota n} \le \norm{Y_0^1 - Y_0^2}(e^{2\kappa W_0^1} + e^{2\kappa W_0^2}) e^{-\iota n}.
   \]
    
    \paragraph{Case 2: $W_0^1 \neq W_0^2$ and $Y_0^1 = Y_0^2 =: Y_0$}

    Without loss of generality, we assume $W_0^1 < W_0^2$, which implies $\tau_1 \le \tau_2$. When $n < \tau_1$ or $ n > \tau_2$, by using the synchronous coupling, we have $Y_n^1 = Y_n^2$. When $n\in [\tau_1 , \tau_2]$, simple computation yields 
    \[
    \norm{Y_n^1 - Y_n^2} \le \norm{Y_n^2} = \norm{Y_0 + \ssum{t}{0}{n-1}\frac{U_t}{\lambda}}.
    \]
    Therefore,
    \begin{equation}\label{eq:dvd_by_yn}
    \begin{aligned}
    &\EB \norm{Y_n^1 - Y_n^2}(e^{\kappa W_n^1} + e^{\kappa W_n^2}) \le \EB \norm{Y_n^2}(e^{\kappa W_n^1} + e^{\kappa W_n^2})\mathbbm{1}_{\{\tau_1 \le n \le \tau_2\}}\\
    &\le 2 \EB \left( \norm{Y_0} + \ssum{t}{0}{n-1}\frac{U_t}{\lambda} \right)e^{\kappa W_n^2}\mathbbm{1}_{\{\tau_1 \le n \le \tau_2\}}.
    \end{aligned}
    \end{equation}
    We then deal with $\EB U_t e^{\kappa W_n^2}\mathbbm{1}_{\{\tau_1 \le n \le \tau_2\}},~\forall t \in [n]$ respectively. In fact, 
    \[\EB U_te^{\kappa W_n^2}\mathbbm{1}_{\{\tau_1 \le n \le \tau_2\}}= \underbrace{\EB U_t\mathbbm{1}_{\{U_t \le n\}}e^{\kappa W_n^2}\mathbbm{1}_{\{\tau_1 \le n \le \tau_2\}}}_{=: \gU_1(t)}
    + \underbrace{\EB U_t\mathbbm{1}_{\{U_t > n\}}e^{\kappa W_n^2}\mathbbm{1}_{\{\tau_1 \le n \le \tau_2\}}}_{=: \gU_2(t)}.
    \]
    For $\gU_1(t)$, it can be bounded as $\gU_1(t) \le n \EB e^{\kappa W_n^2}\mathbbm{1}\{\tau_1 \le n \le \tau_2\}$. For $\gU_2(t)$, because $\kappa < \iota_w/4$, we have that $2\kappa < \iota_w/2$. By using Young's inequality, the following derivation holds,
    \begin{equation*}
    \begin{aligned}
        &\gU_2(t) = \EB U_t\mathbbm{1}_{\{U_t > n\}}e^{\kappa W_n^2}\mathbbm{1}{\{-W_0^2 \le \inf\limits_{k \le n}\ssum{i}{0}{k-1}R_i \le -W_0^1\}}\\
        &\le \EB \left(2 U_t^{2}\mathbbm{1}\{U_t > n\} + e^{2\kappa W_n^2}\right)\mathbbm{1}\{-W_0^2 \le \inf\limits_{k\le n }\ssum{i}{0}{k-1}R_i \le - W_0^1\}\\
        &= 2\EB \left\{
        U_t^{2} \mathbbm{1}_{\{U_t > n\}} \PB \left(
        -W_0^2 \le \inf\limits_{k \le n}\ssum{i}{0}{k-1}R_i \le -W_0^1 \bigg| \sigma\left(\left\{U_i\right\}_{i=1}^t\right)
        \right)
        \right\}\\
        &\quad + \EB e^{2\kappa W_n^2}\mathbbm{1}\{-W_0^2 \le \inf\limits_{k\le n }\ssum{i}{0}{k-1}R_i \le - W_0^1\}.
        \end{aligned}
        \end{equation*}
        For the first term of the above inequality, we have
        \begin{equation*}
        \begin{aligned}
        &2\EB \left\{
        U_t^{2} \mathbbm{1}_{\{U_t > n\}} \PB \left(
        -W_0^2 \le \inf\limits_{k \le n}\ssum{i}{0}{k-1}R_i \le -W_0^1 \bigg| \sigma\left(\left\{U_i\right\}_{i=1}^t\right)
        \right)
        \right\}\\
        &\overset{(a)}{\le} 2\EB U_t^{2} \mathbbm{1}_{\{U_t > n\}}\left\{
        \ssum{k}{0}{n-1} \PB \left(-W_0^2 \le \ssum{i}{0}{k-1}R_i \le -W_0^1 \bigg| \sigma(\{U_i\}_{i=0}^t)\right)
        \right\}\\
        &= 2\EB U_t^{2} \mathbbm{1}_{\{U_t > n\}}\left\{
        \ssum{k}{1}{n}\PB\left(
        -W_0^2 {-} \ssum{i}{0}{k-2}R_i {+} \frac{U_{k-1}}{\lambda} \le \frac{V_k}{\mu} \le -W_0^1 {-} \ssum{i}{0}{k-2}R_i {+} \frac{U_{k-1}}{\lambda}
        \bigg| \sigma(\{U_i\}_{i=0}^t)\right) \right\}\\
        &\overset{(b)}{\le} {\mu np_V}\abs{W_0^1 - W_0^2}\cdot\EB U_t^{2} \mathbbm{1}_{\{U_t > n\}} = {\mu np_V}\abs{W_0^1 - W_0^2}\int_{n}^{\infty}x\PB(U_t > x) \rd x\\
        &\overset{(c)}{\le} \frac{\mu np_V}{\eta}\abs{W_0^1 - W_0^2}e^{-\iota n}.
    \end{aligned}
    \end{equation*}
    Here $(a)$ holds for the union bound, $(b)$ is supported by combining the assumption that $p_{V}(x) \le p_V$ and the independence between $V_k$ and the other random variables in $\{V_i, i \le k-1\}\cup \{U_i, i \le t\}$, and $(c)$ is true owing to Assumption~\ref{assmpt: light tail} for some small enough $\iota$.
    
    Then we focus on controlling the second term, i.e., $\EB e^{\kappa W_n^2}\mathbbm{1}\{\tau_1 \le n \le \tau_2\}$ (Here we use the fact that $\{\tau_1\le n\le \tau_2\} = \left\{-W_0^2 \le \inf\limits_{k\le n}\ssum{i}{0}{k-1}R_i \le - W_0^1\right\}$). The routine on bounding $\EB e^{2\kappa W_n^2}\mathbbm{1}\{\tau_1 \le n \le \tau_2\}$ is similar.
    We express the event $\left\{-W_0^2 \le \inf\limits_{k\le n-1} \ssum{i}{0}{k} R_i \le -W_0^1 \right\}$ as a union set of some tractable disjoint events by discussing the index where the smallest value of the partial sum sequence $\ssum{i}{1}{k}R_i$ locates. That is,
    \begin{equation*}
        \mathbbm{1}\left\{ -W_0^2 \le \inf\limits_{k\le n-1}\ssum{i}{1}{k} R_i \le - W_0^1 \right\}
        = \ssum{k}{0}{n-1} \mathbbm{1}\left\{ -W_0^2 \le \ssum{i}{0}{k} R_i \le W_0^1; ~ k = \argmin\limits_{r \le n-1} \ssum{i}{0}{r}R_i\right\}.
    \end{equation*}
    For the sake of brevity in the following proof,
    we denote the event $\left\{-W_0^2 \le \ssum{i}{0}{k} R_i \le -W_0^1\right\}$ as $\gA_k$. And denote the event $\{k = \argmin\limits_{r\le n-1}\ssum{i}{0}{r}R_i\}$ as $\gB_k$.
    For $k \in [1, n/2]$, i.e., we know $\argmin\limits_{r\le n} \ssum{i}{1}{r} R_i \le \frac{n}{2}$, then it holds that $\ssum{i}{k}{r} R_i \ge 0$ for all $r \in [k, n-1]$. 
    Additionally, given $\gA_k \cap \gB_k$,
    \begin{equation*}
    \begin{aligned}
        e^{\kappa W_n^2} = \exp\left\{\kappa W_0^2 + \kappa\ssum{i}{0}{n-1}R_n\right\} &=\exp\left\{\kappa W_0^2 + \kappa \ssum{i}{0}{k}R_i\right\}\exp\left\{\kappa\ssum{i}{k+1}{n-1}R_i\right\}\\
        &\le e^{\kappa(W_0^2 - W_0^1)}\exp\left\{\kappa\ssum{i}{k+1}{n-1}R_i\right\}.
    \end{aligned}
    \end{equation*}
    Consequently,
    \begin{equation*}
    \begin{aligned}
        \EB e^{\kappa W_n^2}\mathbbm{1}\left\{ 
        \gA_k; ~ \gB_k
        \right\}&\le 
        e^{\kappa W_0^2} \EB \exp\left\{\kappa \ssum{i}{k+1}{n-1} R_i \right\}
        \mathbbm{1} \left\{ \gA_k; \ssum{i}{k+1}{n-1} R_i \ge 0 \right\} \\
        &\overset{(a)}{=} e^{\kappa W_0^2}\PB\left(\gA_k\right) \EB \exp\left\{\kappa \ssum{i}{k+1}{n-1}R_i\right\}\mathbbm{1}\left\{\ssum{i}{k}{n-1} R_i \ge 0\right\} \\
        &\le \mu p_V \norm{W_0^1 - W_0^2}e^{\kappa W_0^2} \EB \exp\left\{\kappa \ssum{i}{k+1}{n-1}R_i\right\}\mathbbm{1}\left\{\ssum{i}{k+1}{n-1}R_i \ge 0\right\}
        \\
        &\overset{(b)}{\le} \mu p_V \norm{W_0^1 - W_0^2} e^{\kappa W_0^2} \int_0^\infty \kappa e^{\kappa s}\PB\left(\ssum{i}{k+1}{n-1}\tilde{R}_i \ge r(n-k) + s\right)\rd s
        \\
        &\le \mu p_V \norm{W_0^1 - W_0^2}e^{\kappa W_0^2} \int_0^\infty \kappa\exp \left\{ - \gamma r(n - k) - (\gamma - \kappa)s \right\}\rd s\\
        &\le \mu p_V \norm{W_0^1 - W_0^2}e^{\kappa W_0^2} \exp \left\{ - \frac{\gamma r}{2}n \right\},
    \end{aligned}
    \end{equation*}
    where $(a)$ holds because $R_i = \frac{V_i}{\mu} - \frac{U_i}{\lambda},~ i \in [n]$ are generated identically and independently, $(b)$ holds by using Fubini's theorem, and the last inequality holds as long as $\kappa < \frac{\gamma}{2}$.

    For $k\in (n/2, n]$, using the definition $k = \argmin\limits_{r\le n-1} \ssum{i}{0}{r} R_i$, we have $\ssum{i}{1}{n/2} R_i \ge -W_0^2$. This implies
    \begin{align*}
        &\EB e^{\kappa W_n^2}\mathbbm{1}\left\{ \gA_k , \gB_k \right\} 
        \le
        \EB e^{\kappa (W_0^2 - W_0^1)} \exp \left\{\ssum{i}{k+1}{n-1}R_i\right\}\mathbbm{1}\left\{
        \gA_k; \ssum{i}{1}{n/2}R_i \ge - W_0^2\right\}\\
        & \le e^{\kappa W_0^2}\PB\left(\ssum{i}{1}{n/2}R_i \ge - W_0^2\right) \PB\left( 
        \gA_k \bigg| \sigma \left(\left\{R_i\right\}_{i=1}^{n/2}\right)
        \right)\EB \exp\left\{\kappa\ssum{i}{k+1}{n-1}R_i\right\}\\
        & \overset{(a)}{\le} e^{\kappa W_0^2}\exp \left\{ {-} \left(\frac{\gamma rn}{2} {-} \mu \theta W_0^2\right)_+\right\}\\
        &\quad\quad \times\PB \left( -W_0^2 - \ssum{i}{1}{k-1} R_i + \frac{U_k}{\lambda} \le \frac{V_k}{\mu} \le -W_0^1 - \ssum{i}{1}{k-1}R_i + \frac{U_k}{\lambda} \bigg| \sigma \left(\left\{R_i\right\}_{i=1}^{n/2}\right) \right)\\
        &\le \mu p_V \norm{W_0^1 - W_0^2} e^{\kappa W_0^2} \exp \left\{- \left(\frac{\gamma r}{2}n - \mu\theta W_0^2\right)_+\right\}\\
        &\overset{(b)}{\le} \mu p_V \norm{W_0^1 - W_0^2}e^{2\kappa W_0^2} e^{-\iota n}.
    \end{align*}
    Here $(a)$ holds according to Assumption~\ref{assmpt: light tail}, and $(b)$ holds as long as $\iota \le \frac{ \gamma r \kappa}{2\mu\theta}$.
    
    So far, we have bounded every case of $\EB U_t e^{\kappa W_n^2}\mathbbm{1}\{\tau_1 \le n \le \tau_2\}$ and get
    \begin{equation*}
    \begin{aligned}
        \EB U_t e^{\kappa W_n^2 \mathbbm{1}\{\tau_1 \le n \le \tau_2\}} &\le \frac{\mu n p_V}{\eta \epsilon}\norm{W_0^1 - W_0^2} e^{-\iota n}\\
        &\quad + \mu p_V \norm{W_0^1 - W_0^2} e^{-\frac{\gamma r}{2}n} + \mu p_V \norm{W_0^1 - W_0^2} e^{2\kappa W_0^2}e^{-\iota n}\\
        &{\le} \frac{3\mu n p_V}{\eta}\norm{W_0^1 - W_0^2} e^{2\kappa W_0^2} e^{-\iota n}.
    \end{aligned}
    \end{equation*}

    Plugging this inequality into \eqref{eq:dvd_by_yn} implies that at the Case 2,
    \begin{equation*}
    \begin{aligned}
        \EB \norm{Y_n^1 - Y_n^2}(e^{\kappa W_n^1} + e^{\kappa W_n^2}) &\le \left( \norm{Y_0} + \frac{3n}{\eta} \right)\mu p_V \norm{W_0^1 - W_0^2} n e^{2\kappa W_0^2} e^{-\iota n}\\
        &\le \mu p_V\left(\norm{Y_0}^{2} + \frac{4}{\eta }\right)\norm{W_0^1 - W_0^2}e^{4 \kappa W_0^2}e^{-\iota n}.
    \end{aligned}
    \end{equation*}
    Up to now, we have completed the analysis in Case 2.

    \paragraph{Case 3: $W_0^1 \neq W_0^2$ and $Y_0^1 \neq Y_0^2$}

    At this case, we note that $Y_n^1 = Y_n^1(W_0^1, Y_0^1)$ and $Y_n^2 = Y_n^2(W_0^2, Y_0^2)$.
    To reuse the existing results in Case 1 and Case 2, we introduce a surrogate sequence $Y_n^{3/2}$ whose initial state is $(W_0^1, Y_0^2)$ and let $Y_n^{3/2}$ evolve under the same synchronous coupling with $Y_n^i, ~ i \in \{1,2\}$. Then by leveraging Jensen's inequality,
    \[
    \EB \norm{Y_n^1 - Y_n^2}(e^{\kappa W_0^1} + e^{\kappa W_0^2}) \le \left\{ \EB \norm{Y_n^1 - Y_n^{3/2}}(e^{\kappa W_0^1} + e^{\kappa W_0^2}) + \EB \norm{Y_n^{3/2} - Y_n^2}(e^{\kappa W_0^1} + e^{\kappa W_0^2}) \right\}.
    \]
    Note that the first term has been analyzed in Case 1 and the second term has been analyzed in Case 2.
    We complete the proof by applying the existing bounds.
\end{proof}

\subsubsection{A sufficient condition of Assumption~\ref{assmpt: convexity} for the queueing example
}

In this section, we provide sufficient conditions under the queueing example to ensure Assumption 1 holds. These conditions are simpler and more general than those given in EC. 5 of \citep{chen2023online}.

\subsection{Proofs for the Inventory Example}

In the following, we verify that the gradient estimator \( H(\vartheta, x) \) and the dynamics of the inventory management satisfy the assumptions outlined in Section~\ref{sec: main results} for the inventory control problem.

\subsubsection{Proof of Proposition~\ref{prop:verify_inventory}}
In this setting, the control parameter is \( \vartheta = S \), and the Markov chain state is \( x_t = (X_t, Y_t) \), as we will take the expectation over the i.i.d. demand variable \( D_t \). 
To verify Assumption~\ref{ass:continue_H}, note that \( \EB_{D_t} \big[H(S, (X_t, Y_t, D_t))\big] \) represents the expected stochastic gradient estimator \( H(S, (X_t, Y_t, D_t)) \) with respect to \( D_t \). For verification, it suffices to consider:
\[
\gP_{S} H(S, (X_t, Y_t)) 
:=\gP_{S} \EB_{D_t}[H(S, (X_t, Y_t, D_t))]
= hF_D\big(I_t(S)\big)I_t^\prime(S) - b\big(1-F_D(I_t(S))\big)I_t^\prime(S),
\]
where \( F_D \) is the cumulative distribution function of \( D_t \). Given this form, Assumption~\ref{ass:continue_H} is naturally satisfied, provided \( D_t \) has a density. Specifically:

\begin{lemma}\label{lem:cont_H_inv}
    Under Assumption~\ref{assmpt:demmand}, in the inventory management problem, for a fixed base-stock level \( S \), Assumption~\ref{ass:continue_H} holds for the gradient estimator \( H(S, (X_t, Y_t, D_t)) \), with the potential function chosen as \( V(x, y) \equiv 1 \).
\end{lemma}

As for Assumption~\ref{ass:continue_P}, we have the following lemma,
\begin{lemma}\label{lem:cont_P_inv}
    Consider two inventory dynamics with the same initialization state $(x,y)$ and two base-stock levels $S_i,~ i = 1,2$. Denote their respective transition kernel as $P_{\vartheta_i}$. Then there exists some universal constant $L$ such that the following holds:
    \[
    \gW_{\norm{\cdot},1}(\delta_{x,y}P_{S_1}, \delta_{x,y}P_{S_2}) \le L \abs{S_1 - S_2}.
    \]
    Here we take the potential function $V(x,y) \equiv 1$.
\end{lemma}

In the final step, we analyze how to verify the Wasserstein contractility property of the inventory dynamics.
For any fixed base stock level $S$, if we regard $I_t(i)$ as a function of the initialization $i$, then what we are curious about now is the Lipschitz constant of the function $I_t(i)$. Thus, let us begin to analyze the path-wise derivative of $I_t(i)$ with respect to $i$. Then, for two inventory dynamics initialized with different states $i_1,~ i_2$ but updated using the same demand sequence $\{D_t\}$,  by the mean value theorem for integrals,
\begin{equation*}
\begin{aligned}
\EB \abs{I_t(i_1) - I_t(i_2)} &= 
\EB \abs{(i_2 - i_1)\int_0^1 dI_t\big((1-\lambda)i_1 + \lambda i_2\big) \rd \lambda}\\
&\le
\abs{i_1 - i_2}\int_0^1 \EB\abs{dI_t\big(
(i_2 - i_1)\lambda + i_1
\big)}\rd \lambda,
\end{aligned}
\end{equation*}
which means we only need to analyze the $l_1$-norm of the derivative, i.e., $\EB\abs{dI_t(i)}$. And this is exactly what the following lemma addresses.
\begin{lemma}\label{lem:I_t_Wcontraction_gen}
    Under Assumption~\ref{assmpt:demmand}, we can find two universal constants $\iota > 0$ and $C > 0$, such that
    \[
    \EB\abs{dI_t(i)} \le C e^{-\iota t}.
    \]
\end{lemma}
Equipped with Lemma~\ref{lem:I_t_Wcontraction_gen}, we derive the following relationship
\begin{equation}\label{eq:I_t_Wcontraction_gen}
    \EB\abs{I_t(i_1) - I_t(i_2)} \le C\abs{i_1 - i_2}e^{-\iota t},
\end{equation}
where \( C > 0 \) and \( \iota > 0 \) are universal constants. Eqn.~\eqref{eq:I_t_Wcontraction_gen} demonstrates that the sequence \( I_t \) satisfies the Wasserstein contraction property. 

Building on this, we further analyze the derivative sequence \( I_t' \), which also satisfies the Wasserstein contraction property. 
The following lemma formalizes this result.

\begin{lemma}\label{lem:I_t^prime_Wcontraction_gen}
    Consider two inventory systems with the same base-stock level \( S \), synchronously coupled with initial states \( (x_1, x_1^{\prime}) \) and \( (x_2, x_2^{\prime}) \), respectively. Under Assumption~\ref{assmpt:demmand}, there exist universal constants \( \iota > 0 \) and \( C > 0 \) such that:
    \[
    \EB \left[\abs{I_t^{1,\prime} - I_t^{2,\prime}} \mid (x_i, x_i^{\prime}),~ i = 1,2\right] \le C\left\{
    \abs{i_1 - i_2} + \abs{i_1^\prime - i_2^\prime}
    \right\}e^{-\iota t}.
    \]
\end{lemma}


The proofs of these two lemmas are deferred to~\ref{sec:prf_genI_Wcon} and~\ref{sec:prf_genIpr_Wcon}, respectively. Based on the update rule of the base-stock policy~\eqref{eq:base_stock_update}, we know that the new order quantity $Q_t$ is uniquely determined by $I_t$ and the previous order quantities. Therefore, from the above lemma, we can naturally deduce the Wasserstein contractility property of the $Q_t$ sequence with respect to the $Q_t^\prime$ sequence. Combining the points above, we can verify that Assumption~\ref{ass:W-contraction} holds for inventory dynamics that satisfy Assumption~\ref{assmpt:demmand}.

\subsubsection{Proof of Lemma~\ref{lem:cont_H_inv} and Lemma~\ref{lem:cont_P_inv}}
First, we verify Lemma~\ref{lem:cont_H_inv}. Denote the initial states of the two sample paths as $(x_j,y_j),~ j=1,2$ with $x_j = (q_{-1}^{(j)},\cdots, q_{-\tau+1}^{(j)}, i^{(j)})$ and $y_j = \left({q_{-1}^{(j)\prime}},\cdots, {q_{-\tau + 1}^{(j)\prime}}, {i^{(j)\prime}}\right)$. Then,
\[
P_{S_j}H\big(S_j, (x_j,y_j)\big) = h F_D\big( i^{(j)}\big) i^{(j)\prime} - b\left(1 - F_D\big(i^{(j)}\big)\right)i^{(j)\prime},\quad j = 1,2.
\]
Hence we can get
\begin{equation*}
\begin{aligned}
    &P_{S_1} H\big(S_1, (x_1,y_1)\big) - P_{S_2} H\big(S_2, (x_2,y_2)\big)\\
    &\quad = 
    h F_D\big(i^{(1)}\big) i^{(1)\prime} - b\big(1 - F_D(i^{(1)})\big) i^{(1)\prime} - \left\{
    h F_D\big(i^{(2)}\big) i^{(2)\prime} - b\big(1 - F_D(i^{(2)})\big) i^{(2)\prime}
    \right\}\\
    &\quad =
    h\left\{F_D(i^{(1)}) - F_D(i^{(2)})\right\} i^{(1)\prime} + h F_D(i^{(2)})\big(i^{(1)\prime} - i^{(2)\prime}\big)\\
    &\quad\quad\quad\quad\quad\quad\quad+ b\left\{F_D(i^{(1)}) - F_D(i^{(2)})\right\}i^{(1)\prime}
     + b\big(1 - F_D(i^{(2)})\big)\big(i^{(1)\prime} - i^{(2)\prime}\big)\\
     &\quad \le 
     L\left\{\abs{i^{(1)} - i^{(2)}} + \abs{i^{(1)\prime} - i^{(2)\prime}}\right\},
\end{aligned}
\end{equation*}
where the last inequality holds by utilizing $i^{(j)\prime} \in \{0,1\}$,~ $F_D(\cdot)$ is Lipschitz continuous and $F_D(\cdot) \in [0,1]$ these three facts.

As for Lemma~\ref{lem:cont_P_inv}, denote the common initial state of the two trajectories as 
\[
x := (q_{-1},\cdots, q_{-\tau+1},i),~~ y = (q_{-1}^\prime, \cdots, q_{-\tau+1}^\prime, i^\prime).
\]
We assume that both trajectories receive the same demand variable $D$ in the first iteration. Then by using the update rule of the inventory dynamics under base-stock policy,
\begin{equation*}
\begin{aligned}
    \EB \left[\abs{I_0^{(1)} - I_0^{(2)}}\right] &= \EB \left[\abs{(i-D)_+ + q_{-\tau+1} - \big((i-D)_+ + q_{-\tau+1}\big)}\right] = 0\\
    \EB \left[\abs{q_0^{(1)} - q_0^{(2)}}\right] &= \EB\left[\abs{(S_1 - I_0 - x_{1:\tau-1}\cdot \mathbf{1}^{\tau-1})_+ - (S_2 - I_0 - x_{1:\tau-1}\cdot \mathbf{1}^{\tau-1})_+}\right] \le \abs{S_1 - S_2}.
\end{aligned}
\end{equation*}
Here $x_{1:\tau-1}$ represents the sub-vector consisting of the first $\tau - 1$ components of $x$. Similarly, we can derive the following continuity result for the derivative sequence.
\begin{equation*}
\begin{aligned}
    \EB \left[\abs{I_0^{(1)\prime} - I_0^{(2)\prime}}\right] &= \EB\left[
    \abs{i^\prime\mathbbm{1}\{D < i\} + q_{-\tau+1}^\prime - i^\prime\mathbbm{1}\{D < i\} + q_{-\tau+1}^\prime}\right] = 0\\
    \EB \left[
    \abs{q_0^{(1)\prime} - q_0^{(2)\prime}}
    \right] &= \EB\left[\left|\left(1 - i^\prime\mathbbm{1}\{D < i\} + y_{1:\tau-1}\mathbf{1}^{\tau-1}\right)\mathbbm{1}
    \{S_1 \ge (i-D)_+ + x_{1:\tau-1}\cdot \mathbf{1}^{\tau-1}\}
    \right.\right.\\
     &~-
     \left.\left.
     \left(1 - i^\prime\mathbbm{1}\{D < i\} + y_{1:\tau-1}\mathbf{1}^{\tau-1}\right)\mathbbm{1}
    \{S_2 \ge (i-D)_+ + x_{1:\tau-1}\cdot \mathbf{1}^{\tau-1}\}\right|\right]\\
    &\lesssim \abs{\PB\left(S_1 \ge (i-D)_+ +x_{1:\tau-1}\cdot \mathbf{1}^{\tau-1}\right) - 
    \PB\left(S_2 \ge (i-D)_+ +x_{1:\tau-1}\cdot \mathbf{1}^{\tau-1}\right)}\\
    &\lesssim \abs{S_1 - S_2}.
\end{aligned}
\end{equation*}
Here the last inequality holds by using the Lipschitz continuous of the cumulative distribution function $F_D$.
\subsubsection{Preliminaries before proving Lemma~\ref{lem:I_t_Wcontraction_gen}}
Before verifying the Wasserstein contractive property of the inventory dynamics, we first introduce an auxiliary sequence \( \{dI_t\}_{t=1}^\infty \).

Define
\(
dI_t(i) := \frac{\rd}{\rd i}I_t(i),~ dQ_t(i) := \frac{\rd}{\rd i}Q_t(i).
\)
According to the update rule of $(I_{t+1}, Q_t)$ under the base-stock policy, we can write down the update rule of $(dI_{t+1}(i), dQ_{t}(i))$,
\begin{equation}
\label{eq:inv_drvt_update_init}
\begin{aligned}
    dI_{t+1} &= dI_t\cdot \mathbbm{1}\{I_t \ge D_t\} + dQ_{t-\tau + 1};\\
    dQ_{t} &= - \left(dX_t\cdot \mathbf{1}^\tau\right)\cdot \mathbbm{1}\{X_t\cdot \mathbf{1}^\tau \le S\}.
\end{aligned}
\end{equation}
Here $dX_t = (dQ_{t-1}, dQ_{t-2}, \cdots, dQ_{t-\tau+1}, dI_t)$. Furthermore, we define two stopping times $\varsigma$ and $\varrho$ as follows, both of which play key roles in the incoming analysis.
\begin{equation*}
\begin{aligned}
    \varsigma &:= \inf\left\{t \ge 0: X_t\cdot \mathbf{1}^\tau \le S\right\};\\
    \varrho &:= \inf\left\{t \ge \varsigma: dI_{t+l} = 0,~\forall l = 0,1,\cdots, \tau \right\}
\end{aligned}
\end{equation*}

The following lemma establishes key properties of two stopping times in the inventory system.

\begin{lemma}\label{lem:gen_lead_time}
    Consider an inventory system \( I_t(i) \) with a replenishment lead time \( \tau \), base-stock level \( S \), and initialization \( i \). The following properties hold:
    \begin{itemize}
        \item For any \( t > \varsigma \):
        \[
        I_t + \sum_{j=t - \tau + 1}^{t-1} Q_j \leq S, \quad \text{and} \quad I_t + \sum_{j=t - \tau + 1}^{t} Q_j = S.
        \]
        Additionally, the incremental changes satisfy that $dI_t + \sum_{j=t-\tau + 1}^{t} dQ_j = 0.$
        \item For any \( t \geq \varrho \), the inventory change is zero: $dI_t = 0.$
    \end{itemize}
\end{lemma}
This part of the proof is similar to \citet[Lemma~1]{Huh2009}, and thus, we omit the proof here.

Now, using Lemma~\ref{lem:gen_lead_time}, we decompose $\EB \abs{dI_t}$ into two parts:
\begin{align*}
\EB\abs{dI_t} &\le \EB\abs{dI_t}\mathbbm{1}\{t/2 \le \varsigma\} + \EB\abs{dI_t}\mathbbm{1}\{\varsigma< t/2 < t \le \varrho\} + \EB\abs{dI_t}\mathbbm{1}\{t> \varrho\}\\
&= \underbrace{\EB\abs{dI_t}\mathbbm{1}\{t/2 \le \varsigma\}}_{\gT_1} + \underbrace{\EB\abs{dI_t}\mathbbm{1}\{\varsigma< t/2 < t \le \varrho\}}_{\gT_2}.
\end{align*}

Then we start to analyze how to control $\gT_1$ and $\gT_2$ separately.

\paragraph{Analysis for $\gT_1$.}
Note that when $t \le \varsigma$, $X_t\cdot \mathbf{1}^\tau > S$. Then the replenishment quantity for this period $Q_t$ satisfies
\[
Q_t = (S - X_t\cdot\mathbf{1}^\top)_+ = 0,~ \forall t \le \varsigma.
\]
Combining this with $X_t \cdot \mathbf{1}^\tau > S$ yields $I_t > 0$.
Furthermore, we can see that $I_t = (I_{t-1} - D_{t-1})_+ + Q_{t-\tau} = I_{t-1} - D_{t-1} < I_{t-1}$, where the last inequality holds because $D_{t-1}$ is a non-negative random variable. This implies that we can recursively obtain $I_t = I_0 - \ssum{j}{0}{t-1}D_{j} = i - \ssum{j}{0}{t-1}D_j$. Since the sequence $\{D_t\}_{t=1}^\infty$ is independent of the initialization $i$, we have $dI_t = dI_0 = \frac{\rd}{\rd i}i=1$. Hence,
\[
\gT_1 = \EB \abs{dI_t}\mathbbm{1}\{t/2 \le \varsigma\}
= \PB(t/2 \le \varsigma) = \PB(I_{t/2} \ge S) = \PB\left(i - \ssum{j}{0}{t/2-1}D_{j}>S\right).
\]

To bound the probability $\PB\left(i - \ssum{l}{0}{t/2-1}D_l > S\right)$, we use the Assumption~\ref{assmpt:demmand}. Because the demand variable $D$ is a positive random variable, it's obviously that $\EB \exp \left\{-D\right\} \le e^{-2\iota}$ for some positive number $\iota$. Hence,
\begin{equation*}
\begin{aligned}
\PB\left(i - \ssum{l}{0}{t/2-1}D_l > S\right) &= \PB\left(\ssum{l}{0}{t/2-1}-D_l > S-i\right)
= \PB\left(\exp\left\{\ssum{l}{0}{t/2-1}-D_l\right\} > \exp\{S-i\}\right)\\
&\le e^{i - S}\EB\exp\left\{\ssum{l}{0}{t/2-1}{-D_l}\right\} \overset{(a)}{=}e^{i -S}\prod\limits_{l=0}^{t/2-1}\EB e^{-D_l}\overset{(b)}{\le} e^{\bar{M} - S -\iota t}.
\end{aligned}
\end{equation*}
Here, equality \((a)\) holds because \( D_l, l \ge 0 \) are mutually independent, and inequality \((b)\) holds due to the assumption \( I_0^2 \le \bar{M} \).
Therefore, now we have proved that
\begin{equation}\label{eq:E_It1-It2_3}
\gT_1 = \EB \abs{dI_t}\mathbbm{1}\{t/2 \le \varsigma\} \le C e^{-\iota t}
\end{equation}
by taking $C$ as $e^{\bar{M} - S}$.

\paragraph{Analysis for $\gT_2$.}
To control the magnitude of $\gT_2$, we should first give $\abs{dI_t}$ a uniform bound. For the sake of this, we need the following claims to help us achieve our goal.

\begin{claim}\label{clm:linearity}
    Consider two inventory dynamics $(X_t^1, dX_t^1)$ and $(X_t^2, dX_t^2)$, whose initializations are $(x, dx_1)$ and $(x, dx_2)$ respectively. Furthermore, both of them share the same demand random sequence $\{D_t\}_{t=0}^\infty$. Then we have $dX_t^1 + dX_t^2 = dX_t^3$ with $dX_t^3$ being the corresponding derivative sequence of an inventory dynamics with initialization $(x, dx_1 + dx_2)$ and randomness $\{D_t\}_{t=0}^\infty$.
\end{claim}

\begin{claim}\label{clm:alnt_dq}
Consider an inventory dynamics system \((X_t, dX_t)\) initialized at \((x, dx)\), where the \(\tau\)-dimensional vector \(dx = (dq_{-1}, \dots, dq_{-\tau + 1}, di_0)\) has only one non-zero component. Specifically, let \(dx = dq \cdot \mathbf{e}_1\), where \(\mathbf{e}_k = (0, \dots, 0, 1, 0, \dots, 0)\) is a unit vector with the 1 in the \(k\)-th position. For the sequence \(\{dQ_t\}\), we assert the following: 
\begin{itemize}
\item Each \(dQ_t\) can only take one of the values \(-dq\), \(0\), or \(dq\).  
\item The values \(-dq\) and \(dq\) alternate in the sequence. That is, between any two adjacent \(dq\), there must be a \(-dq\).
\end{itemize}
\end{claim}

We decompose the inventory dynamics $(X_t, dX_t)$ into $\tau$ dynamics $(X_t, dX_t^k),~ k = 1,2,\cdots, \tau$, whose initializations are $dx_k = dq_k \cdot \mathbf{e}_k,~ k = 1,2,\cdots, \tau-1$ and $dx_{\tau} = di\cdot \mathbf{e}_{\tau}$. Then according to the Claim~\ref{clm:linearity}, we know that $\ssum{k}{1}{\tau}dX_t^k = dX_t$. While for every $dX_t^k$, by Claim~\ref{clm:alnt_dq}, we can get $\abs{dQ_t^k} \le \abs{dq_k},~ \forall k = 1,\cdots, \tau-1$ and $\abs{dQ_\tau} \le \abs{di}$. Combining these two results leads to
\begin{equation*}
\abs{dQ_t} = \abs{\ssum{k}{1}{\tau}dQ_t^k} \le \ssum{k}{1}{\tau}\abs{dQ_t^k} \le \ssum{k}{1}{\tau-1}\abs{dq_k} + \abs{di} = \norm{dx}_1.
\end{equation*}

Then we combine Lemma~\ref{lem:gen_lead_time}, Claim~\ref{clm:linearity} and Claim~\ref{clm:alnt_dq} and get
\[
\abs{dI_t} = \abs{\ssum{j}{t-\tau}{t}dQ_j} = \abs{\ssum{j}{t-\tau}{t}\ssum{k}{1}{\tau}dQ_j^k}\le
\ssum{k}{1}{\tau}\abs{\ssum{j}{t-\tau}{t}dQ_j^k} \le \ssum{k}{1}{\tau-1}\abs{dq_k} + \abs{di} = \norm{dx}_1.
\]
Here the last inequality follows from the fact that $dq_k$ and $-dq_k$ present alternatively in the non-zero position of the sequence $\{dQ_t^k\}$, which means that, except for the first or last elements, the non-zero elements in the middle of the summation $\ssum{j}{t-\tau}{t}dQ_t^k$ can cancel out in pairs according to their signs. While it is not difficult to derive that $dX_0 = (0,0,\cdots, 0, 1)$ by leveraging the update rule of $dI_t$ and $dQ_t$. So we ultimately get
\[
\abs{dI_t} \le \norm{dx}_1 = 1,~ \abs{dQ_t} \le \norm{dx}_1 = 1.
\]

Back to the analysis of $\gT_2$, now we can write down that
\[
\gT_2 = \EB \abs{dI_t}\mathbbm{1}\{\varsigma < t/2 < t < \varrho\} \le \PB\left(\varsigma < t/2 < t < \varrho\right)
\]

The following claim plays a key role for the purpose of controlling the probability $\PB(\varsigma < t/2 < t < \varrho)$.

\begin{claim}\label{clm:I_s>D_s}
    If there exists a time point $t \ge \varsigma + \tau$ such that $I_s \ge D_s,~\forall s\in\{t-\tau, t -\tau +1, \cdots, t + \tau - 1\}$, then we have $t \ge \varrho$.
\end{claim}

For any time point $t$, define the \textit{absorbing set} as
\[
\gA_t := \left\{\omega: I_s \ge D_s,~\forall s\in \{t-\tau, \cdots, t + \tau - 1\}\right\}.
\]
Then Claim~\ref{clm:I_s>D_s} tells us that 
\[
\bigcup_{\varsigma+ \tau \le s \le t}\gA_s \subseteq \{t \ge \varrho\}.
\]
However, recall the update rule of the on-hand inventory $I_t$, we have
\begin{equation*}
\begin{aligned}
I_{t} &= (I_{t-1} - D_{t-1})_+ + Q_{t-\tau}
\ge I_{t-1} + Q_{t - \tau} - D_{t-1}\\
&\ge \cdots
\ge I_{t-\tau} + \ssum{j}{t - 2\tau + 1}{t- \tau}Q_{j} - \ssum{j}{t -\tau}{t-1}D_j
\ge S - \ssum{j}{t - \tau}{t-1} D_j.
\end{aligned}
\end{equation*}
Here the last inequality holds by using the first point of Lemma~\ref{lem:gen_lead_time}. Therefore, it's obvious that
\[
\gA_t = \bigcap\limits_{s= t -\tau}^{t+\tau-1} \{I_s \ge D_s\} \supseteq \bigcap\limits_{s=t-\tau}^{t+\tau -1} \left\{S - \ssum{j}{s-\tau}{s-1}D_j \ge D_s\right\} \supseteq \left\{\ssum{j}{t-2\tau}{t+\tau -1}D_j \le S\right\} =: \gB_t.
\]

Consequently, through some basic set operations, we are able to obtain
\begin{equation*}
\begin{aligned}
    \{\varsigma < t/2 < t < \varrho\} &= \{\varsigma < t/2\}\backslash \{t \ge \varrho\} \subseteq 
    \{\varsigma < t/2\}\bigg\backslash \left(
    \bigcup\limits_{s = \varsigma+\tau}^t \gA_s
    \right)\\
    &= \{\varsigma \le t/2\}\cap \left(\bigcap\limits_{s = \varsigma + \tau}^t \gA_s^c\right) \subseteq \{\varsigma \le t/2\}\cap \left(\bigcap\limits_{s = \varsigma + \tau}^t \gB_s^c\right) \subseteq \bigcap\limits_{k = 0}^{\left[\frac{t - 2\tau}{6\tau}\right]} \gB_{\frac{t}{2}+ (3k+1)\tau}.
\end{aligned}
\end{equation*}
Since \(\{D_t\}_{t\ge 0}\) is an i.i.d. sequence, it is not difficult to verify that \(\gB^c_{\frac{t}{2}+ (3k+1)\tau},~ k \ge 0\) are mutually independent random events. From this, we derive that
\begin{equation*}
\begin{aligned}
    \gT_2 &\le \PB(\varsigma < t/2 < t < \varrho) \le
    \PB\left(
    \bigcap\limits_{k=0}^{\left[\frac{t-2\tau}{6\tau}\right]} \gB^c_{\frac{t}{2}+(3k+1)\tau}
    \right) = \prod\limits_{k=0}^{\left[\frac{t-2\tau}{6\tau}\right]}\PB\left(\gB^c_{\frac{t}{2}+(3k+1)\tau}\right)\\
    &= \prod\limits_{k=0}^{\left[\frac{t-2\tau}{6\tau}\right]} \PB\left(
    \ssum{j}{\frac{t}{2}+3k\tau}{\frac{t}{2}+(3k+2)\tau -1}D_j > S 
    \right) \overset{(a)}{\le} \prod\limits_{k=0}^{\left[\frac{t-2\tau}{6\tau}\right]} \left(1 - \prod\limits_{j = \frac{t}{2}+3k\tau}^{\frac{t}{2}+(3k+2)\tau - 1}\PB\left(D_j \le \frac{S}{3\tau}\right)\right)\\
    &\overset{(b)}{\le}
    \prod\limits_{k=0}^{\left[\frac{t-2\tau}{6\tau}\right]}
    \left(
    1 - \PB\left(D \le \frac{\underline{M}}{3\tau}
    \right)^{3\tau}
    \right) {\le} \exp\left\{
    - \PB\left(D\le \underline{M}/3\tau\right)^{3\tau}\cdot \frac{t - 2\tau}{6\tau} 
    \right\} = \exp\{-\iota t\}.
\end{aligned}
\end{equation*}
Here $(a)$ follows from the fact
\begin{equation*}
\begin{aligned}
\PB\left(\ssum{j}{a}{b}D_j > S\right) = 1 - \PB\left(\ssum{j}{a}{b}D_j \le S\right) &\le 1 - \PB\left(\bigcap_{a\le j \le b}\{D_j \le S/(b-a)\}\right)\\
&= 1 - \prod\limits_{a\le j\le b}\PB\left(D_j \le \frac{S}{b-a}\right),
\end{aligned}
\end{equation*}
and $(b)$ holds by substituting $S$ with $\underline{M}$. $\iota$ is defined by $\iota := \frac{\PB\left(D \le \underline{M}/3\tau\right)^{3\tau}}{6\tau}$. Thus, we establish that
\begin{equation}\label{eq:E_It1-It2_4}
    \gT_2 = \EB \abs{dI_t}\mathbbm{1}\{\varsigma < t/2 < t < \varrho\} \le \exp\{-\iota t\}.
\end{equation}

\subsubsection{Proof of Lemma~\ref{lem:I_t_Wcontraction_gen}}\label{sec:prf_genI_Wcon}
After the analysis of $\gT_1$ and $\gT_2$, we can easily prove the lemma by combining \eqref{eq:E_It1-It2_3} and \eqref{eq:E_It1-It2_4}. The only remaining task is to verify the claims we used in the analysis.

\begin{proof}{Proof of Claim~\ref{clm:linearity}}
    For this claim, we prove it by induction on \( t \).
    
    Firstly, we have $dX_0^1 + dX_0^2 = dx_1 + dx_2 = dX_0^3$. Suppose the claim holds for $0, 1, \cdots , t$. For the iteration $t+1$, since we couple the two sequences \( dI_t^1 \) and \( dI_t^2 \) using the same randomness, we have
    \begin{equation*}
    \begin{aligned}
    dI_{t+1}^1 +& dI_{t+1}^2 \overset{(a)}{=} dI_{t}^1 \cdot \mathbbm{1}\{I_t \ge D_t\} + dQ_{t-\tau+1}^1 + dI_{t}^2 \cdot \mathbbm{1}\{I_t \ge D_t\} + dQ_{t-\tau+1}^2\\
    &= (dI_t^1 + dI_t^2)\mathbbm{1}\{I_t \ge D_t\} + (dQ_{t-\tau+1}^1 + dQ_{t-\tau+1}^2)
    \overset{(b)}{=} dI_t^3\mathbbm{1}\{I_t \ge D_t\} + dQ_{t-\tau+1}^3 = dI_{t+1}^3.
    \end{aligned}
    \end{equation*}

   Here, $(a)$ holds because, apart from the initialization of the pathwise derivative (i.e., $dX_0^i,~ i = 1,2$), the other components (including $X_0^i,~i=1,2$ and the demand variable $D_t$) are exactly the same for both inventory dynamics. Meanwhile, $(b)$ follows the inductive hypothesis. Similarly, we can calculate
   \begin{equation*}
   \begin{aligned}
       dQ_{t}^1 + dQ_{t}^2 &= - (dX_t^1 \cdot \mathbf{1}^\tau)\mathbbm{1}\{X_t\cdot \mathbf{1}^\tau \le S\} - (dX_t^2 \cdot \mathbf{1}^\tau)\mathbbm{1}\{X_t\cdot \mathbf{1}^\tau \le S\}\\
       &= -(dX_t^1 + dX_t^2)\cdot\mathbf{1}^\tau \mathbbm{1}\{X_t\cdot \mathbf{1}^\tau \le S\} = - (dX_t^3 \cdot \mathbf{1}^\tau)\mathbbm{1}\{X_t\cdot \mathbf{1}^\tau \le S\} = dQ_{t}^3.
   \end{aligned}
   \end{equation*}
   Therefore, we have proved that $dX_{t+1}^1 + dX_{t+1}^2 = dX_{t+1}^3$, which concludes the proof.
\end{proof}

\begin{proof}{Proof of Claim~\ref{clm:alnt_dq}}
    We proceed by induction. First, the result holds naturally for the initialization state $(X_0, dX_0) = (x,dx)$. Assume that the conclusion holds for all time points prior to time \( t-1 \).
    Then, consider the time point $t$. Define $t_1 = \max\left\{s\in [0,t-1], dQ_{s}\neq 0\right\}$. From the content of Claim~\ref{clm:alnt_dq}, we know that \(dQ_{t_1} = \pm dq_0\). Whithout loss of generality, we suppose that \(dQ_{t_1} = dq_0\). Recall the update rule of $dI_t,~ dQ_t$,
    \begin{equation*}
    \begin{aligned}
    dI_{t} &= dI_{t-1}\cdot \mathbbm{1}\{I_{t-1} \ge D_{t-1}\} + dQ_{t-\tau}\\
    dQ_t &= -(dI_t + dQ_{t-1} + \cdots + dQ_{t - \tau + 1})
    \end{aligned}
    \end{equation*}
    Plugging the second equation into the first yields
    \begin{equation*}
    \begin{aligned}
        -\ssum{j}{t-\tau+1}{t}Q_{j} = dI_{t} = dI_{t-1}\cdot\mathbbm{1}\{I_{t-1} \ge D_{t-1}\} + dQ_{t-\tau} = -\left(\ssum{j}{t-\tau}{t-1}Q_j\right)\mathbbm{1}\{I_t > D_t\} + dQ_{t-\tau}.
    \end{aligned}
    \end{equation*}
    Rearranging leads to
    \[
    dQ_{t} = -(dQ_{t-1} + \cdots + dQ_{t-\tau})\mathbbm{1}\{I_{t-1} \le D_{t-1}\}.
    \]
    If $I_{t-1} > D_{t-1}$, then $dQ_t = 0$, which concludes the induction process. Otherwise, $I_t \le D_{t-1}$, then
    \[
    dQ_{t} = -(dQ_{t-1} + \cdots + dQ_{t-\tau}).
    \]
    If $t_1 < t - \tau$, we have $dQ_{t-1} = \cdots = dQ_{t-\tau}$, which implies $dQ_t = 0$. Otherwise, $t_1 \ge t - \tau$, then we can decompose the summation $\ssum{j}{t-\tau}{t -1}dQ_j$ as follows,
    \[
    \ssum{j}{t-\tau}{t - 1}dQ_j = \ssum{r}{0}{R}(dQ_{j_{2r}} + dQ_{j_{2r+1}}).
    \]
    Here, \(dQ_{j_{r}}\) represents all the non-zero elements in the sum, with \(j_{1} < j_{2} < \cdots < j_{2R+1} = t_1\). If there is an even number of non-zero elements between \(t - \tau\) and \(t - 1\), then from the statement of Claim~\ref{clm:alnt_dq}, we know that \(dQ_{j_{2r}} + dQ_{j_{2r+1}} = -dq_0 + dq_0 = 0\), which gives 
    \[dQ_{t_2} = -\ssum{r}{0}{R}(dQ_{j_{2r}}+ dQ_{j_{2r+1}}) = 0 \neq dq_0 ,\] 
    verifying our claim. If there is an odd number of non-zero elements, we set \(dQ_{j_0} = 0\). From the sign pattern of each adjacent pair of non-zero elements in this decomposition, we know that \(dQ_{j_1} = dQ_{2R+1} = dQ_{t_1} = dq_0\). In this case, we have 
    \[dQ_{t_2} = -dQ_{j_1} + \ssum{r}{1}{R}(dQ_{j_{2r}} + dQ_{j_{2r+1}}) = - dq_0,\]
    which also completes the induction process. Therefore, by completing the induction, Claim~\ref{clm:alnt_dq} holds.
\end{proof}

\begin{proof}{Proof of Claim~\ref{clm:I_s>D_s}}
    From the update rule \eqref{eq:inv_drvt_update_init} Under the condition that was given in Claim~\ref{clm:I_s>D_s}, we have that for all $l\in \{0,1,\cdots, \tau\}$,
    \begin{equation*}
    \begin{aligned}
    dI_{t+l} &= dI_{t+l - 1}\mathbbm{1}\{I_{t+l-1} \ge D_{t+l-1}\} + dQ_{t -\tau+l} = dI_{t+l-1} + dQ_{t+l -\tau}\\
    &= \cdots = dI_{t+l-\tau}+ \ssum{j}{t-2\tau +l+1}{t+l-\tau} dQ_{j} = 0.
    \end{aligned}
    \end{equation*}
    Here the last equality holds by using Lemma~\ref{lem:gen_lead_time}. And the claim is concluded by recalling the definition of the stopping time $\varrho$.
\end{proof}

\subsubsection{Proof of Lemma~\ref{lem:I_t^prime_Wcontraction_gen}}\label{sec:prf_genIpr_Wcon}

We prove the lemma by considering the following two cases based on the difference in initialization: \(i_1^\prime \neq i_2^\prime\), and \(i_1^\prime = i_2^\prime\) with \(i_1 \neq i_2\).

\paragraph{Case 1: $i_1^\prime \neq i_2^\prime$:} In this setting, $\abs{i_1^\prime - i_2^\prime} = 1$. Thus, we only need to prove that
\[
\EB\left[\abs{I_t^{1,\prime} - I_t^{2,\prime}}\Big| (x_i, x_i^\prime),~i=1,2\right] \le Ce^{-\iota t}
\]
holds for some $C,~ \iota$. This has been proven in \citep[Theorem 3]{Huh2009}.

\paragraph{Case 2: $i_1^\prime = i_2^\prime$ and $i_1 \neq i_2$:}
First, we consider the random indicator $\mathbbm{1}\{X_{t+1}^{1,\prime} \neq X_{t+1}^{2,\prime}\}$ with $X_{t+1}^{\prime} = (Q^\prime_{t},\cdots, Q^\prime_{t-\tau + 1}, I^\prime_{t+1})$. Note that \( X_{t+1}^{1, \prime} \neq X_{t+1}^{2, \prime} \) implies that these two vectors differ in at least one component. Using basic set operations, we obtain,
\begin{equation}\label{eq:x1pr_neq_x2pr}
\begin{aligned}
    \mathbbm{1}\{X_{t+1}^{1,\prime} \neq X_{t+1}^{2,\prime}\} 
    &= \mathbbm{1}\left(\left(\bigcup\limits_{l = 1}^{\tau - 1}\left\{Q_{t-l}^{1,\prime} \neq Q_{t-l}^{2,\prime}\right\}\cup \left\{Q_t^{1,\prime} \neq Q_t^{2,\prime}\right\}
    \cup
    \left\{
    I_{t+1}^{1,\prime} \neq I_{t+1}^{2,\prime}
    \right\}\right)\right)\\
    &=\mathbbm{1}\left(\bigcup\limits_{l=1}^{\tau-2}\{Q_{t-l}^{1,\prime}\neq Q_{t-l}^{2,\prime}\}\right) + \mathbbm{1}\left(
    \bigcap\limits_{l=1}^{\tau - 2}\{Q_{t-l}^{1,\prime} = Q_{t-l}^{2,\prime}\}; Q_t^{1,\prime} \neq Q_t^{2,\prime}
    \right)\\
    & \quad + \mathbbm{1}\left(
    \bigcap\limits_{l=1}^{\tau - 1} \{Q_{t-l}^{1,\prime} = Q_{t-l}^{2,\prime}\}; Q_{t}^{1,\prime} = Q_{t}^{1,\prime}; I_{t+1}^{1,\prime} \neq I_{t+1}^{2,\prime}
    \right).
\end{aligned}
\end{equation}
For the events in the first two terms in \eqref{eq:x1pr_neq_x2pr}, namely \( \bigcup\limits_{l=1}^{\tau - 2}\{Q_{t-l}^{1, \prime} \neq Q_{t-l}^{2, \prime}\} \) and \( \bigcap\limits_{l=1}^{\tau - 2}\{Q_{t-l}^{1, \prime} = Q_{t-l}^{2, \prime}\} \cap \{Q_t^{1, \prime} \neq Q_t^{2, \prime}\} \), both lead to \( \{X_t^{1,\prime} \neq X_t^{2,\prime}\} \). This is because the first event implies that the first \( \tau - 2 \) components of \( X_t^{1, \prime} \), i.e., \( (Q_{t-l}^{1, \prime}, \cdots, Q_{t-l}^{1, \prime}) \), differ from the corresponding components of \( X_t^{2, \prime} \), i.e., \( (Q_{t-l}^{2, \prime}, \cdots, Q_{t-l}^{2, \prime}) \). For the second event, combining it with the sequence update relationship that \( I_t^{i, \prime} + Q_{t}^{i, \prime} + \cdots + Q_{t - \tau + 1}^{i, \prime} = 1\) for \(\forall i = 1, 2 \), we have that:
\begin{align*}
I_{t}^{1,\prime} = 1 - \ssum{l}{1}{\tau - 2}Q_{t-l}^{1,\prime} - Q_t^{1,\prime} = 1 - \ssum{l}{1}{\tau - 2}Q_{t-l}^{2,\prime} - Q_t^{1,\prime}
\neq
1 - \ssum{l}{1}{\tau - 2}Q_{t-l}^{1,\prime} - Q_t^{2,\prime}
= I_t^{2,\prime}.
\end{align*}
To analyze the event in the last term in \eqref{eq:x1pr_neq_x2pr}, we will show that
\begin{equation}
\label{eq:last-term-event}
\bigcap\limits_{l=0}^{\tau - 1} \{Q_{t-l}^{1,\prime} = Q_{t-l}^{2,\prime}\}\bigcap \{I_{t+1}^{1,\prime} \neq I_{t+1}^{2,\prime} \} \subseteq
\{X_t^{1, \prime} \neq X_t^{2, \prime}\} \cup \left\{\mathbbm{1}\{I_t^1 \ge D_t\} \neq \mathbbm{1}\{I_t^2 \ge D_t\}\right\}.
\end{equation}
To show this, we apply the update rule for \( I_{t+1}^{\prime} \) and obtain
\[
I_{t+1}^{i, \prime} = I_t^{i, \prime} \mathbbm{1}\{I_t^i > D_t\} + Q_{t - \tau + 1}^{i, \prime}, \quad \forall i = 1,2.
\]
We assert under the last event in \eqref{eq:x1pr_neq_x2pr}, we must have either $X_t^{1, \prime} \neq X_t^{2, \prime}$ or $\mathbbm{1}\{I_t^1 \ge D_t\} \neq \mathbbm{1}\{I_t^2 \ge D_t\}$.
If $X_t^{1, \prime} = X_t^{2, \prime}$ while the second is not true, there are two cases.
\begin{itemize}
    \item \textbf{Case 1}: \( \mathbbm{1}\{I_t^1 > D_t\} = \mathbbm{1}\{I_t^2 > D_t\} = 1 \), then $I_t^{1, \prime} = I_{t+1}^{1, \prime} - Q_{t - \tau + 1}^{1, \prime} = I_{t+1}^{1, \prime} - Q_{t - \tau + 1}^{2, \prime} \neq I_{t+1}^{2, \prime} - Q_{t - \tau + 1}^{2, \prime} = I_t^{2, \prime}$, which 
    contradicts with the fact that $I_t^{1, \prime}=I_t^{2, \prime}$  from the condition $X_t^{1, \prime} = X_t^{2, \prime}$.
    \item \textbf{Case 2}: \( \mathbbm{1}\{I_t^1 > D_t\} = \mathbbm{1}\{I_t^2 > D_t\} = 0 \), then  \(Q_{t - \tau + 1}^{1, \prime} = I_{t+1}^{1, \prime} \neq I_{t+1}^{2, \prime} = Q_{t - \tau + 1}^{2, \prime} \), leading to a contradiction with $X_t^{1, \prime} = X_t^{2, \prime}$.
\end{itemize}
 Thus, we prove \eqref{eq:last-term-event}. Furthermore, noting that the three terms in~\eqref{eq:x1pr_neq_x2pr} are pairwise disjoint, we have that
\begin{equation}\label{eq:X_t^prime_diff}
\begin{aligned}
     \mathbbm{1}\{X_{t+1}^{1,\prime} \neq X_{t+1}^{2,\prime}\} 
     &\le \mathbbm{1}\{X_t^{1,\prime} \neq X_t^{2,\prime}\} +
     \mathbbm{1}\left\{\mathbbm{1}\{I_t^1 \ge D_t\} \neq \mathbbm{1}\{I_t^2 \ge D_t\}\right\}\\
     &=
     \mathbbm{1}\{X_t^{1,\prime} \neq X_t^{2,\prime}\}
     + \mathbbm{1}\left(\{I_t^1 < D_t\le I_t^2\}\cup \{I_t^2 < D_t\le I_t^1\}\right).
\end{aligned}
\end{equation}
Taking expectation to the Eqn.~\eqref{eq:X_t^prime_diff} yields
\begin{equation}\label{eq:X_t^prime_diff_1}
\begin{aligned}
    \PB(X_{t+1}^{1,\prime} &\neq X_{t+1}^{2,\prime}) \le 
    \PB(X_{t}^{1,\prime} \neq X_{t}^{2,\prime}) + \EB\abs{F_D(I_t^1) - F_D(I_t^2)}\\
    &\le
    \PB(X_{t}^{1,\prime} \neq X_{t}^{2,\prime}) +
    L\EB\abs{I_t^1 - I_t^2} \le \PB(X_{t}^{1,\prime} \neq X_{t}^{2,\prime}) + LC\abs{i_1 - i_2}e^{-\iota t},
\end{aligned}
\end{equation}
where the last inequality holds due to \eqref{eq:I_t_Wcontraction_gen}.

To analyze the quantity $\abs{I_{t}^{1,\prime} - I_{t}^{2,\prime}} = \mathbbm{1}\{I_{t}^{1,\prime}\neq I_{t}^{2,\prime}\}$, we consider two surrogate sequences $I_l^{3,\prime}$ and $Q_l^{3,\prime},~ l = 0,\cdots, t$, which are recursively defined as follows,
\begin{equation*}
\begin{aligned}
    &I_{l+1}^{3,\prime} = I_l^{3,\prime}\mathbbm{1}\{I_l^1 \ge D_l\} + Q_{l-\tau+1}^{3,\prime},~
    Q_{l}^{3,\prime} = I_{l-1}^{3,\prime}\mathbbm{1}\{I_{l-1}^1< D_{l-1}\},\quad l\in [0,t/2];\\
    &I_{l+1}^{3,\prime} = I_l^{3,\prime}\mathbbm{1}\{I_l^2 \ge D_l\} + Q_{l-\tau+1}^{3,\prime},~
    Q_{l}^{3,\prime} = I_{l-1}^{3,\prime}\mathbbm{1}\{I_{l-1}^2< D_{l-1}\},\quad l\in [t/2+1,t].
\end{aligned}
\end{equation*}
Then we have the following decomposition,
\begin{equation*}
\begin{aligned}
    \EB\abs{I_{t}^{1,\prime} - I_t^{2,\prime}} \le \EB \abs{I_{t}^{1,\prime} - I_t^{3,\prime}} + \EB \abs{I_{t}^{2,\prime} - I_t^{3,\prime}}.
\end{aligned}
\end{equation*}
For \( \EB \abs{I_{t}^{1,\prime} - I_t^{3,\prime}} \), note that the sequences \( I_l^{j,\prime} \) for \( j = 1, 3 \) share the same indicator variable \( \mathbbm{1}\{I_l^1 \ge D_l\} \) during the first \( t/2 \) update steps. Consequently, this ensures that \( X_{t/2}^{1,\prime} = X_{t/2}^{3,\prime} \). Furthermore, using the result from~\eqref{eq:X_t^prime_diff_1}, we obtain
\begin{equation*}
\begin{aligned}
    \EB\abs{I_t^{1,\prime}-I_t^{3,\prime}} &= \PB(I_t^{1,\prime}\neq I_t^{3,\prime}) \le \PB(X_t^{1,\prime} \neq X_t^{3,\prime})\\
    &\le
    \PB(X_{t-1}^{1,\prime}\neq X_{t-1}^{2,\prime}) + C\abs{i_1 - i_2}e^{-\iota t/2} \\
    &\le \cdots \le \PB(X_{t/2}^{1,\prime}\neq X_{t/2}^{3,\prime}) + Ct\abs{i_1 - i_2}e^{-\iota t/2}\\
    &= Ct\abs{i_1 - i_2}e^{-\iota t/2}.
\end{aligned}
\end{equation*}

For \( \EB \abs{I_t^{2,\prime} - I_t^{3,\prime}} \), note that the sequences \( I_{l}^{j,\prime} \) for \( j = 2, 3 \) utilize the same indicator variable \( \mathbbm{1}\{I_l^2 \ge D_l\} \) during the last \( t/2 \) update steps. As a result, we can derive that 
\[
\PB(X_{t}^{2,\prime} \neq X_{t}^{3,\prime} \mid \gF_{t/2}) \le C \mathbbm{1}\{X_{t/2}^{2,\prime} \neq X_{t/2}^{3,\prime}\}e^{-\iota t/2},
\]
by applying Theorem 3 from \citet{Huh2009}. Combining this with~\eqref{eq:X_t^prime_diff_1} yields
\begin{equation*}
\begin{aligned}
    \EB\abs{I_t^{2,\prime} -I_t^{3,\prime}}&= \PB(I_t^{2,\prime}\neq I_t^{3,\prime})\le \PB(X_t^{2,\prime}\neq X_t^{3,\prime})\\
    &\le Ce^{-\iota t/2}\PB\left(
    X_{t/2}^{2,\prime} \neq X_{t/2}^{3,\prime}
    \right)\\
    &\le
    Ce^{-\iota t/2}\left\{\PB\left(
    X_{t/2-1}^{2,\prime} \neq X_{t/2-1}^{3,\prime}\right) + C\abs{i_1 - i_2}\right\}\\
    &\le \cdots \le Ce^{-\iota t/2}\left\{
    \PB(x_1^\prime \neq x_2^\prime) + Ct\abs{i_1 - i_2}
    \right\} \\
    &= C^2\abs{i_1 - i_2}e^{-\iota t/4}.
\end{aligned}
\end{equation*}
Here the last equality holds because the analysis is conducted under the case \( i_1^\prime = i_2^\prime \) and \( i_1 \neq i_2 \).
The lemma is concluded by choosing $C,~ \iota$ as $C^2,~ \iota/4$, respectively.

    

\section{Detailed Regret Analysis for Queueing System}\label{sec:queue_regret}
In Section~\ref{sec:regret_analysis}, we derive the cumulative regret bound for a range of online learning problems, including \textbf{pricing and capacity sizing in queueing systems}, using stream SGD. However, in some literature, the cumulative regret for this problem is analyzed in a continuous-time framework. In this section, we formalize this notion of continuous-time regret and provide the corresponding regret bound for stream SGD.
Following the literature on online learning for queues \citep{chen2023online,jia2022resusable}, we define the cumulated regret $R_T$ of stream SGD in the first $T$ iterations as the expected difference between the actually cost and the optimal long-run cost under the optimal policy $(\mu^\star,p^\star)$. 
Note that the queueing system operates in continuous time, so we need to map the discrete iterations back to the real timeline to evaluate the actual revenue performance. 
Before introducing the formal definition of regret, we first describe the real-time dynamic of the queuing system.

\begin{figure}[t]
    \centering
    \includegraphics[width=1.0\textwidth]{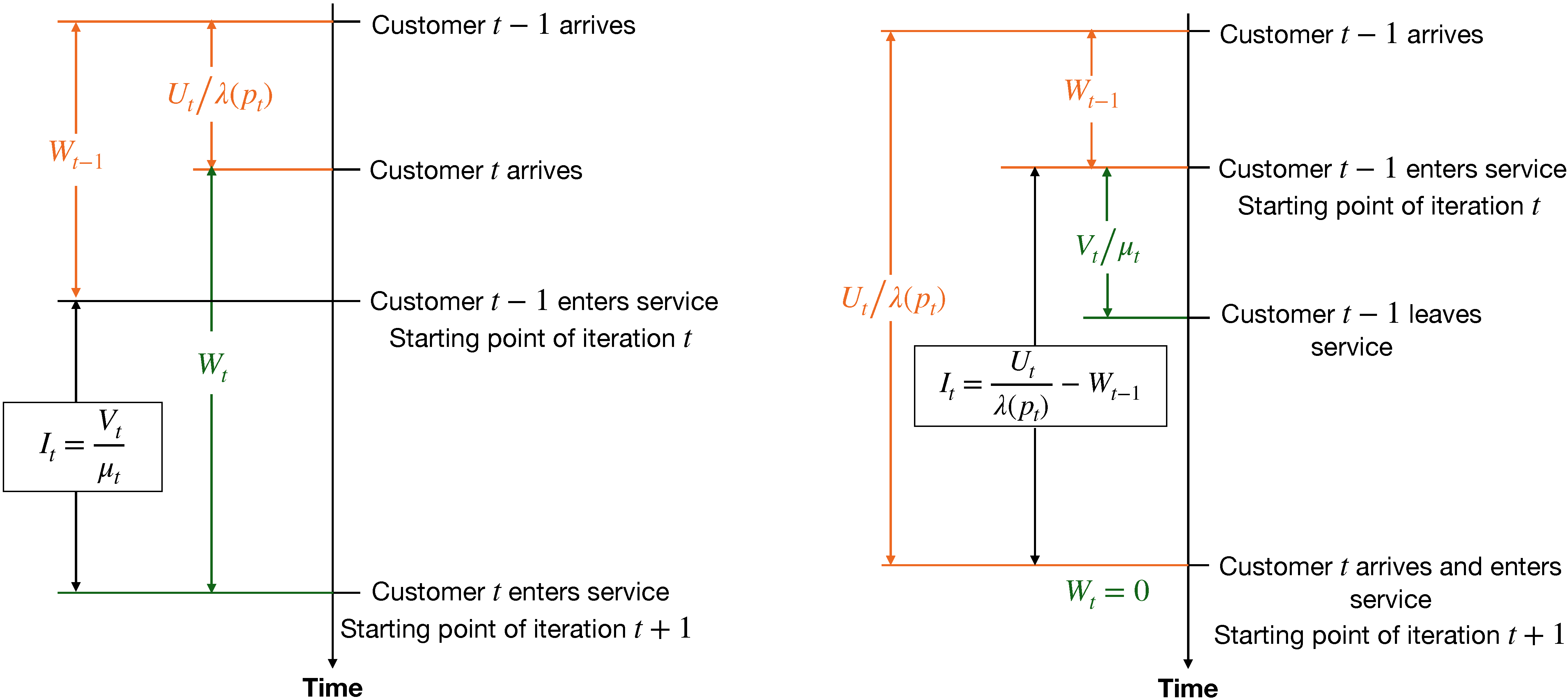}
    \caption{Real-time dynamic of the GI/GI/1 system within a single iteration under stream SGD. 
    The left panel shows the case where the $t$-th customer arrives before the $(t{-}1)$-th customer leaves, while the right panel shows the opposite case.
    }
    \label{fig:service illustration}
\end{figure}

Figure \ref{fig:service illustration} illustrates the dynamic of the queueing system at iteration $t$. 
As the waiting times and observed busy periods are realized when customers enter service, we prescribe that each iteration starts when a customer enters the service and use $I_t$ to denote the time interval between the enter-service times of the consequent customer $t{-}1$ and $t$. 
If the $t$-th customer arrives before the $(t{-}1)$-th customer leaves, then $I_t$ corresponds to the service time of customer $t{-}1$, which equals to $I_t = \frac{V_t}{\mu_t}$.
If the $t$-th customer arrives after the $(t{-}1)$-th customer leaves, we then should take the idle time where no customer receives service into consideration and find that $I_t = \frac{U_t}{\lambda(p_t)} - W_{t-1}$.
In both cases, one can verify that
$$I_t = W_t + \frac{U_t}{\lambda(p_t)} - W_{t-1}.$$
Then, the total time elapsed upon stream SGD finishes $T$ iterations is $\sum_{t=1}^T I_t$ and customer $T+1$ enters service then. As a result, the cumulated regret in $T$ iterations is
\begin{equation} \label{def:regret}
    \widetilde{R}(T) := \EB\left[\ssum{t}{2}{T+1} (h_0(w_t + S_t) - p_t + \zeta(\mu_t)I_t) - f(\mu^\star,p^\star) \ssum{t}{2}{T+1}I_t\right],
\end{equation}
where $S_t:=V_t/\mu_t$ is the service time of customer $t$. For each $t$, the term $(h_0(w_t + S_t) - p_t + \zeta(\mu_t)I_t)$ can be understood as the cost brought by the customer $t$ to the system. The following provides the bound for this new regret, \( \widetilde{R}(T) \), in the continuous-time framework.
\begin{thm}\label{thm:cont_time_regret_queue}
    Consider the pricing and capacity sizing problem. Let $\{(\mu_t,p_t)\}_{t=1}^\infty$ be the parameter sequence outputed by the stream SGD algorithm. For the cumulative regret given in~\eqref{def:regret}, under the step size choice $\eta_t = \frac{\gamma}{t}$,we have
    \[
    \widetilde{R}(T) = \gO(\log T).
    \]
\end{thm}
\begin{proof}{Proof of Theorem~\ref{thm:cont_time_regret_queue}}
Recalling the definition of the cumulated regret $\widetilde{R}_T$ and the GI/GI/1 system generating mechanism, we can derive the following decomposition,
\begin{equation}
\label{eq:regret_decompose}
\begin{aligned}
    \widetilde{R}(T) &= \underbrace{\EB \ssum{t}{{2}}{{T+1}}\left\{ [h_0(w_t {+} S_t) {-} p_t {+} \zeta(\mu_t)I_t] {-} f(\mu_t, p_t)I_t \right\}}_{\widetilde{R}_1(T)} {+} \underbrace{\EB \ssum{t}{2}{T+1}(f(\mu_t,p_t) {-} f(\mu^\star, p^\star))I_t }_{\widetilde{R}_2(T)} \\
    &= \EB \ssum{t}{2}{T+1}\left\{ [h_0(w_t + S_t) - p_t] - \left[ 
    h_0\left( \EB W_\infty(\mu_t, p_t) + \frac{1}{\mu_t} \right) - p_t
    \right] \right\}\nonumber\\
    &\quad + \EB \ssum{t}{2}{T+1}(1 {-} \lambda(p_t)I_t)\left[ 
    h_0\left( \EB W_\infty(\mu_t, p_t) {+} \frac{1}{\mu_t} \right) {-} p_t
    \right] {+} \EB \ssum{t}{2}{T+1}(f(\mu_t,p_t) {-} f(\mu^\star, p^\star))I_t\nonumber\\
    &=
    \underbrace{h_0\ssum{t}{2}{T+1}\left( \EB w_t - \EB W_{\infty}(\mu_t,p_t) \right)}_{\gT_1} + \underbrace{\ssum{t}{2}{T+1}\EB (w_{t+1} - w_t)\lambda(p_t)\Gamma(\mu_t,p_t)}_{\gT_2}\\ 
    &\quad + \underbrace{\EB \ssum{t}{2}{T+1}(f(\mu_t,p_t) {-} f(\mu^\star, p^\star))I_t}_{\gT_3}.  
\end{aligned}
\end{equation}
Here we define $\Gamma(\mu_t,p_t) := h_0\left( \EB W_\infty(\mu_t, p_t) {+} \frac{1}{\mu_t} \right) {-} p_t$ for simplicity.

Here we can see three terms within the final formula, each of which must be managed separately. From an intuitive standpoint, $\gT_1$ arises due to the non-stationary nature of the Markov chain induced by parameter updates in each iteration. $\gT_2$ stems from the randomness of the time intervals between adjacent iterations, denoted as $I_t$. Lastly, $\gT_3$ is rooted in the estimation errors associated with the parameters.

In the following, we analyze the three terms on the r.h.s. of Eqn.~\eqref{eq:regret_decompose} respectively.
For the first term $\gT_1$, only an upper bound for $\EB w_t - \EB W_\infty (\mu_t,p_t)$ is needed, and this is immediately concluded by Lemma~\ref{lem:bd_trans_err_of_regret}. Consequently, we have
\[
\gT_1 = h_0\ssum{t}{2}{T+1}\gO\left(t^{-1}\right) = \gO(\log T).
\]


To analyze $\gT_2$, we recall a previous theoretical result regarding the Lipschitz continuity of asymptotic waiting times and present it in Lemma~\ref{lmm:lip of w_inf}.
Actually, it is a simplified version of Lemma 4 in \citep{chen2023online}.
\begin{lemma}\label{lmm:lip of w_inf}
    For any two pairs of parameters $(\mu_i,p_i)\in [\underline{\mu},\bar{\mu}]\times [\underline{p}, \bar{p}],~i=1,2$, we have
    \[
    \EB W_\infty(\mu_1,p_1) - \EB W_\infty(\mu_2,p_2) = \gO(\abs{\mu_1 - \mu_2} + \abs{p_1 - p_2}).
    \]
\end{lemma}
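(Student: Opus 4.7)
The plan is to combine Loynes' random-walk representation of $W_\infty$ with a synchronous coupling. By Loynes' classical construction for the GI/GI/1 queue, under Assumptions~\ref{assmpt: uniform} and \ref{assmpt: light tail} one has $W_\infty(\mu,p) \overset{d}{=} \sup_{n \ge 0} S_n(\mu,p)$, where $S_0(\mu,p)=0$ and $S_n(\mu,p) = \sum_{i=1}^n \xi_i(\mu,p)$ with $\xi_i(\mu,p) := V_i/\mu - U_i/\lambda(p)$, driven by a single iid sequence $\{(U_i,V_i)\}_{i \ge 1}$. Using the same driver sequence for both parameter pairs couples the two stationary waiting times. Set $N_k := \argmax_{n \ge 0} S_n(\mu_k,p_k)$, which is a.s.\ finite because $\EB\xi_i(\mu_k,p_k) = 1/\mu_k - 1/\lambda(p_k) \le 1/\underline{\mu} - 1/\lambda(\underline{p}) < 0$ uniformly, by Assumption~\ref{assmpt: uniform}(c).

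Writing $\zeta_i := V_i(\mu_1^{-1} - \mu_2^{-1}) - U_i(\lambda(p_1)^{-1} - \lambda(p_2)^{-1})$, the defining optimality of $N_1$ and $N_2$ yields the two-sided sandwich
\[
\sum_{i=1}^{N_2}\zeta_i \;\le\; W_\infty(\mu_1,p_1) - W_\infty(\mu_2,p_2) \;\le\; \sum_{i=1}^{N_1}\zeta_i.
\]
Assumption~\ref{assmpt: uniform} (uniform lower bounds on $\mu$ and $\lambda(p)$, together with differentiability of $\lambda$) provides an absolute constant $C$ with $|\zeta_i| \le C(U_i + V_i)(|\mu_1-\mu_2|+|p_1-p_2|)$. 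Taking expectations then gives
\[
\bigl|\EB W_\infty(\mu_1,p_1) - \EB W_\infty(\mu_2,p_2)\bigr| \;\le\; C(|\mu_1-\mu_2|+|p_1-p_2|)\cdot \EB\!\left[\sum_{i=1}^{N_1\vee N_2}(U_i+V_i)\right].
\]

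The remaining, and main, technical obstacle is to bound $\EB\sum_{i=1}^{N_k}(U_i+V_i)$ by a constant uniform over $(\mu_k,p_k) \in \mathcal{B}$. The idea is to expand $\EB\sum_{i=1}^{N_k}(U_i+V_i) = \sum_{i \ge 1}\EB[(U_i+V_i)\mathbf{1}\{N_k \ge i\}]$, apply Cauchy--Schwarz termwise (with $\EB(U_i+V_i)^2$ finite by Assumption~\ref{assmpt: light tail}), and establish a uniform bound $\PB(N_k \ge i) \le C' e^{-c i}$. This tail bound follows in the spirit of the proof of Lemma~\ref{lem:subexp_of_w&y}: the event $\{N_k \ge i\}$ implies the existence of some $n \ge i$ with $\sum_{j=i}^n (\xi_j(\mu_k,p_k) - \EB\xi_j(\mu_k,p_k)) \ge -\EB\xi_j(\mu_k,p_k)\cdot(n-i+1)$, where the negative drift $-\EB\xi_j \ge \lambda(\underline{p})^{-1} - \underline{\mu}^{-1} > 0$ is uniformly bounded away from zero, so Bernstein's inequality under the light-tail assumption plus a union bound over $n \ge i$ deliver the required geometric decay uniformly in $\vartheta \in \mathcal{B}$. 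Summing the termwise Cauchy--Schwarz estimates then gives the desired $\gO(|\mu_1-\mu_2|+|p_1-p_2|)$ rate with constant independent of the specific $(\mu_k,p_k)$.
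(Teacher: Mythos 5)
Your route (Loynes' representation $W_\infty \overset{d}{=}\sup_{n\ge 0}S_n$, synchronous coupling through a common driver sequence, the two-sided argmax sandwich, and the pointwise bound $\abs{\zeta_i}\le C(U_i+V_i)(\abs{\mu_1-\mu_2}+\abs{p_1-p_2})$ using the uniform bounds $\underline{\mu},\underline{\lambda}>0$ and $\sup\abs{\lambda'}<\infty$ on the compact feasible set) is sound and is genuinely different from the paper, which gives no argument of its own and simply invokes Lemma 4 of \citet{chen2023online}. A self-contained proof along these lines is therefore a worthwhile addition — but the step that is supposed to carry the whole estimate, the uniform exponential tail of the argmax $N_k$, is wrong as you have justified it.

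Concretely, on $\{N_k\ge i\}$ you compare $S_n$ with $S_{i-1}$, obtaining for each fixed $n\ge i$ the event $\sum_{j=i}^n\xi_j\ge 0$, whose probability is at most $e^{-c(n-i+1)}$. A union bound over $n\ge i$ then gives only $\sum_{n\ge i}e^{-c(n-i+1)}=e^{-c}/(1-e^{-c})$, a constant \emph{independent of $i$} — no geometric decay in $i$, and hence not even $\EB N_k<\infty$, so the termwise Cauchy--Schwarz sum $\sum_i\sqrt{\EB(U_i+V_i)^2}\sqrt{\PB(N_k\ge i)}$ does not converge on the strength of what you have shown. The repair stays inside your own toolkit: since $N_k$ is the \emph{global} argmax and $S_0=0$, the event $\{N_k\ge i\}$ is contained in $\bigcup_{n\ge i}\{S_n(\mu_k,p_k)\ge 0\}$, i.e.\ you should compare with index $0$, not index $i-1$. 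The uniform negative drift $1/\lambda(p)-1/\mu\ge 1/\lambda(\underline{p})-1/\underline{\mu}>0$ from Assumption~\ref{assmpt: uniform}(c) together with the uniform light tails of $V_j/\mu$ and $U_j/\lambda(p)$ from Assumption~\ref{assmpt: light tail} gives, via Chernoff/Bernstein, $\PB(S_n\ge 0)\le e^{-cn}$ with $c>0$ uniform over $\mathcal{B}$, and now the union bound yields $\PB(N_k\ge i)\le Ce^{-ci}$. With that corrected tail bound your Cauchy--Schwarz summation closes and the claimed $\gO(\abs{\mu_1-\mu_2}+\abs{p_1-p_2})$ bound follows with a constant uniform over $\mathcal{B}$.
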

When Lemma~\ref{lmm:lip of w_inf} is satisfied, the function $\lambda(p)\Gamma(\mu,p)$ exhibits Lipschitz continuity w.r.t. the parameters $(\mu,p)$. Consequently, $\gT_2$ can be reformulated as follows:
\begin{equation*}
\begin{aligned}
    \gT_2 &= \EB w_{T+2}\lambda(p_T)\Gamma(\mu_T,p_T) - \EB w_1\lambda(p_1)\Gamma(\mu_1,p_1)\\
    &+ \ssum{t}{2}{T+1}\left(\lambda(p_{t-1})\Gamma(\mu_{t-1},p_{t-1}) - \lambda(p_t)\Gamma(\mu_t,p_t)\right)w_t\\
    &\precsim 2\left({\lambda(p^\star)}\Gamma(\mu^\star,p^\star) + T^{-\alpha/2}\right)\sup\limits_{k\ge 1}\EB w_k + \ssum{t}{2}{T+1}\EB\norm{\vartheta_{t-1} - \vartheta_t}w_t\\
    &\le 3\lambda(p^\star)\Gamma(\mu^\star,p^\star)\sup\limits_{k\ge 1}\EB w_k + \ssum{t}{2}{T+1}\sqrt{\EB\norm{\vartheta_{t-1} - \vartheta_t}^2}\sqrt{\EB w_t^2}\\
    &\le 3\lambda(p^\star)\Gamma(\mu^\star,p^\star)\sup\limits_{k\ge 1}\EB w_k + \left(\ssum{t}{2}{T+1}\sqrt{\EB\norm{\vartheta_{t-1} - \vartheta_t}^2}\right)\sqrt{\sup\limits_{k \ge 1}\EB w_k^2}\\
    &\precsim \sup\limits_{k\ge 1}\EB w_k + \sqrt{\sup\limits_{k \ge 1}\EB w_k^2} \ssum{t}{2}{T+1}t^{-1} = \gO(\log T).
\end{aligned}
\end{equation*}
The last inequality uses the fact $\sup_{k\ge 1}\EB w_k^j < \infty, (j = 1,2)$, which are direct consequences of Lemma~\ref{lem:subexp_of_w&y}.

The final component $\gT_3$ essentially concerns the convergence rate of $(\mu_t, p_t)$ in terms of the function value $f(\mu,p)$.

Hence, we will draw upon the convergence analysis in Theorem~\ref{thm:L2-convergence}. Additionally, we will make use of the function $f(\mu,p)$'s smooth properties and consider the optimality of $(\mu^\star, p^\star)$.
In fact, we have
\begin{align*}
    \gT_3 &= \ssum{t}{2}{T+1}\EB (f(\mu_t,p_t) - f(\mu^\star,p^\star))I_t \precsim \ssum{t}{2}{T+1}\EB \norm{\vartheta_t - \vtheta}^2 I_t\\
    &= \ssum{t}{2}{T+1}\EB \norm{\vartheta_t - \vtheta}^2\left(w_{t+1} + \frac{U_t}{\lambda(p_t) }- w_t\right) \le \ssum{t}{2}{T+1}\EB \norm{\vartheta_t - \vtheta}^2 \left(\frac{U_t}{\lambda(p_t)} + \frac{V_t}{\mu_t}\right)\\
    &\overset{(a)}{\le} \left(\frac{1}{\underline{\lambda}} + \frac{1}{\underline{\mu}}\right)\ssum{t}{2}{T+1}\EB\norm{\vartheta_t - \vtheta}^2 = \gO(\log T).
\end{align*}
At $(a)$ we make use of $\EB U_t = \EB V_t = 1$ and the fact that $(U_t,V_t)$ are independent with $\vartheta_t$.
Combining the above bounds for $\gT_i~(i = 1,2,3)$ yields the Theorem~\ref{thm:cont_time_regret_queue}.
\end{proof}

\section{Experimental Results for Other Queue Systems}
\label{append:more-queue}
In the main paper, we presented results for the M/M/1 queueing system. Here, we provide the corresponding results for the other three settings, which are introduced below. The basic setup remains the same as described in Section \ref{sec:MM1}, with the differences highlighted here.

\paragraph{M/M/1 Queue} 
In this case, the objective function \eqref{eqObjmin} has a closed-form expression as
\[
\min_{(\mu, p) \in \mathcal{B}} \left\{h_0 \frac{\lambda(p)}{\mu - \lambda(p)} + \frac{1}{10} \mu^2 - p \lambda(p) \right\},
\]
from which the exact values of the optimal solutions $(\mu^\star, p^\star)$ and the corresponding objective value $f(\mu^\star, p^\star)$ can be obtained via numerical methods.
We set $\mathcal{B} = [3.5, 10] \times [6.56, 15]$, with which  the optimal solution is $(\mu^\star, p^\star) = (4.0234, 7.103)$.  

\paragraph{M/GI/1 Queue} 
The famous Pollaczek-Khinchine (PK) formula provides a closed-form expression for the steady-state average queue length the for M/GI/1  model:
\begin{equation*}
\EE[Q_{\infty}(\mu,p)] = \rho + \frac{\rho^2}{1-\rho} \frac{1+c_s^2}{2},
\end{equation*}
where $\rho = \frac{\lambda(p)}{\mu}$ is the traffic intensity under the parameter $(p, \mu)$ and $c_s^2 = \Var(V)/[\EE V]^2$ is the squared coefficient of variation (SCV) for the service time random variable $V$.

To introduce an additional degree of flexibility for $c_s$, we adopt the Erlang-$k$ distribution for the service-time distribution whose SCV equals $1/k$.
In the special case when $k=1$, the Erlang-1 distribution is reduced to the Poisson distribution, and the corresponding M/GI/1 queue is reduced to an M/M/1 queue.
When $c_s \ge 1$, we then employ the hyperexponential distribution which is the mixture of two i.i.d. exponential distributions.
We would select the mixture probability and the parameters for the two exponential distributions carefully such that it has zero mean and $c_s$ SCV.
For the M/GI/1 model, we consider a linear staffing cost function: $c(\mu) = c_0 \cdot \mu$.
In our experiments, we fix $c_0=1$ and $\mathcal{B} = [3.5, 7] \times [6.5, 10]$ but vary $k$ to make $c_s$ range from $0.1$ to $3$ equidistantly, with a total of 200 different $c_s$'s.
The optimal solutions $(\mu^\star, p^\star)$ and traffic intensity $\rho^*\equiv \lambda(p^*)/\mu^*$ corresponding to different values of $c_s$, together with the last-iterate performance of the stream SGD algorithm, are reported in 
Noting that M/M/1 corresponds to the case where $c_s=1$, we also examine two specific scenarios where $c_s \in \{0.5, 1.5\}$ for further case study.


\paragraph{GI/M/1 Queue} 
 For the queueing model with general inter-arrival time distributions, we consider an $E_2$/M/1 queue – a system characterized by Erlang-2 interarrival times and exponential service times. The main purpose for us to use this setting is that efficient numerical methods are available for computing the true value of the optimal $(\mu^*,p^*)$ and objective function, employing the matrix geometric approach. In experiments for this GI/M/1 queue, we use linear staffing cost with $c_0=1$ and $\mathcal{B} = [3.5, 7] \times [6.5, 10]$, which yields the optimal decision $(\mu^\star, p^\star) = (3.762, 7.935)$.

\paragraph{Convergence and regret results}

Figures \ref{fig:conv_MG1_0.5}, \ref{fig:conv_MG1_1.5}, and \ref{fig:conv_GM1} present the convergence and regret results for the queueing systems introduced above. The results are consistent with the observations in Section \ref{sec:convergence-MM1}. Using a single sample to compute gradients, our method outperforms other batched SGD methods.

\begin{figure}[t!]
    \centering
    \includegraphics[width=\textwidth]{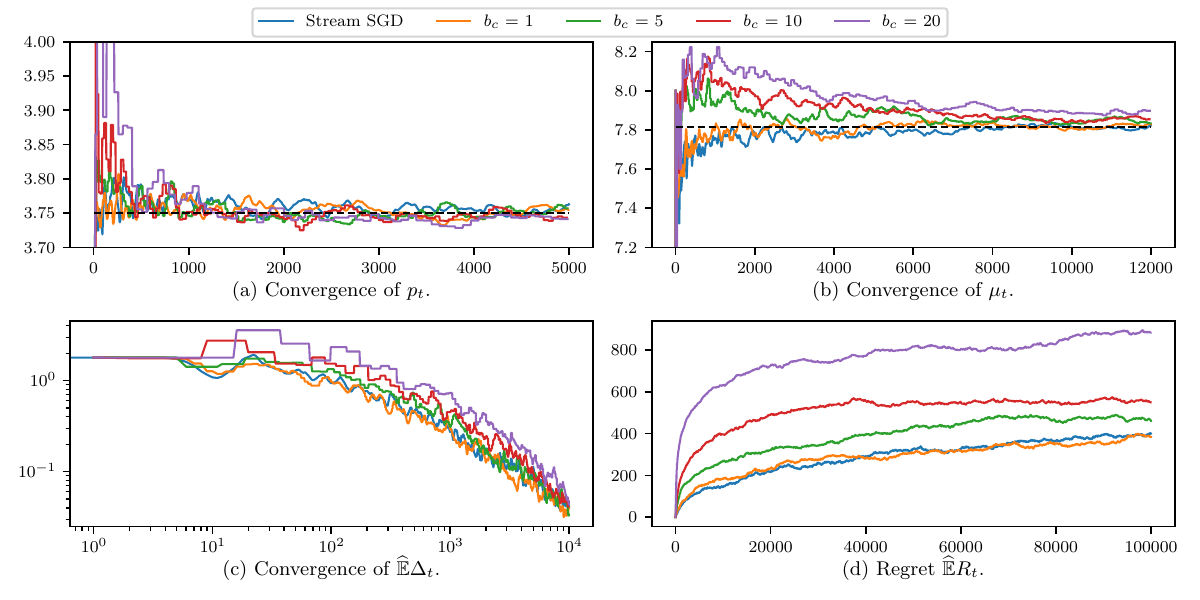}
    \caption{Convergence and regret in the M/GI/1 setting with $c_s=0.5$ over 200 repetitions.}  
    \label{fig:conv_MG1_0.5}
\end{figure}

\begin{figure}[t!]
    \centering
    \includegraphics[width=\textwidth]{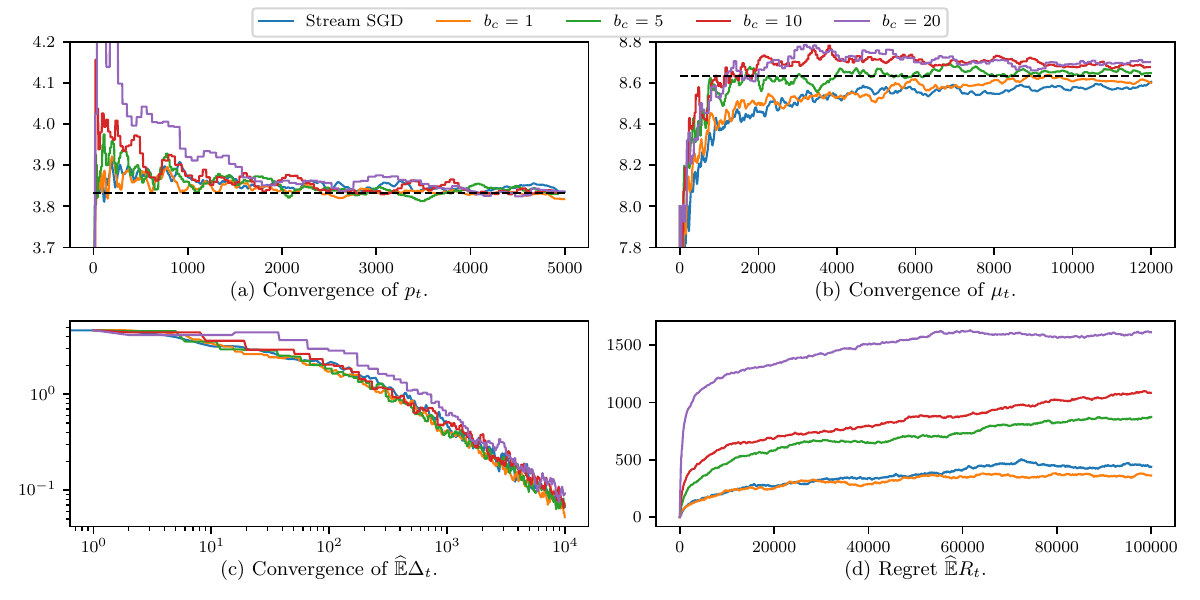}
    \caption{Convergence and regret in the M/GI/1 setting with $c_s=1.5$ over 200 repetitions.}  
    \label{fig:conv_MG1_1.5}
\end{figure}

\begin{figure}[t!]
    \centering
    \includegraphics[width=\textwidth]{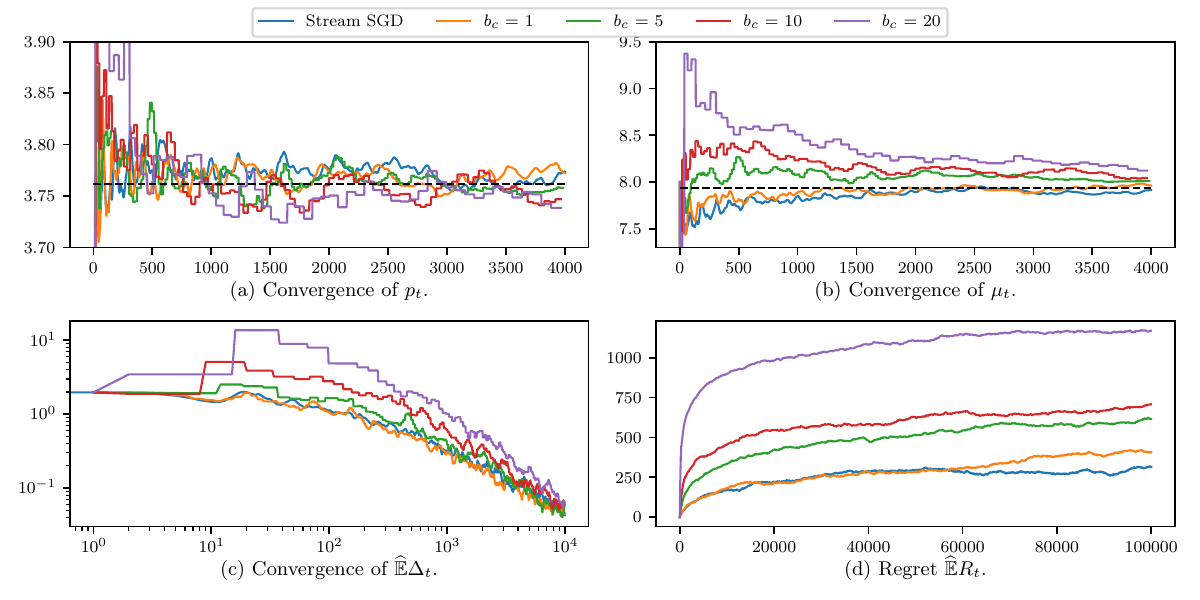}
    \caption{Convergence and regret in the GI/M/1 setting over 200 repetitions.}  
    \label{fig:conv_GM1}
\end{figure}

\paragraph{Robustness check on step sizes}



Although our theoretical framework assumes the same step size $\eta_t$ for all coordinates, we experiment with using different step sizes for each coordinate. This discrepancy does not create significant theoretical challenges. A practical amendment is to replace the standard Euclidean norm with a weighted norm.\footnote{Specifically, for any set of positive weights $w_i > 0$ for $i \in [d]$, if we wish to use separate step sizes $\eta_{t, i} = w_i \eta_{t, 0}$ for the $i$-th coordinate, we can redefine the inner product as $\langle x, y \rangle_w = \sum_{i=1}^d x_i y_i w_i$. Under these assumptions, the theoretical guarantees remain valid as long as the step sizes $\{\eta_{t, i}\}_{i \in [d]}$ converge to zero uniformly at the same rate. This adaptation preserves the key results of our theoretical framework with minimal modifications to the proofs.} 
To test the robustness of step size choices numerically, we allow different yet proportionate step sizes for each coordinate. Specifically, we implement stream SGD using step sizes of the form $\eta_t^p = \frac{c_p}{1+t}$ for updating $p$ and $\eta_t^\mu = \frac{c_\mu}{1+t}$ for $\mu$, with $(c_p, c_\mu) \in \{(1,1), (1,5), (5,1), (5,5), (1,10)\}$. 

Full results of the robustness check on step size are provided in Figure \ref{fig:step_size}.
In the first row, we plot the logarithm of the $L_2$ difference, i.e., $\log \widehat{\mathbb{E}} \Delta_t$, against the logarithm of the number of data samples. The results show that all  curves align well with a 45-degree line, confirming that the $\gO(t^{-1})$ convergence rate holds across all queueing settings and step size combinations. In the second row, we plot the cumulative regret, i.e., ${\widehat{\mathbb{E}}R_t}$, against the logarithm of the number of data samples. Consistent with the theoretical $\gO(\log(t))$ regret bounds, the regret curves become linear in the later iterations, albeit with varying slopes. These results demonstrate the robustness of stream SGD to step size variations across dimensions.

 An interesting observation is that the step size combinations $(1,5)$ and $(1,10)$ slightly outperform the equal step size combination $(1,1)$. A possible explanation is that the objective function is flatter along the staffing level dimension $\mu$ than along the price dimension $p$ (as illustrated in Figure 6 of \cite{chen2023online}). Using a larger step size for $\mu$ may help accelerate convergence by better aligning with the shape of the objective function.

\begin{figure}[t!]
   \centering
   \includegraphics[width=\textwidth]{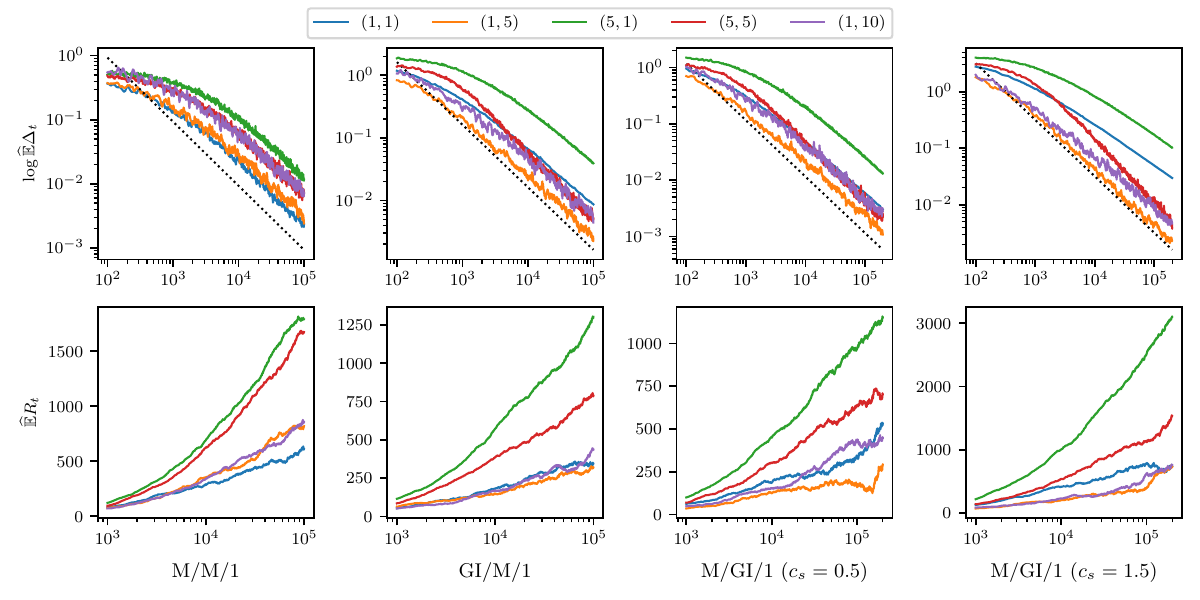}
   \caption{Convergence and regrets with different step size parameters $(c_p, c_{\mu})$ in the four settings over 200 repetitions.}  
  \label{fig:step_size}
\end{figure}

\paragraph{Online inference performance}

To ensure robust inference performance, the step sizes are customized for different models. Recall that the parameter $p$ is updated using $\eta_{t,p} = \frac{c_p}{(1+t)^{\alpha}}$, and $\mu$ is updated using $\eta_{t,\mu} = \frac{c_{\mu}}{(1+t)^{\alpha}}$. We set $c_p = 1$, $\alpha = 0.99$, and $c_{\mu} = 10$ for the M/M/1 model, while $c_{\mu} = 20$ is used for the other three settings. For the M/GI/1 case, the number of iterations is set to $T = 2 \times 10^6$, while for the other three cases, $T$ is set to $5 \times 10^4$.
Figure \ref{fig:inf_more} presents the online inference results for the additional models. Across all settings, our inference method consistently demonstrates strong and stable performance, aligning with the observations in Section \ref{sec:exp-online-inference}.
Finally, Table \ref{table:empirical_coverage_full} provides the full results corresponding to Table \ref{table:empirical_coverage}. Once again, our inference method outperforms the considered batch-mean methods.

\begin{figure}[t!]
    \centering
\includegraphics[width=\textwidth]{figs/inf_result_MM1_0.99_h.pdf}
\includegraphics[width=\textwidth]{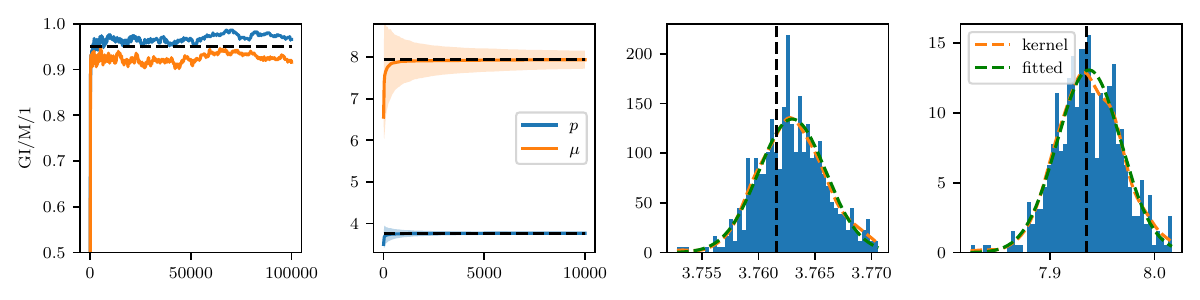}
\includegraphics[width=\textwidth]{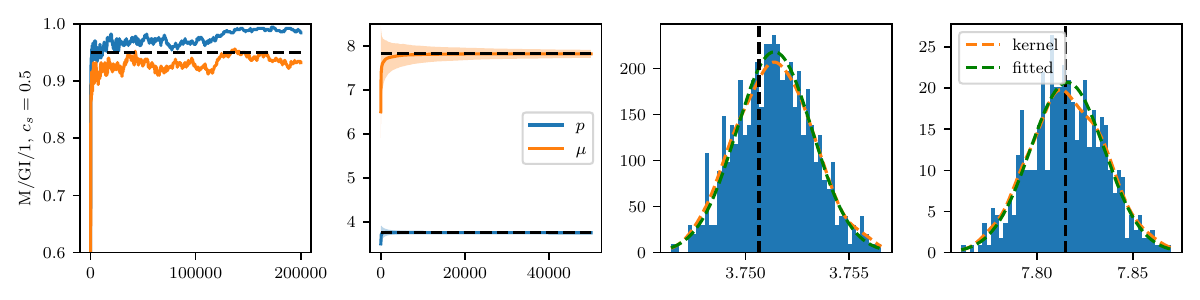}
\includegraphics[width=\textwidth]{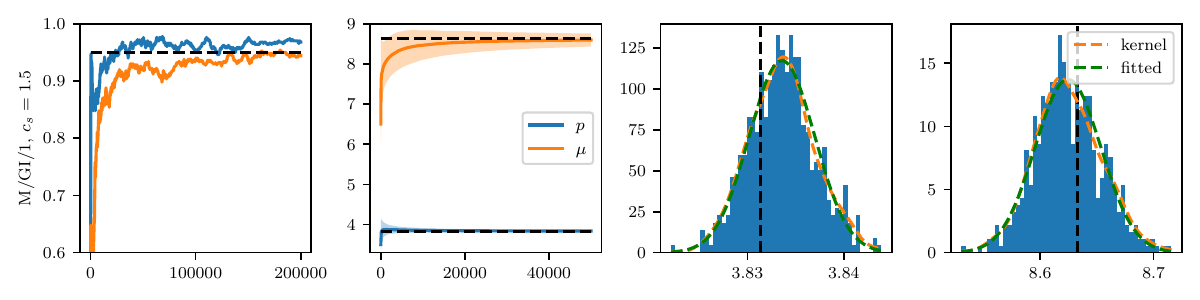}
\caption{Online inference for the full four settings over 500 repetitions.}  
   \label{fig:inf_more}
\end{figure}

\begin{table}[t!]
\caption{Comparison of empirical coverage rates between our method and the batch mean method on the four queueing settings.
The standard errors of coverage rates $\widehat{p}$ are reported in the bracket (computed via $\sqrt{\widehat{p}(1-\widehat{p})/500} \times 100 \%$). 
} 
\centering 
\small
\begin{tabular}{c|c|c|ccccc} 
\toprule
Para.&
Settings&\multicolumn{1}{c|}{Methods} &  $n=T/2^4$ & $n=T/2^3$ & $n=T/2^2$ & $n=T/2$ & $n=T$\\
\midrule \multirow{12}{*}{$p$}&
\multirow{3}{*}{M/M/1}
& Ours & 89.2(1.39)&90.4(1.32)&93.2(1.13)&92.4(1.19)&94.8(0.99)\\ 
&& ES& 80.6(1.77)&86.2(1.54)&90.2(1.33)&91.6(1.24)&92.4(1.19)\\ 
&& IBS& 84.8(1.61)&92.4(1.19)&94.0(1.06)&95.6(0.92)&96.4(0.83)\\ 
\cmidrule{2-8}&
\multirow{3}{*}{\makecell{M/GI/1 \\ $c_s=0.5$}}
& Ours & 97.0(0.76)&96.6(0.81)&95.8(0.9)&97.4(0.71)&96.8(0.79)\\ 
&& ES&96.2(0.86)&97.8(0.66)&96.8(0.79)&97.4(0.71)&97.0(0.76)\\ 
&& IBS& 96.4(0.83)&96.2(0.86)&97.2(0.74)&98.2(0.59)&98.4(0.56)\\ 
\cmidrule{2-8}&
\multirow{3}{*}{\makecell{M/GI/1\\ $c_s=1.5$}}
& Ours & 91.8(1.23)&94.6(1.01)&95.4(0.94)&96.2(0.86)&96.4(0.83)\\ 
&& ES& 88.0(1.45)&92.2(1.2)&94.4(1.03)&96.2(0.86)&97.0(0.76)\\ 
&& IBS& 88.8(1.41)&92.2(1.2)&94.4(1.03)&97.2(0.74)&97.8(0.66)\\ 
\cmidrule{2-8}&
\multirow{3}{*}{\makecell{GI/M/1}}
& Ours & 97.0(0.76)&96.6(0.81)&95.8(0.9)&97.4(0.71)&96.8(0.79)\\ 
&& ES& 96.2(0.86)&97.8(0.66)&96.8(0.79)&97.4(0.71)&97.0(0.76)\\ 
&& IBS&96.4(0.83)&96.2(0.86)&97.2(0.74)&98.2(0.59)&98.4(0.56)\\ 
\midrule
\multirow{12}{*}{$\mu$}&
\multirow{3}{*}{M/M/1}
& Ours & 97.0(0.76)&97.0(0.76)&95.8(0.9)&94.0(1.06)&94.4(1.03)\\ 
&& ES& 97.8(0.66)&97.4(0.71)&96.6(0.81)&93.8(1.08)&93.6(1.09)\\ 
&& IBS& 95.4(0.94)&94.6(1.01)&90.8(1.29)&89.2(1.39)&90.4(1.32)\\ 
\cmidrule{2-8}&
\multirow{3}{*}{\makecell{M/GI/1 \\ $c_s=0.5$}}
& Ours & 91.6(1.24)&91.2(1.27)&92.0(1.21)&93.0(1.14)&93.6(1.09)\\ 
&& ES&90.4(1.32)&89.6(1.37)&90.0(1.34)&91.2(1.27)&90.4(1.32)\\ 
&& IBS& 86.6(1.52)&85.6(1.57)&88.6(1.42)&91.0(1.28)&93.2(1.13)\\ 
\cmidrule{2-8}&
\multirow{3}{*}{\makecell{M/GI/1 \\ $c_s=1.5$}}
& Ours &85.8(1.56)&87.2(1.49)&90.4(1.32)&91.2(1.27)&95.2(0.96)\\ 
&& ES& 76.0(1.91)&83.8(1.65)&85.0(1.6)&86.8(1.51)&89.6(1.37)\\ 
&& IBS& 77.2(1.88)&89.0(1.4)&94.2(1.05)&97.6(0.68)&98.6(0.53)\\ 
\cmidrule{2-8}&
\multirow{3}{*}{\makecell{GI/M/1}}
& Ours & 91.2(1.27)&91.8(1.23)&91.6(1.24)&91.6(1.24)&92.0(1.21)\\ 
&& ES& 91.8(1.23)&91.8(1.23)&90.0(1.34)&89.6(1.37)&90.2(1.33)\\ 
&& IBS& 86.0(1.55)&87.6(1.47)&90.0(1.34)&88.8(1.41)&91.0(1.28)\\ 
\bottomrule
\end{tabular}
\label{table:empirical_coverage_full} 
\end{table}

\section{{Additional Numerical Results on Convergence Rate}}\label{appx: additional results}

{We conducted a series of experiments to further examine the actual convergence rate across a variety of model settings. The basic setup follows the configuration in Section \ref{sec: numermical} and \ref{append:more-queue}, with variations in the initial points $(\mu_0, p_0)$ and in the interarrival and service time distributions. The results are shown in Figure~\ref{fig:new-figure}. The black dotted line represents the fitted line using errors from $10^3$ to $10^5$ steps, with the slope fixed at $-1$. Notably, the fitted line aligns closely with the empirical decay, providing strong evidence that the asymptotic convergence rate is indeed $O(1/t)$.}

\begin{figure}[th]
    \centering
    \includegraphics[width=0.245\textwidth]{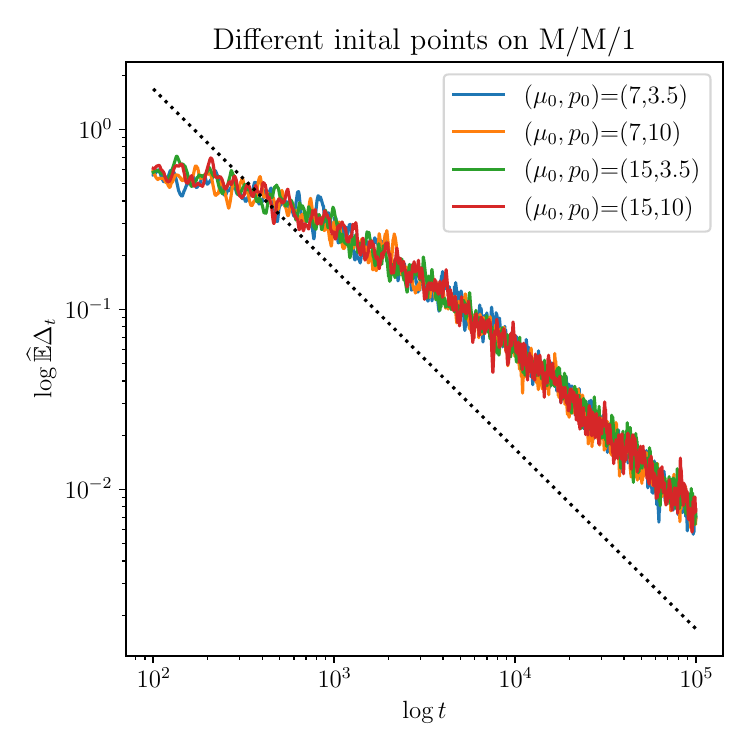}
    \includegraphics[width=0.245\textwidth]{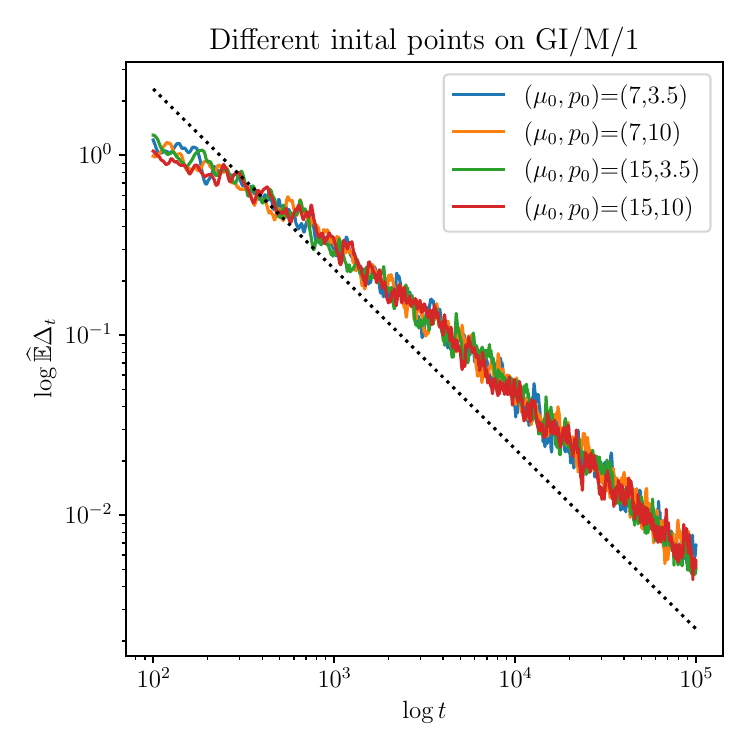} 
    \includegraphics[width=0.245\textwidth]{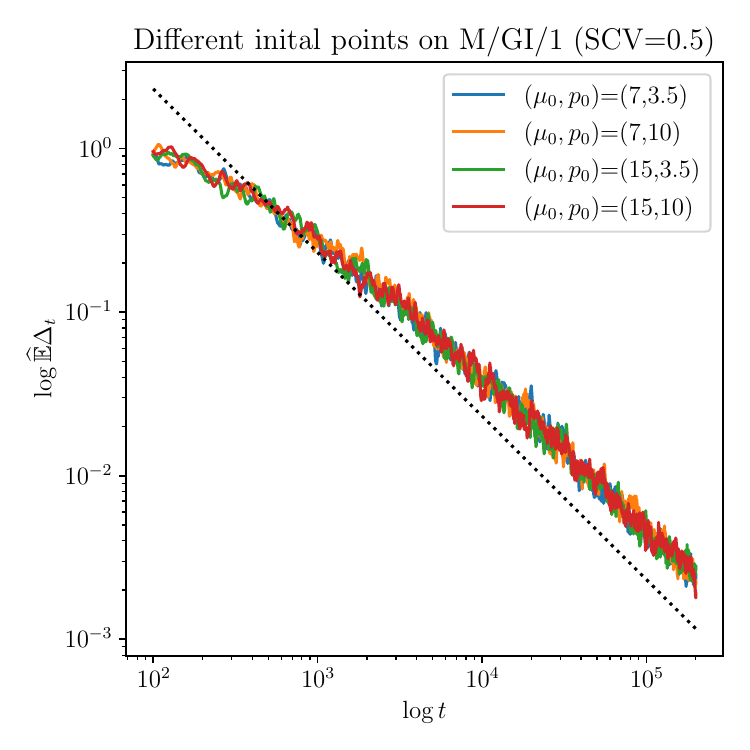}
    \includegraphics[width=0.245\textwidth]{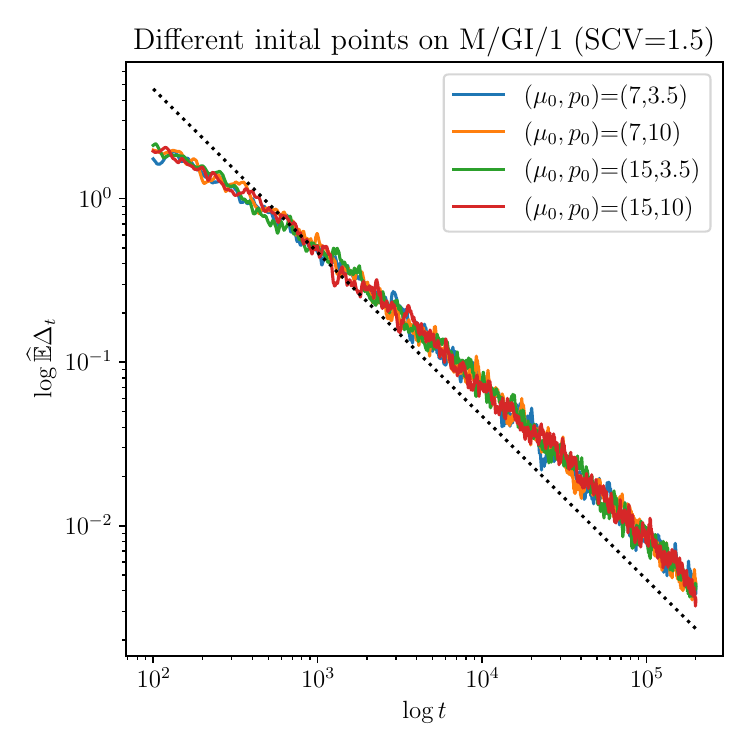} \\
    \caption{{The same asymptotic $O(1/t)$ convergence rate on different initial points and different queue models. The setup is the same as our paper.}}
   \label{fig:new-figure}
\end{figure}

{Next, we focus on M/M/1 queue models but vary the demand function $\lambda(p)$. Recall that the loss function has a closed form on the M/M/1 model:
\begin{equation}
\tag{\ref{eqObjmin}}
\min_{(\mu, p) \in \mathcal{B}} f(\mu, p) \equiv h_0 \frac{\lambda(p)}{\mu - \lambda(p)} + c \cdot \mu - p\lambda(p), \qquad \mathcal{B} \equiv [\underline{\mu}, \bar{\mu}] \times [\underline{p}, \bar{p}],
\end{equation} 
where we set $h_0=c=1$ in this case study.
We test three different demand functions and compute the optimal point $(\mu^\star, p^\star)$ numerically. Note that we slightly enlarge the feasible domain $\mathcal{B}$ such that the initial point could be reasonably far from the optimum while the uniform stability condition holds.
\begin{itemize}
    \item \textbf{Logistic demand function}: $\lambda(p) = M \cdot \frac{\exp(a-p)}{1 + \exp(a-p)}$ with $M=10$ and $a=4.1$.  The optimal solution is $(\mu^\star, p^\star)=(8.184, 3.785)$ and the domain is $[6.56, 15] \times [3.5, 10]$.
    \item \textbf{Linear demand function}: $\lambda(p) = M-b\cdot p$ with $M=10$ and $b=1$.  The optimal solution is $(\mu^\star, p^\star)=(6.321, 5.742)$ and the domain is $[6.1, 15] \times [4, 10]$.
    \item \textbf{Quadratic demand function}: $\lambda(p) = M -p^2/2$ with $M=10$. The optimal solution is $(\mu^\star, p^\star)=(7.449, 3.106)$ and the domain is $[6.871, 15] \times[2.5, 10]$.
\end{itemize}
As shown in Figure \ref{fig:new-demand}, across different initial points and demand functions, the asymptotic convergence rate remains $O(1/t)$, with the black dotted line closely aligning with the tail segments of the empirical error decay curves.}
\begin{figure}[t!]
    \centering
    \includegraphics[width=0.32\textwidth]{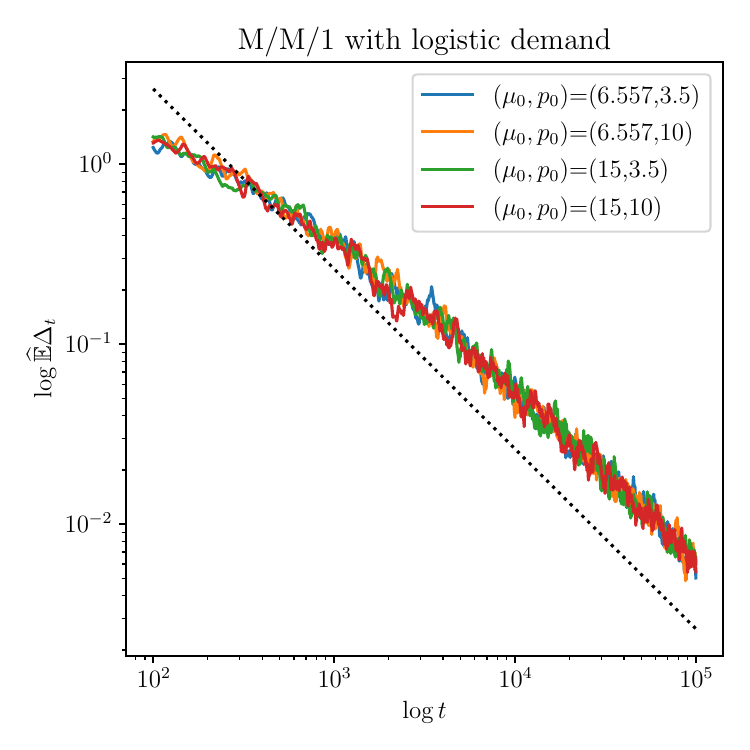}
    \includegraphics[width=0.32\textwidth]{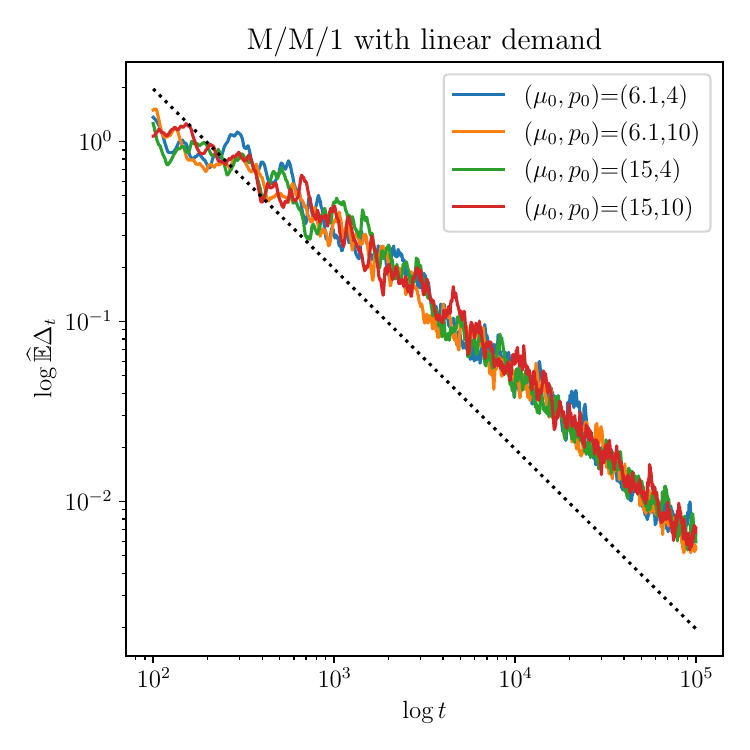} 
    \includegraphics[width=0.32\textwidth]{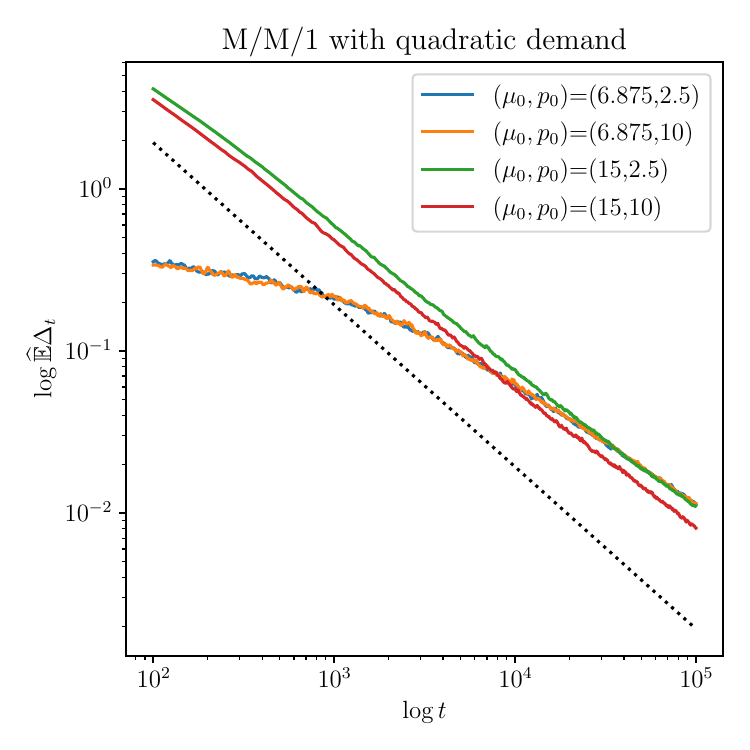}
    \caption{{The same asymptotic $O(1/t)$ convergence rate using different demand function $\lambda$ on the M/M/1 model. }}  
   \label{fig:new-demand}
\end{figure}



  



\end{document}